\providecommand{\showkeys}{}
\providecommand{\gitaddpagehash}{}
\def\blfootnote{\xdef\@thefnmark{}\@footnotetext} 
\newcommand{\gitversionfootnote}{\blfootnote{Version: \gitauthsdate, \gitshash}}
\newcommand{\gitversionfootnote}{}
\newcommand{\fig}[2]{\def\svgwidth{#2}\input{#1.pdf_tex}}
\newcommand{\fig}[2]{\frame{PDF Figure here}}	% if the target is not .pdf, don't include the figure
\newcommand{\dimension}{n}
\newcommand{\dist}{\operatorname{dist}}
\newcommand{\diam}{\operatorname{diam}}
\newcommand{\interior}{\operatorname{int}}
\newcommand{\trace}{\operatorname{tr}}
\newcommand{\esssup}{\operatorname{ess\ sup}\displaylimits}
\newcommand{\essinf}{\operatorname{ess\ inf}\displaylimits}
\newcommand{\lap}{\Delta}
\newcommand{\ov}[1]{\frac{1}{#1}}
\newcommand{\abs}[1]{\left|#1\right|}
\newcommand{\pth}[1]{\left(#1\right)}
\newcommand{\bra}[1]{\left[#1\right]}
\newcommand{\set}[1]{{\left\{#1\right\}}}
\newcommand{\at}[2]{{{\left.{#1}\right|}_{#2}}}
\newcommand{\cl}[1]{\overline{#1}}	% closure
\newcommand{\al}{\ensuremath{\alpha}}
\newcommand{\be}{\ensuremath{\beta}}
\newcommand{\ga}{\ensuremath{\gamma}}
\newcommand{\de}{\ensuremath{\delta}}
\newcommand{\e}{\ensuremath{\varepsilon}}
\newcommand{\vp}{\ensuremath{\varphi}}
\newcommand{\la}{\ensuremath{\lambda}}
\newcommand{\si}{\ensuremath{\sigma}}
\newcommand{\ta}{\ensuremath{\theta}}
\newcommand{\om}{\ensuremath{\omega}}
\newcommand{\R}{\ensuremath{\mathbb{R}}}
\newcommand{\Rd}{\ensuremath{{\mathbb{R}^{\dimension}}}}
\newcommand{\Rn}{\Rd}
\newcommand{\N}{\ensuremath{\mathbb{N}}}
\newcommand{\limhalfsup}{\sideset{}{^*}\limsup}
\newcommand{\limhalfinf}{\sideset{}{^*}\liminf}
\definecolor{grey}{rgb}{0.6,0.6,0.6}
\renewcommand{\labelenumi}{(\alph{enumi})}
\newcommand{\romanlist}{\renewcommand{\labelenumi}{\textup{(}\roman{enumi}\textup{)}}}
\numberwithin{equation}{section}
\newtheorem{theorem}{Theorem}[section]
\newtheorem{lemma}[theorem]{Lemma}
\newtheorem{proposition}[theorem]{Proposition}
\newtheorem{corollary}[theorem]{Corollary}
\newtheorem{definition}[theorem]{Definition}
\theoremstyle{definition}
\newtheoremstyle{remarkstyle}% name of the style to be used
  {3pt}% measure of space to leave above the theorem. E.g.: 3pt
  {3pt}% measure of space to leave below the theorem. E.g.: 3pt
  {\small}% name of font to use in the body of the theorem
  {}% measure of space to indent
  {\bfseries}% name of head font
  {.}% punctuation between head and body
  { }% space after theorem head; " " = normal interword space
  {}% Manually specify head
\theoremstyle{remarkstyle}
\newtheorem{remark}[theorem]{Remark}
\newtheorem{example}[theorem]{Example}
\title{Nonlinear elliptic-parabolic problems}
\author{Inwon C. Kim\thanks{Dept. of Mathematics, UCLA, USA. Partially supported by NSF DMS-0970072} and Norbert Po\v{z}\'{a}r\thanks{Dept. of Mathematics, University of Tokyo, Japan.}}\date{}
\begin{document}
\maketitle
\begin{abstract}
We introduce a notion of viscosity solutions
for a general class of elliptic-parabolic phase transition
problems. These include the Richards equation, which is
a classical model in filtration theory. Existence and
uniqueness results are proved via the comparison
principle. In particular, we show existence and stability
properties of maximal and minimal viscosity solutions for
a general class of initial data. These results are new
even in the linear case, where we also show that viscosity
solutions coincide with the regular weak solutions
introduced in \cite{AL}.
\end{abstract}
\gitversionfootnote

\section{Introduction}

Let $\Omega \subset \Rn$ be a smooth bounded domain and $T > 0$. Let us denote $Q = \Omega \times (0,T]$.
We are interested in the following problem: find a function $u(x,t)$, $u : \cl Q \to \R$, that solves 
\begin{align}
\label{RP}
\begin{cases}
\partial_t b(u) - F(D^2 u, Du, u) = 0 & \text{in } Q, \\
u = g \equiv -1 & \text{on } \partial \Omega \times [0,T],\\
u(\cdot,0) = u_0 & \text{on } \Omega,
\end{cases}
\end{align}
where $Du$ denotes the spatial gradient of $u$, $D^2u$ is the spatial Hessian, and $F(M,p,z):\mathcal{S}_n\times \R^n\times\R \to \R$ is a fully nonlinear, uniformly elliptic operator (see Section~\ref{sec:assumptionsOnF} for precise assumptions on $F$). For the function $b:\R\to\R$ we assume that 
\begin{enumerate}
\item $b$ is increasing and Lipschitz,
\item $b(s) = 0$ for $b \leq 0$, $b \in C(\R) \cap C^1([0,\infty))$, 
\item there exists a constant $c  >0$ such that $b'(s) > c$ for $s \in (0,\infty)$. 
\end{enumerate}

The nonlinear operators $F$ we consider include:
\begin{itemize}
\item 
the uniformly elliptic operator of non-divergence form
\begin{equation}\label{nondivergence}
F(D^2u, Du,u) = -\trace (A(Du)D^2u) + H(u,Du),
\end{equation}

where $A$ satisfies the uniform ellipticity condition
\begin{equation}\label{elliptic1}
\lambda |q|^2 \leq A(p)q\cdot q \leq \Lambda |q|^2 \hbox{ for all } p,q\in\R^n, \hbox{ for some } \lambda, \Lambda>0,
\end{equation}
and
as well as the Bellman-Issacs operators arising from stochatic optimal control and differential games
$$
F(D^2u, Du,u) = \inf_{\alpha\in A} \sup_{\beta\in B} \{\mathcal{L}^{\alpha\beta}u \},
$$
where $\mathcal{L}^{\alpha \beta}$ is a two-parameter family of operators of the form \eqref{nondivergence} satisfying \eqref{elliptic1}; we refer to \cites{CIL, FS} for further examples. 
\item 
a divergence-form operator; to simplify our discussion, we restrict our attention to operators of the form
\begin{equation}\label{divergence}
F (D^2u, Du, u)= \nabla\cdot (\Psi(b(u)) Du),
\end{equation}
where $\Psi\in C^1([0,\infty))$ is a positive function.
The class of operators given in \eqref{divergence} is of particular interest since in that case the problem \eqref{RP} represents the well-known \emph{Richards equation}, which serves as a basic model for the filtration of water in unsaturated soils (see e.g. \cites{DS, MR, R}).  
\end{itemize}

\vspace{10pt}
Our aim is to study the well-posedness of \eqref{RP}. Note that, due to the regularity theory for uniformly elliptic nonlinear operators (\cites{CC, W}), solutions of \eqref{RP} satisfy interior $C^{1,\alpha}$ estimates in the sets $\{u>0\}$ and $\{u<0\}$. Hence the challenge in the study of the problem lies in the behavior of a solution near the transition boundary between the positive and the negative phases: for example, as illustrated in Example~\ref{ex:jump}, discontinuities of solutions in time across the set $\{u=0\}$ are generic.

\medskip

Problem \eqref{RP} can be understood as the limiting equation for the evolution of two phases with different time scales of diffusion and with the threshold value at $u=0$.
In particular, as verified in section~\ref{sec:approximation}, the problem can be viewed as a singular limit of a family of  uniformly parabolic problems \eqref{approx}, where $b(s)$ in $\eqref{RP}$ is regularized.
Hence it is expected that a maximum principle holds for the solutions, and the theory of viscosity solutions may be applicable for the study of pointwise behavior of solutions near the transition boundary $\{u=0\}$.
This is indeed our approach: in this paper we will introduce the notion of viscosity solutions for \eqref{RP} and discuss existence, uniqueness and stability properties, and compare them to the notion of weak solutions (see the discussion below).
Such results have been established for Stefan-type problems (see \cites{CS, KP}, for example), but significant challenges in the analysis arise due to the implicit nature of the boundary motion law in \eqref{RP} and the generic nature of discontinuities in $u$ (as in Example~\ref{ex:jump}); see an extended discussion on this in section~\ref{sec:comparison}.
We aim to present the proof of the comparison principle for fully nonlinear operators in more detail, both to illustrate the flexibility of the viscosity solution approach and to make the results readily available for applications in a general context.
 
\medskip
 
 It should be pointed out that our approach treats the transition boundary $\Gamma:=\partial\{u>0\}$ between the elliptic and parabolic regions as a ``free boundary'' and constructs barriers based on its movement. Hence our approach may not be optimal for other problems such as fast diffusion (when $b'(0)=0$) where the transition boundary moves with infinite speed (see \cite{V}, for example), but strong regularity properties for the solutions are expected (see \cite{BH}).

\vspace{10pt}
Although our presentation is mainly focused on the problem with fully nonlinear operators whose structure assumption requires a linear growth in terms of $|Du|$, the results of this paper extend to the case of divergence-form operators of the form \eqref{divergence} as well. We point out the necessary modifications in the text where appropriate.

\subsubsection*{Literature review}

Let us briefly discuss previously known results on \eqref{RP}: all of them concern divergence-form operators including \eqref{divergence}.
The weak solutions are defined via integration by parts in the important paper of Alt \& Luckhaus \cite{AL}, which shows existence (\cite{AL}*{Theorem 1.7}) of weak solutions for the general class of elliptic-parabolic phase transition problems with divergence-form operators.
Uniqueness results are, however, rather limited.
For $F$ given by \eqref{divergence}, when $\Psi(s)$ is a positive constant (that is, when $F$ is linear), the authors  prove that regular weak solutions (in the sense $\partial_t b(u)\in L^2$) can be constructed by the Galerkin method (\cite{AL}*{Theorem 2.3}), and show that regular weak solutions satisfy the comparison principle and are unique given the initial data (\cite{AL}*{Theorem 2.2}).
Indeed, when $\Psi(s)$ is a positive constant, it is known that $b(u)$ is continuous in a local setting as long as $u$ is bounded (see \cite{DiBG}).
Continuity of $b(u)$ seems to be the optimal result for this problem with general initial data, since $u$ may become discontinuous in time in the elliptic phase (see Example 1.2, and also \cite{DiBG}).
The proof of continuity in \cite{DiBG} is based on the weak Harnack inequality, making use of the linearity of the elliptic operator with respect to $u$.
We also refer to \cite{BW} and \cite{Carrillo99}, who use an entropy solution approach to define weak solutions as well as to prove comparison principle in $L^1$ for the relevant model; this approach, while powerful, does not fit into our setting where we have a non-vanishing elliptic phase.

\medskip

Even for the quasi-linear $F$ given in \eqref{divergence}, there are no uniqueness or stability results except for the aforementioned linear case and for the one-dimensional case (see \cites{BH, VP}): this serves as a motivation of our analysis in this paper.
In one dimension, Mannucci \& Vazquez \cite{MV} studied viscosity solutions of \eqref{RP} for divergence-form operators. Their approach avoids possible complications at the transition boundary $\partial \set{u>0}$ by relying on previously known regularity properties of weak solutions in one dimension.

\subsubsection*{Summary of the main results}

In this section we summarize the main results obtained in this paper. In all statements $F$ is assumed to be either a fully nonlinear operator satisfying the assumptions in Section~\ref{sec:assumptionsOnF} or a quasilinear divergence-form operator of the form \eqref{divergence}.

Our main theorem is the following comparison principle: 

\swapnumbers
\theoremstyle{plain}
\newtheorem{thm}{Theorem}
\begin{thm}[Theorem~\ref{th:comparisonPrinciple} and Theorem~\ref{CP:div}]\label{main1}
Let $u$ be a viscosity subsolution and $v$ a viscosity supersolution of \eqref{RP} on $Q = \Omega \times (0, T]$ for some $T > 0$. If  $u < v$ on the parabolic boundary $\partial_P Q$, then $u < v$ on $Q$.
\end{thm}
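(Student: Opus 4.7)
The plan is to follow the standard viscosity-solution strategy: argue by contradiction, reduce to a strict subsolution via a small perturbation, and analyze the first point where the graphs of $u$ and $v$ meet. Concretely, I would set $u_\e(x,t) := u(x,t) - \e/(T-t)$ so that $u_\e$ becomes a \emph{strict} subsolution (i.e.\ satisfies $\partial_t b(u_\e) - F \le -c_\e < 0$ wherever $u_\e > 0$) and still $u_\e < v$ with a quantitative margin on $\partial_P Q$. Suppose for contradiction that $\sup_Q(u_\e - v) \ge 0$; by upper/lower semicontinuity of the appropriate envelopes, the supremum is attained at some interior point, and one may choose a first time $t_0 \in (0,T]$ and a point $x_0 \in \Omega$ at which $u_\e$ touches $v$ from below. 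Deriving a contradiction at $(x_0,t_0)$ gives $u_\e < v$ on $Q$, and the conclusion follows by letting $\e\to 0$ together with propagation of the strict boundary gap.

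From here the analysis splits into cases according to the signs of $u_\e(x_0,t_0)$ and $v(x_0,t_0)$. In the \emph{parabolic--parabolic} case, where both functions are strictly positive in a neighborhood of $(x_0,t_0)$, the equation is uniformly parabolic there (since $b' > c > 0$ on $\{u>0\}$); the Crandall--Ishii lemma applied to a doubled-variable penalization $\Phi(x,y,t,s) = u_\e(x,t) - v(y,s) - \alpha|x-y|^2 - \beta(t-s)^2$ produces matched parabolic jets, and combining the two viscosity inequalities with the strict deficit $c_\e$ yields a contradiction via uniform ellipticity of $F$ and Lipschitz dependence of $b$. In the \emph{elliptic--elliptic} case, both functions are strictly negative near $(x_0,t_0)$, so $b(u_\e) = b(v) = 0$ there and the equation reduces to the stationary $-F = 0$ on each time slice; the same doubling argument (or the elliptic comparison principle for fully nonlinear uniformly elliptic operators) gives the contradiction.

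The main obstacle is the \emph{mixed, free-boundary} case, where the contact point lies on or immediately adjacent to the transition set $\Gamma := \partial\{u_\e > 0\} \cup \partial\{v > 0\}$. Here the equation changes type across $\Gamma$, solutions may be discontinuous in time (as in Example~\ref{ex:jump}), and the free-boundary motion is only \emph{implicitly} encoded in the definition of viscosity sub/supersolution. My plan is to construct explicit radial barriers that quantify the admissible motion of the free boundary: using the uniform ellipticity of $F$ together with $b' \ge c > 0$ on the parabolic side, one builds a subsolution whose positive phase expands at a controlled maximal speed and a supersolution whose positive phase contracts at a controlled maximal speed. Since $u_\e$ is a strict subsolution, its positive phase cannot expand fast enough to engulf a point where $v \le 0$; conversely, since $v$ is a supersolution, its positive phase cannot retreat fast enough to be overtaken by $u_\e$'s positive phase. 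Sup/inf-convolution regularization in $(x,t)$ would be used to smooth out the time discontinuities and to generate the semi-convex/semi-concave test functions needed to apply the barrier comparison cleanly. Executing this construction uniformly across all admissible configurations of first contact at $\Gamma$, and verifying it adapts both to fully nonlinear $F$ and to the divergence-form operator \eqref{divergence}, is the technical heart of the argument and the main source of difficulty.
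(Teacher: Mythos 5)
Your proposal has the right general shape (argue by contradiction at first contact, regularize, build barriers near the transition boundary), but several of its load-bearing steps would not go through as written, and the step you acknowledge as ``the technical heart'' is precisely where the paper's real work lies.

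First, the Crandall--Ishii doubling-of-variables argument you propose for the pure-phase cases is not available here. The paper makes this point explicitly in Section~\ref{sec:comparison}: phase-transition problems such as \eqref{RP} do not admit a doubled-variables proof, essentially because the penalization cannot distinguish between a contact point in the interior of a phase and a contact point at the transition boundary $\Gamma$, and there is no control on where the near-contact points sent to coincidence by the penalty land relative to $\Gamma$. Relatedly, the clean trichotomy ``parabolic--parabolic / elliptic--elliptic / mixed'' cannot be arranged at the first contact point: you cannot rule out (in fact it is generic) that $u$ is on one side of zero and $v$ on the other, or that one of them equals zero, at the contact point, so the two ``pure'' cases cannot be treated separately by standard comparison and then discarded.

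Second, the perturbation $u_\e = u - \e/(T-t)$ does not interact well with the definition of viscosity solution used here (comparison against \emph{strict classical} barriers, Definitions~\ref{de:classicalSubsolution}--\ref{de:viscositySubsolution}, not jet inequalities), and more importantly it moves the sets $\{u>0\}$ and $\{u=0\}$, so it changes the transition boundary itself. This is not a harmless regularization of the PDE: it alters which region is parabolic and which is elliptic. It also only yields $u \le v$ after letting $\e\to 0$, whereas the statement asserts the strict inequality $u<v$ in $Q$; the paper obtains strictness directly because for a fixed small $r$ it proves $u \le Z < W \le v$ in $Q_r$, and $r$ can be chosen as small as one wishes.

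Third, and most importantly, the mixed/free-boundary case is the entire content of the theorem, and the sketch you give is too coarse to close it. The paper's proof proceeds by (i) taking sup/inf convolutions $Z$, $W$ of $u$, $v$ over a very specific space-time domain $\Xi_r = D_r + E_r$ (not a ball), chosen so that the level sets $\{Z\ge 0\}$ and $\{W\le 0\}$ acquire both space-time and purely spatial interior balls; (ii) defining the first crossing time $t_0$ for $Z$, $W$; (iii) showing the level sets touch only along their boundaries at $t_0$ (Lemma~\ref{le:negativeSetsOrdered}), with a unique common spatial normal (Corollary~\ref{co:unitNormal}); (iv) establishing the gradient blow-up and gradient vanishing estimates tied to the flattened geometry of $\Xi_r$ (Lemmas~\ref{le:DsupersolutionInfty} and~\ref{le:Dsubsolution0}), which rule out contact at the top/bottom caps of $\partial\Xi_r$ and force the ``finite speed of expansion'' (Lemma~\ref{le:finiteSpeed}); (v) showing $Z = W = 0$ at the contact point (Lemma~\ref{le:ZWzeroAtContact}); and (vi) combining the flux-ordering inequalities at the contact point (Lemma~\ref{le:gradientsOrdered}) with Hopf's lemma to derive a contradiction. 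None of steps (iii)--(vi) is foreseeable from ``build radial barriers of controlled speed and sup/inf-convolve'': in particular, the need for the non-ball shape $\Xi_r$ (whose parabolic boundary is \emph{not} regular for the heat equation at the flat caps, precisely so that the gradient blows up / degenerates there), and the role of the ``trunk'' connectivity Lemma~\ref{le:trunk} in preventing sudden extinction of the elliptic phase, are the genuinely new ingredients you would need to discover to complete the argument. As written, your plan would stall exactly at the point where you admit uncertainty.
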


Equipped with the comparison principle, we use Perron's method to show the following existence and stability theorem.
\begin{thm}\label{main2}
For initial data  $u_0 \in \mathcal{P}$ (see the definition of $\mathcal{P}$  in section~\ref{sec:existenceAndStability}, by \eqref{A}--\eqref{B}), the following holds:
\begin{enumerate}
\item (Theorem~\ref{existence}) There exists a minimal and a maximal viscosity solution  $\underline{u}$ and $\overline{u}$ of \eqref{RP} with initial data $u_0$.
\item (Theorem~\ref{stability:visc}) $\underline{u}$ and $\overline{u}$ are stable under perturbations of initial data with appropriate ordering.
\item (Corollary~\ref{approx11}) $\underline{u}$ and $\overline{u}$ can be obtained as a limit of solutions solving the regularized parabolic equation \eqref{approx}.
\end{enumerate}
\end{thm}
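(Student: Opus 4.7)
The plan is to address the three parts in sequence, leveraging the comparison principle (Theorem~\ref{main1}) as the central tool in each step. For part (a), I would apply Perron's method. First I would construct global barriers: for $u_0 \in \mathcal{P}$, a subsolution $\underline{w}$ and a supersolution $\overline{w}$ on $Q$ attaining $u_0$ initially and $g \equiv -1$ on the lateral boundary, built locally using the structural features of $\mathcal{P}$ (the boundary condition being in the elliptic phase makes this feasible). Then I would define
\[
\overline{u}(x,t) = \sup\set{w(x,t) : w \text{ is a subsolution of \eqref{RP},}\ w \leq \overline{w},\ w^*(\cdot,0) \leq u_0}
\]
and symmetrically $\underline{u}$ as an infimum over supersolutions. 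The standard bump argument — if the upper semicontinuous envelope $(\overline{u})^*$ failed the subsolution property at some interior point, one could perturb it upward in a neighborhood without violating any of the defining constraints, contradicting maximality — shows $(\overline{u})^*$ is a subsolution and $(\overline{u})_*$ is a supersolution of \eqref{RP}.

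The chief obstacle in part (a) is the generic presence of time discontinuities across $\{u=0\}$ (Example~\ref{ex:jump}), which means I cannot expect $(\overline{u})^* = (\overline{u})_*$ pointwise; instead, the comparison principle must be compatible with separate testing of the semicontinuous envelopes, and the Perron bump has to respect this asymmetry. A second delicate point is ensuring the initial trace is attained in the correct semicontinuous sense; this requires localizing the barriers from $\mathcal{P}$ near $t=0$ so that both $(\overline{u})^*(\cdot,0) \leq u_0$ and a corresponding lower bound hold, and then invoking comparison to propagate the initial datum.

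For part (b), I would take perturbations $u_0^n \to u_0$ with the monotone ordering appropriate to the envelope in question (say $u_0^n \downarrow u_0$ for $\overline{u}$) and let $\overline{u}^n$ denote the maximal solutions from part (a). The comparison principle gives the same monotonicity on $Q$, so the half-relaxed limit
\[
\tilde u(x,t) = \limhalfinf_{n\to\infty} \overline{u}^n(x,t)
\]
is well-defined. By the standard stability of viscosity sub/supersolutions under half-relaxed limits (which only requires testing with smooth functions and unwinding the comparison with $F$), $\tilde u^*$ is a subsolution and $\tilde u_*$ a supersolution. Attainment of the initial datum in the limit follows from squeezing between the approximate barriers built in part (a). Maximality, together with $\overline{u}^n \geq \overline{u}$, then forces $\tilde u = \overline{u}$; the argument for $\underline{u}$ is symmetric.

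For part (c), I would regularize the nonlinearity $b$ by $b^\varepsilon$ chosen to be strictly increasing everywhere and monotone in $\varepsilon$ (for example, $b^\varepsilon(s) = b(s) + \varepsilon \min(s,0)$ for the maximal approximation, and symmetric choice for the minimal). The regularized equation \eqref{approx} is then uniformly parabolic and admits a unique continuous solution $u^\varepsilon$ by classical nonlinear parabolic theory (or by viscosity theory applied to a uniformly parabolic equation without singular structure). Monotonicity of the $b^\varepsilon$ yields monotonicity of $u^\varepsilon$ via comparison, and the half-relaxed limits as $\varepsilon \downarrow 0$ produce a sub/supersolution pair of the original \eqref{RP}. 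Identification of these limits with $\overline{u}$ and $\underline{u}$ reduces to the same maximality/minimality argument as in part (b), now applied to the $\varepsilon$-family rather than to the $n$-family of initial perturbations.
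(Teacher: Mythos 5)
The overall structure (barriers from $\mathcal{P}$, Perron's method, half-relaxed limits, squeeze via comparison) matches the paper's, but part (c) contains a genuine gap, and parts (b)--(c) have smaller inaccuracies.

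The serious problem is in part (c), where you treat the passage to the limit from the regularized parabolic problems \eqref{approx} as a routine application of half-relaxed limit stability: ``the half-relaxed limits as $\varepsilon \downarrow 0$ produce a sub/supersolution pair of the original \eqref{RP}.'' This is precisely the step that is \emph{not} standard here. Because $b'$ is discontinuous at $0$, the regularized operators $b_n'(u)u_t - F(D^2 u, Du, u)$ do \emph{not} converge locally uniformly to a continuous limit operator, so the perturbed-test-function argument of Crandall--Ishii--Lions does not apply to the notion of viscosity solution used in this paper, whose tests are two-phase barriers matched by a flux condition at $\set{\vp=0}$. The paper proves this limit property as Proposition~\ref{limit:visc}, and its proof requires a case analysis at the touching point $(x_0,t_0)$ of $\phi$ and the half-relaxed limit: the cases $\phi(x_0,t_0)\neq 0$ are handled by the locally uniform convergence of $b_n'$ away from $0$, but the case $\phi(x_0,t_0)=0$ needs an auxiliary strict radial supersolution (built via Proposition~\ref{pr:radialSolution}) which exploits the strict flux ordering $|D\phi^+|<|D\phi^-|$ and the fact that $\phi_t^+ > 0$ at the contact point, deduced by combining the supersolution inequality with $0<b_n'<1$. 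Without this barrier step, the argument does not go through, and it is the heart of part (c). Relatedly, your proposed regularization $b^\varepsilon(s) = b(s) + \varepsilon\min(s,0)$ is not $C^1$ at $s=0$ (left derivative $\varepsilon$, right derivative $1$), so it does not actually make \eqref{approx} a smooth uniformly parabolic problem; the paper requires $b_n\in C^2$ with $0<b_n'<1$. Moreover the monotonicity of $u^\varepsilon$ in $\varepsilon$ that you claim via comparison is not used in the paper and is not evidently available; instead the paper perturbs the \emph{initial data} to $u_0^{\e_k}$ and extracts a diagonal sequence $n_k$.

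A smaller but real issue in part (b): you conclude ``Maximality... forces $\tilde u = \overline{u}$,'' but pointwise equality of the half-relaxed limit with $\overline{u}$ is too strong. The comparison principle (which requires strict separation on the parabolic boundary) only yields $\overline{u}^* < (u^\varepsilon)_*$, and combined with $\limhalfsup u^\varepsilon \leq \overline{u}$ this sandwiches the half-relaxed limits between $\overline{u}_*$ and $\overline{u}=\overline{u}^*$. Since viscosity solutions of \eqref{RP} generically jump (Example~\ref{ex:jump}), $\overline{u}_*$ may be strictly less than $\overline{u}$, so one can only conclude the envelope-level identities $(U_i)^* = \overline{u}$, $(U_i)_* = \overline{u}_*$ — which is in fact exactly what Theorem~\ref{stability:visc} asserts.
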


Aforementioned theorem states that the maximal and minimal viscosity solutions are stable. Unfortunately, we are only able to show the uniqueness of general viscosity solutions (i.e. the coincidence of minimal and maximal viscosity solutions) in several restricted settings. The coincidence of the minimal and the maximal viscosity solutions with general initial data remains open, except for the linear case.

\begin{thm}\label{main3}
For given initial data $u_0\in\mathcal{P}$, the following holds:
\begin{enumerate}
\item (Theorem~\ref{coincidence}) If $F$ is linear, i.e. if $F(M,p,z)=F(M) = \trace M$, then there exists a unique viscosity solution $u$ with initial data $u_0$, and $u$ coincides with the unique weak solution defined in \cite{AL}.
\item (Theorem~\ref{uniqueness}) If $u_0$ is either star-shaped or if $u$ decreases at $t=0$, then there exists a unique viscosity solution of \eqref{RP}.
\end{enumerate}
\end{thm}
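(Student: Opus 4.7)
My plan is to use the comparison principle (Theorem~\ref{main1}) together with structured perturbations of the maximal viscosity solution $\overline{u}$ to force $\overline{u}\le\underline{u}$, which combined with the trivial $\underline{u}\le\overline{u}$ yields uniqueness. For part (a) the more direct route is to identify both extremal viscosity solutions with the Alt--Luckhaus weak solution rather than to perturb.

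For part (a), I would begin with Corollary~\ref{approx11}, which gives $\overline{u}=\lim_{\varepsilon\to 0}\overline{u}^{\varepsilon}$ and $\underline{u}=\lim_{\varepsilon\to 0}\underline{u}^{\varepsilon}$ for solutions $u^{\varepsilon}$ of the regularized uniformly parabolic problem \eqref{approx} with a smoothed $b_{\varepsilon}$. Since $F=\trace M$ is linear, testing the regularized equation against $u^{\varepsilon}+1$ (to absorb the boundary datum $g\equiv -1$) yields uniform bounds on $Du^{\varepsilon}$ in $L^{2}(Q)$ and, via the Lipschitz continuity of $b$ and the assumption $b'\ge c>0$ on $(0,\infty)$, uniform bounds on $\partial_{t}b_{\varepsilon}(u^{\varepsilon})$ in $L^{2}(Q)$. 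Passing to the weak limit preserves the integral identity defining AL weak solutions, and the limit falls inside the regularity class $\partial_{t}b(u)\in L^{2}$ of \cite{AL}*{Theorem 2.2}; that uniqueness result then forces $\overline{u}=\underline{u}$ to be the unique regular weak solution.

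For part (b), star-shaped case, I would exploit the parabolic scaling $v^{\lambda}(x,t):=\overline{u}(\lambda x,\lambda^{2}t)$ for $\lambda>1$. Because the ellipticity constants for $F$ transform covariantly under parabolic scaling, $v^{\lambda}$ is a viscosity subsolution on $\lambda^{-1}\Omega$. Star-shape of $\{u_{0}>0\}$ yields $\{v^{\lambda}(\cdot,0)>0\}=\lambda^{-1}\{u_{0}>0\}\subset\subset\{u_{0}>0\}$, i.e.\ strict inclusion of the positive phases at $t=0$. After an additional infinitesimal translation $v^{\lambda}-\delta$ to enforce pointwise strict ordering on all of $\partial_{P}Q$, Theorem~\ref{main1} gives $v^{\lambda}-\delta\le\underline{u}$; sending first $\delta\to 0^{+}$ and then $\lambda\to 1^{+}$, together with upper semicontinuity of $\overline{u}$, yields $\overline{u}\le\underline{u}$. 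For the monotone case I would use a time-translation instead: when $u$ is non-increasing in time at $t=0$, the shift $v^{h}(x,t):=\overline{u}(x,t+h)$ satisfies $v^{h}(\cdot,0)=\overline{u}(\cdot,h)\le u_{0}$; the same perturbation--comparison--limit scheme then applies with $h\to 0^{+}$ in place of $\lambda\to 1^{+}$.

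The principal obstacle in part (a) is passing to the limit in the AL integral identity across the degenerate set $\{u=0\}$, where $b_{\varepsilon}'$ vanishes and one must carefully track $\partial_{t}b_{\varepsilon}(u^{\varepsilon})$ to show the limit really belongs to the regular weak-solution class. The principal obstacle in part (b) is making the strict ordering on $\partial_{P}Q$ rigorous in the presence of the generic initial-time discontinuities across $\{u=0\}$ exhibited in Example~\ref{ex:jump}: the perturbed function must strictly dominate $\underline{u}$ even on the singular part of the parabolic boundary, and in the fully nonlinear setting one must also verify that $v^{\lambda}$ (or $v^{h}$) is a subsolution of the \emph{original} equation rather than of a rescaled variant, which is where the precise assumptions on $F$ in Section~\ref{sec:assumptionsOnF} enter.
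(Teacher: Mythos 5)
The paper proves part (a) in the opposite direction from what you propose: Theorem~\ref{coincidence} shows that the \emph{regular weak solution} of \cite{AL}, when replaced by its essential semi-continuous representative $v$ (Lemma~\ref{le:viscosityCandidate}), \emph{is} a viscosity solution, by perturbing a putative strict classical barrier into a regular weak sub/supersolution and invoking the Alt--Luckhaus $L^2$ comparison (Theorem~\ref{thm:cp}); uniqueness then follows by squeezing $\overline{u},\underline{u}$ between perturbed weak solutions via the stability results. Your route through Corollary~\ref{approx11} and an a~priori bound on $\partial_t b_\varepsilon(u^\varepsilon)$ can in principle be made to work — testing the regularized equation against $\partial_t u^\varepsilon$ (not $u^\varepsilon+1$, which only gives the gradient bound) produces $\int b'_\varepsilon(u^\varepsilon)|\partial_t u^\varepsilon|^2 \leq C$ and hence a uniform $L^2$ bound on $\partial_t b_\varepsilon(u^\varepsilon)$ since $b'_\varepsilon\le 1$ — but the argument you give for that bound, ``Lipschitz continuity of $b$ and $b'\ge c$ on $(0,\infty)$,'' is not a proof, and you would still need an Aubin--Lions compactness step to pass to the limit in the nonlinear terms of the AL integral identity. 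The paper's route avoids all of this by working downstream of the already-established AL comparison.

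For part (b), your scaling $v^\lambda(x,t)=\overline{u}(\lambda x,\lambda^2 t)$ and the time-shift $v^h(x,t)=\overline{u}(x,t+h)$ are exactly the paper's tools, but your device for obtaining \emph{strict} ordering on $\partial_P Q$ — the additive shift $v^\lambda-\delta$ — is where the proof breaks. The free-boundary structure of \eqref{RP} is tied to the zero level set through $b(u)=u_+$: subtracting $\delta$ moves the phase boundary from $\{v^\lambda=0\}$ to $\{v^\lambda=\delta\}$, and the flux-matching condition of Definition~\ref{de:classicalSubsolution}(iv) and the parabolic/elliptic split are no longer satisfied on the new level set; hence $v^\lambda-\delta$ is not a viscosity subsolution of \eqref{RP}, and Theorem~\ref{main1} does not apply to the pair $(v^\lambda-\delta,\underline{u})$. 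The paper instead uses the \emph{multiplicative} perturbation $(1+c_\varepsilon)u(\cdot,\cdot)$, which is compatible with the structure \eqref{simple}: since $F(\mu M)=\mu F(M)$ and $b(\mu s)=\mu b(s)$ for $\mu>0$, multiplying a subsolution by $1+c_\varepsilon$ leaves the zero level set unchanged and preserves both the bulk equation and the flux condition. In the star-shaped case (B) the paper combines the parabolic rescaling with $(1+c_\varepsilon)$ and uses $u_0(0)>0$ and $g\equiv-1$ to obtain the strict separation on the full parabolic boundary; in the decreasing case (A) it combines the time shift $t\mapsto t+\varepsilon$ with the same multiplicative factor, relying on the strict decrease $u(\cdot,\varepsilon)<u(\cdot,0)$ proved via the supersolution $V(x,t)=u_0^t(x)$. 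You would need to replace your additive $-\delta$ by this multiplicative scaling to close the argument.
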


\begin{remark}
As mentioned above, our approach may not be optimal if $b(s)$ degenerates at $s=0$.
On the other hand, we expect that our approach can be extended to non-Lipschitz $b(s)$ and produce results similar to the above. The difficulty in the analysis lies in the corresponding degeneracy of the elliptic operator in the positive phase, when we write the equation in terms of $b(s)$. 
\end{remark}

\begin{example}[Discontinuous solution]
\label{ex:jump}
Here we briefly discuss an example which illustrates discontinuities in the solutions. 
Set 
\begin{align*}
b(u) = u_+:=\max (0, u),
\end{align*}
and consider \eqref{RP} with negative boundary data, and initial data that are positive on some open set. As the solution evolves, the positive phase disappears in finite time, and then the solution jumps to the stationary solution. Nevertheless, one expects $b(u)$ to be continuous. We refer the reader to \cite{AL}*{p. 312} for an explicit formula.
\end{example}

\subsection{Assumptions on the nonlinear operator \texorpdfstring{$F$}{F}}
\label{sec:assumptionsOnF}

Let $\mathcal{S}_n$ be the space of symmetric $n \times n$ matrices.
For given $0 < \la \leq \Lambda$, we define the \emph{Pucci extremal operators} $\mathcal{M}^\pm : \mathcal{S}_n \to \R$ as in \cites{CC, WangI}:
\begin{align}
\label{eq:Pucci}
\mathcal{M}^+(M) &= \sup_{A \in [\la I, \Lambda I]} \trace AM, & 
\mathcal{M}^-(M) &= \inf_{A \in [\la I, \Lambda I]} \trace AM,
\end{align}
where $[\la I, \Lambda I] = \set{A \in \mathcal{S}_n : \lambda I \leq A \leq \Lambda I}$.
Alternatively, the Pucci operators can be expressed using the eigenvalues $e_1, \ldots, e_n$ of matrix $M$:
\begin{align*}
\mathcal{M}^+(M) &= \Lambda \sum_{e_i > 0} e_i + \lambda \sum_{e_i < 0} e_i, & 
\mathcal{M}^-(M) &= \lambda \sum_{e_i > 0} e_i + \Lambda \sum_{e_i < 0} e_i.
\end{align*}

With the Pucci operators at hand, we shall assume the following structural condition on the operator 
$$
F(M,p,z): \mathcal{S}_n\times \R^n\times\R \to \R:
$$

\begin{enumerate}
\romanlist
\item  There exist $0 < \la < \Lambda$ and $\de_0, \de_1 \geq 0$ such that
\begin{align}
\label{eq:structural}
\begin{aligned}
\mathcal{M}^-(M-N) - \de_1 \abs{p - q} &- \de_0 \abs{z - w} \leq F(M, p, z) - F(N, q, w) \\
&\leq \mathcal{M}^+(M-N) + \de_1 \abs{p - q} + \de_0 \abs{z - w}
\end{aligned}
\end{align}
for all $M, N \in \mathcal{S}_n$, $p, q \in \Rn$ and $z, w \in \R$.

\item $F$ is proper, i.e.
\begin{align}
\label{eq:proper}
z \mapsto F(M, p, z) \quad \text{is nonincreasing in $z$}. 
\end{align}

\item Finally, to guarantee that $u \equiv 0$ is a solution of both the parabolic and the elliptic problems, we assume that
\begin{align}
\label{eq:Fzero}
F(0,0,0) = 0.
\end{align}
\end{enumerate}

\subsection{Notation}

\newcommand{\notationbullet}{\medskip\noindent$\bullet$ }

\notationbullet
In this paper, we work in a fixed space dimension $n \geq 2$. For a point $x \in \Rn$ and time $t \in \R$, the pair $(x,t) \in \Rn \times \R$ represents a point in space-time.

\notationbullet
For given $r > 0$, we define the open balls
\begin{align*}
B_r &:= \set{(x,t): \abs{x}^2 + \abs{t}^2 < r^2},&
B_r^n &:= \set{x : \abs{x} < r},
\end{align*}
the space disk
\begin{align*}
D_r := B_r^n \times \set{0} = \set{(x, 0) : \abs{x} < r},
\end{align*}
and the flattened set
\begin{align*}
E_r  := \set{(x,t) : \abs{x}^3 + \abs{t}^2 < r^2}.
\end{align*}
Finally, we define the domain that is used in the definition of regularizations of solutions,
\begin{align*}
\Xi_r := D_r + E_r.
\end{align*}
Here $+$ is the Minkowski sum. Note that $\Xi_r \in C^2$ (in contrast with $D_r + B_r$, which is only $C^{1,1}$).

\notationbullet
We ask the reader to forgive a slight abuse of notation:
\begin{align*}
\partial D_r &:= \cl{D_r} - D_r = \partial B_r^n \times \set{0} = \set{(x,0) : \abs{x} =  r}.
\end{align*}

\notationbullet
It will also be advantageous to introduce the (open) top and bottom flat pieces of $\partial \Xi_r$,
\begin{align*}
\partial^\top \Xi_r &:= \set{(x,r) : \abs{x} < r}, & \partial_\bot \Xi_r &= \set{(x,-r) : \abs{x} < r},
\end{align*}
and the (open) lateral boundary of $\Xi_r$,
\begin{align*}
\partial_L \Xi_r := \partial \Xi_r \setminus \cl{\partial^\top \Xi_r \cup \partial_\bot \Xi_r}.
\end{align*}
These sets are sketched in Figure~\ref{fig:Xi}.
\begin{figure}
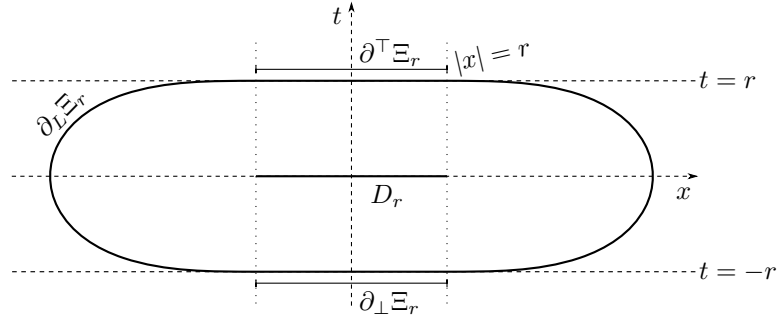

\centering
\fig{Xi}{4.5in}
\caption{The boundary $\partial \Xi_r$ of the set $\Xi_r$}
\label{fig:Xi}
\end{figure}

\notationbullet
The translation of a set $A \in \Rn \times \R$ by a vector $(x,t) \in \Rn \times \R$ will be denoted as
\begin{align*}
A(x,t) := (x,t) + A.
\end{align*}
The translation $A(x)$ of a set $A \subset \Rn$ is defined similarly.

\notationbullet
We will often need to consider timeslices (cross sections at a fixed time) of a given set. To simplify the notation, let us define the timeslice of a set $A \subset \Rn \times \R$ at time $t$ as
\begin{align*}
\at{A}{t} := \set{x : (x, t) \in A}.
\end{align*}
We often write $A_t$ if there is no ambiguity.

\notationbullet
Let $E \subset \Rn\times \R$. Then $USC(E)$ and $LSC(E)$ are respectively the sets of all upper semi-continuous and lower semi-continuous functions on $E$.
For a locally bounded function $u$ on $E$ we define the semi-continuous envelopes
\begin{align}\label{usclsc}
u^{*,E} &:= \inf_{\substack{v \in USC(E)\\v \geq u}} v, & u_{*, E} &:= \sup_{\substack{v \in LSC(E)\\v \leq u}} v.
\end{align}
These envelopes are used throughout most of the article with $E = \cl Q$, and therefore we simply write $u^*$ and $u_*$ if the set $E$ is understood from the context.

\section{Viscosity solutions}
\label{sec:viscositySolutions}

In this section we define the notion of viscosity solutions of problem \eqref{RP}.
Formally, viscosity solutions are the functions that satisfy a local comparison principle on parabolic neighborhoods with barriers which are the classical solutions of the problem.
We refer the reader to \cites{CIL, CV, CS, KP} and references therein for other examples of this approach.

\begin{definition}[Parabolic neighborhood and boundary]
A nonempty set $E \subset \Rn\times \R$ is called a \emph{parabolic neighborhood} if $E = U \cap \set{t \leq \tau}$ for some open set $U \subset \Rn\times \R$ and some $\tau \in \R$. Let us denote $\partial_P E := \cl{E} \setminus E$, the \emph{parabolic boundary} of $E$. (See \cite{WangI} for a more general definition.)
\end{definition}

\begin{definition}[Classical subsolution]
\label{de:classicalSubsolution}
Let $E$ be a parabolic neighborhood. Function $\vp$ is called a \emph{classical subsolution} of problem \eqref{RP} in a parabolic neighborhood $E$ if $\vp \in C(\cl{U})$ on an open set $U \in \Rn \times \R$ such that $E = U \cap \set{t \leq \tau}$ for some $\tau \in \R$, and the following holds:
\begin{enumerate}
\romanlist
\item $\vp \in C^{2,1}_{x,t}(\cl{\set{\vp > 0}})$ and $C^{2,1}_{x,t}(\cl{\set{\vp < 0}})$,
\item $\set{\vp = 0} \subset \partial\set{\vp > 0} \cap \partial\set{\vp < 0}$ and $\abs{D\vp^\pm}  > 0$ on $\set{\vp = 0}$,
\item $b(\vp)_t - F(D^2 \vp, D\vp, \vp) \leq 0$ on $\set{\vp > 0}$ and $\set{\vp < 0}$,
\item $\abs{D\vp^+} \geq \abs{D\vp^-}$ on $\set{\vp = 0}$.
\end{enumerate}
Here $\set{\vp > 0} := \set{(x,t) \in U : \vp(x,t) > 0}$ etc., and
$$
D \vp^\pm(\xi,\tau) := \lim_{\substack{(x,t) \to (\xi,\tau)\\(x,t) \in \set{\pm \vp > 0}}} D\vp(x,t).
$$ 

We say that $\vp$ is a \emph{strict} classical subsolution if the inequalities in (iii) and (iv) are strict.
\end{definition}

Classical supersolutions are defined similarly by flipping the inequalities in Definition~\ref{de:classicalSubsolution} (iii)--(iv).

At last, we define viscosity solutions.
Note that we set $g \equiv -1$ on $\partial \Omega \times [0,T]$ throughout the paper.

\begin{definition}[Viscosity subsolution]
\label{de:viscositySubsolution}

Let $Q = \Omega \times (0,T]$ be a parabolic cylinder. Function $u \in USC(\cl Q)$ is a \emph{viscosity subsolution} of \eqref{RP} in $Q$ if $u(\cdot, 0) \leq u_0$ on $\Omega$, $u \leq g$ on $\partial \Omega \times [0,T]$, and if $u < \vp$ on $E$ for any \emph{strict} classical supersolution $\vp$ on any parabolic neighborhood $E \subset Q$ for which $u < \vp$ on $\partial_P E$.
\end{definition}

One can define viscosity supersolutions accordingly, as a function in $LSC(\cl{Q})$, by switching the direction of inequality signs in the previous definition. 

\begin{definition}[Viscosity solution]
Locally bounded function $u$ is a viscosity solution of \eqref{RP} on $Q$ if $u^{*,\cl Q}$ is a viscosity subsolution on $Q$ and $u_{*, \cl Q}$ is a viscosity supersolution on $Q$.
\end{definition}

\begin{remark}
\label{re:only-cylinders}
We only test viscosity solutions by \emph{strict} classical barriers.
It is therefore possible to narrow the choice of $E$ in Definition~\ref{de:viscositySubsolution} to only include parabolic cylinders of the form $Q' = \Omega' \times (t_1, t_2] \subset Q$, where $\Omega'$ has a smooth boundary, instead of all parabolic neightborhoods.
Indeed, suppose that $E \subset Q$ is a parabolic neighborhood, $\vp$ is a strict classical supersolution on $E$, $u < \vp$ on $\partial_P E$, but $u \geq \vp$ at some point in $E$. Define $\tau := \sup \set{\si: u < \vp \text{ on } E \cap \set{t \leq \si}} \in \R$. The set 
\begin{align*}
A := \set{x : (x,\tau) \in E, u \geq \vp}
\end{align*}
is compact and therefore $\de := \dist(A \times \set{\tau}, \partial_P E) > 0$. Define the parabolic cylinder $Q' = (A + B_{\de/2}) \times (\tau - \de/2, \tau]$. Clearly $Q' \subset E$ and $u < \vp$ on $\partial_P Q'$. The boundary of $A + B_{\de/2}$ can be easily regularized. This observation will be useful in section~\ref{sec:uniqueness}, where we will show that regular weak solutions are viscosity solutions.
\end{remark}

\section{Comparison principle} 
\label{sec:comparison}

This section is devoted to the proof of the following ``weak'' comparison principle.

\begin{theorem}
\label{th:comparisonPrinciple}
Let $u$ be a viscosity subsolution and $v$ a viscosity supersolution of \eqref{RP} on $Q = \Omega \times (0, T]$ for some $T > 0$, and assume that $u < v$ on $\partial_P Q$. Then $u < v$ on $Q$.
\end{theorem}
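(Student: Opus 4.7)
The plan is to argue by contradiction, reducing to a local confrontation at a ``first touching'' event, with smooth barriers produced by sup/inf regularization against the set $\Xi_r$. Suppose $u \geq v$ at some point of $Q$. Since $u \in USC(\cl Q)$, $v \in LSC(\cl Q)$ and $u < v$ on $\partial_P Q$, there is a first time $t_0 > 0$ at which the set $\set{u \geq v}$ meets $\Omega \times \set{t_0}$, and the meeting must be interior in space.

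Next I would regularize: for small $r > 0$ define
$$u^r(x,t) := \sup_{(y,s) \in \Xi_r(x,t)} u(y,s), \qquad v_r(x,t) := \inf_{(y,s) \in \Xi_r(x,t)} v(y,s),$$
on a slightly shrunken cylinder. A standard (but nontrivial) argument shows that $u^r$ remains a viscosity subsolution of \eqref{RP} and $v_r$ a viscosity supersolution, that both are space-time semi-convex/semi-concave, and that the strict inequality $u < v$ on $\partial_P Q$ persists on the parabolic boundary of the shrunken cylinder for $r$ small. Pick a first crossing $(x_r, t_r)$ of $u^r - v_r$ and set $c := u^r(x_r, t_r) = v_r(x_r, t_r)$.

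Now I would split into three cases. \textbf{Parabolic case} ($c > 0$): in a full space-time neighborhood both functions solve the uniformly parabolic equation $\partial_t b(\cdot) - F(D^2 \cdot, D\cdot, \cdot) = 0$; Alexandrov's theorem provides pointwise second-order expansions of the semi-convex/concave functions $u^r, v_r$ at $(x_r, t_r)$, and the structural assumptions \eqref{eq:structural}--\eqref{eq:proper} combined with the first-crossing condition $\partial_t(u^r - v_r)(x_r,t_r) \geq 0$ give the standard Pucci-type contradiction. \textbf{Elliptic case} ($c < 0$): $b \equiv 0$ in a neighborhood, so $u^r, v_r$ are ordinary sub/supersolutions of the elliptic equation $-F = 0$ on the slice $t = t_r$, and classical elliptic comparison applies. \textbf{Free boundary case} ($c = 0$): the main difficulty. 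Here the $C^2$ geometry of $\partial \Xi_r$ forces the zero level sets of $u^r$ and $v_r$ to be $C^2$ near $(x_r,t_r)$ with well-defined one-sided spatial gradients $Du^{r,\pm}$ and $Dv_r^{\pm}$. The subsolution inequality $\abs{Du^{r,+}} \geq \abs{Du^{r,-}}$, the supersolution inequality $\abs{Dv_r^{+}} \leq \abs{Dv_r^{-}}$, and the local inclusion $\set{u^r > 0} \subset \set{v_r > 0}$ (with matching values on their common boundary at the contact) force equality along the whole chain, and a small strict perturbation then yields an explicit smooth classical barrier that strictly separates $u^r$ and $v_r$ just before $t_r$; inserting this barrier into Definition~\ref{de:viscositySubsolution} (or its supersolution counterpart) contradicts the first-crossing property. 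Sending $r \to 0$ transfers the strict ordering back to $u$ and $v$ via $u^r \searrow u^{*,\cl Q}$ and $v_r \nearrow v_{*,\cl Q}$.

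\textbf{The main obstacle} is the free-boundary case $c = 0$: the motion of $\partial\set{u > 0}$ is implicit in \eqref{RP}, so one cannot simply invoke a Hopf-type lemma and must instead manufacture an explicit classical barrier reflecting the prescribed gradient jump. This is precisely the reason the paper designs $\Xi_r = D_r + E_r$ with $C^2$ boundary rather than using the $C^{1,1}$ set $D_r + B_r$: the extra regularity of $\partial\Xi_r$ allows barriers translated from it to touch the free boundary with controlled second-order geometry, which is indispensable for the gradient comparison in the $c = 0$ case.
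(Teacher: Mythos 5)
Your overall framework (argue by contradiction, sup/inf-regularize over $\Xi_r$, localize to a first crossing, split by phase) is the same skeleton the paper uses, and the ``parabolic'' and ``elliptic'' sub-cases for $c\neq 0$ are genuinely routine. But in the free-boundary case $c=0$ — which you correctly identify as the crux — your outline contains a real gap and a mis-diagnosis of why the domain $\Xi_r=D_r+E_r$ was chosen.

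First, the role of $\Xi_r$. You attribute it to ``$C^2$ boundary $\Rightarrow$ controlled second-order geometry of the barrier''. That is not what the paper exploits. The paper explicitly points out that $\Xi_r$ is \emph{not} a regular set for the heat equation at its top flat piece $\cl{\partial^\top\Xi_r}$ (it is only $C^{1,1/2}_{x,t}$ there). This irregularity is what forces a positive caloric supersolution in $\Xi_r$ to have its spatial gradient \emph{blow up} at the lateral boundary as one approaches $\cl{\partial^\top\Xi_r}$ (Lemma~\ref{le:DsupersolutionInfty}), and dually a nonnegative parabolic subsolution vanishing on $\Xi_r$ to have \emph{vanishing} one-sided gradient there (Lemma~\ref{le:Dsubsolution0}). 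These two quantitative facts are the engine of the ``finite speed of expansion'' lemma (Lemma~\ref{le:finiteSpeed}), which rules out the possibility that a dual point sits on $\cl{\partial^\top\Xi_r}\cup\cl{\partial_\bot\Xi_r}$. This is precisely what lets the paper extract a well-defined spatial contact geometry (a single unit normal $\nu$, Corollary~\ref{co:unitNormal}) and then prove the gradient ordering. You never address the possibility that the contact is realized through the top or bottom of $\Xi_r$ — exactly the pathological scenario (instant extinction / discontinuous expansion of a phase at the contact point) that distinguishes this problem from Stefan-type free boundary problems — and this is the lemma your proposal is missing.

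Second, several of your intermediate claims overreach. The interior/exterior ball property (Proposition~\ref{pr:dualPoint}) gives the level sets of $Z$ and $W$ a $C^{1,1}$-type touching geometry, not $C^2$ regularity, and it does not by itself give ``well-defined one-sided spatial gradients''; $Z$ and $W$ are merely semicontinuous, and the gradient ordering in Lemma~\ref{le:gradientsOrdered} is formulated with $\liminf/\limsup$ and must be \emph{proved} by a barrier argument that again leans on the finite-speed lemma (which provides the interior space-time normal $(\nu,m)$ needed to build the moving radial barrier). Similarly, the flux inequalities $\abs{Du^{r,+}}\geq\abs{Du^{r,-}}$ and $\abs{Dv_r^{+}}\leq\abs{Dv_r^{-}}$ are not automatic consequences of the definitions after regularization — that is the content of Lemma~\ref{le:gradientsOrdered}. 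Finally, the ``small strict perturbation yields a classical barrier'' step hides the closing of the argument: the paper needs a genuinely strict inequality to derive a contradiction, and obtains it from the elliptic Hopf lemma (Proposition~\ref{pr:Hopf}) applied in the connected component of $\set{Z<0}_{t_0}$ touching the contact point; ensuring $Z\not\equiv W$ on that component is itself nontrivial and requires the ``trunk'' lemma (Lemma~\ref{le:trunk}), which your proposal does not mention. Also, the final $r\to 0$ step you propose is unnecessary (and the monotone convergences $u^r\searrow u^{*}$, $v_r\nearrow v_{*}$ would need care): since $u\leq Z$ and $W\leq v$, proving $Z<W$ on $Q_r$ already gives $u<v$ there, and $Q$ is exhausted by the $Q_r$.

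In short: the skeleton is right, but the proposal replaces the paper's core new lemmas (gradient blowup/vanishing near $\cl{\partial^\top\Xi_r}$, finite speed of expansion, the trunk connectivity, Hopf at the contact) with an unproved assertion that the free boundary is $C^2$ with matching one-sided gradients. That assertion is both false in the stated generality and, even if repaired, does not by itself exclude the degenerate contact scenarios that make this problem hard.
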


To simplify the exposition of the proof of this theorem, we shall assume that $T = \infty$. 
In fact, it is always possible to extend $u$ and $v$ from $\Omega \times (0,T]$ to $\Omega \times (0, \infty)$.
Moreover, we will only consider $b(u)$ of the form $b(u) = u_+ := \max(u, 0)$.
The problem \eqref{RP} with a fully nonlinear operator $F$ and a more general $b$ can be always rewritten in this way.
Indeed, the differentiation $b(u)_t = b'(u) u_t$ for $u > 0$ is justified by the regularity of $b$, and $b'(u)$ can be absorbed into $F$.

\subsubsection*{Heuristic arguments}

The comparison principle for classical subsolutions and supersolutions $u$ and $v$ of \eqref{RP} can be proved using the following formal argument:
we would like to show that 
\begin{equation}\label{order}
\{u(\cdot,t)>0\}\subset \{v (\cdot,t)>0\}\hbox{ for all }t>0,
\end{equation}
 since then the conclusion follows due to the standard elliptic and parabolic comparison principle.
Hence suppose \eqref{order} fails at some time. Since $|Du|, |Dv|>0$ on the boundary of their respective positive phases, the sets $\{u(\cdot,t)>0\}$ and $\{v(\cdot,t)>0\}$ have smooth boundaries and evolve continuously in time with respect to the Hausdorff distance. Therefore it follows that there exists the first time $t_0>0$ where $\partial\{u(\cdot,t)>0\}$ intersects $\partial\{v(\cdot,t)>0\}$, let's say at $x=x_0$. Since 
 $$
 \{u>0\}\cap\{t\leq t_0\} \subset \{v>0\}\cap\{t\leq t_0\} \hbox{ and } u_0 <v_0,
 $$
the comparison principle applied to $u$ and $v$ respectively in the sets $\{u>0\}$ and in $\{v<0\}$  yields that $u\leq v$ up to $t=t_0$. In particular,  since $u(x_0,t_0)=v(x_0,t_0)=0$, this means that 
 $$
 |Du^+|(x_0,t_0) \leq |Dv^+|(x_0,t_0) \hbox{ and } |Dv^-|(x_0,t_0) \leq |Du^-(x_0,t_0)|.
 $$
  Furthermore, due to the regularity of $\partial\{u(\cdot,t>0\}$ and $\partial\{v(\cdot,t)>0\}$ and Hopf's lemma for uniformly parabolic  and elliptic operators,  it turns out that the above inequalities are in fact strict. This contradicts the flux-matching condition for the classical sub- and supersolution (Definition~\ref{de:classicalSubsolution}(iv)), i.e. the fact that
  $$|Du^+| \geq|Du^-|\hbox{ and }|Dv^+|\leq|Dv^-|\hbox{ at } (x_0,t_0). $$ 

\vspace{10pt}

Unfortunately, the rigorous version of the above heuristic argument is rather lengthy. There are many difficulties one faces in the general setting, where $u$ and $v$ are merely semi-continuous functions.  As it is always the case in the theory of weak solutions, one should translate the above heuristics onto appropriate test functions or, to be more precise in our case, barriers. Our argument relies on a certain regularization procedure (see subsection~\ref{sec:regularizations}) which ensures that, at a contact point $(x_0,t_0)$ of the regularized solutions, the phase boundaries of each regularized solution are both locally $C^{1,1}$ in space. Such regularity of the phase boundary would enable us to construct appropriate barriers which would allow one mimic the heuristic argument above. This technique was pioneered by Caffarelli and V\'{a}zquez (\cite{CV}) in their treatment of viscosity solutions for the porous medium equation. It was later applied to several one phase free boundary problems in \cites{BV, K03, K02}, and later extended to two-phase Stefan problems by the authors in \cite{KP} (see also \cite{CS}). 

\vspace{10pt}

In contrast to the aforementioned results for free boundary problems, the analysis of our problem presents several new challenges. The most obvious challenge arises from the flux matching condition of \eqref{RP} on the transition boundary. While the regularization procedure provides regularity information in space variables, one should still show the finite propagation property of the phase boundary. In the aforementioned free boundary problems, the free boundary motion law prescribes the normal velocity of the free boundary in terms of the gradient of the solution, which links the space regularity to the time regularity of the solution. Here one does not have such a direct relationship between time and space regularity of the transition boundary. Indeed, the flux matching condition of \eqref{RP} turns out to be more delicate than the prescribed gradient condition in flame propagation type problems, since one has to account for the possibility that the fluxes from both sides will either degenerate to zero or diverge to infinity.  This is overcome by the observation that with the regularization we can rule out the scenarios of a sudden extinction of the elliptic phase (Lemma~\ref{le:trunk}), or a sudden shrinkage/discontinuous expansion of the parabolic phase (Lemmas~\ref{le:DsupersolutionInfty} and \ref{le:finiteSpeed}) at a contact point. Note, however, that even with regularized solutions, the elliptic phase might instantly become extinct away from a contact point.

\vspace{10pt}

 We point out that, to allow for the regularization procedure, the strict ordering of $u$ and $v$ on $\partial_P Q$ in the statement of Theorem~\ref{th:comparisonPrinciple} is necessary. We also point out that a proof via doubling of variables, a classical tool in the theory of viscosity solutions (see \cite{CIL}), is not available for phase transition-type problems, including \eqref{RP}.

\vspace{10pt}

We shall present the proof by spliting it in a number of smaller intermediate results, which will be later collected in \S\ref{sec:proofOfComparison} below.

\subsection{Properties of solutions}

In this section we clarify what we mean by the parabolic and the elliptic problems. We refer the reader to \cite{CIL} for the precise definitions and the detailed overview of the standard viscosity theory.

\begin{definition}
\label{def:parabolicElliptic}
We say that $w(x,t)$ is a \emph{solution} (resp. subsolution, supersolution) of the parabolic problem in an open set $Q' \subset \Rn\times \R$ if $w$ is the standard viscosity solution (resp. subsolution, supersolution) of
\begin{align*}
w_t - F(D^2w, Dw, w) &= 0 & &\text{in } Q'.
\end{align*}

Similarly, we call $w(x)$ a \emph{solution} (resp. sub/supersolution) of the elliptic problem in an open set $\Omega' \subset \Rn$ if $w$ is the standard viscosity solution (resp. sub/supersolution) of
\begin{align*}
- F(D^2w, Dw, w) &=0 & &\text{in } \Omega'.
\end{align*}
\end{definition}

Now with the help of the previous definition, we can rigorously express the intuitive fact that the problem \eqref{RP} is parabolic in the positive phase and elliptic in the negative phase.

\begin{lemma}
\label{le:positiveNegativePart}
If $u$ is a viscosity subsolution of \eqref{RP} in $Q$ then $u_+$ is a subsolution of the parabolic problem in $Q$. Similarly, if $v$ is a viscosity supersolution of \eqref{RP} in $Q$ then $-v_-(\cdot, t)$ is a supersolution of the elliptic problem in $\Omega$ for each $t > 0$.
\end{lemma}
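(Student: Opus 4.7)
The plan is to verify both assertions by contradiction, promoting a touching test function into a strict classical barrier for \eqref{RP} that can be played against the viscosity property of $u$ (resp.\ $v$).

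For the subsolution claim, suppose $u_+$ fails to be a parabolic viscosity subsolution at some interior $(x_0,t_0)$: after a standard perturbation there is a smooth $\phi$ with $u_+ \leq \phi$ near $(x_0,t_0)$, equality only at $(x_0,t_0)$, and $\phi_t - F(D^2\phi, D\phi, \phi) > 0$ at that point. If $u_+(x_0,t_0) = 0$, then $\phi \geq u_+ \geq 0$ together with $\phi(x_0,t_0) = 0$ forces an interior spacetime minimum of $\phi$, giving $\phi_t = 0$, $D\phi = 0$, $D^2\phi \geq 0$ there; the structural condition \eqref{eq:structural} together with \eqref{eq:Fzero} yields
\[
F(D^2\phi,0,0) \geq \mathcal{M}^-(D^2\phi) \geq 0,
\]
contradicting the strict parabolic inequality. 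If instead $u_+(x_0,t_0) > 0$, I choose a parabolic cylinder $E \ni (x_0,t_0)$ so small that $\phi \geq 2c > 0$ on $\cl E$ and the strict parabolic inequality persists there, and set the barrier $\tilde\phi := \phi - c$. Since $\tilde\phi > 0$ on $\cl E$, the flux-matching condition of Definition~\ref{de:classicalSubsolution} is vacuous; properness \eqref{eq:proper} gives $\tilde\phi_t - F(D^2\tilde\phi, D\tilde\phi, \tilde\phi) \geq \phi_t - F(D^2\phi, D\phi, \phi) > 0$, so $\tilde\phi$ is a strict classical supersolution of \eqref{RP}. Choosing $c$ less than $\tfrac{1}{2}\min_{\partial_P E}(\phi - u_+)$ gives $u \leq u_+ < \tilde\phi$ on $\partial_P E$, whereas $\tilde\phi(x_0,t_0) = u(x_0,t_0) - c < u(x_0,t_0)$, contradicting the viscosity subsolution property of $u$.

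For the supersolution part, fix $t_0 > 0$ and let $\phi_0(x)$ touch $w := -v_-(\cdot, t_0)$ from below at $x_0$ with strict local minimum. If $v(x_0,t_0) \geq 0$, then $\phi_0 \leq w \leq 0$ and $\phi_0(x_0) = 0$ put a spatial local maximum of $\phi_0$ at $x_0$, so $D\phi_0(x_0) = 0$, $D^2\phi_0(x_0) \leq 0$, and \eqref{eq:structural}--\eqref{eq:Fzero} give $F(D^2\phi_0,0,0) \leq \mathcal{M}^+(D^2\phi_0) \leq 0$, which is the required elliptic supersolution inequality. Otherwise $v(x_0,t_0) < 0$; assume toward contradiction $F(D^2\phi_0,D\phi_0,\phi_0)(x_0) > 0$, which by continuity and properness persists in a neighborhood. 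Lift $\phi_0$ to the spacetime barrier
\[
\tilde\phi(x,t) := \phi_0(x) - C(t - t_0)^2
\]
on a parabolic cylinder $E := B_r^n(x_0) \times (t_0 - s, t_0]$; for $r, s$ small, $\tilde\phi < 0$ on $\cl E$ and $F > 0$ there, so (with the positive phase empty and $b(\tilde\phi)_t = 0$) $\tilde\phi$ is a strict classical subsolution of \eqref{RP}, and $\tilde\phi(x_0,t_0) = v(x_0,t_0)$.

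The delicate step is verifying the strict ordering $v > \tilde\phi$ on $\partial_P E$ using only the lower semicontinuity of $v$. On the lateral piece $\partial B_r^n(x_0) \times [t_0 - s, t_0]$, the strict local minimum of $w - \phi_0$ delivers $v(\cdot, t_0) - \phi_0 \geq 2\eta > 0$ on $\partial B_r^n(x_0)$; a compactness argument for the LSC function $v - \phi_0$ on this compact lateral boundary shows that for all sufficiently small $s > 0$ one has $v - \phi_0 \geq \eta$ uniformly, whence $v - \tilde\phi \geq \eta + C(t - t_0)^2 > 0$. On the bottom $\cl{B_r^n(x_0)} \times \set{t_0 - s}$, LSC of $v$ on the compact $\cl E$ provides a lower bound $v \geq v_{\min}$, and choosing $C$ so that $Cs^2$ exceeds $\max_{\cl{B_r^n(x_0)}}\phi_0 - v_{\min}$ makes $v > \tilde\phi$ there as well. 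The viscosity supersolution property then forces $v > \tilde\phi$ on $E$, contradicting $v(x_0,t_0) = \tilde\phi(x_0,t_0)$.
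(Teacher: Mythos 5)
Your overall strategy (case distinction on the sign of the touching value, followed by lifting an elliptic test function to a space‐time barrier and playing it against the viscosity definition) is the same as the paper's, just more explicit: the paper dispatches the $u_+$ claim in one line (``$u_+=\max\{u,0\}$ and $0$ solves the equation'') and, for $-v_-$, implicitly restricts to the set where $v<0$ and uses a \emph{linear} time lift $\vp(x)+\tfrac{\eta_0}{\de}(t-t_0)$ where you use a quadratic one $-C(t-t_0)^2$; both lifts work and yield the same contradiction. Your Case~1's (with the caveat that at a one-sided parabolic touching point $\phi_t\le 0$, not $\phi_t=0$, which only helps) and your careful LSC/compactness verification of $v>\tilde\phi$ on $\partial_P E$ are correct and fill in details the paper omits.

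There is, however, one genuine error in your subsolution argument. When $u_+(x_0,t_0)>0$ you set $\tilde\phi=\phi-c$ and claim that properness \eqref{eq:proper} yields
\[
\tilde\phi_t-F(D^2\tilde\phi,D\tilde\phi,\tilde\phi)\ \ge\ \phi_t-F(D^2\phi,D\phi,\phi).
\]
This is the wrong direction: \eqref{eq:proper} says $z\mapsto F(M,p,z)$ is \emph{nonincreasing}, so from $\tilde\phi<\phi$ we get $F(D^2\phi,D\phi,\phi-c)\ge F(D^2\phi,D\phi,\phi)$ and hence $\tilde\phi_t-F(\tilde\phi)\le \phi_t-F(\phi)$; properness works \emph{against} you here, and does not show $\tilde\phi$ is a strict supersolution. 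The correct ingredient is the modulus-of-continuity part of the structural condition \eqref{eq:structural}, $\abs{F(M,p,z)-F(M,p,w)}\le\de_0\abs{z-w}$ (or simply continuity of $F$ plus compactness of $\cl E$), which gives $\tilde\phi_t-F(\tilde\phi)\ge\phi_t-F(\phi)-\de_0 c>0$ for $c$ small. With that substitution the argument goes through. (Note that in your supersolution Case~2, by contrast, properness \emph{is} applied in the right direction: there the shift $\phi_0-C(t-t_0)^2\le\phi_0$ is used to preserve $F>0$, and the inequality points the way you need.)
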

\begin{proof}

1. The first claim follows easily since $u_+ = \max \set{u, 0}$ and $0$ solves 
\begin{align*}
u_t - F(D^2 u, Du, u) = 0
\end{align*}
(see \eqref{eq:Fzero}). 

\vspace{10pt}

2. To prove the second claim for  $-v_- := \min \set{v, 0}$, suppose it is not a supersolution of $-F(D^2u, Du, u)=0$ at time $t=t_0>0$.
Let us denote $\zeta(x) = -v_-(x,t_0)$.
Then there is a function $\vp \in C^2$, $\vp < 0$, and an open (space) ball $B \subset \set{\zeta < 0}$ such that $-F(D^2\vp, D\vp, \vp) < 0$ on $B$, $\vp \leq \zeta$ in $B$ and $\vp < \zeta$ on $\partial B$, but $\vp = \zeta$ at some point in $B$.
Due to the continuity of $F$, there exists $\eta_0 > 0$ such that $-F(D^2 \vp, D\vp, \vp - \eta) < 0$ in $B$ for all $\eta \in [0, \eta_0]$.
For any $\de > 0$ we set
$$
Q_\de := B \times (t_0-\de, t_0].
$$
Since $v$ is lower semi-continuous, there exists small $\de>0$ such that $\cl{Q_\de} \subset \set{v < 0}$, $\vp < v(\cdot, t)$ on $\partial B$ for $t \in [t_0 - \de, t_0]$, and $v(x,t) - \vp(x) > - \eta_0$ on $\cl{Q}_\de$.
Let us define the barrier $\psi(x,t) = \vp(x) + \frac{\eta_0}{\de} t$.
Observe that $-F(D^2 \psi, D \psi, \psi) < 0$ and $\psi < 0$ in $Q_\de$ and therefore it is a strict classical subsolution of \eqref{RP}.
Furthermore, $\psi < v$ on $\partial_P Q_\de$, while $\psi = v$ at some point in $\cl Q_\de$.
This contradicts the fact that $v$ is a supersolution of \eqref{RP}.
\end{proof}

\subsection{Regularizations}
\label{sec:regularizations}

In this section, we define regularizations of the subsolution $u$ and the supersolution $v$, and prove some of their properties that are applied in the proof of the comparison theorem.

For given $r \in (0,1)$, define the regularizations
\begin{align}
\label{eq:regularizedSolutions}
\begin{aligned}
Z(x,t) &:= \sup_{\cl \Xi_r(x,t)} u , \\
W(x,t) &:= \inf_{\cl \Xi_r(x,t)} v.
\end{aligned}
\end{align}
Functions $Z$ and $W$ are well-defined on the parabolic cylinder $\cl Q_r$,
\begin{align*}
Q_r &:= \set{(x,t) \in Q \mid \cl\Xi_r(x,t) \subset Q} = \Omega_r \times (r, \infty),
\end{align*}
where 
\begin{align*}
\Omega_r := \set{x \in \Omega \mid \dist(x, \Omega^c) > r + r^{2/3}}.
\end{align*}

\begin{remark}
$\Omega_r$ has the uniform exterior ball property (with radius $r + r^{2/3}$) and therefore it is a regular domain for the elliptic problem.
\end{remark}

\begin{remark}
The main advantage of regularizing over the set $\Xi_r$ instead of the ball $B_r$ is that the appropriate level sets of $Z$ and $W$ have both space-time and space interior balls, see Proposition~\ref{pr:dualPoint}.
\end{remark}

\begin{remark}
Defining $\Xi = D + E$, instead of $D + B$ as in the previous paper \cite{KP}, has the consequence that the parabolic boundary of $\Xi$ is not $C^{1,1/2}_{x,t}$ at the top flat piece $\cl{\partial^\top \Xi}$, and thus it is not a regular set for the heat equation (see \cite{CS}). We can therefore expect that the gradient of a positive caloric function in $\Xi$ which is zero on the lateral boundary of $\Xi$ will blow up at the lateral boundary as we approach $\cl{\partial^\top \Xi}$, see Lemma~\ref{le:DsupersolutionInfty}. A similar effect of vanishing gradient is expected when the solution is positive on $\Xi^c$ but zero in $\Xi$, see Lemma~\ref{le:Dsubsolution0}. This is the necessary new ingredient required in the proof of the ``finite speed of propagation'' in Lemma~\ref{le:finiteSpeed}.
\end{remark}

\begin{proposition}
\label{pr:convolutionIsSolution}
Suppose that $v$ is a visc. supersolution on $Q = \Omega \times (0,\infty)$. Then $W$ is a visc. supersolution on $Q_r$. Similarly, if $u$ is a visc. subsolution on $Q$ then $Z$ is a visc. subsolution on $Q_r$.
\end{proposition}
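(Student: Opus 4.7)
The plan is to verify the viscosity supersolution property of $W$ via a translation/transport argument: a would-be contact $W = \vp$ inside $Q_r$ with a strict classical subsolution $\vp$ is shifted into a contact of $v$ with a translate of $\vp$ inside $Q$, contradicting the fact that $v$ is a visc.\ supersolution on $Q$. The statement for $Z$ follows by the dual argument (swap sup/inf, sub/super, reverse inequalities). Lower semicontinuity $W \in LSC(\cl{Q_r})$ is immediate from $v \in LSC(\cl Q)$ and the Hausdorff-continuous dependence of the compact sets $\cl\Xi_r(x,t)$ on $(x,t)$.

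For the comparison property, suppose $\vp$ is a strict classical subsolution of \eqref{RP} on a parabolic neighborhood $E \subset Q_r$ with $W > \vp$ on $\partial_P E$ and, toward a contradiction, $W \leq \vp$ somewhere in $E$. A standard first-touching extraction---replace $\vp$ by $\vp + \e$ for $\e > 0$ small, use LSC of $W$, and take the earliest $t$-slice where contact occurs---produces a point $(x_0, t_0) \in E \setminus \partial_P E$ with $W(x_0, t_0) = \vp(x_0, t_0)$. Since $v$ is LSC on the compact set $\cl\Xi_r(x_0, t_0)$, the infimum defining $W(x_0, t_0)$ is attained at some $(y_0, s_0) \in \cl\Xi_r(x_0, t_0)$, so $v(y_0, s_0) = W(x_0, t_0)$. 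Setting $(a, \tau) := (y_0 - x_0,\, s_0 - t_0) \in \cl\Xi_r$, define the translate
$$
\tilde\vp(y, s) := \vp(y - a,\, s - \tau) \qquad \text{on } \tilde E := E + (a, \tau).
$$
Translation-invariance of \eqref{RP} and of conditions (i)--(iv) in Definition~\ref{de:classicalSubsolution} makes $\tilde\vp$ a strict classical subsolution on $\tilde E$, and the very definition of $Q_r$ guarantees $\tilde E \subset Q$. For any $(y, s) \in \partial_P \tilde E$ the point $(x, t) := (y, s) - (a, \tau)$ lies in $\partial_P E$ and $(y, s) \in \cl\Xi_r(x, t)$, hence
$$
v(y, s) \geq W(x, t) > \vp(x, t) = \tilde\vp(y, s),
$$
while at $(y_0, s_0) \in \tilde E$ we have $v(y_0, s_0) = W(x_0, t_0) = \vp(x_0, t_0) = \tilde\vp(y_0, s_0)$. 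This violates the supersolution property of $v$ on $Q$.

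The only delicate matter is really geometric bookkeeping: ensuring the translated parabolic neighborhood $\tilde E$ stays inside $Q$ (which is precisely what the definition $Q_r = \set{(x,t) : \cl\Xi_r(x,t) \subset Q}$ guarantees), and rigorously extracting the first-touching point in a merely LSC setting (handled by the $\e$-shift together with the fact that $\{W \leq \vp + \e\}$ is closed). The elliptic--parabolic structure of \eqref{RP} and the flux condition in Definition~\ref{de:classicalSubsolution}(iv) play no active role here: all four items are translation-invariant, so the transition boundary is simply carried along rigidly by the translation. The harder interactions of the regularization with the transition boundary---interior-ball properties of $\Xi_r$, gradient blow-up near $\cl{\partial^\top \Xi_r}$, and finite speed of propagation---are only needed in the subsequent lemmas entering the proof of the comparison principle itself.
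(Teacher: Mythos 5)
The paper states Proposition~\ref{pr:convolutionIsSolution} without proof, treating it as a standard property of sup/inf convolutions. Your core idea --- translate the barrier along the vector joining the test point to a dual point, using $Q_r = \set{(x,t) : \cl\Xi_r(x,t) \subset Q}$ to guarantee that the translated parabolic neighborhood stays inside $Q$ --- is exactly the expected argument, and the bookkeeping in the final display (passing the strict ordering on $\partial_P E$ to the translated boundary $\partial_P\tilde E$ via $v \geq W$) is correct.

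There is, however, one step that does not survive scrutiny: the ``$\e$-shift'' in your first-touching extraction. Replacing $\vp$ by $\vp + \e$ is \emph{not} an innocuous perturbation for classical sub/supersolutions of \eqref{RP}. Adding a constant moves the transition set $\set{\vp + \e = 0}$ to $\set{\vp = -\e}$, and on the strip $\set{-\e < \vp < 0}$ the shifted function $\vp + \e$ is now in its ``parabolic'' phase, so Definition~\ref{de:classicalSubsolution}(iii) would demand $b(\vp+\e)_t - F(D^2\vp, D\vp, \vp+\e) \leq 0$ there --- but $\vp$ was only constrained to satisfy the elliptic inequality in $\set{\vp < 0}$, with no control on $\vp_t$. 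Likewise the level set $\set{\vp = -\e}$ has no reason to satisfy the nondegeneracy and flux conditions (ii) and (iv). So $\vp + \e$ is generally not a strict classical subsolution, and the extraction as written is unjustified. The good news is that it is also unnecessary: you do not need an exact touching $W(x_0,t_0) = \vp(x_0,t_0)$. Any $(x_0,t_0) \in E$ with $W(x_0,t_0) \leq \vp(x_0,t_0)$ will do. Pick a dual point $(y_0, s_0) \in \cl\Xi_r(x_0,t_0)$ attaining the infimum (LSC of $v$ on a compact set), so $v(y_0,s_0) = W(x_0,t_0) \leq \vp(x_0,t_0) = \tilde\vp(y_0,s_0)$ with $(y_0,s_0) \in \tilde E$, while $v > \tilde\vp$ on $\partial_P \tilde E$; this already contradicts the supersolution property of $v$, since the definition requires strict inequality on all of $\tilde E$, not just at a touching point. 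With the $\e$-shift deleted, your proof is correct and matches the intended (standard) argument.
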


The strict separation of $u$ and $v$ on the parabolic boundary of $Q$ allows us to separate $Z$ and $W$ on the parabolic boundary of $Q_r$.
\begin{proposition}
\label{pr:ZWorderedOnBoundary}
Suppose that $u \in USC$ and $v \in LSC$ in $\cl Q$ such that $u < v$ on $\partial_P Q$. Then there exists $r_0 > 0$ such that $Z < W$ on $\partial_P Q_r$ for all $0 < r \leq r_0$.
\end{proposition}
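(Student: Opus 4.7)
The plan is to argue by contradiction using only the upper/lower semi-continuity of $u$, $v$ and compactness of the parabolic boundary. I will work under the assumption that $T$ is finite, since the $T = \infty$ convention adopted for the proof of Theorem~\ref{th:comparisonPrinciple} is only a notational device and the conclusion is only needed on each finite time slice. The starting identity is
\begin{align*}
Z(x,t) - W(x,t) = \sup\bigl\{u(y,s) - v(y',s') : (y,s),\, (y',s') \in \cl\Xi_r(x,t) \cap \cl Q\bigr\},
\end{align*}
so a failure of $Z < W$ at some $(x,t) \in \partial_P Q_r$ exhibits a pair of points in $\cl\Xi_r(x,t)$ on which $u - v$ is almost nonnegative.

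First I record the geometric estimate $\cl\Xi_r \subset \cl{B^n_{r+r^{2/3}}} \times [-r,r]$, which is immediate from $\Xi_r = D_r + E_r$ and the definition of $E_r$; in particular $\diam \cl\Xi_r = O(r^{2/3})$ as $r \to 0$. Combining this with $\partial_P Q_r = (\cl{\Omega_r} \times \set{r}) \cup (\partial\Omega_r \times [r,T])$ and the definition $\Omega_r = \set{x \in \Omega : \dist(x, \Omega^c) > r + r^{2/3}}$, every base point $(x,t) \in \partial_P Q_r$ lies at Euclidean distance at most $r + r^{2/3}$ from $\partial_P Q$.

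With these estimates in hand, suppose the conclusion fails: there exist sequences $r_n \downarrow 0$, $(x_n, t_n) \in \partial_P Q_{r_n}$, and points $(y_n, s_n), (y_n', s_n') \in \cl\Xi_{r_n}(x_n, t_n) \cap \cl Q$ with $u(y_n, s_n) - v(y_n', s_n') \geq -1/n$. By compactness of $\partial_P Q$, along a subsequence $(x_n, t_n) \to (x_*, t_*) \in \partial_P Q$, and the diameter estimate forces $(y_n, s_n), (y_n', s_n') \to (x_*, t_*)$ as well. Applying upper semi-continuity of $u$ and lower semi-continuity of $v$ on $\cl Q$ then yields
\begin{align*}
u(x_*, t_*) - v(x_*, t_*) \geq \limsup_{n \to \infty} \bigl[u(y_n, s_n) - v(y_n', s_n')\bigr] \geq 0,
\end{align*}
which contradicts the hypothesis $u < v$ on $\partial_P Q$. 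The only mildly delicate point is the compactness reduction at $T = \infty$, but this is purely a bookkeeping matter; the core of the argument is the elementary semi-continuity/compactness step above, and I do not expect any serious obstacle.
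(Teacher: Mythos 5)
Your proof is correct and supplies exactly the details behind the paper's one-line remark ``Standard from semicontinuity'': the diameter estimate $\diam\cl\Xi_r = O(r^{2/3})$, the observation that base points of $\partial_P Q_r$ approach $\partial_P Q$, and the compactness-plus-semicontinuity contradiction. Your handling of the $T=\infty$ convention is also appropriate, since the proposition is only ever invoked on compact portions of the parabolic boundary.
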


\begin{proof}
Standard from semicontinuity.
\end{proof}

Arguably the most important feature of regularizations $Z$ and $W$ is the interior ball property of their level sets, as well as of the time-slices of their level sets. We formalize this fact by introducing the notion of \emph{dual points}.

\begin{definition}
\label{de:dualPoint}
Let $r > 0$, $u \in USC(\cl \Omega)$ and let $Z$ be its sup-convolution. Let  $P \in \cl{Q}_r$. We say that $P' \in \cl{Q}$ is a \emph{sup-dual point of $P$ with respect to $u$}
\begin{align*}
\text{if } P' \in \cl \Xi(P) \text{ and } u(P') = Z(P).
\end{align*}
  Let us define $\Pi^u(P)$ to be the set of all sup-dual points of $P$ with respect to $u$,
  \begin{align*}
  \Pi^u(P) = \set{P' \in \cl \Xi_r(P) : u(P') = Z(P)}.
  \end{align*}
 
 Similarly we can define \emph{inf-dual points} $\Pi_v(P)$ for $v \in LSC(\cl \Omega)$ by $\Pi_v(P) := \Pi^{-v}(P)$.
\end{definition}

In what follows, we shall use the following convenient notation for various level sets of $u$ and $Z$:
\begin{align*}
\set{u \geq 0} &= \set{(x,t) \in \cl{Q}: u(x,t) \geq 0},\\
\set{u < 0} &= \set{(x,t) \in Q: u(x,t) < 0},
\end{align*}
which is contrasted with
\begin{align*}
\set{Z \geq 0} &= \set{(x,t) \in \cl{Q_r}: Z(x,t) \geq 0},\\
\set{Z < 0} &= \set{(x,t) \in Q_r: Z(x,t) < 0}.
\end{align*}
Sets $\set{v \leq 0}$, $\set{v > 0}$, $\set{W \leq 0}$ and $\set{W > 0}$ are defined in a similar fashion.
This choice guarantees that sets with $\geq$ and $\leq$ are closed, while sets with $<$ and $>$ are open. 

We first make a few simple observations about $\Pi^u$ and $\Pi_v$.

\begin{proposition}
\label{pr:dualPoint}
Let $u \in USC(\cl Q)$. Then for all $P \in \cl Q_r$:
\begin{enumerate}
\romanlist
\item $\Pi^u (P) \neq \emptyset$,
\item $\cl \Xi_r(P') \cap \cl Q_r \subset \set{Z \geq Z(P)}$ for all $P' \in \Pi^u(P)$,
\item if $P \in \partial \set{Z \geq 0}$, then $\Pi^u(P) \subset \partial \Xi_r(P) \cap \partial \set{u \geq 0}$ and $\Xi_r(P) \subset \set{u < 0}$. 
\end{enumerate}
\end{proposition}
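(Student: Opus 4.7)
The plan is to handle the three claims in turn, relying on three ingredients: the compactness of $\cl \Xi_r$, the invariance of $\Xi_r$ under the central reflection $(x,t)\mapsto -(x,t)$ (immediate from the explicit formulas for $D_r$ and $E_r$), and the upper semi-continuity of $Z$ on $\cl Q_r$ (a routine consequence of the compactness argument applied to varying centers). Claim (i) follows directly from the first ingredient: since $\cl \Xi_r(P)$ is compact and $u \in USC(\cl Q)$, the supremum defining $Z(P)$ is attained at some point $P' \in \cl \Xi_r(P) \cap \cl Q$, and any such maximizer lies in $\Pi^u(P)$.

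For (ii) I would exploit the reflection symmetry. Because $\Xi_r = -\Xi_r$, the relation $P' \in \cl \Xi_r(P)$ is symmetric in $P$ and $P'$, and more generally for any $Q \in \cl \Xi_r(P')$ the point $P'$ lies in $\cl \Xi_r(Q)$. Whenever $Q \in \cl Q_r$, the value $u(P') = Z(P)$ then appears as a candidate in the supremum defining $Z(Q)$, which yields $Z(Q) \geq Z(P)$ and hence the desired inclusion $\cl \Xi_r(P') \cap \cl Q_r \subset \set{Z \geq Z(P)}$.

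For (iii), I would first extract $Z(P) = 0$: the closedness of $\set{Z \geq 0}$ gives $Z(P) \geq 0$, while upper semi-continuity of $Z$ together with the existence of nearby points at which $Z < 0$ gives $Z(P) \leq 0$. Next I would prove $\Xi_r(P) \subset \set{u < 0}$ by contradiction: a point $Q \in \Xi_r(P)$ with $u(Q) \geq 0$ would, by openness of $\Xi_r(P)$, persist in $\Xi_r(P'')$ for every $P''$ in a small neighborhood of $P$, forcing $Z(P'') \geq u(Q) \geq 0$ there and placing $P$ in the interior of $\set{Z \geq 0}$---a contradiction. Given this, any $P' \in \Pi^u(P)$ satisfies $u(P') = 0$, so $P' \notin \Xi_r(P)$ but $P' \in \cl \Xi_r(P)$, i.e.\ $P' \in \partial \Xi_r(P)$; and since $P' \in \set{u \geq 0}$ is approached by points of $\Xi_r(P) \subset \set{u < 0}$, also $P' \in \partial \set{u \geq 0}$. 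The only step not amounting to pure bookkeeping is the symmetry argument in (ii), which relies on $\Xi_r$ being invariant under $-\mathrm{id}$---precisely the geometric feature behind using $D_r + E_r$ rather than $D_r + B_r$---so I expect no substantive obstacle beyond carefully establishing that $Z$ is upper semi-continuous on $\cl Q_r$.
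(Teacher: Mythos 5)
The paper states Proposition~\ref{pr:dualPoint} without proof, so there is no written argument to compare against; your proof is the natural one and, modulo one misstep, is correct. Parts (i) and (ii) are fine: compactness of $\cl\Xi_r(P)$ plus $u\in USC$ gives a maximizer for (i), and the central symmetry $\Xi_r=-\Xi_r$ (which holds since $D_r=-D_r$ and $E_r=-E_r$) is exactly what makes the membership $P'\in\cl\Xi_r(Q)\Leftrightarrow Q\in\cl\Xi_r(P')$ symmetric, yielding (ii). The contradiction argument in (iii) for $\Xi_r(P)\subset\set{u<0}$ is also correct: a point $Q\in\Xi_r(P)$ with $u(Q)\geq 0$ would force $Z\geq 0$ on an open neighborhood of $P$, contradicting $P\in\partial\set{Z\geq 0}$.

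The flaw is the claim $Z(P)=0$ and its proposed justification. Upper semi-continuity of $Z$ gives $\limsup_{P''\to P}Z(P'')\leq Z(P)$; it bounds nearby values by $Z(P)$, not the other way around, so the existence of nearby points with $Z<0$ does \emph{not} imply $Z(P)\leq 0$. In fact $Z(P)=0$ can genuinely fail at the level of generality of the proposition: take $u\equiv -1$ except at a single point $P_0$ where $u(P_0)=5$. Then $Z\equiv 5$ on $\cl\Xi_r(P_0)\cap\cl Q_r$ and $Z\equiv -1$ elsewhere, so $\set{Z\geq 0}=\cl\Xi_r(P_0)\cap\cl Q_r$, and for $P\in\partial\Xi_r(P_0)$ one has $Z(P)=5\neq 0$. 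Fortunately this subsidiary claim is never used in full strength: you only need $u(P')=Z(P)\geq 0$, which follows from the closedness of $\set{Z\geq 0}$ alone, to conclude $P'\notin\Xi_r(P)\subset\set{u<0}$ and hence $P'\in\partial\Xi_r(P)$ and $P'\in\partial\set{u\geq 0}$. Strike the sentence about $Z(P)=0$, replace ``$u(P')=0$'' by ``$u(P')=Z(P)\geq 0$,'' and the argument is complete.
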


Since the closed sets $\set{W \leq 0}$ and $\set{Z \geq 0}$ have closed space and space-time interior balls at each point, they don't have any points that are isolated from their interior:
\begin{lemma}
\label{le:nohair}
The level sets of the functions $W$ and $Z$ defined above have the following properties:
\begin{align*}
\cl{\interior \set{W \leq 0}} &= \set{W \leq 0}, &
\cl{\interior \set{Z \geq 0}} &= \set{Z \geq 0}.
\end{align*}
Moreover it is true for every time-slice $t$ (with space closure and interior).
\end{lemma}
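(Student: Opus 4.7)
The two statements are symmetric, so I will focus on the claim for $\set{Z \geq 0}$; the argument for $\set{W \leq 0}$ is identical with $\Pi_v$ and inf-convolutions in place of $\Pi^u$ and sup-convolutions. Since $u \in USC(\cl Q)$ and $\cl\Xi_r$ is compact, $Z$ is upper semi-continuous on $\cl Q_r$, so $\set{Z \geq 0}$ is closed and the inclusion $\cl{\interior \set{Z \geq 0}} \subset \set{Z \geq 0}$ is automatic.

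The reverse inclusion rests on the elementary fact that $\Xi_r = D_r + E_r$ is centrally symmetric about the origin, since both $D_r$ and $E_r$ are; consequently $P' \in \cl\Xi_r(P)$ if and only if $P \in \cl\Xi_r(P')$. Now I would fix $P \in \set{Z \geq 0}$; Proposition~\ref{pr:dualPoint}(i) supplies a sup-dual point $P' \in \Pi^u(P)$ with $u(P') = Z(P) \geq 0$, and part (ii) of that proposition gives
\[
\cl\Xi_r(P') \cap \cl Q_r \;\subset\; \set{Z \geq Z(P)} \;\subset\; \set{Z \geq 0}.
\]
By the symmetry observation, $P \in \cl\Xi_r(P')$, so I can pick a sequence $P_n \in \Xi_r(P') \cap Q_r$ with $P_n \to P$. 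Each $P_n$ lies in the open set $\Xi_r(P') \cap Q_r \subset \set{Z \geq 0}$ and hence in $\interior \set{Z \geq 0}$, yielding $P \in \cl{\interior \set{Z \geq 0}}$.

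The slicewise statement is obtained by running the same argument at a fixed time $t_0$: with $P' = (x', t')$, the cross-section of $\Xi_r(P')$ at height $t_0$ is the open Euclidean ball in $\Rn$ centered at $x'$ with radius $r + (r^2 - (t_0 - t')^2)^{1/3} > 0$ (using $|t_0 - t'| \leq r$), which is contained in the relevant space-slice of $\set{Z \geq 0}$; by symmetry $x_0$ is in its closure.

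The only minor technical point I expect is selecting the sequence $P_n \in \Xi_r(P') \cap Q_r$ when $P$ lies on $\partial_P Q_r$. This can be handled by a tangential-approach argument using the smoothness of $\partial \Xi_r$ together with that of the cylinder $Q_r$; alternatively, it can be bypassed by observing that Proposition~\ref{pr:ZWorderedOnBoundary} forces $Z < 0$ on the pertinent portion of $\partial_P Q_r$, so $\set{Z \geq 0}$ does not meet that portion and the issue does not arise.
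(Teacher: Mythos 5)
Your proof is correct and follows essentially the same route as the paper: both hinge on the interior-ball property of Proposition~\ref{pr:dualPoint} together with the central symmetry of $\Xi_r$ (which the paper uses implicitly when it asserts $x \in \cl B$ for the dual ball $B$). One small slip in your final paragraph: Proposition~\ref{pr:ZWorderedOnBoundary} gives only $Z < W$ on $\partial_P Q_r$, not $Z < 0$, and indeed $Z$ can be nonnegative on $\partial_P Q_r$ (e.g.\ on the bottom slice $\Omega_r \times \set{r}$, $\cl\Xi_r(x,r)$ reaches down to $t=0$ where $u_0$ may be positive); so the second of your two proposed workarounds fails. The paper does not address this edge case at all, and your first suggestion (a tangential-approach argument exploiting the uniform exterior-ball property of $\Omega_r$ noted in the remark following the definition of $\Omega_r$) is the one that can be made to work.
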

\begin{proof}
This follows from the interior ball property in Proposition~\ref{pr:dualPoint}. Fix time $t$ and for simplicity define $E = \set{W \leq 0}_t$. Pick any point $x \in E$. Then there is an open ball $B = B_r(y)$ such that $x \in \cl B$ and $\cl B \subset E$. But $B$ is open so $B \subset \interior E$ and therefore $x \in \cl B \subset \cl{\interior E}$.
\end{proof}

Another important property of the regularized supersolution $W$ is that each point of the time-slice $\set{W \leq 0}_t$ is connected to the boundary $\partial \Omega_r$ with nonpositive values of $W$ by a wide ``trunk'' of finite length:
\begin{lemma}
\label{le:trunk}
Let $W$ be the inf-convolution of a supersolution $v$. Then for every $(\xi,\si) \in \set{W \leq 0}$ there is a piecewise linear continuous curve $\ga: [0,1] \to \cl \Omega_r$, with finite length such that $\ga(1) \in \Omega_r^c$ and 
\begin{align*}
\xi \in \bigcup_{s \in [0,1]} \cl B_{r/2}(\ga(s)) \cap \cl \Omega_r \subset \at{\set{W \leq 0}}{t=\si}.
\end{align*}
\end{lemma}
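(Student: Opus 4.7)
The approach combines the space-time interior ball property of $W$ from Proposition~\ref{pr:dualPoint} with a strong minimum principle argument for the elliptic supersolution obtained from $v$ at the time $\tau_0$ of an inf-dual point.

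\textbf{Step 1 (Dual point).} By Proposition~\ref{pr:dualPoint}(i), pick $(y_0,\tau_0)\in\Pi_v(\xi,\sigma)$, so that $v(y_0,\tau_0)=W(\xi,\sigma)\le 0$; by the symmetry $\Xi_r=-\Xi_r$, also $(\xi,\sigma)\in\cl\Xi_r(y_0,\tau_0)$. The spatial slice of $\cl\Xi_r(y_0,\tau_0)$ at time $\sigma$ is the closed ball of radius
\[
r_0 := r + (r^2 - (\sigma-\tau_0)^2)^{1/3} \ge r
\]
centred at $y_0$, and Proposition~\ref{pr:dualPoint}(ii) then gives $\cl B_{r_0}(y_0)\cap\cl\Omega_r\subset \at{\set{W\le 0}}{\sigma}$; in particular $\xi\in\cl B_{r_0}(y_0)$.

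\textbf{Step 2 (Connecting $y_0$ to $\partial\Omega$).} By Lemma~\ref{le:positiveNegativePart}, the function $\zeta := \min(v(\cdot,\tau_0),0)$ is an LSC viscosity supersolution of $-F=0$ in $\Omega$ with $\zeta\equiv -1$ on $\partial\Omega$. I claim that the connected component of the closed set $\set{v(\cdot,\tau_0)\le 0}$ in $\cl\Omega$ containing $y_0$ must meet $\partial\Omega$. Suppose for contradiction that it is a bounded island $K\subset\Omega$. Immediately outside $K$ one has $v>0$, and the LSC of $v$ forces $v\equiv 0$ on $\partial K$, hence $\zeta\equiv 0$ on $\partial K$. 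In the generic case $\zeta(y_0)<0$, the strong minimum principle applied to the uniformly elliptic supersolution $\zeta$ on the connected component of $\set{\zeta<0}$ through $y_0$ forces $\zeta$ to be a negative constant on that component, incompatible with the value $\zeta=0$ on the island's boundary via lower semicontinuity. Thus the component meets $\partial\Omega$, and we choose a piecewise linear path $\tilde\gamma : [0,L]\to\cl\Omega$ of finite length with $\tilde\gamma(0)=y_0$, $\tilde\gamma(L)\in\partial\Omega$, and $v(\tilde\gamma(s),\tau_0)\le 0$ for all $s\in [0,L]$.

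\textbf{Step 3 (Assembling $\gamma$).} Pick $z_0$ on the segment from $\xi$ to $y_0$ with $\abs{z_0-\xi}\le r/2$ and $\abs{z_0-y_0}\le r_0-r/2$; this is possible since $\xi\in\cl B_{r_0}(y_0)$ and $r_0\ge r$. Every point $p$ on the segment from $z_0$ to $y_0$ satisfies $\abs{p-y_0}\le r_0-r/2$, so its $r/2$-ball is contained in $\cl B_{r_0}(y_0)$, and hence in $\at{\set{W\le 0}}{\sigma}$ after intersecting with $\cl\Omega_r$. Concatenate this segment with $\tilde\gamma$, truncated at the first crossing of $\partial\Omega_r$ (which necessarily exists, since $\tilde\gamma$ ends on $\partial\Omega$). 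For $p=\tilde\gamma(s)$ and any $x'\in\cl B_{r/2}(p)$, the inequalities $\abs{p-x'}\le r/2 \le r_0$ and $\abs{\tau_0-\sigma}\le r$ place $(p,\tau_0)$ into $\cl\Xi_r(x',\sigma)$, so $W(x',\sigma)\le v(p,\tau_0)\le 0$. The resulting piecewise linear $\gamma$ has finite length $\le (r_0-r/2) + L$, ends at a point of $\partial\Omega_r\subset\Omega_r^c$, and satisfies $\xi\in\cl B_{r/2}(\gamma(0))$.

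\textbf{Main obstacle.} The delicate part is the island-contradiction in Step~2, where one must carefully combine the LSC of $v$ along $\partial K$ with the strong minimum principle for the LSC supersolution $\zeta$ inside $K$. The edge case $W(\xi,\sigma)=0$, in which $v(y_0,\tau_0)=0$ and $\zeta(y_0)=0$, requires an additional approximation: one can use Lemma~\ref{le:nohair} to replace $\xi$ by nearby interior points of $\set{W\le 0}$ where the strict case applies, and then take a limit of the resulting trunks (whose lengths are controlled by $\diam\Omega$).
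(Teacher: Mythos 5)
The main gap is in Step~2, and it is not only a matter of rigor but of a missing idea. Two technical problems first: the claims ``immediately outside $K$ one has $v>0$'' and ``LSC forces $v\equiv 0$ on $\partial K$'' are both false as stated. Connected components of a compact set can accumulate (other components of $\set{v(\cdot,\tau_0)\le 0}$ may come arbitrarily close to $K$, so no punctured neighbourhood of $K$ need lie in $\set{v>0}$), and since $\partial K\subset K$ one only has $v\le 0$ there, with lower semicontinuity giving $v(x)\le\liminf_{y\to x}v(y)$ rather than $v(x)\ge 0$. Moreover $\set{\zeta<0}$ is not open for an LSC function $\zeta$ (only $\set{\zeta>c}$ is), so ``the connected component of $\set{\zeta<0}$'' is not a domain on which one may invoke a strong minimum principle. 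The first of these could be repaired by using that quasicomponents of a compactum coincide with components, producing an open $\Omega'\supset K$ with $\partial\Omega'\cap\set{v(\cdot,\tau_0)\le 0}=\emptyset$; and a strong minimum principle for LSC viscosity supersolutions could be extracted from weak Harnack, though it is not in the paper's Appendix~\ref{ap:elliptic-and-parabolic-theory}.

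The essential obstruction, however, is that a purely elliptic argument at the single time slice $\tau_0$ can only yield $v(\cdot,\tau_0)\ge 0$ on the would-be island, never strict positivity: once $v\ge 0$, the elliptic supersolution $-v_-(\cdot,\tau_0)$ is identically zero and carries no further information, so the scenario $v\equiv 0$ on $K$ (precisely the case $W(\xi,\si)=0$) is not ruled out. Calling this the ``edge case'' and appealing to Lemma~\ref{le:nohair} does not fix it: points of $\interior\set{W\le 0}$ need not satisfy $W<0$, and in the comparison argument the lemma is invoked exactly at points of $\partial\set{W\le 0}$. The paper closes this gap with a genuinely parabolic ingredient: after the elliptic comparison gives $v\ge 0$ on $G\times(\si'-\de,\si']$, the traveling subsolution of Lemma~\ref{le:arbitrarySpeedBarrier} is slid in to force $v>0$ at $t=\si'$, which then contradicts $v(x_{j_1},\si')\le 0$. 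This use of the time variable (the positive phase invades any region where $v\ge 0$ with positive boundary data, instantaneously) is the idea your proof is missing. The paper also works with a finite cover of $H=\set{v(\cdot,\si')\le 0}$ by $r/2$-balls rather than abstract components of $H$, which both sidesteps the topological subtleties above and automatically yields a piecewise-linear curve with finitely many segments. Your Steps~1 and~3, on the other hand, are correct and match the paper's use of the dual point and the inf-convolution geometry.
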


\begin{proof}
Pick $P = (\xi, \si) \in \set{W \leq 0}$. By Proposition~\ref{pr:dualPoint}, there exists a point $P' = (\xi', \si')$ on $\Pi_v(P)$ such that $(\xi, \si) \in \cl \Xi_r(\xi',\si') \subset \set{W \leq 0}$. Therefore we can also find $\hat \xi$ such that $\xi \in \partial B_{r/2}(\hat \xi)$ and $\cl B_{r/2} (\hat \xi) \subset \set{W \leq 0}_\si$. The situation is depicted in Figure~\ref{fig:trunk}.
\begin{figure}
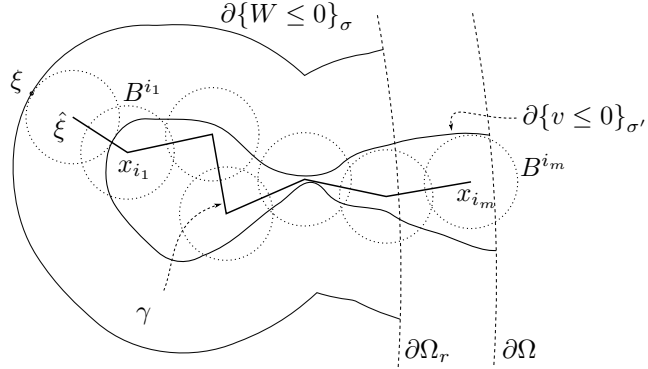

\centering
\fig{trunk}{4.5in}
\caption{Space cross section of the situation in the proof of Lemma~\ref{le:trunk}}
\label{fig:trunk}
\end{figure}

Let us denote $H = \at{\set{v\leq 0}}{t=\si'}$. Since $H$ is compact we can select a finite subcover from the open cover
\begin{align*}
H \subset \bigcup_{x \in H} B_{r/2}(x).
\end{align*}
Let us denote the balls in the finite subcover by $B^1, \ldots, B^k$, with centers $x_1, \ldots, x_k$.

Suppose that there exists a permutation $\pth{j_1, \ldots, j_k}$ of $\pth{1, \ldots, k}$ and $q \in \N$ such that
\begin{align*}
\pth{\bigcup_{1\leq l \leq q} B^{j_l}} \cap \pth{\partial \Omega \cup \bigcup_{q < l\leq k} B^{j_l}} = \emptyset.
\end{align*}
Let us denote $G = \bigcup_{1\leq l \leq q} B^{j_l}$ and $C = H \cap G$. Note that $G$ is open and $C$ is nonempty.
By definition, $v > 0$ on $\partial G \times \set{\si'}$ and therefore there is $\de > 0$ such that $v > 0$ on $\partial G \times (\si' - \de, \si']$ ($\set{v > 0}$ is open by $v \in LSC$).
In particular, since $-v_-(\cdot, t)$ is a supersolution of the elliptic problem for every $t$ (Lemma~\ref{le:positiveNegativePart}), an application of the elliptic comparison principle (Proposition~\ref{pr:ellipticComparison}) yields that $v \geq 0$ in $G \times (\si' - \de, \si']$.
And a straightforward barrier argument (barrier constructed in Lemma~\ref{le:arbitrarySpeedBarrier}) shows that in fact $v > 0$ on $G \times \set{\si'}$. This is a contradiction with $v(x_{j_1}, \si') \leq 0$ at $x_{j_1} \in G$.

Therefore we conclude that we can find $i_1, \ldots, i_m$ distict with $\xi' \in B^{i_1}$, $B^{i_m} \cap \partial \Omega \neq \emptyset$ and
\begin{align*}
B^{i_l} \cap B^{i_{l+1}} \neq \emptyset, \qquad 1 \leq l < m.
\end{align*}
Since we observe that $\cl B_r(x_i) \subset \set{W \leq 0}_{\si}$ for all $1 \leq i \leq k$, we can choose the curve $\ga$ as the 
piecewise linear curve connecting points $\hat \xi, x_{i_1}, \ldots, x_{i_m}$.
\end{proof}

Now we present two results, Lemmas~\ref{le:DsupersolutionInfty} and \ref{le:Dsubsolution0}, that justify the shape of the domain $\Xi_r$ chosen in the definition of the regularizations $Z$ and $W$.

\begin{lemma}
\label{le:DsupersolutionInfty}
Let $v$ be a visc. supersolution of \eqref{RP} on $\cl\Xi$, $\Xi = \Xi_r(\xi, \si)$ for some $(\xi, \si) \in \Rn \times \R$, and $v > 0$ in $\Xi$. Then there exists $f \in C([0,r])$, $f(0) = 0$, $f > 0$ on $(0,r]$ and $\frac{f(s)}{s} \to \infty$ as $s \to 0+$ such that
\begin{align*}
v(x, t) \geq f(r - \abs{x - \xi}) \quad \text{for } \abs{x-\xi} < r,\ t \in [\si + r/2, \si + r].
\end{align*}
\end{lemma}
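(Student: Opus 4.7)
Translate so that $(\xi, \si) = (0, 0)$. Since $v$ is a viscosity supersolution of \eqref{RP} and $v > 0$ on the open set $\Xi_r$, an argument in the spirit of Lemma~\ref{le:positiveNegativePart} (applied on $\set{v > 0} \supset \Xi_r$) shows that $v$ is a viscosity supersolution of the purely parabolic problem $w_t - F(D^2 w, Dw, w) = 0$ on $\Xi_r$. By lower semicontinuity and strict positivity, $v$ is bounded below by a positive constant on any compactly contained subset of $\Xi_r$.

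The plan is to construct a classical subsolution $\phi$ of this parabolic problem, defined on a neighborhood of $\cl{\Xi_r} \cap \set{t \geq -r/2}$, that vanishes on the lateral boundary $\partial_L \Xi_r$, is controlled on the bottom slice $\set{t = -r/2}$, and satisfies
\begin{align*}
\phi(x, t) \geq f(r - \abs{x}), \qquad \abs{x} < r,\ t \in [r/2, r),
\end{align*}
for some $f$ as in the statement. Applying the parabolic comparison principle to $v$ and a small multiple $c\phi$ on a slightly interior subdomain $\Xi_r^\eta \subset \Xi_r$ (where, by LSC of $v$, one has $c\phi \leq v$ on the parabolic boundary for suitable $c = c(\eta) > 0$), and then passing to the limit $\eta \to 0$ while invoking LSC of $v$ at the top face $t = r$, yields $v(x,t) \geq c_0 f(r - \abs{x})$ on the region claimed.

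The heart of the argument is exhibiting $\phi$ with $f(s)/s \to \infty$. This rests on the cuspidal geometry of $\partial \Xi_r$ at the top corner $\set{\abs{x} = r,\ t = r}$: since $E_r$ is defined by $\abs{y}^3 + s^2 < r^2$, the lateral boundary satisfies $\abs{x} - r \sim (2r)^{1/3}(r - t)^{1/3}$ as $t \to r^-$, i.e.\ a \emph{cubic} (rather than Lipschitz) cusp. For uniformly parabolic operators, subsolutions vanishing on such a cusp must decay strictly sublinearly at the cusp tip. Concretely, take $\phi$ to be a classical solution of an auxiliary parabolic equation whose coefficients saturate the Pucci bounds \eqref{eq:structural} (so that $\phi$ is automatically an $F$-parabolic subsolution) with homogeneous lateral data and a unit bump initial datum on $\set{t = -r/2}$; then classical boundary Hölder/Harnack estimates in cuspidal domains give $\phi(x, r) \gtrsim (r - \abs{x})^\alpha$ for some $\alpha \in (0, 1)$ depending on the cusp exponent and the ellipticity ratio $\Lambda/\lambda$, whence $f(s) = c s^\alpha$ works. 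An alternative is to build $\phi$ explicitly via radial barriers of the form $A (r - \abs{x})^\alpha \chi(t)$ with $\chi$ peaking near $t = r$, verifying the subsolution inequality $\phi_t - F(D^2\phi, D\phi, \phi) \leq 0$ by direct Pucci computation with $\alpha$ close to the natural cusp exponent $1/3$. The main obstacle is precisely this: producing the \emph{strictly sublinear} decay rate uniformly over the class of admissible $F$, which is enabled by the sharpness of the cubic cusp but fails if $\Xi_r$ is replaced by the Lipschitz $D_r + B_r$ (cf.\ the remark following Proposition~\ref{pr:convolutionIsSolution}).
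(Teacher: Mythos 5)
Your high-level outline is correct and matches the paper's strategy: reduce to the parabolic problem in $\{v>0\}$, construct a subsolution vanishing on $\partial_L\Xi_r$, and compare. You also correctly identify the cubic cusp at the top corner of $\partial\Xi_r$ as the source of the strictly sublinear decay, and the remark comparing to $D_r+B_r$ shows you understand why the set $\Xi_r$ was chosen. However, the core step --- producing a subsolution $\phi$ with $\phi(x,r)\gtrsim f(r-|x|)$ for sublinear $f$ --- is where your sketch has a genuine gap, and the tools you invoke do not close it.

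Your explicit barrier $A(r-|x|)^\alpha\chi(t)$ with $\alpha\in(0,1)$ and $\chi$ increasing is not a subsolution of the parabolic problem. Writing $\psi(\rho)=(r-\rho)^\alpha$, the radial eigenvalue $\psi''=\alpha(\alpha-1)(r-\rho)^{\alpha-2}$ and the tangential eigenvalues $\psi'/\rho$ are all negative, so $D^2\phi$ has only negative eigenvalues and both Pucci operators of $D^2\phi$ blow up to $-\infty$ as $\rho\to r^-$. The subsolution inequality requires $\phi_t\le F(D^2\phi,D\phi,\phi)$, and using \eqref{eq:structural} the worst admissible $F$ gives $F(D^2\phi,\cdot)\le\mathcal M^+(D^2\phi)+\de_1|D\phi|+\de_0|\phi|$, whose leading term is $\la\alpha(\alpha-1)(r-\rho)^{\alpha-2}\to-\infty$. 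This forces $\phi_t\to-\infty$ near the lateral boundary, which contradicts $\chi$ peaking at $t=r$. So the direct Pucci computation you gesture at will not verify the subsolution inequality for this ansatz, whatever $\alpha$ close to $1/3$ you try.

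The alternative of solving an auxiliary extremal equation and invoking ``classical boundary H\"older/Harnack estimates in cuspidal domains'' is also not available off the shelf. The paper's own remark after Proposition~\ref{pr:convolutionIsSolution} stresses that $\Xi_r$ was chosen \emph{precisely} so that its parabolic boundary fails to be $C^{1,1/2}_{x,t}$ at $\cl{\partial^\top\Xi_r}$, hence it is not a regular set for boundary regularity theory, and the quantitative blow-up of the lateral gradient near the top corner is exactly the non-classical estimate the lemma is meant to establish; citing it as known is circular. What the paper actually does is an \emph{iterated parabolic Harnack} argument: starting from a compact core $K\subset\Xi$ where $v\ge\min_K v>0$ (lower semicontinuity plus positivity), it builds a chain of nested cylinders $Q_j\subset\Xi$ whose spatial radii satisfy the recurrence $a_{j+1}\ge r^{1/3}a_j^{2/3}$ coming from the cubic relation \eqref{eq:cubicXi}. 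This recurrence converges doubly exponentially, so the number of Harnack steps to propagate positivity from $K$ to a point at distance $s$ from the corner grows only like $\log\log(1/s)$, each step losing a Harnack factor $\ov{2c}$. The result is $f(s)\sim(\log(1/s))^{-\gamma}$, which goes to zero slower than any power $s^\alpha$ and certainly has $f(s)/s\to\infty$. A parabolic cusp $\tau^{1/2}$ would give a $\log(1/s)$-length chain and hence only a H\"older bound; the cubic cusp is what shortens the chain to $\log\log$. This Harnack chain is the key missing ingredient in your sketch, and you will also need the paper's separate barrier argument to cross the final open-at-the-top boundary since $\Xi_r$ does not contain $\{t=r\}$.
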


\begin{proof}
We only prove that $v(x,\si + r) \geq f(r - \abs{x - \xi})$.
The full result follows from the simple observation
\begin{align*}
\Xi_{r/2}(x,t) \subset \Xi_r \qquad \text{for } \abs{x} \leq \frac{r}{2}, \ t \in \bra{-\frac{r}{2}, \frac{r}{2}}.
\end{align*}

Let us fix a point $(\zeta, \si + r) \in D_r(\xi, \si +r)$.
Since the argument is invariant under translation and space rotation, we can assume that $(\zeta, \si+r) = (0,0)$. Let $s = r - \abs{\zeta - \xi}$.
Our goal is to show that $v(0,0) = v(\zeta, \si + r) \geq f(s)$ for some function $f$.
The situation is depicted in Figure~\ref{fig:expansionWleq0}.
\begin{figure}
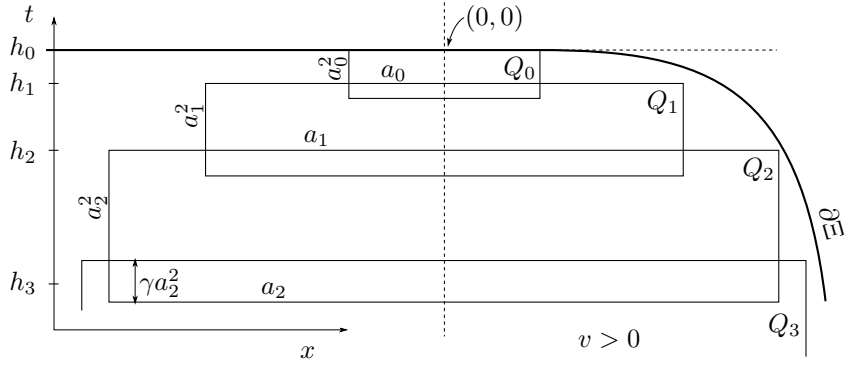

\centering
\fig{gradientBlowup}{4.5in}
\caption{Iterations in the proof of lower bound in Lemma~\ref{le:DsupersolutionInfty}}
\label{fig:expansionWleq0}
\end{figure}

It is straightforward to estimate the distance of the lateral boundary  $\partial_L \Xi$ at each time, $\at{\partial \Xi}{t}$, from the origin $x = 0$ for $t \in (-r,0)$:
\begin{align}
\label{eq:cubicXi}
\dist(0, \at{\partial \Xi}{t}) = s + \sqrt[3]{r^2 - (t + r)^2} = s +\sqrt[3]{-2rt - t^2} > s + \sqrt[3]{-rt}.
\end{align}

Since $v \in LSC$ is positive in $\Xi$, it has a positive minimum on the compact set 
\begin{align*}
K := \set{(x,t) : \abs{x - \xi} \leq r + \sqrt[3]{-r t},\ t \in [-7r/8, -r^2/256]}
\end{align*}
as $K \subset \Xi$.

Let us set 
\begin{align*}
\ga := \min \set{\frac{1}{16n\la + 8 \de_1 + 2\de_0}, 1}
\end{align*}
and let us define the barrier
\begin{align}
\label{decrParabolaBarrier}
\vp(x,t) = \pth{-\frac{1}{2\ga} t - 4 \abs{x}^2 +1}_+.
\end{align}
Clearly $\vp$ is a viscosity subsolution of the parabolic problem due to \eqref{eq:structural}, and $\vp(0, \ga) = \frac{1}{2} \vp(0,0) = \frac{1}{2}$.
Let us define a sequence of parabolic cylinders
\begin{align*}
 Q_j = B_{a_j} \times (h_j - (1 + \ga) a_j^2, h_j), \quad j \geq 0,
 \end{align*} 
 where 
 \begin{align*}
 h_j &= -\sum_{i=0}^{j-1} a_i^2, \quad j \geq 1, \qquad h_0 = 0.
 \end{align*}
$a_j$'s are defined in accordance with \eqref{eq:cubicXi} as
\begin{align*}
  a_j = \sqrt[3]{-r h_j} + s, \quad j \geq 1, \qquad a_0 = \min \set{s, \frac{r}{16}}.
 \end{align*}
Writing $h_{j+1} = h_j - a_j^2$, we can derive the recurrence relation 
\begin{align}
 \label{recurak}
 a_{j+1} = \sqrt[3]{r a_j^2 + (a_j - s)^3} + s, \quad j \geq 0.
\end{align}
 Note that $a_j$'s were chosen in such a way that $Q_j \subset \Xi$ for all $j$ for which $h_{j+1} < r$ due to \eqref{eq:cubicXi}.
 
First assume that $s < r/16$. From \eqref{recurak} we estimate
 \begin{align*}
 a_{j+1} \geq r^{1/3} a_j^{2/3}, \quad j \geq 0,
 \end{align*}
 which yields
 \begin{align}
 \label{eq:boundak}
 a_j \geq r \pth{\frac{s}{r}}^{\pth{\frac{2}{3}}^j} \to  r \quad \text{ as } j \to \infty.
 \end{align}
 
We want to estimate the minimal $k = k(s)$ that guarantees
\begin{align*}
\partial_B Q_k = D_{a_k} (0, h_k - (1 + \ga) a_k) \subset K.
\end{align*}
We first observe that if $a_{j} \geq \frac{r}{2} + s$ for some $j \geq 0$ then $k \leq j$. Therefore \eqref{eq:boundak} yields the upper bound
\begin{align}
\label{kBound}
k \leq \frac{\log\left[\frac{\log \frac{s}{r}}{\log \frac{1}{2}}\right]}{\log\frac{3}{2}} + 1.
\end{align}

To show that such $k$ indeed exists, suppose that $h_j > - r^2/8$ for some $j$. Then $a_j < r/2 + s$ and 
\begin{align*}
h_j - (1 + \ga)a_j^2 \geq -\frac{r^2}{8} - 2 \pth{\frac{r}{2} +s}^2 = - \frac{r^2}{8} - \frac{r^2}{2} - 2rs - 2s^2 \geq -\frac{7r}{8},
\end{align*}
since $s < r/16$ and $\ga \leq 1$.

If $s \geq r/16$ then we set $k  = 0$. 
 
 \vspace{10pt}
 
Next we iteratively define, for $j = k, \ldots, 0$, the rescaled barriers
\begin{align*}
\vp_j(x,t) = \kappa_j \vp(a_j^{-1}x, a_j^{-2}(t-h_j) + 1 + \ga),
\end{align*}
where 
\begin{align*}
\kappa_j =
\begin{cases} \min_K v > 0 & j = k,\\
\min_{B_{a_j/2}} \psi_{j+1}(\cdot, h_{j+1} - \ga a_j^2)  & j < k,
\end{cases}
\end{align*}
and the functions $\psi_j$, $j = k, \ldots, 0$, are the unique solutions of the parabolic problem with boundary data $\psi_j = \vp_j$ on $\partial_P Q_j$.
We observe that $\psi_j \geq \vp_j$ in $Q_j$ due to the parabolic comparison (Prop.~\ref{pr:parabolicComparison}) since $\vp_j$ is a subsolution of the parabolic problem.

Now the parabolic Harnack inequality (Prop.~\ref{pr:Harnack} applied with $t_1 = \ga$ and $t_2 = 1 + \ga$) yields
\begin{align}
\label{kappaIteration}
\begin{aligned}
\kappa_{j-1} &= \inf_{B_{a_{j-1}/2}} \psi_j(\cdot, h_j - \ga a_{j-1}^2)
\geq \inf_{B_{a_j/2} \times [h_j - \ga a_{j-1}^2, h_j]} \psi_j
\\&\geq \ov{c} \sup_{B_{a_j/2}} \psi_j(\cdot, h_{j+1})
\geq \frac{1}{2c} \sup_{B_{a_j/2}} \psi_j(\cdot, h_{j+1} - \ga a_j^2)
\\&= \frac{1}{2c} \kappa_j.
\end{aligned}
\end{align}

Finally, we realize that, by definition, $v \geq \vp_k = \psi_k$ on $\partial_P Q_k$ and thus an application of the parabolic comparison shows that $v \geq \psi_k$ in $Q_k$.
Since we chose $\psi_j$ so that $\psi_j \leq \psi_{j+1}$ on $Q_j \cap Q_{j+1}$, we can apply the comparison principle iteratively to conclude that $v \geq \psi_j$ in $Q_j$ for $j = k, \ldots, 1$.
The case $j = 0$, however, has to be considered separately, because we only know that $v > 0$ in $\Xi$, i.e. in $Q_0 \cap \set{t < 0}$.
Therefore the parabolic comparison can only show that $v \geq \psi_0$ in $Q_0 \cap \set{t < 0}$.
Nevertheless, since $\psi_0(0,0) > 0$ and $v$ is bounded from below on $\cl\Xi$, a straightforward barrier argument using a strict classical subsolution of \eqref{RP}, that can be constructed in the form similar to \eqref{decrParabolaBarrier}, extended in negative phase using a strict subsolution of the elliptic problem, shows that $v(0,0) \geq \psi_0(0,0)$.

Now let $\al = \ov{2c}$. Then a simple induction of \eqref{kappaIteration} using the bound \eqref{kBound} yields the lower bound
 \begin{align*}
v(0, 0) \geq \psi_0(0,0) \geq \al \pth{\frac{\log \frac{s}{r}}{\log \frac{1}{2}}}^{\frac{\log \al}{\log \frac{3}{2}}} \min_K v =: f(s), \quad s \in (0, r].
 \end{align*}
A straightforward computation verifies that $f(s) \to 0$ and $f(s)/ s \to \infty$ as $s \searrow 0+$ since $\al < 1$.
\end{proof}

\begin{corollary}
[Continuous expansion of $\set{v \leq 0}$]
\label{co:continuityWleq0}
If $v$ is a visc. supersolution of \eqref{RP} then the set $\set{v \leq 0}$ cannot expand discontinuously, i.e.
\begin{align*}
\cl{\set{v \leq 0, \ t <  s}} = \set{v\leq 0, \ t\leq s},
\end{align*}
for any $s$.
\end{corollary}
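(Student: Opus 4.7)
The inclusion $\cl{\{v \leq 0,\ t < s\}} \subset \{v \leq 0,\ t \leq s\}$ is immediate: $v \in LSC(\cl Q)$ makes $\{v \leq 0\}$ closed in $\cl Q$, and $\{t \leq s\}$ is closed as well. For the reverse inclusion, take $(x_0, t_0)$ with $v(x_0, t_0) \leq 0$ and $t_0 \leq s$; if $t_0 < s$ the point already lies in $\{v \leq 0,\ t < s\}$, so the only case that genuinely requires an argument is $t_0 = s$.

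For this case I would argue by contradiction. Suppose $(x_0, s) \notin \cl{\{v \leq 0,\ t < s\}}$. Then there exists $\varepsilon > 0$ such that $v > 0$ on $B_\varepsilon^n(x_0) \times (s - \varepsilon, s)$. The plan is to fit a regularization domain $\Xi_r(x_0, s - r)$ inside this positivity box and then invoke Lemma~\ref{le:DsupersolutionInfty} at its top. Since $\Xi_r = D_r + E_r$, for any $r > 0$ satisfying $r + r^{2/3} < \varepsilon$ and $2r < \varepsilon$ one has
\begin{align*}
\Xi_r(x_0, s - r) \subset B_{r + r^{2/3}}^n(x_0) \times (s - 2r,\ s),
\end{align*}
so $v > 0$ on the open set $\Xi_r(x_0, s - r)$. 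Applying Lemma~\ref{le:DsupersolutionInfty} with $\xi = x_0$, $\sigma = s - r$ and evaluating the conclusion at the apex point $(x, t) = (x_0, \sigma + r) = (x_0, s)$ yields
\begin{align*}
v(x_0, s) \geq f\bigl(r - \abs{x_0 - \xi}\bigr) = f(r) > 0,
\end{align*}
contradicting $v(x_0, s) \leq 0$.

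The only bookkeeping is to check that $v$ is a viscosity supersolution on $\cl{\Xi_r(x_0, s - r)}$, as required by the lemma; this is inherited by localization from $v$ being a supersolution on $\cl Q$ provided $\cl{\Xi_r(x_0, s - r)} \subset \cl Q$, which holds for small $r$ whenever $(x_0, s)$ lies in the spatial interior. The spatial-boundary case $x_0 \in \partial \Omega$ is trivial, since $v \equiv g \equiv -1$ there forces $(x_0, t_n) \in \{v \leq 0,\ t < s\}$ for any sequence $t_n \nearrow s$. I do not anticipate a real obstacle: this corollary is precisely what Lemma~\ref{le:DsupersolutionInfty} was engineered to yield, and the essential work — namely the super-linear lower bound $f$ with $f(r)>0$ for $r>0$ — has already been done there.
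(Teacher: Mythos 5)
Your argument is the paper's: at a putative discontinuous‐expansion point $(x_0,s)$, you fit $\Xi_r(x_0,s-r)$ inside the positivity region of $v$ and invoke Lemma~\ref{le:DsupersolutionInfty} at the apex to obtain $v(x_0,s)\ge f(r)>0$, contradicting $v(x_0,s)\le 0$. One small inaccuracy in your closing remark: a viscosity supersolution satisfies $v\ge g$, not $v\equiv g$, on $\partial\Omega\times[0,T]$, so the case $x_0\in\partial\Omega$ is not as trivial as you claim; however the paper's own proof leaves this case implicit as well, and in the place the corollary is actually applied (Step~1 of Lemma~\ref{le:negativeSetsOrdered}) only interior points arise, so this does not affect the substance of the argument.
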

\begin{proof}
Suppose that the claim is not true, i.e. there is a point 
\begin{align*}
(\xi,\tau) \in \set{v\leq 0, \ t\leq \tau} \setminus \cl{\set{v \leq 0, \ t <  \tau}}.
\end{align*}
That means that there is $\rho > 0$ such that 
\begin{align*}
 \cl\Xi_\rho(\xi, \tau - \rho) \cap \set{v \leq 0, \ t <  \tau} = \emptyset
\end{align*}
and thus $v > 0$ in $\Xi_\rho(\xi, \tau - \rho)$.  Lemma~\ref{le:DsupersolutionInfty} then yields $v(\xi, \tau) > 0$, a contradiction.
\end{proof}

\begin{remark}
Note that the set $\set{u \geq 0}$ of a subsolution $u$ can expand discontinuously when a part of the set $\set{u < 0}$ is pinched off by a collision of two ``fingers'' of $\set{u \geq 0}$.
\end{remark}

\begin{lemma}
\label{le:Dsubsolution0}
Let $u \geq 0$ be a bounded subsolution of the parabolic problem in a parabolic neighborhood of $\cl \Xi$, with $\Xi = \Xi_r(\xi, \si)$ for some $(\xi, \si) \in \Rn \times \R$ and $r \in (0,1)$. Assume that $u = 0$ on $\Xi$. Then there exists $\e > 0$ and $g \in C^1([0, r + \e))$ with $g = g' = 0$ on $[0,r]$ such that
\begin{align*}
0 \leq u(x,t) \leq g(\abs{x-\xi}) \qquad \text{for } \abs{x - \xi} < r +\e,\  t \in [\si, \si + r].
\end{align*}
\end{lemma}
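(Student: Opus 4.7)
By a translation we may reduce to the case $(\xi, \sigma) = (0, 0)$, and set $M := \sup u$, which is finite by the boundedness hypothesis. For any point $(x_0, t_0)$ with $|x_0| < r + \varepsilon$ and $t_0 \in [0, r]$, there is nothing to prove if $(x_0, t_0) \in \Xi_r$ (because $u = 0$ there), so the interesting case is the ``teardrop'' region where $s := |x_0| - r \in (0, \varepsilon)$ and $t_0 \in [\sqrt{r^2 - s^3}, r]$. The geometric observation driving the argument is that in this region, the time during which the vertical line $\set{x_0} \times [0, r]$ stays outside $\cl\Xi_r$ has length at most $r - \sqrt{r^2 - s^3} \leq C s^3$, which is cubically small compared to the parabolic scale $s^2$ associated with a spatial scale $s$.

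First I would set up the parabolic cylinder $Q' := B_{s/2}(x_0) \times (t_0 - \tau, t_0]$, with $\tau$ chosen maximally so that $B_{s/2}(x_0) \times \set{t_0 - \tau} \subset \cl\Xi_r$. From the cross-section formula $\at{\cl\Xi_r}{t} = \set{|y| \leq r + \sqrt[3]{r^2 - t^2}}$ for $|t| \leq r$, the constraint $|x_0| + s/2 = r + 3s/2 \leq r + \sqrt[3]{r^2 - (t_0 - \tau)^2}$ forces $\tau \leq C_1 s^3$ for a constant $C_1$ depending only on $r$. With this choice, $u \equiv 0$ on the bottom of $Q'$ (which lies in $\cl\Xi_r$, using that $u = 0$ on the open set $\Xi_r$ together with the uniform ellipticity of the subsolution inequality to rule out isolated upward jumps of $u$ across $\partial\Xi_r$), while $u \leq M$ on the lateral boundary.

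Next I would construct a radially symmetric classical supersolution $\Psi$ of the Pucci extremal inequality $\Psi_t \geq \mathcal{M}^+(D^2\Psi) + \delta_1|D\Psi| + \delta_0|\Psi|$ on $Q'$, satisfying $\Psi(\cdot, t_0 - \tau) \geq 0$ on $B_{s/2}(x_0)$ and $\Psi \geq M$ on $\partial B_{s/2}(x_0) \times [t_0 - \tau, t_0]$. By the structural condition \eqref{eq:structural} such a $\Psi$ is then also a supersolution of the original parabolic problem, so the parabolic comparison principle yields $u \leq \Psi$ in $Q'$. Because the parabolic scale ratio $\tau/(s/2)^2 \leq 4 C_1 s$ tends to $0$ as $s \to 0$, quantitative estimates of Gaussian (heat equation) or Krylov-Safonov (nonlinear Pucci) type give $\Psi(x_0, t_0) \leq C_2 M \exp(-c/s)$ for constants $C_2, c > 0$ independent of $s$ (or at worst a power $C_2 M s^{\gamma}$ with $\gamma > 0$, which can then be boosted to arbitrarily high polynomial order by iterating the barrier estimate on a shrinking sequence of cylinders with fixed scale ratio). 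Setting $g \equiv 0$ on $[0, r]$ and $g(\rho) := C_2 M \exp(-c/(\rho - r))$ on $(r, r + \varepsilon)$ produces a $C^\infty$ function with all derivatives vanishing at $\rho = r$, in particular $g = g' = 0$ on $[0, r]$, and yields the required bound $u(x, t) \leq g(|x - \xi|)$ on the prescribed set.

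The hard part will be the quantitative construction of the supersolution $\Psi$ with the super-linearly small value at $(x_0, t_0)$ for the fully nonlinear Pucci operator: in the heat equation case one can read this off an explicit error-function barrier such as $\Psi(x, t) \approx M\,\mathrm{erfc}((s/2 - |x - x_0|)/(2\sqrt{t - t_0 + \tau}))$, but for the general Pucci operator with lower-order terms $\delta_0, \delta_1 > 0$ one needs either an analogous explicit radial barrier solving a Pucci ODE, or the Gaussian-type bounds from Krylov's theory, together with a routine absorbing adjustment to handle the zero- and first-order terms. An equivalent and perhaps more elementary route, dual to the proof of Lemma~\ref{le:DsupersolutionInfty}, would be to apply an iterated Harnack-type argument to the non-negative supersolution $M - u$ on $\Xi_r$; the mirror structure of the two lemmas strongly suggests this as the intended approach.
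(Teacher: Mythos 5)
Your proposal is correct in outline but takes a substantially different and much heavier route than the paper. The paper's proof uses a single explicit paraboloid barrier
\begin{align*}
\psi_{\e,\eta}(x,t) = \frac{4M}{\e}\bigl(4n\Lambda t + |x|^2 + \eta\bigr)
\end{align*}
on a cylinder of \emph{fixed} scale $\e^{1/2}$ (depending only on $n,\Lambda,\de_0,\de_1$, not on the distance $s = |x_0| - r$), slid around $\partial\Xi_r$; this is a strict supersolution by \eqref{eq:structural}, it dominates $M$ on the lateral boundary, and its nonpositivity set is steeper than the cubic lateral boundary of $\Xi_r$ (the reason for defining $\Xi_r$ with $|x|^3 + |t|^2 < r^2$). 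Taking infima over translations produces the \emph{quadratic} bound $g(\rho) \sim \frac{4M}{\e}(\rho - r)^2$, which is already $C^1$ with $g(r)=g'(r)=0$ — exactly what the lemma asks for and no more. Your approach instead builds $s$-dependent thin cylinders with $\tau \sim s^3 \ll s^2$ and aims for exponential (or high-polynomial) decay via Gaussian/Krylov--Safonov estimates or iterated Harnack; this is a valid strategy and mirrors Lemma~\ref{le:DsupersolutionInfty} as you suggest, but it imports quantitative parabolic machinery for Pucci operators with lower-order terms that is far from elementary and not needed here, since the quadratic rate suffices. A few small points: ``$\tau$ chosen \emph{maximally}'' should be \emph{minimally} (as $\tau$ increases from $0$ the cross-section first widens, so the binding constraint giving $\tau \lesssim s^3$ is a lower bound on admissible $\tau$, and you want the smallest admissible one); and the bottom disk should be placed strictly inside the \emph{open} set $\Xi_r$ — since $u$ is only USC and the hypothesis gives $u=0$ on $\Xi_r$ but not on $\cl\Xi_r$, the appeal to ellipticity to ``rule out isolated upward jumps'' is not a clean argument, whereas enlarging $\tau$ by a harmless constant factor avoids the issue entirely. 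Finally, your closing guess that the paper uses an iterated Harnack argument on $M-u$ is not what the paper does — the authors use the single paraboloid barrier described above.
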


\begin{proof}
Let us choose $(y,s) \in B_r(x) \times [\si, \si + r]$.
For given $\e > 0$, $\eta \in (0, \e)$, we define the barrier
\begin{align*}
\psi_{\e,\eta}(x,t) &:= \frac{4M}{\e}(4n\Lambda t + \abs{x}^2 + \eta),
\end{align*}
on
\begin{align*}
 E_{\e, \eta} &:= \pth{B_{\e^{1/2}} \times (-\frac{\e}{8n\Lambda}, 0]} \cap \set{\psi_{\e,\eta}(x,t) > 0}.
\end{align*}
Note that:
\begin{enumerate}
\romanlist
\item Due to \eqref{eq:structural},
$\psi_{\e,\eta}$ is a strict supersolution of the parabolic problem in $E_{\e,\eta}$ as long as $\e^{1/2} < \frac{2n\Lambda}{3 (\de_0 + \de_1)}$, and $\psi_{\e,\eta} \geq 2M$ on $\partial B_{\e^{1/2}} \times [- \frac{\e}{8n\Lambda},0]$.
\item $\psi_{\e,\eta} > 0$ if and only if $t > - \frac{\eta}{4n\Lambda}$ or $\abs{x} > \sqrt{-4n\Lambda t - \eta}$.
\item $\sqrt[3]{-rt} > \sqrt{-4n\Lambda t}$ when $t \in (-\frac{r}{8n\Lambda},0)$ and $\sqrt[3]{-\frac{r\e}{8n\Lambda}} > \e^{1/2}$ for small $\e > 0$.
\end{enumerate}
(i)--(iii) verify that for small $\e > 0$ and all $\eta \in (0,\e)$,
we have the ordering 
$$u(x,t) \leq \psi_{\e,\eta} (x-y, t-s)\hbox{ on }\partial_P E_{\e,\eta} + (y,s)$$
and hence by the parabolic comparison (Prop.~\ref{pr:parabolicComparison}) the ordering holds in $E_{\e, \eta} + (y,s)$.
Therefore there exists a constant $\e> 0$ such that
\begin{align*}
u(x,t) \leq g(\abs{x - \xi}) := \inf_{\substack{\eta>0\\\zeta\in B_r}} \psi_{\e, \eta}(x- \zeta, 0)
\end{align*}
for $(x,t) \in B_{r + \e} \times [\si, \si + r]$.
\end{proof}

\subsection{\texorpdfstring{$Z$}{Z} and \texorpdfstring{$W$}{Z} cross}

We proceed with the the proof of the comparison principle (Theorem~\ref{th:comparisonPrinciple}) for the regularized solutions $Z$ and $W$, in place of $u$ and $v$. To argue by contradiction, we investigate the situation when $Z$ and $W$ cross in $Q_r$, i.e. there is a finite first crossing time $t_0$, defined by
\begin{align}
\label{eq:crossingTime}
t_0 := \sup \set{\tau \mid Z(\cdot, t) < W(\cdot, t) \text{ for } 0 \leq t \leq \tau}.
\end{align}

Now since $\set{W \leq 0}$ cannot expand discontinuously due to Lemma~\ref{co:continuityWleq0}, we can prove that a certain ordering of level sets of $W$ and $Z$ is preserved up to the crossing time $t_0$.

\begin{lemma}
\label{le:negativeSetsOrdered}
Let $Z$ and $W$ be the regularized solutions defined in \eqref{eq:regularizedSolutions} and $t_0$ be the crossing time defined in \eqref{eq:crossingTime}. Then
\begin{align}
\label{eq:touchOnBoundary}
\set{Z \geq 0}_{t_0} \cap \set{W \leq 0}_{t_0} = \partial \pth{\set{Z \geq 0}_{t_0}} \cap \partial \pth{\set{W \leq 0}_{t_0}}
\end{align}
In particular,
\begin{align}
\label{eq:disjointInterior}
\interior \pth{\set{Z \geq 0 }_{t_0}} \cap \interior \pth{\set{W \leq 0}_{t_0}} = \emptyset.
\end{align}
\end{lemma}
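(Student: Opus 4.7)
The plan is to prove \eqref{eq:touchOnBoundary} first; the ``in particular'' statement \eqref{eq:disjointInterior} then follows immediately. The containment $\supseteq$ in \eqref{eq:touchOnBoundary} is trivial because $\{Z \geq 0\}_{t_0}$ and $\{W \leq 0\}_{t_0}$ are closed ($Z \in USC$, $W \in LSC$), so they contain their topological boundaries. The content is the other direction: any $x_0$ in the intersection must in fact lie on both boundaries.

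I would argue by contradiction, splitting into two cases: (i) $x_0 \in \interior \{W \leq 0\}_{t_0}$, or (ii) $x_0 \in \interior \{Z \geq 0\}_{t_0}$. In either case the goal is the same: exhibit $t^* < t_0$ and $y^* \in \Omega_r$ with $Z(y^*, t^*) \geq 0$ and $W(y^*, t^*) \leq 0$, contradicting the defining property of $t_0$ that $Z(\cdot, t) < W(\cdot, t)$ for every $t < t_0$.

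For case (i), first I would invoke the interior-ball property (Proposition~\ref{pr:dualPoint}) at $(x_0, t_0) \in \{Z \geq 0\}$ to select a sup-dual point $(x_0', s_0') \in \Pi^u(x_0, t_0)$ so that $\cl\Xi_r(x_0', s_0') \cap \cl Q_r \subseteq \{Z \geq 0\}$. Next, since $W$ is a viscosity supersolution on $Q_r$ by Proposition~\ref{pr:convolutionIsSolution}, Corollary~\ref{co:continuityWleq0} applied to $W$ supplies a sequence $(y_n, s_n) \in \{W \leq 0\}$ with $s_n < t_0$ and $(y_n, s_n) \to (x_0, t_0)$; the definition of $t_0$ then forces $Z(y_n, s_n) < W(y_n, s_n) \leq 0$. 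The contradiction follows once I show $(y_n, s_n) \in \cl\Xi_r(x_0', s_0') \cap \cl Q_r$ for $n$ large, for this yields $Z(y_n, s_n) \geq 0$. The inclusion is immediate whenever $(x_0, t_0)$ lies in the space-time interior $\Xi_r(x_0', s_0')$.

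The hard part will be handling the boundary positioning $(x_0, t_0) \in \partial \Xi_r(x_0', s_0')$, most critically the ``bottom-flat'' sub-case $s_0' = t_0 + r$ in which the entire $\Xi_r$-neighborhood lies in $\{t \geq t_0\}$ and hence gives no leverage for $t < t_0$. To circumvent this I would use the assumption $B_\rho(x_0) \subseteq \{W \leq 0\}_{t_0}$ to replace $x_0$ by a nearby $\tilde x$ in the relative interior of the closed space slice $\cl\Xi_r(x_0', s_0') \cap \{t = t_0\}$ (a space ball centered at $x_0'$): that interior slice meets $B_\rho(x_0)$ in an open set, and any such $\tilde x$ still satisfies $\tilde x \in \{Z \geq 0\}_{t_0} \cap \{W \leq 0\}_{t_0}$. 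A freshly chosen dual point at $\tilde x$, combined with Lemma~\ref{le:DsupersolutionInfty} applied to $W$ to rule out degenerate dual-point positions, then places $\tilde x$ in an interior configuration from which the argument of the previous paragraph goes through. Case (ii) runs analogously after swapping the roles of $Z$ and $W$, but the crucial asymmetry---that $\{Z \geq 0\}$ can expand discontinuously and so admits no direct analog of Corollary~\ref{co:continuityWleq0}---forces one to substitute the contrapositive of Lemma~\ref{le:Dsubsolution0} to locate the requisite pre-crossing points; this substitution is where most of the remaining delicacy resides.
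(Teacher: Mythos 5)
Your overall structure---reduce \eqref{eq:disjointInterior} to \eqref{eq:touchOnBoundary}, note $\supseteq$ is trivial by closedness, argue $\subseteq$ by contradiction splitting on which interior $x_0$ falls into---is reasonable, and you correctly identify that the serious obstacle is when the sup-dual point $(x_0',s_0')\in\Pi^u(x_0,t_0)$ lies on $\cl{\partial^\top\Xi_r(x_0,t_0)}$, i.e.\ $s_0'=t_0+r$, in which case $\cl\Xi_r(x_0',s_0')\cap\set{t<t_0}=\emptyset$ and no pre-crossing information flows from the interior-ball property. The trouble is that you have no machinery for ruling this configuration out without circularity. The paper's device for excluding top dual points is precisely Lemma~\ref{le:finiteSpeed} (step 2), and that proof invokes \eqref{eq:disjointInterior}, Lemma~\ref{le:trunk} and Hopf's lemma; you cannot appeal to it (nor to Lemma~\ref{le:ZWzeroAtContact}, Corollary~\ref{cor:orderingAtt0}, etc.) while proving the very inclusion \eqref{eq:disjointInterior} they presuppose. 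Your proposed patch---shift to a nearby $\tilde x$ inside the bottom disk of $\cl\Xi_r(x_0',s_0')$ and re-pick a dual point, then ``combine with Lemma~\ref{le:DsupersolutionInfty} applied to $W$''---doesn't work: the fresh dual point can again land on the top face, and Lemma~\ref{le:DsupersolutionInfty} constrains dual points of the \emph{inf}-regularization $W$ (through $v$), not those of the sup-regularization $Z$ relative to $u$; it gives no handle on $\Pi^u$.

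The paper takes a different route that avoids this difficulty entirely. It only proves the single inclusion $\interior\set{W\leq0}_{t_0}\subset\set{Z<0}_{t_0}$, and does so not by chasing dual points of $Z$ near the hypothetical $(x_0,t_0)$, but by \emph{constructing a barrier}: continuous expansion of $\set{W\leq 0}$ (Corollary~\ref{co:continuityWleq0}) gives a small time window before $t_0$ during which $B_\de(\xi)\subset\interior\set{W\leq 0}_t$, and Lemma~\ref{le:trunk} supplies a wide trunk $G$ connecting $B_\de(\xi)$ to $\partial\Omega_r$ through $\set{W\leq 0}$; since $Z<W\leq 0$ in $G$ up to time $t_0$, one solves the elliptic problem in $G$ with zero Dirichlet data on $\partial G\cap\Omega_r$ and negative data on $\partial G\cap\partial\Omega_r$ to get a strict classical supersolution sitting above $Z$, which forces $Z(\xi,t_0)<0$. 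The other inclusion then comes for free from Lemma~\ref{le:nohair}: the closed sets $\set{W\leq 0}_{t_0}$ and $\set{Z\geq 0}_{t_0}$ have no isolated points relative to their interiors, so $\interior\set{W\leq 0}_{t_0}\cap\set{Z\geq 0}_{t_0}=\emptyset$ immediately yields $\interior\set{Z\geq 0}_{t_0}\cap\set{W\leq 0}_{t_0}=\emptyset$. This means your whole Case (ii) is unnecessary; you don't need to hunt for a substitute for Corollary~\ref{co:continuityWleq0} on the $Z$ side at all. If you want to rescue your proof, the missing ingredient is the trunk lemma plus the elliptic barrier, not a finer analysis of dual-point geometry.
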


\begin{proof}
First observe that $\set{W \leq 0}_t \subset \set{Z < 0}_t$ for $t < t_0$ (or equivalently $\set{Z \geq 0}_t \subset \set{W > 0}_t$ for $t < t_0$).

\medskip\noindent
\textbf{Step 1.}
We claim that $\interior \set{W \leq 0}_{t_0} \subset \set{Z < 0}_{t_0}$.

Pick any $\xi \in \interior \set{W \leq 0}_{t_0}$. Due to continuous expansion of $\set{W \leq 0}$ (Corollary~\ref{co:continuityWleq0}), there exists $\de > 0$ such that $B_\de(\xi) \subset \interior \set{W \leq 0}_t$ for a short time before $t_0$, $t \in [t_0 - \de, t_0]$.
Lemma~\ref{le:trunk} yields that $B_\de(\xi)$ is connected to $\partial \Omega_r$ with non-positive values of $W$ (and thus negative $Z$) through a wide ``trunk'' 
\begin{align*}
G = (\ga([0,1]) + B_{r/2}) \cap \Omega_r,
\end{align*}
and $B_\de(\xi) \subset G \subset \set{W \leq 0}_{t_0}$.  
The continuous expansion of $\set{W \leq 0}$ again guarantees that $G$ can be chosen so that $G \subset \set{W \leq 0}_t$ for a short time before $t_0$.
Finally, we recall that $Z < W$ in $G \times [0, t_0)$.
Therefore we can construct a strict classical supersolution of \eqref{RP} up to the time $t_0$ that will stay above $Z$ and is negative in $B_\de(\xi)$. Indeed, we solve the elliptic problem in $G$ with zero on $\partial G \cap \Omega_r$ and negative data on $\partial G \cap \partial \Omega_r$. Since $G$ has a uniform interior ball property at points $\partial G \cap \Omega_r$, we can proceed as in the proof of Lemma~\ref{stability:initial}.
We conclude $Z(\xi, t_0) < 0$.

\medskip\noindent
\textbf{Step 2.} 
Since $\set{Z \geq 0} \cap Q_r = \set{Z < 0}^c \cap Q_r$, we have
\begin{align}
\label{int-Wleq0-cap-Zgeq0}
\interior \pth{\set{W \leq 0}_{t_0}} \cap \set{Z \geq 0}_{t_0} = \emptyset,
\end{align}
which together with Lemma~\ref{le:nohair} also gives
\begin{align}
\label{int-Zgeq0-cap-Wleq0}
\interior \pth{\set{Z \geq 0}_{t_0}} \cap \set{W \leq 0}_{t_0} = \emptyset.
\end{align}

\medskip\noindent
\textbf{Step 3.}
Now \eqref{int-Wleq0-cap-Zgeq0} clearly implies
\begin{align*}
\set{Z \geq 0}_{t_0} \cap \set{W \leq 0}_{t_0} \subset \set{W \leq 0}_{t_0} \setminus \interior \pth{\set{W \leq 0}_{t_0}} = \partial \pth{\set{W \leq 0}_{t_0}}.
\end{align*}
Using \eqref{int-Zgeq0-cap-Wleq0} symmetrically concludes the proof of the lemma.
\end{proof}

\begin{corollary}
\label{co:unitNormal}
Let $Z, W$ and $t_0$ be defined as above and let 
\begin{align*}
\xi \in \set{Z \geq 0}_{t_0} \cap \set{W \leq 0}_{t_0}.
\end{align*}
Then there is a unique unit vector $\nu$ such that $\nu$ is the unit outer normal to $\set{Z \geq 0}_{t_0}$ at $\xi$ and the unit outer normal vector to $\cl {\set{W > 0}}_{t_0}$ at $\xi$.
\end{corollary}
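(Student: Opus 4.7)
The plan is to combine Lemma~\ref{le:negativeSetsOrdered}, which forces $\xi$ onto the boundary of both time-slices, with the interior ball property of Proposition~\ref{pr:dualPoint} to produce two closed space balls tangent to the respective phase boundaries at $\xi$, and then use the disjoint-interiors statement \eqref{eq:disjointInterior} to force external tangency, which pins down a single common outer normal.

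First I would pick a sup-dual point $P' = (\xi',t')$ of $P = (\xi,t_0)$ with respect to $u$ (Proposition~\ref{pr:dualPoint}(i)), so that $\cl\Xi_r(P') \cap \cl Q_r \subset \set{Z \geq 0}$. A direct computation from $\Xi_r = D_r + E_r$ shows that, for $|s| \leq r$, the space slice $\at{\cl\Xi_r}{s}$ is the closed Euclidean ball of radius $r + (r^2 - s^2)^{1/3} \geq r$. Slicing at $t = t_0$ therefore yields a closed space ball $\cl B_{\rho_Z}(\xi') \subset \set{Z \geq 0}_{t_0}$ in a neighborhood of $\xi$, with $\rho_Z \geq r$ and $\xi \in \cl B_{\rho_Z}(\xi')$. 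Applying the analogous construction to $v$ and $W$ produces an inf-dual point $P'' = (\xi'',t'')$ and a closed ball $\cl B_{\rho_W}(\xi'') \subset \set{W \leq 0}_{t_0}$ of radius $\rho_W \geq r$, also containing $\xi$. Since Lemma~\ref{le:negativeSetsOrdered} places $\xi$ on $\partial\set{Z \geq 0}_{t_0} \cap \partial\set{W \leq 0}_{t_0}$, it cannot lie in the space interior of either ball, giving $|\xi - \xi'| = \rho_Z$ and $|\xi - \xi''| = \rho_W$.

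The key geometric squeeze is the next step. The open balls $B_{\rho_Z}(\xi')$ and $B_{\rho_W}(\xi'')$ lie in $\interior\set{Z \geq 0}_{t_0}$ and $\interior\set{W \leq 0}_{t_0}$, which are disjoint by \eqref{eq:disjointInterior}, so $|\xi' - \xi''| \geq \rho_Z + \rho_W$; the triangle inequality through $\xi$ provides the reverse inequality, forcing equality. Hence $\xi$ lies on the segment from $\xi'$ to $\xi''$ at distance exactly $\rho_Z$ from $\xi'$, and
\begin{equation*}
\nu := \frac{\xi - \xi'}{\rho_Z} = \frac{\xi'' - \xi}{\rho_W}
\end{equation*}
is a well-defined unit vector. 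The first expression identifies $\nu$ with the outer unit normal of the tangent interior ball $\cl B_{\rho_Z}(\xi')$ at $\xi$, and hence with the outer unit normal of $\set{Z \geq 0}_{t_0}$ at $\xi$. Moreover, since the open space-time set $\Xi_r(P'')$ is disjoint from $\cl{\set{W > 0}}$ (any point in it has a neighborhood on which $W \leq 0$), its time slice $B_{\rho_W}(\xi'')$ lies in the complement of $\cl{\set{W>0}}_{t_0}$; the direction from $\xi$ towards $\xi''$ is therefore the outer normal direction of $\cl{\set{W>0}}_{t_0}$ at $\xi$, and this direction is exactly $\nu$.

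Uniqueness follows by running the same squeeze with any other pair of dual points: a different sup-dual ball paired against the original inf-dual ball (or vice versa) still satisfies the triangle-equality case and so yields the identical $\nu$. The main thing requiring care — and the reason for regularizing over $\Xi_r$ in place of a round ball $B_r$, as flagged in the remark after \eqref{eq:regularizedSolutions} — is the verification that the space-time interior sets $\cl\Xi_r(P')$ and $\cl\Xi_r(P'')$ slice at $t = t_0$ to genuine Euclidean balls of positive radius tangent at $\xi$; this is exactly the feature of $D_r + E_r$ that makes the outer normal direction well-defined in the first place.
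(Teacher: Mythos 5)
Your proof is correct and follows essentially the same route as the paper's: pick dual points on both sides via Proposition~\ref{pr:dualPoint}, slice the corresponding copies of $\Xi_r$ at $t=t_0$ to get interior space balls tangent at $\xi$, invoke \eqref{eq:disjointInterior} to force external tangency, and note that the resulting common normal is independent of the choice of dual points. The only difference is cosmetic — you make the tangency explicit via the triangle-inequality equality case and compute the slice radius $r+(r^2-s^2)^{1/3}$ directly, whereas the paper states $\cl B_u\cap\cl B_v=\{\xi\}$ and cites the ball property of Proposition~\ref{pr:dualPoint} without the arithmetic.
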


\begin{proof}
Due to Lemma~\ref{le:negativeSetsOrdered}, $\xi \in \partial \pth{\set{Z \geq 0}_{t_0}} \cap \partial \pth{\set{W \leq 0}_{t_0}}$. 
Moreover there are dual points 
$$(\xi_u, \si_u) \in \Pi^u(P) \subset  \partial\set{u \geq 0}\hbox{ and }(\xi_v, \si_v) \in \Pi_v(P) \subset\partial \set{v \leq 0}$$ (see Proposition~\ref{pr:dualPoint}). 

\vspace{10pt}

Let $B_u = \interior \pth{\at{\cl \Xi_r(\xi_u, \si_u)}{t = t_0}}$ and $B_v = \interior \pth{\at{\cl \Xi_r(\xi_v, \si_v)}{t = t_0}}$. Due to the definition of $\Xi_r$, $B_u$ and $B_v$ are balls in $\Rn$, centered at $\xi_u$ and $\xi_v$, respectively, of radius greater than or equal to $r$. Observe that $B_u \subset \interior \set{W \geq 0}_{t_0}$ and $B_v \subset \interior \set{Z \leq 0}_{t_0}$. Therefore
\begin{align*}
B_u \cap B_v \subset \pth{\interior \set{W \geq 0}_{t_0}} \cap \pth{\interior \set{Z \leq 0}_{t_0}} = \emptyset,
\end{align*}
while $\cl B_u \cap \cl B_v = \set{\xi}$.
This is true for an arbitrary choice of dual points. Therefore the outer unit normal of $\nu$ of $B_u$ (resp. inner unit normal of $B_v$) at $\xi$ is uniquely determined, and can be taken as
\[
\nu = \frac{\xi - \xi_u}{\abs{\xi - \xi_u}} = \frac{\xi_v - \xi}{\abs{\xi_v - \xi}}. \qedhere
\]
\end{proof}

\subsection{Finite speed of expansion}

Our goal in this section is to use the ordering of the support to prove the ordering of the functions $Z$ and $W$ at the contact time $t = t_0$. This needs a careful analysis since $Z$ and $W$ are merely semi-continuous. 

\vspace{10pt}

From Lemma~\ref{le:positiveNegativePart} we know that $Z_+$ is a parabolic subsolution, and $-W_-$ is an elliptic supersolution for each time, and also $W$ is a parabolic supersolution in $\set{W > 0}$ (open set) and $Z$ is an elliptic subsolution for each time in $\set{Z < 0}$ (open set). Therefore we can invoke the standard comparison principle (Propositions~\ref{pr:ellipticComparison} and \ref{pr:parabolicComparison}) in the open sets $\set{W > 0}$ and $\set{Z < 0}_t$, if we know that the functions are ordered on the boundaries of these sets. This is not completely obvious, however, and we have to pay special attention to the situation at the contact point.

\begin{lemma}[Finite speed of expansion at the contact point]
\label{le:finiteSpeed}
Let $Z$, $W$ be the regularizations and $t_0$ be the crossing time as defined in \eqref{eq:crossingTime}. Then for any $\xi \in \set{Z \geq 0}_{t_0} \cap \set{W \leq 0}_{t_0}$ 
\begin{align*}
\Pi^u(\xi,t_0) \cup \Pi_v(\xi,t_0) \subset \partial_L \Xi_r(\xi, t_0).
\end{align*}
\end{lemma}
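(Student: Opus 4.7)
The plan is to argue by contradiction: suppose a sup-dual point $(\xi', \sigma') \in \Pi^u(\xi, t_0)$ lies in $\cl \partial^\top \Xi_r(\xi, t_0) \cup \cl \partial_\bot \Xi_r(\xi, t_0)$; the inf-dual case $\Pi_v(\xi, t_0)$ is symmetric under $v \leftrightarrow -v$, with Lemma~\ref{le:Dsubsolution0} and Lemma~\ref{le:DsupersolutionInfty} swapping roles. I focus first on the bottom cap $\sigma' = t_0 - r$, $|\xi' - \xi| \leq r$. Proposition~\ref{pr:dualPoint}(ii) yields $\cl \Xi_r(\xi', t_0 - r) \cap \cl Q_r \subset \set{Z \geq 0}$. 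From $\Xi_r = D_r + E_r$, the time-$t$ slice of this set for $t = t_0 - r + s$ with $s \in [-r, r]$ is the closed ball $\cl B_{r + (r^2 - s^2)^{1/3}}(\xi')$: the top slice at $t = t_0$ is $\cl B_r(\xi')$, while the middle slice at $t = t_0 - r$ is the strictly larger $\cl B_{r + r^{2/3}}(\xi')$. When $|\xi' - \xi| < r$, the top slice immediately puts $\xi \in B_r(\xi') \subset \interior \set{Z \geq 0}_{t_0}$, contradicting $\xi \in \set{W \leq 0}_{t_0}$ via \eqref{eq:disjointInterior} of Lemma~\ref{le:negativeSetsOrdered}.

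The rim case $|\xi' - \xi| = r$, with $\xi' = \xi - r \nu$ for $\nu$ the unique normal of Corollary~\ref{co:unitNormal}, is not closed by the top slice since $\xi$ lands only on $\partial \cl B_r(\xi')$. Here the cubic $r^{2/3}$ bulge of the middle slice becomes essential: we obtain $\xi \in B_{r + r^{2/3}}(\xi') \subset \interior \set{Z \geq 0}_{t_0 - r}$. Since $t_0$ is the crossing time \eqref{eq:crossingTime}, $Z < W$ strictly before $t_0$, hence $\xi \in \set{W > 0}_{t_0 - r}$ and so $v > 0$ on all of $\cl \Xi_r(\xi, t_0 - r)$. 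Lemma~\ref{le:DsupersolutionInfty} applied to $v$ on $\Xi_r(\xi, t_0 - r)$ then supplies the superlinear Hopf-type lower bound $v(x, t) \geq f(r - |x - \xi|)$ on $B_r(\xi) \times [t_0 - r/2, t_0]$, with $f(s)/s \to \infty$ as $s \to 0^+$. Combining this with the positivity already established on $\cl \Xi_r(\xi, t_0 - r)$ and the continuous expansion of $\set{W \leq 0}$ from Corollary~\ref{co:continuityWleq0}, one shows that any candidate inf-dual witness $(y^*, s^*) \in \cl \Xi_r(\xi, t_0) \cap \set{v \leq 0}$ would have to be approached by sequences $(y_k, s_k) \to (y^*, s^*)$ with $s_k < s^*$ that already lie in the region where $v > 0$ has been established --- a contradiction. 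The top cap $\sigma' = t_0 + r$ follows the same pattern, with Lemma~\ref{le:Dsubsolution0} in place of Lemma~\ref{le:DsupersolutionInfty} furnishing the complementary vanishing-gradient estimate on $u_+$ above $\cl \Xi_r(\xi, t_0)$.

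I expect the rim case to be the main technical obstacle: it is precisely why the paper uses $\Xi_r = D_r + E_r$ rather than the simpler $D_r + B_r$ of \cite{KP}. The $r^{2/3}$ bulge from $E_r$ is the only mechanism providing strict interior access at time $t_0 - r$, and the superlinear blowup of Lemma~\ref{le:DsupersolutionInfty} is what allows the resulting positivity of $v$ to survive the pointwise laxity of lower semicontinuity all the way up to the contact time $t_0$. Threading this cubic-geometric slack through the Hopf-type estimate so that the propagated $v > 0$ region genuinely covers every threatened inf-dual witness in $\cl \Xi_r(\xi, t_0)$ is the delicate bookkeeping at the heart of the proof.
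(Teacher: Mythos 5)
There is a genuine gap in the way you attempt to rule out the bottom cap for $\Pi^u$. In the rim case $|\xi'-\xi|=r$, you correctly deduce from $\xi\in\interior\set{Z\geq 0}_{t_0-r}$ and the definition of $t_0$ that $W(\xi,t_0-r)>0$, hence $v>0$ on $\cl\Xi_r(\xi,t_0-r)$, and Lemma~\ref{le:DsupersolutionInfty} then gives $v(x,t)\geq f(r-|x-\xi|)>0$ on $B_r(\xi)\times[t_0-r/2,t_0]$. But the contradiction you want --- that no inf-dual witness can sit in $\cl\Xi_r(\xi,t_0)\cap\set{v\leq 0}$, i.e.\ that $W(\xi,t_0)>0$ --- does not follow from these two facts. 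The set $\cl\Xi_r(\xi,t_0)$ reaches out to radius $r+r^{2/3}$ at $t=t_0$ (an annulus that neither positivity region covers) and extends all the way up to $t=t_0+r$, a future region where no information about $v$ has been established. Invoking Corollary~\ref{co:continuityWleq0} to pull a candidate $(y^*,s^*)$ back in time into a ``known'' region fails precisely because for $s^*$ near $t_0+r$, the pulled-back sequence still lives at times well above $t_0$, and for $s^*=t_0$ with $|y^*-\xi|\in(r,r+r^{2/3})$ the sequence falls into the uncovered annulus. Your argument therefore never closes.

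The other claims compound the issue: the top cap for $\Pi^u$ does \emph{not} ``follow the same pattern,'' because if $(\xi',t_0+r)\in\Pi^u$ then $\cl\Xi_r(\xi',t_0+r)$ is supported in $[t_0,t_0+2r]$ and there is no past time $t_0-r$ to exploit via the crossing condition $Z<W$; and the asserted $v\leftrightarrow -v$ symmetry between $\Pi^u$ and $\Pi_v$ is false at the structural level, since $u$ is USC and $v$ is LSC, $\set{W\leq 0}$ cannot expand discontinuously (Corollary~\ref{co:continuityWleq0}) while $\set{Z\geq 0}$ can (see the remark following it), and the paper correspondingly handles $\Pi^u$ and $\Pi_v$ at the top cap with genuinely different barrier constructions (a rescaled radial subsolution from Proposition~\ref{pr:radialSolution} plus the blow-up of Lemma~\ref{le:DsupersolutionInfty} for $\Pi_v$; Hopf's lemma, Lemma~\ref{le:trunk} and the vanishing-gradient estimate of Lemma~\ref{le:Dsubsolution0} for $\Pi^u$). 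The paper never attempts to rule out the bottom caps directly: after eliminating both top caps, a purely geometric argument (Step~3) shows that a bottom-cap dual point for one function forces a top-cap dual point for the other, using that $\cl\Xi_r(P_u)\cap\cl\Xi_r(P_v)\cap\set{t\leq t_0}$ must degenerate to the single point $(\xi,t_0)$ in view of \eqref{eq:disjointInterior} and the definition of $t_0$. That indirect route is what avoids the positivity-propagation problem you run into; your direct approach would need a mechanism to control $v$ at times after $t_0$ and outside the cubic bulge, and no such mechanism is supplied.
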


\begin{proof}
We split the proof into a number of shorter steps.  Here we use the simplified notation $\Xi = \Xi_r(\xi, t_0)$ (and analogously for $\Pi^u$ and $\Pi_v$).

\medskip\noindent
\textbf{Step 1.} $\Pi_v \cap \cl{\partial^\top \Xi} = \emptyset$.

For the sake of the argument, suppose that this does not hold and there indeed is a point $P' = (\xi', \si') \in \Pi_v \cap \cl{\partial^\top \Xi}$. Note that $\si' = t_0 +r$. We observe that $v > 0$ in $\Xi$ due to \eqref{eq:touchOnBoundary} and Proposition~\ref{pr:dualPoint}(iii), and thus we can apply Lemma~\ref{le:DsupersolutionInfty} for $v$ on $\cl \Xi$. The first consequence is $\abs{\xi' - \xi} = r$, i.e. 
$$
P' \in \partial D_r (\xi, t_0 + r).
$$

Let $\nu$ be the unit normal from Corollary~\ref{co:unitNormal}. We choose $\rho_0 = \min(\frac{r}{4}, \hat \rho_c)$ ($\hat \rho_c$ is defined in Prop.~\ref{pr:radialSolution}) and set
$$
\zeta = \xi' - \rho_0 \nu,\ \hat \si = \si',\ \hat a = 2,\ \hat b = -1\hbox{ and }\hat \om = 0.
$$
 Proposition~\ref{pr:radialSolution} provides us with a radially symmetric subsolution of \eqref{RP} with parameters $\hat a$, $\hat b$ and $\hat \omega$. We denote $\hat \vp$ its translation by $(\zeta, \hat \si)$ which is defined on the cylinder 
\begin{align}
\label{cylinder-K}
K = \set{\rho_0 - \e < |x - \zeta| < \rho_0 + \e,\ t \in [\hat \si - \e, \hat \si]},
\end{align}
for some $\e > 0$. The situation is depicted in Figure~\ref{fig:finspeedv}, which is a part of the larger picture in Figure~\ref{fig:contact}.
\begin{figure}
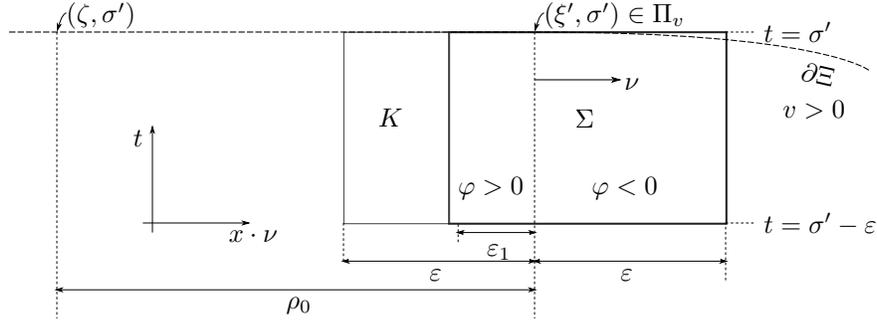

\centering
\fig{finspeedv}{4.5in}
\caption{Construction of a test function $\vp$ in Lemma~\ref{le:finiteSpeed}}
\label{fig:finspeedv}
\end{figure}

Let us define $\vp = \mu \hat \vp$, where $\mu > 0$ is picked large enough to ensure that $\min_{\cl {K}} \vp < \min_{\cl {K}} v$.
Furthermore, since $v(x,t) \geq f(r - \abs{x - \xi})$ for $\abs{x - \xi} < r$, with $f(s) /s \to \infty$ as $s \to 0+$, there exists $\e_1 > 0$ for which $v > \vp$ on
\begin{align*}
\set{\rho_0 - \e_1 \leq |x - \zeta| < \rho_0,\ t \in [\hat \si - \e, \hat \si]}.
\end{align*}
We shall compare $v$ and $\vp$ on a cylinder $\Sigma$,
\begin{align*}
\Sigma := \set{\rho_0 - \e_1 < |x - \zeta| < \rho_0 + \e, \ t \in (\hat \si - \e, \hat \si]} \subset K.
\end{align*}
So far we have shown that $v > \vp$ on all of $\partial_P \Sigma$ except on 
\begin{align*}
\set{(x, \hat \si - \e) : \rho_0 \leq \abs{x - \zeta} < \rho_0 + \e} \subset \Xi,
\end{align*}
where clearly $\vp \leq 0$ while $v > 0$. 

Therefore $\vp < v$ on $\partial_P \Sigma$ while $\vp \geq v$ at $P' \in \Sigma$, and we arrive at a contradiction since $\vp$ is a strict classical subsolution of \eqref{RP} on $\Sigma$.

\medskip\noindent
\textbf{Step 2.} $\Pi^u \cap \cl{\partial^\top \Xi} = \emptyset$

The proof of step 2 is similar to that of step 1. Choose $P' \in \Pi^u \cap \cl{\partial^\top \Xi}$. Due to \eqref{eq:disjointInterior}, Lemma~\ref{le:trunk} and Hopf's lemma (Prop.~\ref{pr:Hopf}), we have 
\begin{align*}
\liminf_{h \searrow 0+} \frac{-Z(\xi + \nu h)}{h} = 2c > 0.
\end{align*}

Therefore there is $h_0 > 0$ for which
\begin{align}
\label{eq:upperBoundByHopf}
u(x,t) \leq Z(\xi + \nu h, t_0) < -ch < 0 \quad \text{for } h \leq  h_0,\ (x,t) \in \cl \Xi_r(\xi + \nu h, t_0).
\end{align}
Proceeding as in the proof of step 1, we have $\hat \vp$, $\e \in (0, h_0)$ and $K$, this time with $\zeta = \xi' + \rho_0 \nu$, $\hat \sigma = \sigma'$. In contrast with step 1, we define $\vp = -\mu \hat \vp$, and \eqref{eq:upperBoundByHopf} lets us choose $\mu > 0$ small for which $\vp > u$ on 
\begin{align*}
\set{\abs{x- \zeta} = \rho_0 - \e , \ t \in [\hat \si - \e, \hat \si]} \cap \set{\rho_0 - \e \leq \abs{x- \zeta} \leq \rho_0, \ t = \hat \si - \e}. 
\end{align*}

Now we find $\e_1 > 0$ such that $\vp > u$ on $\set{\abs{x - \zeta} = \rho_0 + \e_1, \ t \in [\hat \si - \e, \hat \si]}$, this is again possible thanks to Lemma~\ref{le:Dsubsolution0} applied to $u$ on $\Xi$. 

Finally, define 
\begin{align*}
\Sigma := \set{\rho_0 - \e < \abs{x-\zeta} < \rho_0 + \e_1, \ t \in (\hat \si - \e, \hat \si]}.
\end{align*}
Since clearly $\vp \geq 0$ and $u < 0$ on $\set{\rho_0 \leq \abs{x- \zeta} < \rho_0 + \e, \ t = \hat \si - \e}$, we succeeded in showing that $\vp > u$ on $\partial_P \Sigma$ while $0 = \vp \leq u$ at $P' \in \Sigma$, a contradiction.

\medskip\noindent
\textbf{Step 3.} $\Pi^u \cap \cl{\partial_\bot \Xi} \neq \emptyset$ then $\Pi_v \cap \cl{\partial^\top \Xi} = \emptyset$ (and also when if we swap $\Pi^u$ and $\Pi_v$)

Indeed, this follows from \eqref{eq:disjointInterior} and the definition of $t_0$ in \eqref{eq:crossingTime}, which together yield that for any $P_u \in \Pi^u$ and $P_v \in \Pi_v$ we have 
\begin{align*}
\cl \Xi_r(P_u) \cap \cl \Xi_r(P_v) \cap \set{t \leq t_0} = (\xi, t_0) = P.
\end{align*}

\medskip\noindent
\textbf{Step 4.} To finish the proof, we simply realize that steps 2 and 3 used together imply that $\Pi^u \cap \cl{\partial_\bot \Xi} = \emptyset$, and similarly steps 1 and 3 imply $\Pi_v \cap\cl{\partial_\bot \Xi} = \emptyset$.
\end{proof}
\begin{remark}
Lemma~\ref{le:finiteSpeed} shows that the situation at any $P = (\xi, t_0) \in \set{Z \geq 0} \cap \set{W \leq 0}$ looks like Figure~\ref{fig:contact}.
\end{remark}

\begin{figure}
\centering
\fig{contact}{5.4in}
\caption{Situation at $P = (\xi, t_0) \in \set{Z \geq 0} \cap \set{W \leq 0}$.}
\label{fig:contact}
\end{figure}

\vspace{15pt}

Now we have enough regularity to show the following:
\begin{lemma}
\label{le:ZWzeroAtContact}
For $Z$, $W$, $t_0$ defined above, we have 
\begin{align*}
Z = W = 0 \quad \text{on} \quad \set{Z \geq 0}_{t_0} \cap \set{W \leq 0}_{t_0}.
\end{align*}
\end{lemma}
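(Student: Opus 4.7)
Since $Z(\xi,t_0)\ge 0$ and $W(\xi,t_0)\le 0$ are built into the contact set hypothesis, the claim reduces to the opposite inequalities $Z(\xi,t_0)\le 0$ and $W(\xi,t_0)\ge 0$. These two statements are symmetric under swapping subsolution with supersolution and sup-convolution with inf-convolution, so I focus on the latter, arguing by contradiction.

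Suppose $W(\xi,t_0)=-\beta<0$. Fix dual points $P_u\in \Pi^u(\xi,t_0)$ and $P_v\in \Pi_v(\xi,t_0)$; by Lemma~\ref{le:finiteSpeed} both lie on $\partial_L \Xi_r(\xi,t_0)$, and by Corollary~\ref{co:unitNormal} they sit on opposite sides of $\xi$ along the common outer unit normal $\nu$. Proposition~\ref{pr:dualPoint}(ii) and its inf-convolution analog give $Z\ge Z(\xi,t_0)\ge 0$ on the spacetime set $\cl\Xi_r(P_u)$, and $W\le -\beta$ on $\cl\Xi_r(P_v)$; their slices at $t=t_0$ are closed spatial balls $\cl B$ (centered at $\xi_u$, on the $-\nu$ side) and $\cl B'$ (centered at $\xi_v$, on the $+\nu$ side), tangent at $\xi$.

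The key intermediate observation is that $W(y,t_0)>0$ for every $y\in \interior B$. Indeed, $(y,t_0)$ lies in the open set $\Xi_r(P_u)$, so there is a full spacetime neighborhood of $(y,t_0)$ on which $Z\ge 0$; the strict ordering $Z<W$ for $t<t_0$ then gives $W>Z\ge 0$ on its past portion. Corollary~\ref{co:continuityWleq0}, which forbids discontinuous expansion of $\set{W\le 0}$, then forces $W(y,t_0)>0$.

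Consequently, at $t=t_0$ the elliptic supersolution $-W_-(\cdot,t_0)$ (Lemma~\ref{le:positiveNegativePart}) equals $0$ on $\interior B$ and drops discontinuously to $-\beta$ at the boundary point $\xi$, while remaining $\le -\beta$ on $\cl B'$. To reach the contradiction, I construct a strict classical subsolution $\psi$ of \eqref{RP} on a small parabolic cylinder $N\subset Q_r$ around $(\xi,t_0)$ using the radially symmetric barriers of Proposition~\ref{pr:radialSolution}, centered just inside $B$ on the $-\nu$ side. The quantitative Hopf estimate (Proposition~\ref{pr:Hopf}) bounds $W$ on the $+\nu$ side of $\xi$, while the positivity of $W$ on $\interior B$ bounds $W$ from below on the $-\nu$ side of $\partial_L N$; tuning the parameters so that $\psi<W$ on $\partial_P N$ while $\psi(\xi,t_0)>-\beta=W(\xi,t_0)$ then contradicts the viscosity supersolution property of $W$. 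The main obstacle is the fine calibration of the barrier's radius, free-boundary velocity, and strict flux-matching $\abs{D\psi^+}>\abs{D\psi^-}$, which must simultaneously accommodate the quantitative Hopf rate, the size of $N$, and the geometry of $B$ and $B'$.
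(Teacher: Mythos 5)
Your reduction ``by symmetry'' to the single inequality $W(\xi,t_0)\ge 0$ is where the argument breaks. Problem \eqref{RP} is not invariant under $u\mapsto -u$: $b(u)=u_+$ is not odd, the parabolic phase is the positive set and the elliptic phase the negative set, and consequently the level sets of the regularizations behave very differently in time. Corollary~\ref{co:continuityWleq0} says $\set{W\le 0}$ cannot expand discontinuously, and this is exactly what your ``key intermediate observation'' ($W>0$ on $\interior B$ at time $t_0$) uses. But the Remark immediately after Corollary~\ref{co:continuityWleq0} warns that $\set{Z\ge 0}$ \emph{can} expand discontinuously, so the analogous observation for $Z$ (that $Z<0$ on $\interior B'$ at time $t_0$) does \emph{not} follow from any continuity-of-level-sets statement. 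The paper therefore presents the $Z$ side in full (and calls the $W$ side ``similar, simpler''): the crucial extra ingredient for the $Z$ side is the quantitative Hopf-type argument referenced from Step~2 of the proof of Lemma~\ref{le:finiteSpeed}, which establishes $Z<0$ with a linear rate on the $+\nu$ side at time $t_0$, using \eqref{eq:disjointInterior}, Lemma~\ref{le:trunk} and Proposition~\ref{pr:Hopf}. By doing only the $W$ half, you have proved the easier inequality and omitted the one the paper actually proves; your proposal is therefore incomplete, not merely differently organized.

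Two smaller issues on the $W$ side as well. First, Proposition~\ref{pr:radialSolution} produces barriers only on an annular spacetime shell $K$ around a sphere of radius $\rho_0$, not on a full parabolic cylinder $N$ around $(\xi,t_0)$; the comparison region must be a moving shell $\Sigma_\tau$ as in the paper's proof, with a small shift $\eta\nu$ to bring $(\xi,t_0)$ into the interior, followed by $\eta\to 0$. Second, your use of Proposition~\ref{pr:Hopf} ``to bound $W$ on the $+\nu$ side'' is not set up: Hopf's lemma compares a subsolution and a supersolution that \emph{agree} at the boundary point, but you have assumed $W(\xi,t_0)=-\beta\ne 0$, so no such pair is at hand at $\xi$. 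In the paper's construction the outer lateral boundary of $\Sigma_\tau$ is handled by simply choosing $\mu$ large (for the $Z$ side) or exploiting the uniform lower bound on $v$ (for the $W$ side); no Hopf estimate is needed on the $W$ side, and your appeal to it does not correspond to a valid application.
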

\begin{proof}
We will only show this result for $Z$.
A similar, simpler argument applies to $W$ as well. See also \cite{KP}*{Lemma~3.6}.

Let $M = 2 \max_{\set{t \leq t_0}} Z < \infty$ and $P = (\xi,t_0) \in \set{Z \geq 0} \cap \set{W \leq 0}$.
We can also choose $P_v \in \Pi_v(P)$.
Let $\nu$ be the unit normal from Corollary~\ref{co:unitNormal}.
Lemma~\ref{le:finiteSpeed} guarantees that there exists $m_v \in \R$ such that $(\nu, m_v)$ is an interior normal of $\partial\Xi(P_v)$ at $P$, see Figure~\ref{fig:contact}.
This is why we call Lemma~\ref{le:finiteSpeed} ``finite speed of expansion''.

We recall that $Z < W \leq 0$ in $\Xi(P_v) \cap \set{t < t_0}$.
Furthermore, the argument from step 2 of the proof of Lemma~\ref{le:finiteSpeed}, using \eqref{eq:disjointInterior}, Lemma~\ref{le:trunk} and Proposition~\ref{pr:Hopf}, verifies that $Z < 0$ in $\Xi(P_v) \cap \set{t = t_0}$ as well.

As in the proof of Lemma~\ref{le:finiteSpeed}, we can construct a strict classical supersolution of \eqref{RP} with the help of Proposition~\ref{pr:radialSolution}.
Let us again choose $\rho_0$ small enough so that for $\hat a = 2$, $\hat b = -1$, $\hat \om = \max \set{0, 2m_v}$, $\zeta = \xi + \rho_0 \nu$, there is a strict classical supersolution $-\hat \vp(\cdot - \zeta, \cdot - t_0)$, defined on the set
\begin{align*}
K = \set{\rho_0 - \e \leq \abs{x - \zeta} \leq \rho_0 + \e,\ t \in (t_0 - \e, t_0]},
\end{align*}
for some $\e > 0$.

For $\tau > 0$ and $h = \tau^2$, let us define
\begin{align*}
\Sigma_\tau = \set{\rho_0 < \abs{x- \zeta} - (t- t_0)\hat \om < \rho_0 + h,\ t \in (t_0 - \tau, t_0]},
\end{align*}
see Figure~\ref{fig:Zzero}.
\begin{figure}
\centering
\fig{Zzero}{4.5in}
\caption{Construction of a test function $\vp$ in Lemma~\ref{le:ZWzeroAtContact}}
\label{fig:Zzero}
\end{figure}

We can take $\tau$ small enough so that
\begin{enumerate}
\item $\cl \Sigma_\tau \subset K$,
\item $\cl \Sigma_\tau \cap \set{t = t_0 - \tau} \subset \Xi(P')$.
\end{enumerate}
Now we find $\mu > 0$ large enough so that 
\begin{align*}
-\mu \hat \vp > 2M \quad \text{on} \quad \set{\abs{x} = \rho_0 + h + t \hat \om,\ t \in [-\tau, 0]}.
\end{align*}
For $\eta > 0$ set $\vp_\eta(x,t) = - \mu \hat \vp(x - \zeta - \eta \nu, t - t_0)$.
Since $Z$ is a subsolution of \eqref{RP} and $Z < \vp$ on $\partial_P \Sigma + (\eta \nu, 0)$ for small $\eta$, we also have $Z < \vp$ in $\Sigma + (\eta \nu, 0)$.
We conclude by sending $\eta \to 0$,
\[
Z(\xi, t_0) \leq \lim_{\eta \to 0+} \vp_\eta(\xi, t_0) = \vp_0(\xi, t_0) = 0. \qedhere
\]
\end{proof}

Lemmas~\ref{le:negativeSetsOrdered} and \ref{le:ZWzeroAtContact} have the following consequence:
\begin{corollary}
\label{cor:orderingAtt0}
Let $Z$, $W$ and $t_0$ be as above. Then 
\begin{align*}
Z \leq W \quad \text{at } t = t_0.
\end{align*}
Furthermore,  
\begin{align*}
\set{Z = 0}_{t_0} \cap \set{W = 0}_{t_0} \neq \emptyset,
\end{align*}
\end{corollary}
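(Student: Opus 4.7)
I would prove the two claims in sequence, handling the ordering $Z \leq W$ first and then deducing the nonempty intersection from it.

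For the ordering, I would decompose $\Omega_r$ at time $t = t_0$ into $\set{W \leq 0}_{t_0}$ and $\set{W > 0}_{t_0}$ and argue separately on each. On the closed piece, Lemma~\ref{le:positiveNegativePart} together with its symmetric analog for subsolutions gives that $-Z_-(\cdot, t_0)$ is an elliptic subsolution and $-W_-(\cdot, t_0)$ is an elliptic supersolution on $\Omega_r$. The strict ordering $Z < W$ on $\partial_P Q_r$ from Proposition~\ref{pr:ZWorderedOnBoundary} yields $-Z_- \leq -W_-$ on $\partial \Omega_r$, and the elliptic comparison principle (Proposition~\ref{pr:ellipticComparison}) extends this to all of $\Omega_r$. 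On $\set{W \leq 0}_{t_0}$ this reads $\min\set{Z,0} \leq W$; a point with $Z > 0$ here would lie in the contact set $\set{Z \geq 0}_{t_0} \cap \set{W \leq 0}_{t_0}$, and Lemma~\ref{le:ZWzeroAtContact} would then force $Z = W = 0$, contradicting $Z > 0$. Hence $Z \leq 0$, so $Z \leq W$ on this piece. On the open piece $\set{W > 0}_{t_0}$, I would apply the parabolic comparison principle (Proposition~\ref{pr:parabolicComparison}) to the parabolic subsolution $Z_+$ (Lemma~\ref{le:positiveNegativePart}) and the parabolic supersolution $W$ on the parabolic neighborhood $E = \set{W > 0} \cap Q_r \cap \set{t \leq t_0}$. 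The boundary ordering $Z_+ \leq W$ on the part of $\partial_P E$ lying in $\partial_P Q_r$ follows from Proposition~\ref{pr:ZWorderedOnBoundary}; on the interface portion $\partial \set{W > 0} \cap Q_r \cap \set{t \leq t_0}$ it follows from the strict ordering $Z < W$ for $t < t_0$ combined with Corollary~\ref{co:continuityWleq0} and, at the top time $t = t_0$, the elliptic inequality established above. The conclusion $Z_+ \leq W$ on $E$ gives $Z \leq Z_+ \leq W$ on $\set{W > 0}_{t_0}$.

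For the nonempty intersection of zero sets, the definition of $t_0$ as the first crossing time provides, for each $n$, a point $(x_n, t_n)$ with $t_n \in (t_0, t_0 + 1/n)$ and $Z(x_n, t_n) \geq W(x_n, t_n)$. Compactness of $\cl \Omega_r$ produces a subsequence converging to some $(x_*, t_0)$, and semi-continuity ($Z \in USC$, $W \in LSC$) then gives $Z(x_*, t_0) \geq W(x_*, t_0)$. Combined with the ordering above, this forces $Z(x_*, t_0) = W(x_*, t_0)$; the strict separation on $\partial_P Q_r$ guarantees $x_* \in \Omega_r$. A strong maximum principle argument rules out a nonzero common value: a positive value would place $(x_*, t_0)$ inside $\set{W > 0}$, and strong parabolic comparison on a small cylinder enclosed in this open set, coupled with $Z < W$ for $t < t_0$, would yield a contradiction; a strictly negative value is similarly excluded by strong elliptic comparison at $t = t_0$ on $\set{W < 0}_{t_0}$. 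Hence $x_* \in \set{Z = 0}_{t_0} \cap \set{W = 0}_{t_0}$.

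The main obstacle lies in the parabolic comparison on $E$: since $W$ is merely lower semi-continuous, it can in principle drop strictly below $0$ across $\partial \set{W > 0}$, which would violate a naive pointwise boundary ordering $Z_+ \leq W$ on the interface. Resolving this forces one to read the boundary ordering in the semi-continuous envelope sense of viscosity theory and to use Lemma~\ref{le:ZWzeroAtContact} to pin $W = 0$ at any contact point of $\set{Z \geq 0}_{t_0}$ and $\set{W \leq 0}_{t_0}$, so that the only places where a pointwise comparison might fail have already been controlled through Lemma~\ref{le:negativeSetsOrdered}.
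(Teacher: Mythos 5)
Your proposal runs into a genuine gap at the very first step of the ordering argument. You claim that ``Lemma~\ref{le:positiveNegativePart} together with its symmetric analog for subsolutions gives that $-Z_-(\cdot, t_0)$ is an elliptic subsolution on $\Omega_r$.'' There is no such symmetric analog: Lemma~\ref{le:positiveNegativePart} is built on the facts that the \emph{maximum} of two subsolutions is a subsolution (hence $u_+ = \max(u,0)$ is a parabolic subsolution) and the \emph{minimum} of two supersolutions is a supersolution (hence $-v_- = \min(v,0)$ is an elliptic supersolution). For $-Z_- = \min(Z,0)$ you would need the minimum of a subsolution and of the zero solution to be a subsolution, which fails. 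Concretely, in one space dimension take $u(x) = x$: it is a classical solution of $-u'' = 0$, but $\min(x,0)$ has a concave kink at the origin and is a strict \emph{super}solution there, not a subsolution (the test function $\vp(x) = x/2 - x^2/2$ touches $\min(x,0)$ from above at $0$ with $-\vp''(0) = 1 > 0$). The same happens for any viscosity subsolution of \eqref{RP} at its transition boundary: the flux condition $\abs{D\vp^+} \geq \abs{D\vp^-}$ produces a convex kink for $\vp_+$ (which is why $Z_+$ is a parabolic subsolution) but a concave kink for $-\vp_-$. Hence the elliptic comparison cannot be invoked on all of $\Omega_r$ as you propose, and the first step of your decomposition does not close.

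The paper sidesteps exactly this by restricting the elliptic comparison to the open set $\set{Z < 0}_{t_0}$, where $Z$ itself (not $-Z_-$) is the elliptic subsolution, extending $Z$ by $0$ only on $\partial\set{Z<0}_{t_0}$ (harmless for the USC and subsolution properties in $\interior \cl{\set{Z<0}_{t_0}} = \set{Z<0}_{t_0}$), and using Lemma~\ref{le:ZWzeroAtContact} to pin $W = 0$ at the only boundary points where pointwise ordering is nonobvious; the symmetric parabolic comparison is then run on the parabolic neighborhood $\set{W > 0,\ t\leq t_0}$. Note that this decomposition $\set{Z<0}_{t_0} \cup \set{W>0}_{t_0} \cup (\text{contact set})$ is genuinely different from your $\set{W\leq 0}_{t_0} \cup \set{W>0}_{t_0}$ split, and the difference matters precisely because it keeps the subsolution in the region where it actually is a subsolution. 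On the parabolic piece you correctly identify the LSC boundary issue, but ``reading the ordering in the envelope sense'' is not a resolution; the clean fix is to replace $W$ by $\max(W,0)$ on the closure of $\set{W>0,\ t\leq t_0}$ (still LSC there, still a supersolution in the interior) and compare with $Z_+$. Finally, for the nonempty-intersection claim you propose a direct limit-point argument closed by strong parabolic/elliptic comparison; the paper instead observes that disjointness would make $\set{Z\geq 0, t\leq t_0}$ and $\set{W\leq 0, t\leq t_0}$ compactly separated and rederives a strict inequality from the first part, contradicting the definition of $t_0$ and $Z - W \in USC$. Your route is plausible but leans on strong comparison principles that are neither stated nor proved in the paper's appendix (only Hopf's lemma is), so you would need to supply those.
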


\begin{proof}
We apply the comparison principle for elliptic equations (Prop.~\ref{pr:ellipticComparison}) and compare $Z(\cdot, t_0)$ and $W(\cdot, t_0)$ on the open set $\set{Z < 0}_{t_0}$.
Indeed, we can set $Z = 0$ on $\partial \set{Z < 0}$ to ensure that $Z\leq 0$.
The modified $Z(\cdot, t_0)$ is clearly in $USC(\cl{\set{Z<0}_{t_0}})$ and a subsolution of the elliptic problem in $\interior \cl{\set{Z<0}_{t_0}}$ ($=\set{Z<0}_{t_0}$ by Lemma~\ref{le:nohair}).
Then $Z \leq W$ on $\partial \pth{\set{Z < 0}_{t_0}}$ due to Lemma~\ref{le:ZWzeroAtContact} and the elliptic comparison applies.

The same can be done using the comparison principle for the parabolic equation (Proposition~\ref{pr:parabolicComparison}) in the parabolic neighborhood $\set{W > 0,\ t \leq t_0}$.

\vspace{10pt}

Finally, if $\set{Z = 0}_{t_0} \cap \set{W = 0}_{t_0} = \emptyset$, then in the view of Lemma~\ref{le:negativeSetsOrdered} and Lemma~\ref{le:ZWzeroAtContact},
\begin{align*}
\set{Z \geq 0,\ t \leq t_0} \cap \set{W \leq 0,\ t \leq t_0} = \emptyset.
\end{align*}
Since these sets are compact, they have positive distance and the comparison in the first part of the proof yields $Z < W$ at $t = t_0$, a contradiction with the definition of $t_0$ and $Z - W \in USC$.
\end{proof}

\subsection{Ordering of gradients at the contact point}
Here we will show that, at a contact point $P$, the ``gradients" of $Z$ (resp. $W$) follow the flux ordering given in Definition~\ref{de:classicalSubsolution}(iv) (resp. its supersolution counterpart). As mentioned in the beginning of section~\ref{sec:comparison}, such ordering would readily yield a contradiction with the fact that $Z$ crosses $W$ from below at $P$.

\begin{lemma}
\label{le:gradientsOrdered}
Let $Z$, $W$ and $t_0$ be as defined above, let \\$P = (\xi, t_0) \in \set{Z \geq 0} \cap \set{W \leq 0}$ be a contact point at time $t_0$, and let $\nu$ be the unique spatial unit normal vector at $P$ obtained in Corollary~\ref{co:unitNormal}. Then
\begin{align*}
 \liminf_{h \searrow0+} \frac{Z(\xi - h \nu, t_0)}{h} + \liminf_{h \searrow 0+} \frac{Z(\xi + h \nu, t_0)}{h} &\geq 0,\\
  \limsup_{h \searrow 0+} \frac{W(\xi - h \nu, t_0)}{h} + \limsup_{h \searrow 0+} \frac{W(\xi + h \nu, t_0)}{h} &\leq 0.
\end{align*}
\end{lemma}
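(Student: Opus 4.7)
I treat the first inequality for $Z$; the second for $W$ is obtained by a symmetric argument with a strict classical subsolution in place of a supersolution. Suppose, for contradiction, that
\[
A := \liminf_{h \searrow 0+} \frac{Z(\xi - h\nu, t_0)}{h}, \qquad B := \liminf_{h \searrow 0+} \frac{Z(\xi + h\nu, t_0)}{h}, \qquad A + B < 0.
\]
By Corollary~\ref{co:unitNormal} and Lemma~\ref{le:negativeSetsOrdered}, $\set{Z \geq 0}_{t_0}$ contains an interior ball at $\xi$ on the $-\nu$ side, while $\interior \set{W \leq 0}_{t_0} \subset \set{Z < 0}_{t_0}$ contains an interior ball at $\xi$ on the $+\nu$ side; hence $A \geq 0 \geq B$, and I fix $\hat a, \hat b$ with $A < \hat a < \hat b < -B$. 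The plan is to use these parameters to build a strict classical supersolution of~\eqref{RP} whose inner and outer normal derivatives at its free boundary are $\hat b$ and $\hat a$, and that touches $Z$ from above at $(\xi, t_0)$, contradicting Definition~\ref{de:viscositySubsolution}.

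Invoking Proposition~\ref{pr:radialSolution} with the pair $(\hat a, \hat b)$ and a suitably chosen small normal velocity $\hat\omega \geq 0$ for the free boundary, I obtain some small $\rho_0 > 0$ and a radially symmetric strict classical supersolution $\hat\vp$ of~\eqref{RP}, whose free boundary is the sphere of radius $\rho_0$ at the initial time with outer gradient $\hat a$ and inner gradient $\hat b$. Setting $\zeta := \xi + \rho_0 \nu$ and translating, $\vp(x, t) := \hat\vp(x - \zeta, t - t_0)$, the unshifted free-boundary sphere at $t_0$ passes through $\xi$ with normal $-\nu$, matching the interior-ball geometry of Corollary~\ref{co:unitNormal}. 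For small $\eta > 0$, I introduce the shifted barrier
\[
\vp_\eta(x, t) := \hat\vp\bigl(x - \zeta + \eta\nu, t - t_0\bigr),
\]
whose free-boundary sphere $\set{|x - (\zeta - \eta\nu)| = \rho_0}$ at $t_0$ now encloses $\xi$; in particular $\vp_\eta(\xi, t_0) < 0 = Z(\xi, t_0)$.

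I compare $\vp_\eta$ and $Z$ on the thin annular parabolic cylinder around the shifted free-boundary sphere,
\[
\Sigma_\eta := \set{(x, t) : \rho_0 - \delta < |x - (\zeta - \eta\nu)| + \hat\omega (t - t_0) < \rho_0 + \delta,\ t \in (t_0 - \tau, t_0]},
\]
with small $\delta, \tau > 0$. The heart of the proof is showing $\vp_\eta > Z$ on $\partial_P \Sigma_\eta$. On the bottom slice $\set{t = t_0 - \tau}$ this follows from $Z < W$ for $t < t_0$ (Lemma~\ref{le:negativeSetsOrdered}) together with Corollary~\ref{co:continuityWleq0}. On the outer lateral boundary (where $\vp_\eta > 0$), the axial direction of the sphere is controlled by $A < \hat a$ via a judicious pairing of the outer radius with a subsequence $h_n \searrow 0$ realizing $A$; on the inner lateral boundary (where $\vp_\eta < 0$), the axial direction is analogously controlled by $\hat b < -B$, with the help of the elliptic Hopf estimate (Proposition~\ref{pr:Hopf}) applied on the ``trunk'' from Lemma~\ref{le:trunk}. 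Once $\vp_\eta > Z$ on $\partial_P \Sigma_\eta$, Definition~\ref{de:viscositySubsolution} forces $\vp_\eta > Z$ on all of $\Sigma_\eta$, contradicting $\vp_\eta(\xi, t_0) < 0 = Z(\xi, t_0)$.

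The main obstacle is promoting the \emph{axial} liminf data on $Z$ along $\xi \pm h\nu$ to a \emph{uniform} estimate on the full lateral spherical pieces of $\partial_P \Sigma_\eta$. Here the sup-convolution structure of $Z$ and the interior-ball regularity of $\partial \set{Z \geq 0}_{t_0}$ near $\xi$ (enforced by Lemma~\ref{le:finiteSpeed} and Corollary~\ref{co:unitNormal}) are needed to bound the tangential growth of $Z$ along the free boundary, while the velocity $\hat\omega$ and the shift $\eta$ together provide the quantitative slack required for the strict inequality. The final coupling of the parameters $\rho_0, \delta, \tau, \eta$ must be done carefully, and this is the technical core of the argument.
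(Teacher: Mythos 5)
Your overall strategy — build a radial strict supersolution from Proposition~\ref{pr:radialSolution} with gradients between $A$ and $-B$, slide it, and derive a contradiction with the viscosity subsolution property — is the right family of ideas, and you correctly identify $A \geq 0 \geq B$. But there is a genuine gap, and you have in fact pointed at it yourself: you plan to compare the barrier with $Z$ at $(\xi,t_0)$, and then admit that ``promoting the axial liminf data on $Z$ along $\xi\pm h\nu$ to a uniform estimate on the full lateral spherical pieces of $\partial_P\Sigma_\eta$'' is the unresolved ``technical core.'' That promotion is not available for $Z$ directly: $Z$ is merely upper semicontinuous, and knowing $Z(\xi-h\nu,t_0)<(A+\eta)h$ on the axis says nothing about $Z$ at off-axis points of the sphere $|x-(\zeta-\eta\nu)|=\rho_0+\delta$ at time $t_0$, which is exactly where you need the bound.

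The paper's proof sidesteps this entirely by comparing the barrier not with $Z$ at $(\xi,t_0)$ but with the original subsolution $u$ at a dual point $P_u=(\xi',\sigma')\in\Pi^u(P)\subset\partial_L\Xi_r(P)$, which generally sits at a time $\sigma'\neq t_0$. The reason this works is precisely the definition of the sup-convolution: the single scalar bound
\begin{align*}
\sup_{\cl\Xi_r(\xi - h_1\nu,\,t_0)} u = Z(\xi - h_1\nu, t_0) < (a+\eta)h_1
\end{align*}
converts a pointwise statement about $Z$ on the axis into a \emph{uniform} bound on $u$ over the entire region $\cl\Xi_r(\xi-h_1\nu,t_0)$, and that region covers the outer lateral piece of the comparison cylinder around $P_u$ (similarly for the inner piece with $h_2$). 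This is exactly the mechanism your argument is missing. You do mention ``the sup-convolution structure of $Z$'' as something ``needed,'' but you never actually invoke it this way, because your comparison is anchored at $(\xi,t_0)$ rather than at $P_u$; the inclusion $A\subset\cl\Xi_r(\xi-h_1\nu,t_0)$ only holds near the dual point, and the speed $\hat\omega$ must be chosen using the slope $m_u$ of the interior normal $(\nu,m_u)$ to $\partial\Xi_r(P)$ at $P_u$ (available by Lemma~\ref{le:finiteSpeed}) so that the barrier's zero level set stays inside those dual $\Xi$-regions. A secondary point: your use of Hopf's lemma here is out of place — Proposition~\ref{pr:Hopf} is invoked in \S\ref{sec:proofOfComparison} \emph{after} Lemma~\ref{le:gradientsOrdered}, to compare $Z$ and $W$, and is not needed (and would muddy the logical ordering) inside this lemma's proof.
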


\begin{proof}
We prove the result for $Z$. The proof for $W$ is similar.

Let us again write $\Xi = \Xi_r(P) \equiv \Xi_r(\xi,t_0)$ throughout the proof. Denote 
\begin{align*}
a &:= \liminf_{h \searrow 0+} \frac{Z(\xi - h \nu, t_0)}{h}, & b &:= \liminf_{h \searrow 0+} \frac{Z(\xi + h \nu, t_0)}{h}.
\end{align*}
Note that $b \leq 0 \leq a$. A simple barrier argument, following the one in the proof of Lemma~\ref{le:ZWzeroAtContact} and taking advantage of the room provided by Lemma~\ref{le:finiteSpeed}, shows that $a < \infty$. Furthermore, the inequality $a + b \geq 0$ is satisfied trivially if $b = 0$. Thus we may replace $b$ by $-2a$ if necessary, and assume that $-\infty < b < 0$.

Suppose that the result does not hold, in other words, $a + b < 0$. In that case, we set $\kappa := \frac{a - b}{2} > 0$, and we note that
\begin{align*}
a &< \kappa, & b &< -\kappa,
\end{align*}
Let us choose $\eta > 0$ such that 
\begin{align*}
a + 3\eta &< \kappa, & b + 3\eta & < - \kappa. 
\end{align*}
We shall use this to construct a barrier that crosses $u$ from above at an arbitrary fixed point $P_u = (\xi', \si') \in \Pi^u(P)$, yielding a contradiction.
As in the proof of Lemma~\ref{le:ZWzeroAtContact}, since $P_u \in \partial_L \Xi$ thanks to Lemma~\ref{le:finiteSpeed}, there is $m_u \in \R$ such that $(\nu, m_u) \in \Rn \times \R$ is an interior normal vector to $\partial \Xi$ at $P_u$ (see Figure~\ref{fig:contact}).

\vspace{10pt}

Following the proof of Lemma~\ref{le:finiteSpeed}, we can construct a radial supersolution of \eqref{RP} with the help of Proposition~\ref{pr:radialSolution}.
Let us again choose $\rho_0$ small so that for $\hat a = a + 3 \eta$ and $\hat b = b + 3\eta$, $\hat \om = \max (0, 2m)$, $\zeta = \xi' + \rho_0 \nu$, and $\hat \si = \si'$, there is $\vp = -\hat \vp$, a strict classical supersolution of \eqref{RP} translated by $(\zeta, \hat \si)$, defined on $K$ as in \eqref{cylinder-K}, and $\vp = 0$ at $P_u$. 

\vspace{10pt}

From the definition of $a$ and $b$, and $Z$, for every $\tau \in (0,1)$ there are $h_1, h_2 \in (0, \tau^2)$ such that
\begin{align}
\label{u-less-than-a+eta}
\sup_{\cl\Xi_r(\xi - h_1 \nu, t_0)} u = Z(\xi - h_1 \nu, t_0) &<  (a + \eta) h_1, \\
\label{u-less-than-b+eta}
\sup_{\cl\Xi_r(\xi + h_2 \nu, t_0)} u = Z(\xi + h_2 \nu, t_0) &<  (b+\eta) h_2.
\end{align}
For a given choice of $\tau, h_1, h_2$, let us define
\begin{align*}
\Sigma_\tau := \set{\rho_0 - h_2 < \abs{x - \zeta} - (t - \hat \si) \hat \om < \rho_0 + h_1,\ t \in (\hat \si - \tau, \hat \si]},
\end{align*}
see Figure~\ref{fig:finspeedv} for a sketch of a similar construction.
We shall choose $\tau \in (0,1)$ small enough so that the following holds:
\begin{enumerate}
\item $\Sigma_\tau \subset K$,
\item $\vp > (a+\eta) h_1$ on $A := \set{\abs{x - \zeta} - (t - \hat \si)\hat \om = \rho_0 + h_1, \ t \in [\hat \si - \tau, \hat \si]}$,
\item $\vp > (b+\eta) h_2$ on $\partial_P \Sigma_\tau \setminus A$,
\item $A \subset \cl{\Xi}_r (\xi - h_1\nu, t_0)$,
\item $\partial_P \Sigma_\tau \setminus A \subset \cl{\Xi}_r (\xi + h_2\nu, t_0)$.
\end{enumerate}
Indeed, $\vp$ satisfies (b) and (c) for small $\tau$ due to its smoothness in the positive and negative phases, and (a), (d) and (e) are a consequence of the choice of $\hat \omega$ and the definition of $\Sigma_\tau$. Since $\vp > u$ on $\partial_P \Sigma_\tau$ (thanks to (b)+(d)+\eqref{u-less-than-a+eta} and (c)+(e)+\eqref{u-less-than-b+eta}) while $0 = \vp  = u$ at $P_u \in \Sigma_\tau$, we get a contradiction.
\end{proof}

\subsection{Proof of Theorem \ref{th:comparisonPrinciple}}
\label{sec:proofOfComparison}

Now we are ready to prove Theorem~\ref{th:comparisonPrinciple}. The proof proceeds by showing the comparison for the regularizations $Z$ and $W$ defined in \eqref{eq:regularizedSolutions}. We choose $r > 0$ such that $Z < W$ on $\partial_P Q_r$ (Proposition~\ref{pr:ZWorderedOnBoundary}).

\vspace{10pt}

Suppose that there is a point in $Q$ where $u \geq v$. Since $u \leq Z$ and $W \leq v$, we see that there must be a point in $Q_r$ where $Z \geq W$. Then $t_0$ in \eqref{eq:crossingTime} is finite. We recall that $Z \leq W$ at $t = t_0$ by Corollary~\ref{cor:orderingAtt0}.

\vspace{10pt}

Corollary~\ref{cor:orderingAtt0} also guarantees the existence of a contact point at $t=t_0$:
$$
P = (\xi, t_0) \in \partial \set{Z \geq 0} \cap \set{W \leq 0}\hbox{ with }Z(P) = W(P) = 0.
$$ At the point $P$, the boundaries $\partial \set{Z \geq 0}$ and $\partial \set{W \leq 0}$ are $C^{1,1}$, in the sense that they have space-time and space balls from both sides (Proposition~\ref{pr:dualPoint}(iii)). Let $\nu$ be the unique unit normal vector to the space balls at $\xi$ given by Corollary~\ref{co:unitNormal}.

\vspace{10pt}

Set $\Omega'$ to be the connected component of the open set $\set{Z < 0}_{t_0}$ that contains $\xi$ on its boundary. Lemma~\ref{le:trunk} guarantees that 
$$
Z(\cdot, t_0) \not\equiv W(\cdot, t_0)\hbox{ on }\Omega'.
$$ We recall that $Z(\cdot, t_0)$ is a subsolution of the elliptic problem on $\Omega'$ and $-W^-(\cdot, t_0)$ is a supersolution of the elliptic problem (Lemma~\ref{le:positiveNegativePart}). We can apply the elliptic Hopf's lemma (Proposition~\ref{pr:Hopf}) to $Z(\cdot, t_0)$ and $W(\cdot, t_0)$ on $\Omega'$ at $\xi$, which yields
\begin{align*}
\liminf_{h \searrow 0+} \frac{- Z(\xi + h\nu, t_0)}{h} > \liminf_{h \searrow 0+} \frac{- W(\xi + h\nu, t_0)}{h}.
\end{align*}

\vspace{10pt}

Finally, the ``weak gradients'' are ordered at $\xi$ by Lemma~\ref{le:gradientsOrdered} which leads to a contradiction (we write $Z(\cdot)$ instead of $Z(\cdot, t_0)$, all limits are $h \searrow 0+$):
\begin{align*}
\liminf \frac{Z(\xi - h \nu)}{h} &\geq  \limsup  \frac{-Z(\xi + h \nu)}{h}  \geq \liminf  \frac{-Z(\xi + h \nu)}{h} \\&\stackrel{\text{Hopf}}{>} \liminf  \frac{-W(\xi + h \nu)}{h} \geq  \limsup \frac{W(\xi - h \nu)}{h} \\&\geq \liminf \frac{Z(\xi - h\nu)}{h}.
\end{align*}
Therefore $t_0$ cannot be finite and we conclude that $u < v$ in $Q$. This finishes the proof of Theorem~\ref{th:comparisonPrinciple}. \qed

\subsection{Remarks on the proof of Theorem~\ref{th:comparisonPrinciple} for divergence-form operators}

The arguments in the proof of Theorem~\ref{th:comparisonPrinciple} also hold for the divergence-form operator $F$ given by \eqref{divergence}.
The only difference lies in the references on the properties of solutions of the parabolic and elliptic problems, which have been used throughout section~\ref{sec:comparison} and which are proved or referred to in Appendix~\ref{ap:elliptic-and-parabolic-theory}, as well as the specific barriers constructed in Appendix~\ref{ap:construction-of-barriers-fully-nonlinear}.

Since the barriers for the divergence-form operator are constructed in Appendix~\ref{ap:construction-of-barriers-divergence-form}, here we only point out the references for the regularity properties of solutions of 
the parabolic and elliptic problems (in the sense of Definition~\ref{def:parabolicElliptic}).
More precisely, we used
the parabolic Harnack inequality (Proposition~\ref{pr:Harnack}) and the elliptic Hopf's lemma (Proposition~\ref{pr:Hopf}). For a divergence-form operator $F$ given by \eqref{divergence}, Proposition~\ref{pr:Harnack} is shown in \cite{LSU}*{Chapter V} and Proposition~\ref{pr:Hopf} is shown in \cite{DDS}.

\vspace{10pt}

Having all of the above properties and test functions, one can proceed as in section~\ref{sec:comparison} to prove the following:

\begin{theorem}~\label{CP:div}
Theorem~\ref{th:comparisonPrinciple} holds for $F$ given in \eqref{divergence}.
\end{theorem}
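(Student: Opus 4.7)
My plan is to run the proof of Theorem~\ref{th:comparisonPrinciple} essentially verbatim, with each analytic input replaced by its divergence-form counterpart. The entire structural backbone -- the sup/inf convolutions $Z$, $W$ on $\Xi_r$, the decomposition in Lemma~\ref{le:positiveNegativePart}, the continuous expansion of $\set{v \leq 0}$, the trunk Lemma~\ref{le:trunk}, the finite-speed Lemma~\ref{le:finiteSpeed}, the vanishing at contact (Lemma~\ref{le:ZWzeroAtContact}), and the weak-gradient ordering (Lemma~\ref{le:gradientsOrdered}) -- is purely geometric and was organized so as to use only three analytic black boxes: standard parabolic/elliptic comparison, the parabolic Harnack inequality (Proposition~\ref{pr:Harnack}), and the elliptic Hopf lemma (Proposition~\ref{pr:Hopf}). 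For $F$ given by \eqref{divergence}, these are precisely the ingredients referenced in Appendix~\ref{ap:elliptic-and-parabolic-theory} (Harnack from \cite{LSU}*{Chapter V}, Hopf from \cite{DDS}), while the classical radial barriers of Proposition~\ref{pr:radialSolution} are re-constructed in Appendix~\ref{ap:construction-of-barriers-divergence-form}.

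With those substitutions, every step of section~\ref{sec:comparison} transcribes without change. I would introduce $Z$ and $W$ by \eqref{eq:regularizedSolutions}, fix $r$ small so $Z < W$ on $\partial_P Q_r$ by Proposition~\ref{pr:ZWorderedOnBoundary}, and assume for contradiction that the crossing time $t_0$ defined in \eqref{eq:crossingTime} is finite. By Lemma~\ref{le:negativeSetsOrdered} -- whose proof uses only the elliptic comparison and a simple barrier argument, both available in the divergence-form setting -- the interiors of $\set{Z \geq 0}_{t_0}$ and $\set{W \leq 0}_{t_0}$ are disjoint, yielding a contact point $P = (\xi, t_0)$ with a unique spatial outer unit normal $\nu$ by Corollary~\ref{co:unitNormal}. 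Lemmas~\ref{le:finiteSpeed}, \ref{le:ZWzeroAtContact}, and \ref{le:gradientsOrdered} then go through using the divergence-form radial barriers, giving $Z(P) = W(P) = 0$ and the weak-gradient ordering. Finally, the elliptic Hopf lemma of \cite{DDS} applied on the connected component $\Omega'$ of $\set{Z < 0}_{t_0}$ touching $\xi$ produces the strict inequality that contradicts Lemma~\ref{le:gradientsOrdered}, exactly as in \S\ref{sec:proofOfComparison}.

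The main obstacle I anticipate is the iterative Harnack chain in Lemma~\ref{le:DsupersolutionInfty}, since the equation $b(u)_t = \nabla \cdot (\Psi(b(u)) Du)$ is quasilinear and the Harnack constant could a priori depend on the solution through the coefficient $\Psi(b(u))$. This is handled because $u$ and $v$ are locally bounded on $\cl Q$, so $b(u)$ ranges over a compact subset of $[0,\infty)$ on which $\Psi$ is bounded away from $0$ and from $\infty$; the Harnack constant provided by \cite{LSU}*{Chapter V} is then uniform along the entire chain of rescaled cylinders $Q_j$, and the geometric iteration \eqref{recurak}--\eqref{kBound} yields the same lower bound $f(s)$ with $f(s)/s \to \infty$ as $s \to 0+$. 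Similarly, the quadratic sliding barrier $\psi_{\e,\eta}$ of Lemma~\ref{le:Dsubsolution0} is replaced by the corresponding radial barrier from Appendix~\ref{ap:construction-of-barriers-divergence-form}, vanishing to second order on the cuspidal piece of $\partial \Xi_r$, and all remaining details coincide with the fully nonlinear argument. No genuinely new conceptual ingredient is required beyond bookkeeping these uniformity considerations.
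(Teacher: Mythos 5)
Your proposal matches the paper's own proof of Theorem~\ref{CP:div}, which consists precisely of the observation that the structural backbone of Section~\ref{sec:comparison} uses only the parabolic Harnack inequality, the elliptic Hopf lemma, and the radial barriers of Proposition~\ref{pr:radialSolution} as analytic inputs, all of which the paper supplies with divergence-form counterparts (via \cite{LSU}, \cite{DDS}, and Appendix~\ref{ap:construction-of-barriers-divergence-form}, respectively). Your additional remark that the Harnack constant in the iteration of Lemma~\ref{le:DsupersolutionInfty} remains uniform --- because the bounded range of $b(u)$ keeps $\Psi$ bounded away from $0$ and $\infty$ --- is a worthwhile clarification of a point the paper leaves implicit.
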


\section{Existence and stability of viscosity solutions}
\label{sec:existenceAndStability}

Let $F$ and $\Omega$ be given as before. In this section we show the existence of viscosity solutions of \eqref{RP} via the Perron's method. We  say that a continuous function $u_0:\Omega\to \R$ is in $\mathcal{P}$ if the following conditions hold:
\begin{equation}\label{A}
u_0=-1\hbox{ on }\partial\Omega\quad \text{and} \quad-F(D^2u_0,Du_0,u_0)=0\hbox{ in }\{u_0<0\}.
\end{equation}
\begin{equation}\label{B}
\Gamma(u_0):=\partial\{u_0>0\}=\partial\{u_0<0\} \text{ is locally a $C^{1,1}$-graph.}
\end{equation}
Note that \eqref{B} and the boundedness of $\Omega$ guarantee that there exists $R_0 > 0$ such that $\partial \set{u_0 > 0}$ has both an interior and an exterior ball of radius $R_0$ at each point. 

 First we ensure, by constructing suitable barriers, that the solutions with initial data $u_0 \in \mathcal{P}$ evolve continuously at the initial time.
\begin{lemma}\label{stability:initial}
Suppose $u_0\in\mathcal{P}$. Then there exist at least one viscosity subsolution $U$ and one supersolution $V$ of \eqref{RP}  with initial data $u_0$. Moreover, there exists a constant $C>0$ and a small time $t_0>0$ depending only on the maximum of $u_0$, $N$, $d$ and $R_0$ such that for $0<t<t_0$ we have
$$
d(x, \Gamma), d (y,\Gamma) \leq t^{1/4} \hbox{ for } x\in\partial\{U(\cdot,t)>0\} \hbox{ and } y\in \partial\{V(\cdot,t)<0\}.
$$
\end{lemma}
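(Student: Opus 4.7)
The strategy is to construct $U$ and $V$ by prescribing the motion of the free boundary at rate $t^{1/4}$ in the inward (resp.\ outward) normal direction from $\Gamma(u_0)$, and then solving the parabolic equation in the positive phase and the elliptic equation (at each time) in the negative phase on the resulting time-dependent configurations. Concretely, for $U$ I set
\[
\Sigma_-(t) := \set{x \in \set{u_0 > 0}: \dist(x, \Gamma) > c_1 t^{1/4}},
\]
which by \eqref{B} and the uniform $R_0$-ball property is a $C^{1,1}$ domain with interior and exterior ball radius at least $R_0/2$ for all $t \leq t_0$ (with $t_0$ depending on $R_0$). Then $U$ is defined, in the shrinking cylinder $\bigcup_t \Sigma_-(t) \times \set{t}$, as the classical solution of $U_t - F(D^2U, DU, U) = 0$ with initial data $u_0|_{\set{u_0>0}}$ and zero Dirichlet data on $\partial \Sigma_-(t)$; and, for each fixed $t \in (0,t_0)$ and $x \in \Omega \setminus \cl{\Sigma_-(t)}$, as the viscosity solution of $-F(D^2U, DU, U) = 0$ with zero data on $\partial \Sigma_-(t)$ and $-1$ on $\partial \Omega$. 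Since $\Sigma_-(0) = \set{u_0>0}$ and $u_0$ already satisfies the elliptic equation in $\set{u_0<0}$, one has $U(\cdot,0) = u_0$.

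The heart of the proof is verifying the flux-matching inequality $\abs{DU^+} \geq \abs{DU^-}$ on $\set{U=0}$ (Definition~\ref{de:classicalSubsolution}(iv)); the interior conditions (i)--(iii) hold by construction. Since the moving interface $\partial \Sigma_-(t)$ is uniformly $C^{1,1}$ and the elliptic data are bounded, standard elliptic estimates combined with a boundary Hopf-type argument at the $C^{1,1}$ interface yield a uniform upper bound $\abs{DU^-} \leq C_1$ depending only on $R_0$, $\norm{u_0}_\infty$ and the structural constants of $F$. To bound $\abs{DU^+}$ from below I use Proposition~\ref{pr:radialSolution}: at each $(\xi,t_*) \in \set{U=0}$, I place a radial \emph{upper} barrier $\hat\vp$ for the parabolic problem in a thin exterior annular shell aligned with the uniform exterior ball of $\Sigma_-(t_*)$ at $\xi$, with parameters $\hat a, \hat b, \hat \om$ chosen so that its free boundary moves inward at normal speed $c_1 t^{-3/4}/4$ and so that $\hat\vp \geq U$ on the parabolic boundary of the shell. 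The parabolic comparison principle then gives $U \leq \hat\vp$ throughout the shell, and a direct computation on the closed form of $\hat\vp$ yields $\abs{DU^+}(\xi,t_*) \geq C_2 \, c_1 \, t_*^{-3/4}$. Choosing $c_1$ large in terms of $C_1, C_2$ and then $t_0$ small forces $\abs{DU^+} \geq \abs{DU^-}$ on $\set{U=0}$ for $t \in (0,t_0)$, so that $U$ is a classical, hence viscosity, subsolution of \eqref{RP}.

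The supersolution $V$ is constructed symmetrically, replacing $\Sigma_-(t)$ by the expanded positive phase
\[
\Sigma_+(t) := \set{x \in \Omega : \dist(x, \set{u_0 > 0}) < c_2 t^{1/4}},
\]
and swapping the roles of the two phases in the flux-matching argument (the elliptic side of $V$ now shrinks, and one needs $\abs{DV^-} \geq \abs{DV^+}$, obtained by a radial \emph{lower} barrier in the negative phase). The evolution estimate $d(x,\Gamma), d(y,\Gamma) \leq t^{1/4}$ is then immediate from the construction of $\Sigma_\pm(t)$, after absorbing the constants $c_1, c_2$ into a further shrinking of $t_0$ (noting $c_i t^{1/4} \leq t^{1/4}$ whenever $t \leq c_i^{-4}$). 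The main obstacle in this plan is the quantitative lower bound on $\abs{DU^+}$ via Proposition~\ref{pr:radialSolution}: the exponent $1/4$ is chosen precisely so that the boundary speed $\sim t^{-3/4}$ is integrable (giving total displacement $\sim t^{1/4}$) yet dominates the fixed bound $C_1$ for all small $t$. Matching the parameters $\hat a, \hat b$ of the radial barrier to the actual gradients of $u_0$ across $\Gamma$, and aligning the annular shell with the uniform exterior ball supplied by \eqref{B}, is the step where the $C^{1,1}$-regularity assumption is crucially used.
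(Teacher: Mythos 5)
Your construction of the shifted positive phases---shrinking by roughly $t^{1/4}$ for $U$ and expanding by $t^{1/4}$ for $V$, then solving the parabolic problem on the positive side and the elliptic problem at each fixed time on the negative side---is exactly the paper's construction, and the elliptic-side gradient estimates ($|DU^-| \leq C_0$ and $|DV^-| \geq c_0$ via the uniform exterior ball property and uniform ellipticity) are done in the same way. The gap is in the parabolic-side gradient estimates, where the direction of your barrier comparisons is reversed.

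To bound $|DU^+|$ from below at a contact point $(\xi,t_*)$ you place an \emph{upper} barrier $\hat\vp$ with $\hat\vp \geq U$ on the parabolic boundary of an annular shell, conclude $U \leq \hat\vp$ in the shell, and then assert $|DU^+|(\xi,t_*) \geq C_2 c_1 t_*^{-3/4}$. But if $U \leq \hat\vp$ with both vanishing at $(\xi,t_*)$, then $U$ grows at most as fast as $\hat\vp$ into the positive phase, which gives $|DU^+|(\xi,t_*) \leq |D\hat\vp^+|(\xi,t_*) = \hat a$ --- an \emph{upper} bound, the opposite of what you need. A lower bound on $|DU^+|$ requires a barrier sitting \emph{below} $U$ in the positive phase; the paper supplies this by invoking the iterated Harnack-type construction in the proof of Lemma~\ref{le:DsupersolutionInfty} (applied to $U$ viewed as a supersolution of the parabolic problem inside its shrinking positive phase), which gives $|DU^+| \geq C(t) \to \infty$ as $t \to 0$. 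Symmetrically, the upper bound $|DV^+| \leq c(t) \to 0$ is the delicate half of the $V$-argument, and the paper obtains it from the flatness barrier in Lemma~\ref{le:Dsubsolution0}; your proposal does not address it --- you mention only a ``radial lower barrier in the negative phase'' for $|DV^-|$, which again runs the wrong way (a lower barrier $\psi \leq V \leq 0$ vanishing at $\xi$ yields $|DV^-| \leq |D\psi^-|$, an upper bound). Finally, note that Proposition~\ref{pr:radialSolution} only produces barriers with $\hat\om \geq 0$, i.e.\ an \emph{expanding} zero set, so it does not directly provide a radial comparison function whose free boundary retreats inward at speed $c_1 t^{-3/4}/4$ as your plan for $\Sigma_-(t)$ requires.
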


\begin{proof}

1. First we construct a supersolution $V$. Let us set 
$$
\mathcal{O}^+(t):= \{x: d(x, \{u_0> 0\})< t^{1/4}\},
$$
and let $V(x,t)$ solve 
$$
\left\{\begin{array}{lll}
V_t - F(D^2 V, DV, V)>0 &\hbox{ in }&\mathcal{O}^+(t)\\ \\
V=0&\hbox{ on } &\Gamma(t):=\partial\mathcal{O}^+(t)\\ \\
-F(D^2V,DV,V)>0 &\hbox{ in } & \mathcal{O}^-(t):= \Omega \setminus \mathcal{O}^+(t)
\end{array}\right.
$$
with the initial and lateral boundary data $u_0$ and $-1$, respectively. Note that, due to the regularity assumption \eqref{B} on $\Gamma$, the positive phase $\mathcal{O}^+(t)$ has the exterior ball property with radius $R_0-t^{1/4}$ for small time $t$. In particular, it follows from the uniform ellipticity of the operator in the negative phase that  there exists $t_0>0$ such that  we have $|DV| >c_0$ on $\Gamma(t)$ for $0<t<t_0$, where $c_0 > 0$ is independent of $t$.

\vspace{10pt}

Now a barrier argument, which takes advantage of the barrier constructed in the proof of Lemma~\ref{le:Dsubsolution0}, yields the existence of a function $c(t)$ such that
$$
|DV^+| \leq c(t)\hbox{ with }  c(t)\to 0\hbox{ as }t\to\infty.
$$ 
  Hence we can choose $t_0$ sufficiently small such that $|DV^+| < |DV^-| $ on $\Gamma(t)$ for $0\leq t \leq t_0$, and therefore $V$ is a viscosity supersolution of \eqref{RP} for $0\leq t\leq t_0$. For $t> t_0$ we take $V(\cdot, t) = \psi$, where $\psi(x)$ is a viscosity supersolution of the elliptic problem $-F(D^2 \psi, D\psi, \psi) =0$ in $\Omega$ with the boundary data $-1$, and $\psi > V(\cdot, s)$ for $0 \leq s \leq t_0$.

\vspace{10pt}

2. The construction of a subsolution $U$ is similar: we replace $\mathcal{O}^+(t)$ in the construction of $V$ by 
$$
\mathcal{O}^+(t):=\{x: d(x,\{u_0 < 0\}) > t^{1/4}\}.
$$

It follows from a barrier argument in the proof of Lemma~\ref{le:DsupersolutionInfty}  that 
$$
|DU^+| \geq C(t)\hbox{ on }\Gamma(t),\hbox{ with } C(t)\to\infty\hbox{ as }t\to 0.
$$ 
On the other hand due to the exterior ball property $|DU^-| \leq C_0$ on $\Gamma(t)$, where $C_0$ is independent of time.  Hence we can choose $t_0$ sufficiently small such that $|DU^+| > |DU^-| $ on $\Gamma(t)$, and thus $U$ is a viscosity subsolution of \eqref{RP} for $0\leq t\leq t_0$. For $t>t_0$ we take $\mathcal{O}^+(t) := \emptyset$.
\end{proof}

Using $V$ and $U$ constructed above, the minimal solution $\underline{u}$ can be constructed as 
$$
\underline{u} = \inf\{ z: \hbox{viscosity supersolution of \eqref{RP} such that }U \leq z \},
$$
and the maximal solution $\overline{u}$ can be constructed as 
$$
\overline{u}=\sup\{w: \hbox{viscosity subsolution of \eqref{RP} such that } w\leq V\}.
$$

\begin{theorem}\label{existence}
$\overline{u}$ and $\underline{u}$ are viscosity solutions of \eqref{RP} with initial data $u_0$, and $\overline{u} = \overline{u}^*$, $\underline{u} = \underline{u}_*$. Moreover, if $v$ is a viscosity solution of \eqref{RP} with initial data $u_0$ then $\overline{u} \leq v\leq\ \underline{u}$.
\end{theorem}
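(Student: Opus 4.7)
The plan is a Perron-type argument. Since $\overline{u}$ is the supremum of viscosity subsolutions bounded above by $V$, its upper semicontinuous envelope $\overline{u}^*$ is automatically a viscosity subsolution: given any strict classical supersolution $\vp$ on a parabolic neighborhood $E \subset Q$ with $\overline{u}^* < \vp$ on $\partial_P E$, each competitor $w$ in the defining family satisfies $w \leq \overline{u}^* < \vp$ on $\partial_P E$, so $w < \vp$ on $E$ by the subsolution property; passing to the envelope and slightly shrinking $E$ delivers $\overline{u}^* < \vp$ strictly on $E$. That $\overline{u}_*$ is a supersolution follows from the classical bump-up contradiction: if not, a strict classical subsolution could be locally lifted above $\overline{u}$ on a small parabolic cylinder, producing a competitor strictly larger than $\overline{u}$ and contradicting the supremum; the room needed for the lifting is provided by the strict gap $\overline{u}_* < V$ at interior points, guaranteed by the strict supersolution property of $V$ constructed in Lemma~\ref{stability:initial}. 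The dual arguments apply to $\underline{u}$.

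The identities $\overline{u} = \overline{u}^*$ and $\underline{u} = \underline{u}_*$ are immediate from the construction as a supremum (resp.\ infimum) of a family with the appropriate one-sided regularity bounded by the continuous bookend $V$ (resp.\ $U$). The initial and lateral data are pinned down by the squeeze $U \leq \overline{u}, \underline{u} \leq V$ combined with Lemma~\ref{stability:initial}; the lower bound $U \leq \overline{u}$ follows because $U \leq V$ (direct from the construction of $U$ and $V$ with nested positive phases in Lemma~\ref{stability:initial}), so $U$ itself is an admissible competitor in the defining family for $\overline{u}$.

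For the extremality $\overline{u} \leq v \leq \underline{u}$, fix any viscosity solution $v$ of \eqref{RP} with initial data $u_0$. To show $\overline{u} \leq v$, let $w$ be an arbitrary element of the family defining $\overline{u}$, so $w$ is a subsolution with $w \leq V$; in particular $w(\cdot, 0) \leq V(\cdot, 0) = u_0 = v(\cdot, 0)$ and $w \leq -1 = v$ on $\partial \Omega \times [0,T]$, so $w \leq v$ on $\partial_P Q$ non-strictly. Replacing $v$ by a strict upward perturbation $v_\e$ obtained by lifting $v$ by $\e$ and simultaneously expanding the positive phase by an amount comparable to $\e^{1/4}$ (exactly as in the construction of $V$ in Lemma~\ref{stability:initial}), the properness \eqref{eq:proper} of $F$ in the $z$-variable guarantees that $v_\e$ is a strict supersolution of \eqref{RP} and $v_\e > v \geq w$ strictly on $\partial_P Q$. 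Theorem~\ref{th:comparisonPrinciple} then yields $w < v_\e$ on $Q$; sending $\e \to 0$ gives $w \leq v$, and taking the supremum over $w$ delivers $\overline{u} \leq v$. The inequality $v \leq \underline{u}$ is proved by the dual argument: for any supersolution $z \geq U$ in the family defining $\underline{u}$, we have $v \leq z$ on $\partial_P Q$, and after applying a strict downward perturbation of $v$ (again combining a shift with a contraction of the positive phase as in the construction of $U$) we invoke Theorem~\ref{th:comparisonPrinciple} to get $v \leq z$ on $Q$, so $v \leq \inf_z z = \underline{u}$.

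The main obstacle is the perturbation step: Theorem~\ref{th:comparisonPrinciple} requires \emph{strict} ordering on $\partial_P Q$, whereas $v$ shares the initial datum with the extremal competitors non-strictly. A naive vertical shift $v + \e$ is a supersolution of the interior PDE by properness of $F$, but it displaces the transition interface into the negative phase, where the supersolution analogue of the flux-matching condition in Definition~\ref{de:classicalSubsolution}(iv) is not manifestly preserved and in fact may fail. The remedy is to couple the vertical shift with a small outward normal displacement of the positive phase, exactly the construction tested in Lemma~\ref{stability:initial}, which restores strict satisfaction of both the PDE and the flux condition. Alternatively, one may carry out the entire extremality argument through the regularized parabolic approximation \eqref{approx} from Corollary~\ref{approx11}, for which the standard parabolic comparison holds without strict ordering on the boundary, and then pass to the limit.
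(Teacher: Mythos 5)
Your Perron machinery for the first two claims (that $\overline{u}^{*}$ is a subsolution, that $\overline{u}_{*}$ is a supersolution via the bump argument, and the role of the barriers $U$, $V$ from Lemma~\ref{stability:initial} in pinning down the initial and lateral data) is the standard route the paper has in mind, and it is fine in outline. The genuine gap is in the extremality argument. You prove $\overline{u}\leq v$ by comparing an arbitrary competitor $w$ of the defining family against a strict upward perturbation $v_{\e}$ of the given solution $v$, obtained by ``lifting $v$ by $\e$ and expanding its positive phase by $\e^{1/4}$ as in Lemma~\ref{stability:initial}.'' Such a perturbation does not exist in the generality you need. The construction in Lemma~\ref{stability:initial} solves prescribed elliptic/parabolic problems in domains $\mathcal{O}^{\pm}(t)$ built from an initial interface that is $C^{1,1}$ and a datum in $\mathcal{P}$, and it only produces a supersolution for a short time $t_{0}$. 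A general viscosity solution $v$ is merely a pair of semicontinuous envelopes; at positive times $\partial\set{v>0}$ has no known regularity, ``expanding the positive phase'' of it is not a well-defined operation, and the flux condition of Definition~\ref{de:classicalSubsolution}(iv) at the displaced interface cannot be verified. Decisively: if every viscosity solution admitted strict supersolution approximations $v_{\e}\searrow v$, then running your argument with $w=u^{*}$ for a second solution $u$, and symmetrically, would yield uniqueness of viscosity solutions for every $u_{0}\in\mathcal{P}$ and every admissible $F$ --- precisely the statement the paper declares open and proves only under hypotheses (A)/(B) of Theorem~\ref{uniqueness} or for linear $F$. Your fallback through the approximation \eqref{approx} fails for the same reason: Corollary~\ref{approx11} identifies the limits of $u_{n}$ with $\overline{u}$ and $\underline{u}$, not with an arbitrary solution $v$.

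The intended extremality argument requires no comparison against $v$ at all: one checks that $v^{*}\leq V$ (using that $V$ is a strict classical supersolution on $\Omega\times(0,t_0]$ and dominates afterwards, so the definition of viscosity subsolution applies directly to the barrier $V$), whence $v^{*}$ is itself a member of the family defining $\overline{u}$ and $v\leq v^{*}\leq\overline{u}$; dually $\underline{u}\leq v$. Note that this gives $\underline{u}\leq v\leq\overline{u}$, which is consistent with $\overline{u}$ being the \emph{maximal} solution and with its use as an upper bound in Theorem~\ref{stability:visc}; the inequality as printed in the statement appears to have the two extremal solutions interchanged, and your proposal inherits that reversed orientation.
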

\begin{proof}

 The proof follows from standard arguments in Perron's method (see for example the proof of Theorem 1.2 in \cite{K}. Also see \cite{CIL}*{section 4}). 
\end{proof}

 Lastly we show stability properties of the maximal (and minimal) solutions under perturbation of initial data.
 
 For a family of functions $\{u^\e\}_{\e>0}$, let us define the half-relaxed limits
\begin{align*}
\limhalfsup_{\e\to 0} u^\e(x,t) := \limsup_{\substack{\e \to 0\\(y,s)\to(x,t)}} u^\e(y,s)
\end{align*}
and
\begin{align*}
\limhalfinf_{\e\to 0} u^\e(x,t) := \liminf_{\substack{\e \to 0\\(y,s)\to(x,t)}} u^\e(y,s).
\end{align*}

\begin{theorem}\label{stability:visc}
 Let $\overline{u}$ be the maximal viscosity solution of \eqref{RP} with initial data $u_0$. Let $u_0^\e$ be a sequence of functions decreasing in $\e \to 0$ such that 
\begin{enumerate}
\romanlist
\item $u_0 < u_0^\e$ and $u_0^\e$ converges uniformly to $u_0$ as $\e\to 0$, 
\item  $u_0^\e$ and  $\Gamma^\e :=\partial\{u_0^\e>0\}=\partial\{u_0^\e<0\}$ satisfy Assumptions \eqref{A} and \eqref{B} uniformly in $\e>0$,
\item  $\Gamma^\e$ converges uniformly to $\Gamma(u_0)$ with respect to the Hausdorff distance.
 \end{enumerate}
 Let $u^\e$ be a viscosity solution with initial data $u_0^\e$. Then $u^\e$ converges to $\overline{u}$ in the following sense: both $U_1:=\limhalfsup_{\e\to 0}u^\e$ and $U_2:=\limhalfinf_{\e\to 0} u^\e$ satisfy
$$
(U_i)^* = \overline{u} \hbox{ and } (U_i)_*=\overline{u}_*.
$$
Corresponding results hold for the minimal solutions, if one replaces the condition $u_0< u_0^\e$ in (i) with $u_0^\e < u_0$, and $u_0^\e$ is increasing as $\e \to 0$.
\end{theorem}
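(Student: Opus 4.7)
The plan is to sandwich
\begin{align*}
\overline{u}_* \leq U_2 \leq U_1 \leq \overline{u},
\end{align*}
from which the four identities $(U_i)^*=\overline{u}$ and $(U_i)_*=\overline{u}_*$ follow by taking upper and lower semicontinuous envelopes and exploiting that $U_1\in USC$, $U_2\in LSC$ by construction, and $\overline{u}^*=\overline{u}$ from Theorem~\ref{existence}. The middle inequality $U_2\leq U_1$ is immediate from $\liminf\leq\limsup$.

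For the lower bound $\overline{u}_*\leq U_2$, I would apply the comparison principle (Theorem~\ref{th:comparisonPrinciple}) to the viscosity subsolution $\overline{u}=\overline{u}^*$ with initial data $u_0$ and the viscosity supersolution $(u^\e)_*$ with initial data $u_0^\e$. The hypothesis $u_0<u_0^\e$ gives strict ordering at $t=0$ on $\cl\Omega$; the lateral boundary issue (both equal $-1$) is resolved by restricting to a slightly shrunken cylinder $\Omega'\times(0,T]$ with $\Omega'\Subset\Omega$, where Hopf-type estimates for the elliptic problem in the negative phase yield $\overline{u}<-1+\delta<u^\e$ on $\partial\Omega'\times[0,T]$ for a suitable $\delta>0$. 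Comparison then gives $\overline{u}\leq u^\e$ on $\Omega'\times(0,T]$, and exhausting $\Omega'\nearrow\Omega$ yields $\overline{u}\leq u^\e$ on all of $Q$. Since $\overline{u}$ is independent of $\e$, $\limhalfinf_{\e\to 0}\overline{u}=\overline{u}_*$, and monotonicity of $\limhalfinf$ yields $\overline{u}_*\leq U_2$.

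For the upper bound $U_1\leq\overline{u}$, I would use the standard half-relaxed limit argument to show $U_1$ is a viscosity subsolution of \eqref{RP}. Suppose $\vp$ is a strict classical supersolution on a parabolic neighborhood $E\subset Q$ with $U_1<\vp$ on $\partial_P E$. Upper semicontinuity of $U_1-\vp$ on the compact set $\partial_P E$ gives a uniform margin $U_1\leq\vp-3\eta$, and the definition of $\limhalfsup$ then yields $(u^\e)^*\leq\vp-\eta$ on $\partial_P E$ for all small $\e$. The viscosity subsolution property of each $u^\e$, applied with $\vp$ (still a strict classical supersolution), yields $u^\e<\vp$ on $E$, and passing to $\limhalfsup$ gives $U_1\leq\vp$ on $E$. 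Verifying the initial condition $U_1(\cdot,0)\leq u_0$ uses the quantitative $t^{1/4}$ modulus of Lemma~\ref{stability:initial} applied uniformly to each $u^\e$ (the uniform assumptions on $u_0^\e$ and $\Gamma^\e$ ensure the constants in that lemma do not blow up), together with uniform convergence $u_0^\e\to u_0$. An analogous uniform supersolution barrier $V$ for $u_0$ gives $U_1\leq V$, and the maximality of $\overline{u}$ among subsolutions with initial data $u_0$ bounded above by $V$ concludes $U_1\leq\overline{u}$.

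The main obstacle will be the half-relaxed limit argument in the upper bound: the strictness of $\vp$ must be leveraged without displacing the transition boundary $\{\vp=0\}$, since a constant additive shift could destroy the gradient-matching condition (iv) of Definition~\ref{de:classicalSubsolution}. I expect this to be handled by a small time shift of $\vp$ rather than an additive shift, as this preserves the free boundary geometry and all of the structural conditions (i)--(iv) while absorbing the $\eta$-separation on $\partial_P E$. A secondary technical point is ensuring that the initial-time barriers from Lemma~\ref{stability:initial} can be constructed with constants uniform in $\e$, which follows from hypothesis (ii) of the theorem. The symmetric argument, using the reversed ordering $u_0^\e<u_0$ with $u_0^\e$ increasing to $u_0$, establishes the corresponding convergence for the minimal solution $\underline{u}$.
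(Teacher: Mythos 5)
Your proposal is correct and matches the paper's proof: the lower bound $\overline{u}_*\leq U_2$ comes from Theorem~\ref{th:comparisonPrinciple} applied to $\overline{u}^*$ and $(u^\e)_*$, the upper bound $U_1\leq\overline{u}$ from showing $U_1=\limhalfsup u^\e$ is a viscosity subsolution bounded above by the Lemma~\ref{stability:initial} barrier $V$ and invoking maximality, and the four identities then follow from the sandwich together with $\overline{u}^*=\overline{u}$ and $(\overline{u}_*)^*=\overline{u}$ (Corollary~5.5). The two technical points you flag are real but are elided in the paper's terse proof: for recovering strictness in the half-relaxed-limit step, the paper's analogous argument in Proposition~\ref{limit:visc} perturbs the zero set spatially (as in the proof of Theorem~\ref{coincidence}) rather than using your time shift, and either device works; for the lateral boundary where both $\overline{u}$ and $u^\e$ equal $-1$, your Hopf-based separation on a shrunken cylinder is in the right spirit but is not immediate (when $F(0,0,-1)=0$, e.g.\ the linear case, the constant $-1$ solves the elliptic problem and no strict gap is forced, and in any case the Hopf constant for $u^\e$ is not obviously uniform in $\e$), whereas the paper's Theorem~\ref{th:comparisonPrinciple} in effect already works on the shrunken $\Omega_r$ and only needs strict ordering on $\partial_P Q_r$, which is where the slack should actually be established.
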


\begin{remark}
Note that the convergence in the sense of the above theorem is optimal for semi-continuous functions.
\end{remark}

\begin{proof}
1.  First observe that Theorem~\ref{th:comparisonPrinciple} yields that 
 \begin{equation}\label{one}
 \overline{u}^*< (u^\e)_*\hbox{ for any } \e>0.
 \end{equation}
On the other hand, due to the standard stability property of viscosity solutions, \\$\limhalfsup_{\e\to 0}u^\e(y,s)$
  is a subsolution of \eqref{RP}. Hence by a barrier argument (one may use the supersolution $V^\e$ constructed as $V$ in the proof of Lemma~\ref{stability:initial} but for the initial data $u_0^\e$) we can show that 
 \begin{equation}
 \limhalfsup_{\e\to 0} u^\e \leq V.
 \end{equation}
  Therefore by definition of $\overline{u}$ we have 
 \begin{equation}\label{three}
  \limhalfsup_{\e\to 0} u^\e \leq \overline{u}.
  \end{equation}
    Putting \eqref{one} and \eqref{three} together, we conclude.
\end{proof}

From the above theorem we have the following ``regularity" information on the minimal and maximal viscosity solutions.

\begin{corollary}
\begin{equation}\label{equality}
(\overline{u}_*)^* = \overline{u} \hbox{ and } (\underline{u}^*)_*= \underline{u}.
\end{equation}
\end{corollary}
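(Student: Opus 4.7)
The plan is to deduce both identities directly from the stability theorem (Theorem~\ref{stability:visc}) by perturbing the initial data. One direction is automatic: since $\overline{u}_* \leq \overline{u}$ and $\overline{u} = \overline{u}^*$ is already upper semi-continuous by Theorem~\ref{existence}, we have $(\overline{u}_*)^* \leq \overline{u}$; symmetrically $(\underline{u}^*)_* \geq \underline{u}$. The content of the corollary lies in the reverse inequalities.

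For the first, I would construct a decreasing sequence $u_0^\e \searrow u_0$ in $\mathcal{P}$ satisfying hypotheses (i)--(iii) of Theorem~\ref{stability:visc}. Concretely, enlarge the positive phase $\{u_0 > 0\}$ by its $\e$-tubular neighborhood (feasible because $\Gamma(u_0)$ is $C^{1,1}$ with uniform interior and exterior balls by \eqref{B}), smoothly extend $u_0 + \e$ across the added strip so that it vanishes on the new boundary $\Gamma^\e$, and solve the elliptic problem in the resulting smaller negative phase with boundary data $0$ on $\Gamma^\e$ and $-1$ on $\partial\Omega$. The uniform $C^{1,1}$ regularity of $\Gamma^\e$ together with the uniform convergence $u_0^\e \to u_0$ and Hausdorff convergence $\Gamma^\e \to \Gamma(u_0)$ follow from the hypothesis \eqref{B} on $\Gamma(u_0)$.

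Let $u^\e$ be a viscosity solution with initial data $u_0^\e$, provided by Theorem~\ref{existence}, and set $U_2 := \limhalfinf_{\e \to 0} u^\e$. By its definition as a relaxed $\liminf$, $U_2$ is lower semi-continuous, so $(U_2)_* = U_2$; Theorem~\ref{stability:visc} identifies $(U_2)_* = \overline{u}_*$, hence $U_2 = \overline{u}_*$. Applying the theorem's other conclusion $(U_2)^* = \overline{u}$ then yields $(\overline{u}_*)^* = (U_2)^* = \overline{u}$. The identity for $\underline{u}$ is entirely analogous, using an increasing sequence $u_0^\e \nearrow u_0$ obtained by shrinking the positive phase, together with $U_1 := \limhalfsup_{\e \to 0} u^\e \in USC$. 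I expect the only non-routine step to be the uniform geometric construction of the perturbations $u_0^\e$; once that is in place, the stability theorem itself provides all of the analytic content.
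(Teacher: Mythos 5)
Your argument is correct and reflects exactly the route the paper leaves implicit: the corollary is stated as a direct consequence of Theorem~\ref{stability:visc} with no written proof, and the key observation you supply—that $U_2 := \limhalfinf_{\e\to 0} u^\e$ is automatically lower semi-continuous (as any half-relaxed lower limit is), so $(U_2)_* = U_2 = \overline{u}_*$ and then $(U_2)^* = \overline{u}$ closes the loop—is the analytic content the authors intend. The minor caveat worth flagging is the strict inequality $u_0 < u_0^\e$ in hypothesis (i) of Theorem~\ref{stability:visc}: since both $u_0$ and $u_0^\e$ take the fixed boundary value $-1$ on $\partial\Omega$, the ordering can only be strict in the interior; this is an imprecision inherited from the paper (the same issue appears in the invocation of Theorem~\ref{th:comparisonPrinciple} in the proof of Theorem~\ref{stability:visc}), not a defect introduced by your construction of $u_0^\e$ via tubular enlargement of $\{u_0>0\}$.
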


\section{Uniqueness properties}
\label{sec:uniqueness}

Recall that our comparison principle, Theorem~\ref{th:comparisonPrinciple}, requires strictly separated initial data. We saw in the previous section that the theorem yields existence (and uniqueness, by definition) of maximal and minimal viscosity solutions. In this section we will discuss uniqueness properties of general viscosity solutions. 

\subsection{Viscosity solutions coincide with regular weak solutions}

In this section we consider a linear uniformly elliptic operator $F$ of the form $F(M, p, z) = \trace M$.
We show that in this case the viscosity solutions of \eqref{RP} are precisely the regular weak solutions introduced in \cite{AL}.
As a corollary it follows that viscosity solutions are unique for this class of operators.

For completeness of the paper we revisit the notion of regular weak solutions and its properties.
We refer to Section~1.4 and Definition~2.1 of \cite{AL} for the definition of weak solutions, subsolutions and supersolutions of \eqref{RP}, which are as usual defined in $L^2(0,T;H^1(\Omega))$ via integration by parts.

\begin{theorem}[\cite{AL}, Theorem 2.2: simplified version]\label{thm:cp}
Suppose that
\begin{align*}
F(M,p,z) = \trace M,
\end{align*}
and let $Q = \Omega \times (0, T]$, where $\Omega$ is a bounded domain with Lipschitz boundary.
Suppose that $u_1$ is a weak subsolution and $u_2$ a weak supersolution of \eqref{RP}, with initial data $b^0_1$, $b^0_2$, and boundary data $u^D_1$, $u^D_2$, respectively, such that 
\begin{equation}\label{reg}
\partial_t b(u_1), \quad \partial_t b(u_2) \in L^2(Q).
\end{equation}
If $b^0_1 \leq b^0_2$ a.e. in $\Omega$ and $u^D_1 \leq u^D_2$ a.e. on $\partial_L Q$ then $u_1 \leq u_2$ a.e. in $Q$.
\end{theorem}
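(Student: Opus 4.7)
My plan is to adapt the classical energy method of Alt--Luckhaus. The key step is to subtract the weak formulations for the subsolution $u_1$ and the supersolution $u_2$ and test against a nonnegative function supported on $\set{u_1 > u_2\,}$. The natural candidate is $\phi = (u_1 - u_2)_+$ (for the degenerate analysis below, a Lipschitz approximation $H_\delta(b(u_1) - b(u_2))$ of $\mathrm{sign}_+(b(u_1)-b(u_2))$ is sometimes more convenient). The lateral boundary ordering $u^D_1 \leq u^D_2$ ensures that $\phi$ has zero trace on $\partial_L Q$, hence lies in $L^2(0,T; H^1_0(\Omega))$ and is an admissible test function. After a mild time mollification to localize to $(0,\tau)$, this yields
\begin{align*}
\int_0^\tau \int_\Omega \partial_t[b(u_1) - b(u_2)] \cdot \phi \,dx\,ds + \int_0^\tau \int_\Omega \nabla(u_1 - u_2) \cdot \nabla \phi \,dx\,ds \leq 0.
\end{align*}
Choosing $\phi = (u_1 - u_2)_+$, the spatial term simplifies directly to $\int_0^\tau \int_\Omega \abs{\nabla(u_1-u_2)_+}^2\,dx\,ds \geq 0$ and can be dropped.

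The main obstacle is the time term, and this is precisely where the regularity hypothesis $\partial_t b(u_i) \in L^2(Q)$ enters. I would establish the chain-rule identity
\begin{align*}
\int_0^\tau \int_\Omega \partial_t[b(u_1) - b(u_2)] \cdot (u_1 - u_2)_+ \,dx\,ds \geq \int_\Omega \mathcal{J}(u_1(\tau), u_2(\tau))\,dx - \int_\Omega \mathcal{J}(u_1(0), u_2(0))\,dx,
\end{align*}
where $\mathcal{J}(a,c) := \int_c^a (b(s) - b(c))_+\,ds \geq 0$, with $\mathcal{J}(a,c) = 0$ iff $b(a) \leq b(c)$. The rigorous derivation proceeds by Steklov time-averaging: form the averages $u_i^h$ of $u_i$, verify the chain rule pointwise for the smoothed $u_i^h$, integrate in time, and pass $h \to 0$ using the $L^2$-bound on $\partial_t b(u_i)$ (for weak convergence of the time derivative) together with $u_i \in L^2(0,T; H^1(\Omega))$ and the Lipschitz character of $b$ (for strong $L^2$ convergence of the compositions). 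The presence of the degenerate set $\{b = 0\}$ makes this passage delicate, since $\partial_t u_i$ need not exist there; however only the one-sided inequality is required.

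Once the chain rule is in hand, the initial ordering $b^0_1 \leq b^0_2$ forces $\mathcal{J}(u_1(0), u_2(0)) \equiv 0$ a.e.\ (the integrand $(b(s) - b(u_2(0)))_+$ vanishes on the range of integration). Combining the positivity of the spatial term with the chain-rule estimate yields $\int_\Omega \mathcal{J}(u_1(\tau), u_2(\tau))\,dx \leq 0$ for every $\tau \in (0,T]$, whence $\mathcal{J} \equiv 0$ a.e., i.e.\ $b(u_1) \leq b(u_2)$ a.e.\ in $Q$. To upgrade to $u_1 \leq u_2$ a.e., one argues in two regions: on $\set{u_2 > 0}$ the strict monotonicity of $b$ from assumption (c) gives the conclusion at once, while on $\set{u_2 \leq 0}$ both $b(u_i)$ vanish and each $u_i(\cdot,t)$ solves the time-slice elliptic problem $-\Delta u = 0$ (as a weak sub- or supersolution, respectively), so the elliptic weak maximum principle with the boundary ordering $u^D_1 \leq u^D_2$ yields $u_1 \leq u_2$ there as well. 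The genuinely hard kernel of the argument is thus the Steklov chain-rule identity in the time term; everything else is essentially a bookkeeping of the two one-sided inequalities and the prescribed boundary and initial data.
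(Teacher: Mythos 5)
The paper does not prove this result: it is cited from \cite{AL}*{Theorem~2.2} and used as a black box, so your proposal must stand on its own merits. Unfortunately, the chain-rule inequality at its core is false, and the proof collapses at that step.

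The pointwise form of what you claim is
\begin{align*}
\partial_t\bra{b(u_1)-b(u_2)}\,(u_1-u_2)_+ \;\geq\; \partial_t\,\mathcal{J}(u_1,u_2),
\end{align*}
and it fails whenever $u_1>0>u_2$ with $\partial_t u_1<0$. Take $b(s)=s_+$, $u_1(t)=2-t$, $u_2\equiv-1$: the left side is $-(3-t)$, while $\mathcal{J}(u_1,u_2)=u_1^2/2$ gives $\partial_t\mathcal{J}=-(2-t)$, so the inequality is violated exactly by the cross-term $|u_2|\,\partial_t u_1$. Testing against $(u_1-u_2)_+$ weights $\partial_t b(u_1)$ by the \emph{full} gap $u_1-u_2$, including the portion where $b$ is flat, and no nonnegative $\mathcal{J}$ vanishing precisely when $b(a)\le b(c)$ can serve as a primitive of that quantity. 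The alternative you mention, $H_\delta\pth{b(u_1)-b(u_2)}$, repairs the time term but breaks the gradient term: $\nabla(u_1-u_2)\cdot\nabla\pth{b(u_1)-b(u_2)}$ is not sign-definite once $b'$ degenerates.

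The test function that actually works is $H_\delta(u_1-u_2)$, a Lipschitz approximation of $\operatorname{sign}_+(u_1-u_2)$ (not of $\operatorname{sign}_+(b(u_1)-b(u_2))$). The gradient term becomes $\int H_\delta'(u_1-u_2)\,|\nabla(u_1-u_2)|^2\ge 0$. For the time term the hypothesis $\partial_t b(u_i)\in L^2(Q)$ lets you send $\delta\to0$ to obtain $\int_0^\tau\!\int_\Omega \partial_t\bra{b(u_1)-b(u_2)}\,\chi_{\set{u_1>u_2}}$; since $\set{b(u_1)>b(u_2)}\subset\set{u_1>u_2}$ by monotonicity and $\partial_t\bra{b(u_1)-b(u_2)}$ vanishes a.e.\ on $\set{b(u_1)=b(u_2)}$, this equals $\int_\Omega\pth{b(u_1)-b(u_2)}_+(\cdot,\tau)\,dx-\int_\Omega\pth{b^0_1-b^0_2}_+\,dx$. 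The initial ordering removes the second term, giving $b(u_1)\le b(u_2)$ a.e.; your concluding time-slice elliptic comparison to upgrade this to $u_1\le u_2$ is the right idea, but needs more care in identifying the open elliptic domain at each time slice and the measure-theoretic sense in which the sub-/supersolution properties hold there.
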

 
 We call $u$ a {\it regular weak solution} if $u$ is a weak solution and satisfies the additional regularity \eqref{reg}. It is shown in \cite{AL} (Theorem 2.3) that for linear $F$ there exists a regular weak solution with initial data $u_0\in \mathcal{P}$.
 
 \vspace{10pt}
 
  Observing that the test functions $U$ and $V$ constructed in the proof of Lemma~\ref{stability:initial} are respectively a classical subsolution and a classical supersolution of \eqref{RP} for $0<t<t_0$, Theorem~\ref{thm:cp} yields the following:
 
 \begin{lemma}
 Let $u_0\in H^1(\Omega)$ satisfy assumptions \eqref{A}-\eqref{B}. Then any regular weak solution  $u$ of \eqref{RP} with initial data $u_0$ uniformly converges to $u_0$ as $t\to0$.
 \end{lemma}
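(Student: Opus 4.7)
The plan is to sandwich the regular weak solution $u$ between the classical subsolution $U$ and the classical supersolution $V$ constructed in Lemma~\ref{stability:initial}. Since both $U$ and $V$ have initial data $u_0$ and evolve continuously at $t=0+$ (their free boundaries staying within distance $t^{1/4}$ of $\Gamma(u_0)$ for $0<t<t_0$), the sandwich will transfer this initial continuity to $u$.

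First I would verify that the functions $U$ and $V$ from Lemma~\ref{stability:initial} are in fact regular weak sub- and supersolutions in the sense of \cite{AL}. Since they are $C^{2,1}$ in each of their positive and negative phases and satisfy the one-sided flux inequalities $|DU^+|\geq|DU^-|$ and $|DV^+|\leq|DV^-|$ across the smooth free boundaries $\Gamma_U(t),\Gamma_V(t)$, integration by parts against a nonnegative test function yields the weak sub/super-inequalities, with the interfacial term having the correct sign. For the additional regularity $\partial_t b(U),\partial_t b(V)\in L^2(\Omega\times(0,t_0])$, note that $b(U)=U_+$ is smooth in the open positive phase $\{U>0\}$, identically zero outside, and vanishes on the moving free boundary, so no singular distributional contribution appears along $\Gamma_U(t)$; the $L^2$ bound on $U_t$ in $\{U>0\}$ then follows from standard parabolic regularity, and similarly for $V$.

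Next I would apply Theorem~\ref{thm:cp} to the pairs $(U,u)$ and $(u,V)$ on $\Omega\times(0,t_0]$: the initial traces all coincide with $u_0$ and the lateral boundary data all equal $-1$, so the hypotheses are met and we conclude $U\leq u\leq V$ a.e.\ in $\Omega\times(0,t_0]$.

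The third step is to show $\|U(\cdot,t)-u_0\|_\infty,\|V(\cdot,t)-u_0\|_\infty\to 0$ as $t\to 0+$. Away from a fixed neighborhood of $\Gamma(u_0)$, standard interior parabolic (resp.\ elliptic) estimates on the nearly stationary positive (resp.\ negative) phases, whose boundaries are displaced by at most $t^{1/4}$, give uniform convergence to $u_0$. Inside the $t^{1/4}$-tubular neighborhood of $\Gamma(u_0)$, continuity of $u_0$ together with $u_0|_{\Gamma(u_0)}=0$ makes $u_0$ uniformly small, and the boundary-layer barriers from Lemma~\ref{le:Dsubsolution0} and Lemma~\ref{le:DsupersolutionInfty} (already invoked in the proof of Lemma~\ref{stability:initial}) make $U$ and $V$ uniformly small in a slightly enlarged layer. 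Combined with the sandwich $U\leq u\leq V$, this yields uniform convergence $u(\cdot,t)\to u_0$ on $\Omega$.

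The main obstacle will be the quantitative control of $U$ and $V$ near $\Gamma(u_0)$ at small times, where the moving free boundary interacts with the initial transition layer; this is precisely what the barriers from Lemmas~\ref{le:Dsubsolution0} and \ref{le:DsupersolutionInfty} are designed to handle. A secondary technical point is the verification of $\partial_t b(U),\partial_t b(V)\in L^2$, but this follows cleanly from $b\equiv 0$ in the negative phase and $U=V=0$ on the respective free boundaries, which rules out singular distributional terms.
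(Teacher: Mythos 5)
Your proposal is correct and follows the same route the paper takes: the paper's entire proof is the one-sentence remark preceding the lemma (that $U$ and $V$ from Lemma~\ref{stability:initial} are classical sub/supersolutions, so Theorem~\ref{thm:cp} applies), and you have simply filled in the details of that sketch — verifying that the classical barriers are also regular weak barriers, invoking the Alt--Luckhaus comparison to sandwich $u$, and deducing uniform convergence of $u$ from that of $U$ and $V$.

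One small point worth flagging: when you argue $\partial_t b(U)\in L^2$ by saying ``no singular distributional contribution appears along $\Gamma_U(t)$,'' the cleanest justification is that $b(U)=U_+$ is Lipschitz in $t$ (uniformly in $x$) because $U$ is $C^{2,1}$ up to the smooth moving interface with $U=0$ there, so its distributional time derivative coincides with the a.e.\ pointwise derivative and lies in $L^\infty\subset L^2$; the absence of a jump in $U_+$ itself across $\Gamma_U(t)$ is what rules out a surface measure, not the vanishing of $U_t$ there. With that clarification, the argument is sound.
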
 
 
 Furthermore, the following holds due to the estimates derived in the proof of Theorem 2.3. in \cite{AL}:
\begin{lemma}\label{stability:weak}
 Regular weak solutions are stable under perturbation of initial data in $H^1(\Omega)$. More precisely if the sequence of initial data $u^0_n$ converges to $u_0$ in $H^1(\Omega)$, then the corresponding regular weak solutions $u_n$ with respect to the initial data $u^0_n$ converges to the regular weak solution of $u$ with initial data $u_0$  in $L^2(0,T; H^1(\Omega))$.
 \end{lemma}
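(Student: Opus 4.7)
The plan is to combine the uniform energy estimates of \cite{AL}*{Theorem 2.3}, weak compactness, and an energy identity to promote weak convergence to strong $L^2(0,T;H^1(\Omega))$ convergence.

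First, the Galerkin construction underlying \cite{AL}*{Theorem 2.3} yields the uniform bound
\[
\|u_n\|_{L^\infty(0,T;H^1(\Omega))} + \|\partial_t b(u_n)\|_{L^2(Q)} \leq C \|u_0^n\|_{H^1(\Omega)},
\]
so, since $b$ is Lipschitz, $b(u_n)$ is uniformly bounded in $L^\infty(0,T;H^1(\Omega)) \cap H^1(0,T;L^2(\Omega))$. The Aubin--Lions lemma extracts a subsequence with $b(u_n)$ converging strongly in $C([0,T];L^2(\Omega))$, while weak $L^2(0,T;H^1)$-compactness gives $u_n \rightharpoonup u_\infty$. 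Since the weak formulation of \eqref{RP} for $F(M) = \trace M$ is linear in both $u$ and $b(u)$, one can pass to the limit and identify $u_\infty$ as a regular weak solution with initial data $u_0$; Theorem~\ref{thm:cp} then forces $u_\infty = u$, so the entire sequence converges weakly.

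Next I upgrade weak to strong convergence via an energy identity obtained from the Alt--Luckhaus chain rule \cite{AL}*{Lemma 1.5}. Testing the weak formulation for any regular weak solution $w$ of \eqref{RP} with the admissible function $w+1 \in L^2(0,T;H^1_0(\Omega))$ yields
\[
\|\nabla w\|_{L^2(Q)}^2 = \int_\Omega \widetilde{\mathcal{B}}(w_0)\,dx - \int_\Omega \widetilde{\mathcal{B}}(w(T))\,dx,
\qquad \widetilde{\mathcal{B}}(r) := (r+1)b(r) - \int_0^r b(\sigma)\,d\sigma.
\]
The structural assumption $b'(s) \geq c > 0$ on $(0,\infty)$ makes $b|_{[0,\infty)}$ globally invertible with Lipschitz inverse $\phi$, so $\widetilde{\mathcal{B}} \equiv 0$ on $(-\infty,0]$ and on $[0,\infty)$ factors as $\widetilde{\mathcal{B}}(r) = \mathcal{E}(b(r))$ for a continuous $\mathcal{E}$ of at most quadratic growth. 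Since $u_0^n \to u_0$ in $H^1(\Omega) \hookrightarrow L^2(\Omega)$ yields $b(u_0^n) \to b(u_0)$ in $L^2$ by the Lipschitz property, and the previous step produced $b(u_n(T)) \to b(u(T))$ in $L^2(\Omega)$, the uniform $L^2$-bounds on $b(u_n)$ and $u_0^n$ together with the local Lipschitzness of $\mathcal{E}$ give
\[
\int_\Omega \widetilde{\mathcal{B}}(u_n(T))\,dx \to \int_\Omega \widetilde{\mathcal{B}}(u(T))\,dx, \qquad \int_\Omega \widetilde{\mathcal{B}}(u_0^n)\,dx \to \int_\Omega \widetilde{\mathcal{B}}(u_0)\,dx,
\]
whence $\|\nabla u_n\|_{L^2(Q)} \to \|\nabla u\|_{L^2(Q)}$. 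Combined with the weak convergence $\nabla u_n \rightharpoonup \nabla u$, this yields strong $L^2(Q)$ convergence of the gradients, and the Poincaré inequality applied to $u_n - u$ (which has zero lateral trace, as both functions equal $-1$ on $\partial\Omega$) completes strong convergence in $L^2(0,T;H^1(\Omega))$.

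The main obstacle is the rigorous use of the Alt--Luckhaus chain rule: with only $\partial_t b(u) \in L^2(Q)$ and $u \in L^2(0,T;H^1(\Omega))$, $u$ itself need not be in $H^1(0,T;L^2)$, so the integration by parts $\int \partial_t b(u)\cdot u\,dt = \int \mathcal{B}(u(T)) - \mathcal{B}(u_0)\,dx$ must be justified via Steklov time averages together with the monotone convexity of $\widetilde{\mathcal{B}}$. This is precisely the device developed in the proof of \cite{AL}*{Theorem 2.3}, which explains why the present lemma appeals to the estimates derived there; once the chain rule is granted, the remainder of the argument is a standard weak-compactness plus norm-convergence step.
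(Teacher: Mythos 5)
The paper offers no proof of this lemma at all: it is stated as an immediate consequence of ``the estimates derived in the proof of Theorem~2.3 in \cite{AL}''.  Your proposal supplies precisely the chain of reasoning that assertion presupposes, and it is essentially correct.  The ingredients you assemble --- the uniform Galerkin bounds $\|u_n\|_{L^\infty(0,T;H^1)}+\|\partial_t b(u_n)\|_{L^2(Q)}\leq C$, Aubin--Lions compactness for $b(u_n)$, identification of the weak limit via monotonicity of $b$ and the uniqueness from Theorem~\ref{thm:cp}, and the energy identity
$\|\nabla w\|^2_{L^2(Q)}=\int_\Omega \widetilde{\mathcal{B}}(w_0)-\widetilde{\mathcal{B}}(w(T))\,dx$
obtained by testing with $w+1\in L^2(0,T;H^1_0)$ and the Alt--Luckhaus chain rule --- are exactly what turns the compactness into strong $L^2(0,T;H^1)$ convergence, and your observation that $\widetilde{\mathcal{B}}=\mathcal{E}\circ b$ with $\mathcal{E}(s)=s+\int_0^s b^{-1}(\tau)\,d\tau$ (Lipschitz inverse thanks to $b'>c$ on $(0,\infty)$) correctly reduces the convergence of the boundary terms to the strong $L^2$ convergence of $b(u_n)$ already in hand.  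Two small points you should tighten: the passage from ``local Lipschitzness of $\mathcal{E}$'' to convergence of the integrals really uses the quantitative bound $|\mathcal{E}(a)-\mathcal{E}(b)|\leq C|a-b|(1+|a|+|b|)$ together with Cauchy--Schwarz and the uniform $L^2$ bounds (local Lipschitzness alone would not suffice without uniform $L^\infty$ control, which $H^1$ data does not automatically provide for $n\geq 2$); and the identification $v=b(u_\infty)$ from $b(u_n)\to v$ strongly and $u_n\rightharpoonup u_\infty$ weakly needs the standard Minty monotonicity trick, which you should at least name.  With those two sentences added, the argument is complete and is the one the authors evidently had in mind.
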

 
Given a weak solution $u \in L^\infty(Q)$, we want to find a representative $v = u$ a.e. that is a suitable candidate for a viscosity solution.
We shall find it using a weaker notion of semi-continuous envelopes.
To this end, let us state an elementary lemma.

\begin{lemma}
\label{leOrderOfEnvelopes}
Let $E \subset \Rn$ such that $\abs{E \cap B_\de(x)} > 0$ for all $x\in E$ and $\de > 0$. If $f\leq g$ a.e. in $E$, then
\begin{align*}
 f_{*,E} \leq g^{*,E}.
\end{align*}
\end{lemma}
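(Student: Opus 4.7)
The plan is to first observe that the abstract definitions of $f_{*,E}$ and $g^{*,E}$ given in \eqref{usclsc} admit the familiar pointwise description
\begin{align*}
f_{*,E}(x) = \liminf_{\substack{y\to x\\y\in E}} f(y), \qquad g^{*,E}(x) = \limsup_{\substack{y\to x\\y\in E}} g(y),
\end{align*}
since $\liminf$ (resp.\ $\limsup$) produces an LSC (resp.\ USC) function sandwiched by the competitors on the appropriate side. With this reformulation in hand, the claim reduces to comparing a liminf of $f$ with a limsup of $g$ at an arbitrary $x \in E$.

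The argument then proceeds by contradiction. If $f_{*,E}(x_0) > g^{*,E}(x_0)$ at some $x_0 \in E$, set $\eta := \tfrac{1}{3}\bigl(f_{*,E}(x_0) - g^{*,E}(x_0)\bigr) > 0$. By the liminf/limsup characterization, I can choose $\delta > 0$ small enough that simultaneously
\begin{align*}
f(y) &> f_{*,E}(x_0) - \eta \qquad \text{and} \qquad g(y) < g^{*,E}(x_0) + \eta
\end{align*}
for every $y \in E \cap B_\delta(x_0)$. Subtracting, $f(y) - g(y) > \eta > 0$ on all of $E \cap B_\delta(x_0)$.

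Here is where the density hypothesis enters decisively: by assumption $|E \cap B_\delta(x_0)| > 0$, so the strict inequality $f > g$ would hold on a set of positive measure, contradicting $f \leq g$ a.e.\ in $E$. Hence no such $x_0$ exists, proving $f_{*,E} \leq g^{*,E}$ on $E$. I do not anticipate a genuine obstacle; the only subtlety is to notice that the density assumption is precisely what bridges the gap between the pointwise semicontinuous envelopes and the merely a.e.\ ordering of $f$ and $g$, ruling out pathological modifications on negligible sets.
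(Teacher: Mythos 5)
Your proof is correct, and it is the natural direct argument for this elementary lemma, which the paper states without proof. Passing to the $\liminf$/$\limsup$ characterization of the envelopes (valid since $f$, $g$ are locally bounded), and then using the density hypothesis $\abs{E\cap B_\delta(x_0)}>0$ to turn a pointwise strict inequality at $x_0$ into a positive-measure set where $f>g$, is precisely what that hypothesis is designed for and closes the gap between the pointwise envelope comparison and the merely a.e.\ ordering.
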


Given a function $u \in L^\infty(Q)$, where $Q = \Omega \times (0,T]$, define the \emph{essential semi-continuous envelopes}
\begin{align*}
u^\diamond(x, t) &= \inf_{v = u \text{ a.e.}} v^{*,\cl Q}(x,t) = \inf_{r>0} \esssup_{B_r(x,t) \cap \cl Q} u ,\\
u_\diamond(x,t) &= \sup_{v = u \text{ a.e.}} v_{*,\cl Q}(x,t) = \sup_{r > 0} \essinf_{B_r(x,t) \cap \cl Q} u.\\
\end{align*}
Here $v^{*,\cl Q}$ and $v_{*, \cl Q}$ are as defined in \eqref{usclsc}.

It follows from the definition that $u^\diamond \in USC(\cl Q)$ and $u_\diamond \in LSC(\cl Q)$, and due to Lemma \ref{leOrderOfEnvelopes} we have 
$$
u_\diamond \leq u^\diamond.
$$ We claim that $u_\diamond \leq u \leq u^\diamond$ a.e. To see this, we have for almost every $P = (x,t)$ by the Lebesgue's differentiation theorem
\begin{align*}
u^\diamond(P) &= \inf_{v = u \text{ a.e.}} v^*(P) = \inf_{v = u \text{ a.e.}} \inf_{r>0} \sup_{B_r(P)} v\\
&= \inf_{r>0} \inf_{v = u \text{ a.e.}} \sup_{B_r(P)} v \geq \lim_{r \to 0} \frac{1}{\abs{B_r(P)}} \int_{B_r(P)} u = u(P), \quad \text{a.e. } P.
\end{align*}

Next we introduce a candidate function for the viscosity solution.

\begin{lemma}
\label{le:viscosityCandidate}
Given a function $u \in L^\infty(Q)$, define
\begin{align*}
v = \max(\min(u, u^\diamond),u_\diamond).
\end{align*}
Then $v = u$ a.e., $v_* = u_\diamond$ and $v^* = u^\diamond$.
\end{lemma}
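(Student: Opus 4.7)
The plan is to verify the three claims in the natural order: first $v = u$ almost everywhere, and then use that fact together with the pointwise sandwich $u_\diamond \leq v \leq u^\diamond$ to identify the semi-continuous envelopes.

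First I would show $v = u$ a.e. The paragraph immediately preceding the lemma records that $u_\diamond \leq u \leq u^\diamond$ a.e.\ in $\cl Q$ (a consequence of the Lebesgue differentiation theorem). On the full-measure set where this double inequality holds, $\min(u, u^\diamond) = u$ and hence $v = \max(u, u_\diamond) = u$. Thus $v = u$ a.e.\ as claimed.

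Next I would verify the pointwise bound $u_\diamond \leq v \leq u^\diamond$ \emph{everywhere} in $\cl Q$. The lower bound is immediate since $v = \max(\cdot,\, u_\diamond) \geq u_\diamond$. For the upper bound, note that $\min(u, u^\diamond) \leq u^\diamond$ trivially, and also $u_\diamond \leq u^\diamond$ (this is the content of Lemma~\ref{leOrderOfEnvelopes} applied to $f = g = u$, which is noted right after the envelopes are defined). Taking the maximum preserves the bound, giving $v \leq u^\diamond$.

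With these two facts in hand, the identification of the envelopes is essentially bookkeeping. For $v^*$: on one hand $v \leq u^\diamond$ everywhere and $u^\diamond \in USC(\cl Q)$, so $u^\diamond$ is a competitor in the infimum defining $v^*$, giving $v^* \leq u^\diamond$; on the other hand, since $v = u$ a.e., the function $v^{*, \cl Q}$ is itself a competitor in the infimum defining $u^\diamond$, giving $u^\diamond \leq v^{*, \cl Q} = v^*$. The argument for $v_* = u_\diamond$ is entirely symmetric, using $v \geq u_\diamond$ and $u_\diamond \in LSC(\cl Q)$.

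There is no real obstacle here; the only point that deserves a sentence of care is that $u^\diamond$ and $u_\diamond$ are themselves upper and lower semi-continuous, respectively — a fact already asserted in the text and easy to confirm from the $\esssup$/$\essinf$ representation, since $r \mapsto \esssup_{B_r(x,t)\cap \cl Q} u$ is monotone in $r$ and its infimum over $r$ is the standard USC envelope of the essential supremum.
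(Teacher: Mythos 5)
Your argument is correct and follows essentially the same route as the paper's proof: establish the pointwise sandwich $u_\diamond \le v \le u^\diamond$ from $u_\diamond \le u^\diamond$, deduce $v = u$ a.e.\ from the a.e.\ inequality $u_\diamond \le u \le u^\diamond$, and then identify $v^* = u^\diamond$ (and symmetrically $v_* = u_\diamond$) by pairing the USC-competitor bound $v^* \le u^\diamond$ with the infimum-competitor bound $u^\diamond \le v^*$. The paper states this more tersely, but the logical content is identical.
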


\begin{proof}
Observe that $u_\diamond \leq v \leq u^\diamond$ since $u_\diamond \leq u^\diamond$.
In fact, since $u_\diamond \leq u \leq u^\diamond$ a.e we also have that $v = u$ a.e.
Now $u^\diamond \in USC$ implies $v^* \leq u^\diamond$ and the definition of $u^\diamond$ as the infimum yields $u^\diamond \leq v^*$. Similarly $u_\diamond = v_*$.  
\end{proof}

Now we are ready to state the main result in this subsection:

\begin{theorem}\label{coincidence}
Let $\mathcal{P}$ be the class of regular initial data as defined in Section~\ref{sec:existenceAndStability} and let $u$ be the unique regular weak solution with initial data $u_0 \in \mathcal{P}$.
Then $v$ as given in Lemma~\ref{le:viscosityCandidate} is a viscosity solution of \eqref{RP} for the initial data $u_0$.
\end{theorem}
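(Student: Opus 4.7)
By Lemma~\ref{le:viscosityCandidate}, $v^{*} = u^{\diamond}$ and $v_{*} = u_{\diamond}$, so my plan is to establish that $u^{\diamond}$ is a viscosity subsolution and $u_{\diamond}$ a viscosity supersolution of \eqref{RP}; the two arguments are symmetric, so I concentrate on the former. The initial condition $u^{\diamond}(\cdot,0) = u_{0}$ follows from the lemma immediately preceding this theorem (uniform convergence of regular weak solutions to $u_{0}$) combined with continuity of $u_{0}$, and the lateral condition $u^{\diamond} = -1$ on $\partial\Omega \times [0,T]$ is inherited from standard elliptic boundary regularity in the negative phase. By Remark~\ref{re:only-cylinders}, in order to verify the barrier condition it suffices to consider strict classical supersolutions $\varphi$ on parabolic cylinders $Q' = \Omega' \times (t_{1}, t_{2}]$ with smooth $\Omega'$ such that $u^{\diamond} < \varphi$ on $\partial_{P} Q'$.

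The core of the argument is to apply Theorem~\ref{thm:cp} to $u$ and $\varphi$. First I would verify that a strict classical supersolution $\varphi$ of \eqref{RP} is in fact a regular weak supersolution on $Q'$ with $\partial_{t} b(\varphi) \in L^{2}$: piecewise integration by parts in $\{\varphi > 0\}$ and $\{\varphi < 0\}$ produces a surface term $\int_{\{\varphi=0\}} (|D\varphi^{-}| - |D\varphi^{+}|)\,\psi\, d\sigma \geq 0$ for test functions $\psi \geq 0$, by the flux matching condition in Definition~\ref{de:classicalSubsolution}(iv), while the $L^{\infty}$ bound on $\partial_{t} \varphi_{+}$ follows from smoothness of $\varphi$ in its positive phase and continuity across the free boundary. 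Since $u^{\diamond} < \varphi$ on the compact set $\partial_{P} Q'$ and $\varphi - u^{\diamond}$ is lower semicontinuous, there exist $\delta > 0$ and an open neighborhood $N$ of $\partial_{P} Q'$ in $\overline{Q'}$ with $u^{\diamond} + 2\delta < \varphi$ on $N$; since $u \leq u^{\diamond}$ a.e., this gives $u \leq \varphi - 2\delta$ a.e. in $N \cap Q'$. This a.e. ordering transfers to the trace of $u$ on $\partial_{L} Q'$ via the trace theorem for $L^{2}(t_{1},t_{2}; H^{1}(\Omega'))$, and to the initial slice $b(u)(\cdot, t_{1})$ via $L^{2}$ continuity of $b(u)$ (from $\partial_{t} b(u) \in L^{2}$). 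Theorem~\ref{thm:cp} then yields $u \leq \varphi$ a.e. in $Q'$, hence $u^{\diamond} \leq \varphi$ on $Q'$ by Lemma~\ref{leOrderOfEnvelopes} and continuity of $\varphi$.

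The main obstacle is upgrading $u^{\diamond} \leq \varphi$ to the strict inequality $u^{\diamond} < \varphi$ demanded by Definition~\ref{de:viscositySubsolution}. At a hypothetical interior contact point $P_{0} \in Q'$ with $u^{\diamond}(P_{0}) = \varphi(P_{0})$ I would split by the sign of $\varphi(P_{0})$. If $\varphi(P_{0}) > 0$, upper semicontinuity forces $u^{\diamond} > 0$ in a neighborhood of $P_{0}$, and $u$ agrees there with its continuous positive part $u_{+}$ (known to be continuous for the linear Richards equation by \cite{DiBG}); the strong parabolic maximum principle applied to $u - \varphi$ then contradicts $\varphi_{t} - \Delta \varphi > 0$. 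The case $\varphi(P_{0}) < 0$ is analogous using the strong elliptic maximum principle against $-\Delta \varphi > 0$. The genuinely delicate case is the free-boundary contact $\varphi(P_{0}) = 0$: here I would exploit that a regular weak solution enforces the flux balance $|Du^{+}| = |Du^{-}|$ across $\{u=0\}$ while a strict classical supersolution satisfies the strict inequality $|D\varphi^{+}| < |D\varphi^{-}|$, and then invoke Hopf's lemma on either phase to rule out such a contact. The viscosity supersolution property of $u_{\diamond}$ follows by a symmetric argument, completing the proof that $v$ is a viscosity solution of \eqref{RP}.
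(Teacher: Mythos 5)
Your first stage---using Theorem~\ref{thm:cp} to conclude $u^{\diamond}\leq\varphi$ (resp.\ $u_\diamond\geq\phi$) once the barrier and $u$ are ordered on $\partial_P Q'$---is essentially the same move the paper makes, and the verification that a strict classical barrier is a regular weak (sub/super)solution, that the ordering transfers through the trace and through the $L^2$-continuity of $b(u)$, is all in the spirit of the printed proof. The divergence comes at the upgrade from $\leq$ to $<$, and there your proposal has a real gap.

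The paper sidesteps the contact-point analysis entirely: if $\phi\geq v_*$ at some $(x_0,t_0)$, it \emph{perturbs the barrier} into a new strict classical subsolution $\tilde\phi$ with $\tilde\phi>\phi$ at $(x_0,t_0)$ while keeping $\tilde\phi<v_*$ on $\partial_P Q'$ (a mollified shift of the free boundary if $\phi(x_0,t_0)=0$, a mollified additive bump otherwise). Applying Theorem~\ref{thm:cp} to the \emph{perturbed} barrier gives $\tilde\phi\leq u$ a.e., and since $\tilde\phi$ is continuous, $\tilde\phi=\tilde\phi_\diamond\leq u_\diamond = v_*$ pointwise, so $\phi<\tilde\phi\leq v_*$ at $(x_0,t_0)$, a direct contradiction. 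No pointwise information about $u$ near a contact point is ever needed.

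Your proposal instead tries to rule out an interior contact of $u^{\diamond}$ and $\varphi$ after the fact. This runs into trouble precisely where a contact is most dangerous, on $\{\varphi=0\}$: you want to use the flux balance $|Du^+|=|Du^-|$ for the regular weak solution as a \emph{pointwise} condition and apply Hopf's lemma at the contact, but (a) the flux balance for regular weak solutions is a distributional statement, not a pointwise one, and (b) Hopf's lemma there would require the transition set $\partial\{u>0\}$ to have an interior ball at that point, and no such regularity of the weak solution's free boundary is known or established in the paper. The off-free-boundary cases are also delicate: while $u_+=b(u)$ is continuous (so $\varphi(P_0)>0$ can be handled via the parabolic strong maximum principle applied to $u_+-\varphi$), the negative phase $u_-$ is in general discontinuous in time (cf.\ Example~\ref{ex:jump}), so the elliptic strong maximum principle cannot be invoked for $\varphi(P_0)<0$ without extra work. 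The perturbation-of-barrier trick is what lets the paper avoid all of these issues; without it, your strictness step does not close.

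Two minor points: the initial-condition verification should cite the unnumbered lemma ("any regular weak solution uniformly converges to $u_0$ as $t\to0$") together with the barriers $U,V$ of Lemma~\ref{stability:initial}, not Lemma~\ref{le:viscosityCandidate}; and your assertion that lower semicontinuity of $\varphi-u^\diamond$ plus compactness of $\partial_P Q'$ yields a \emph{uniform} $\delta$-gap on an open neighborhood is correct but should be stated carefully, since $u^\diamond$ is only upper semicontinuous.
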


\begin{proof}
The proof is based on the local comparison principle that regular weak solutions satisfy (Theorem~\ref{thm:cp}). First note that $v$ uniformly converges to $u_0$ at $t=0$ due to Theorem~\ref{thm:cp}, and the barrier arguments using the barriers $V$ and $U$ constructed in Lemma~\ref{stability:initial}. 

\vspace{10pt}

We only show that $v_*$ is a viscosity supersolution; parallel arguments apply to the subsolution part.
Let $\phi$ be a strict classical subsolution in a cylindrical domain $Q':= \Omega' \times (t_1,t_2] \subset Q$ such that $\phi < v_*$ on $\partial_P Q'$.

Our goal is to show that $\phi < v_*$ in $Q'$. Thus assume that this does not hold and there exists $(x_0, t_0) \in Q'$ with $\phi \geq v_*$ at $(x_0,t_0)$.

We claim that we can perturb $\phi$ into a strict classical subsolution $\tilde{\phi}$ with $\tilde{\phi} > \phi$ at $(x_0,t_0)$ and $\tilde \phi < v_*$ on $\partial_P Q'$. 
To this end, let $\ta \in C^\infty_c(\Rn)$, $\ta \geq 0$, be the standard smooth mollifier with support $\cl{B}_1^n$. For $\e, \eta> 0$ let $\ta_{\e, \eta}(x) : = \e \ta(\eta^{-1} x)$ and define $\tilde \phi$ as
\begin{itemize}
\item if $\phi(x_0,t_0) = 0$: $\tilde \phi(x,t) = \phi(x + \ta_{\e,\eta}(x) \nu, t)$, where $\nu$ is the unit outer normal of $\set{\phi > 0}_{t_0}$ at $x_0$,
\item if $\phi(x_0,t_0) \neq 0$: $\tilde \phi(x,t) = \phi(x,t) + \ta_{\e,\eta}(x- x_0)$.
\end{itemize}
If $\e$ and $\eta$ are chosen small enough, a straightforward differentiation shows that the perturbation $\tilde \phi$ has the required properties.

There is also $\de, \eta > 0$ such that $\tilde \phi + \de < v_*$ on an $\eta$-neighborhood of $\partial_P Q'$, i.e. on the set
\begin{align*}
\set{(x,t) \in \cl Q': \dist((x,t), \partial_P Q') < \eta}.
\end{align*}
Finally, $v_* \leq u$ a.e. in $Q'$ by definition.
We conclude that:
\begin{enumerate}
\romanlist
\item $\tilde \phi(\cdot, t) \leq u(\cdot, t)$ on $\partial \Omega'$ for a.e. $t \in (t_1,t_2]$ in the sense of trace on $H^1(\Omega')$;
\item due to regularity \eqref{reg}, $b(u) \in C(t_1, t_2; L^2(\Omega'))$  (see \cite{Evans}*{\S5.9.2 Theorem 3} or \cite{GGZ}*{Chapter IV, Theorem 1.17}) and therefore there is a unique $b_0' \in L^2(\Omega')$ such that $b(u(t)) \to b_0'$ strongly in $L^2(\Omega')$ as $t \to t_1$, and $\tilde \phi(\cdot, t_1) \leq b_0'$ a.e. on $\Omega'$.
\end{enumerate}
A simple computation shows that $\tilde \phi$ is a regular weak subsolution in the sense of \cite{AL}*{Definition 2.1}.
Therefore, by Theorem~\ref{thm:cp}, we have $\tilde \phi \leq u$ a.e. in $Q'$ and hence $\phi < \tilde \phi = \tilde \phi_\diamond \leq u_\diamond = v_*$ at $(x_0,t_0)$, a contradiction. Therefore $v_*$ is a viscosity supersolution of \eqref{RP} due to Remark~\ref{re:only-cylinders}.
\end{proof}

Theorem~\ref{stability:visc} and Lemma~\ref{stability:weak} yield the following:
 
\begin{corollary}\label{coro:unique}
Let $F(M, p, z) = \trace M$ and let $u_0\in \mathcal{P}$. Then there exists a unique viscosity solution $v$ of \eqref{RP} with initial data $u_0$, which coincides almost everywhere in $Q = \Omega \times (0,\infty]$ with the regular weak solution with initial data $u_0$.
\end{corollary}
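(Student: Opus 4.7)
The plan is to combine the existence of a viscosity solution produced by Theorem~\ref{coincidence} with the stability of regular weak solutions under $H^1$ perturbations (Lemma~\ref{stability:weak}) and the comparison principle (Theorem~\ref{th:comparisonPrinciple}), in order to pin down the maximal and minimal viscosity solutions.

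First I apply Theorem~\ref{coincidence} to the unique regular weak solution $u$ with initial data $u_0$ to obtain a viscosity solution $v$ of \eqref{RP} satisfying $v = u$ a.e.\ in $Q$. By Theorem~\ref{existence}, any viscosity solution $\tilde v$ with the same initial data satisfies $\underline u \leq \tilde v \leq \overline u$, so it suffices to show $\overline u = \underline u = v$ almost everywhere in $Q$.

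To establish $v = \overline u$ a.e., I would approximate $u_0$ from above by a family $\{u_0^\e\}_{\e>0} \subset \mathcal P$ with $u_0^\e \searrow u_0$ as $\e \to 0$, $u_0 < u_0^\e$ strictly, $u_0^\e \to u_0$ uniformly and in $H^1(\Omega)$, and with the interfaces $\Gamma(u_0^\e)$ satisfying the $C^{1,1}$ bound \eqref{B} uniformly in $\e$. For each $\e$, let $u^\e$ be the unique regular weak solution with initial data $u_0^\e$ (existence by \cite{AL}*{Theorem~2.3}), and let $v^\e$ be the viscosity solution constructed from $u^\e$ via Theorem~\ref{coincidence}, so that $v^\e = u^\e$ a.e.\ in $Q$. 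Lemma~\ref{stability:weak} yields $u^\e \to u$ in $L^2(0,T;H^1(\Omega))$, hence $u^\e \to u$ almost everywhere on $Q$ along a subsequence (still indexed by $\e$), and therefore $v^\e \to v$ a.e. On the other hand, the strict ordering of the initial data (together with the identical lateral data, as used in the proof of Theorem~\ref{stability:visc}) justifies an application of Theorem~\ref{th:comparisonPrinciple} to conclude $\overline u \leq v^\e$ pointwise on $Q$ for every $\e > 0$. Taking $\liminf$ along the a.e.-convergent subsequence gives $\overline u(P) \leq v(P)$ for a.e.\ $P \in Q$, and combined with the pointwise inequality $v \leq \overline u$ from Theorem~\ref{existence}, I conclude $v = \overline u$ a.e. A symmetric argument with an increasing family $u_0^\e \nearrow u_0$ in $\mathcal P$, using the comparison $v^\e \leq \underline u$ and the analogous a.e.\ convergence, delivers $v = \underline u$ a.e., completing the reduction.

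The main technical hurdle is the construction of the approximating families $\{u_0^\e\}$ in $\mathcal P$ that simultaneously satisfy the hypotheses of Theorem~\ref{stability:visc} (uniform $C^{1,1}$ control of $\Gamma(u_0^\e)$ and uniform convergence) and Lemma~\ref{stability:weak} ($H^1$ convergence). Since $\Gamma(u_0)$ is $C^{1,1}$ by \eqref{B}, this can be achieved by perturbing $u_0$ in the normal direction of $\Gamma(u_0)$ by a small, smoothly cut-off amount supported in a tubular neighborhood of the interface, then smoothly extending into the elliptic phase using \eqref{A}; the resulting families retain the required regularity uniformly.
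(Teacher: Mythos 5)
Your proposal is correct and follows essentially the same route the paper takes: the paper's one-line proof simply points to Theorem~\ref{stability:visc} and Lemma~\ref{stability:weak} (with Theorem~\ref{coincidence} implicitly supplying that each approximating regular weak solution is a viscosity solution), and you unpack exactly those ingredients, re-deriving the needed consequence of Theorem~\ref{stability:visc} directly from the comparison principle. The only cosmetic difference is that you restate the stability argument in place rather than citing Theorem~\ref{stability:visc} as a black box.
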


\subsection{Discussion of uniqueness for nonlinear operators}

At the moment, the uniqueness results for fully nonlinear operators are unfortunately quite limited to initial data and domains under special geometries. Here we illustrate an example.  The authors suspect that uniqueness holds in most settings, but it remains an open question.  

When
\begin{equation}\label{simple}
b(u)=u_+,\quad F = F(M) \hbox{ with } F(\mu M) = \mu F(M) \text{ for } \mu >0,
\end{equation}
 the following holds.

Let $\Gamma_0:=\partial\{u_0>0\}=\partial\{u_0<0\}$ be a locally $C^{1,1}$ graph, and suppose the initial data $u_0$ satisfies one of the following:
\begin{itemize}
\item[(A)] (Initially decreasing: only possible) $u_0$ is $C^1$ in $\overline{\{u_0>0\}}$ with
$$
-F(D^2 u_0)>0\hbox{ in }\{u_0>0\}\hbox{ and }|Du_0^+|< |Du_0^-|\hbox{ on }\Gamma_0;
$$
\item[(B)] (Star-shaped) $g=-1$, $\Omega$ is a star-shaped domain, and  $u_0((1+\e)x) < u_0(x)$ for all $x\in (1+\e)^{-1}\Omega$ except at $x= 0$ for any $\e>0$.
\end{itemize}

\begin{theorem}\label{uniqueness}
Suppose that $(A)$ or $(B)$ holds. Then for the operator of the form \eqref{simple} there exists a unique viscosity solution of \eqref{RP} with initial data $u_0$. 
\end{theorem}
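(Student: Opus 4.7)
The plan is to establish $\underline u \equiv \overline u$ on $Q$, from which uniqueness follows via Theorem~\ref{existence}. Since $\underline u \leq \overline u$ is already known from that theorem, the task reduces to producing the reverse inequality $\overline u \leq \underline u$. In both cases the strategy is identical in spirit: exploit a symmetry of \eqref{RP} to build a one-parameter family $\{w_\alpha\}$ of viscosity solutions out of $\overline u$, use the structural hypothesis on $u_0$ to arrange $w_\alpha < \underline u$ strictly on the parabolic boundary of a suitable subdomain, apply the comparison principle Theorem~\ref{th:comparisonPrinciple} to conclude $w_\alpha < \underline u$ throughout, and finally pass $\alpha$ to its limiting value, recovering $\overline u \leq \underline u$ by upper semicontinuity.

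In case (B) the relevant symmetry is the scale invariance furnished by $F(\mu M)=\mu F(M)$: a direct computation shows that for every $\lambda>1$ the rescaling
\[
\overline u_\lambda(x,t) := \overline u(\lambda x,\lambda^2 t)
\]
is a viscosity solution of \eqref{RP} on $Q_\lambda:=\lambda^{-1}\Omega\times (0,\lambda^{-2}T]$ with Dirichlet datum $-1$ on $\partial(\lambda^{-1}\Omega)\times[0,\lambda^{-2}T]$. The star-shape assumption $u_0(\lambda x)<u_0(x)$ gives $\overline u_\lambda(\cdot,0)<\underline u(\cdot,0)$ strictly on $\lambda^{-1}\Omega$, and since $\lambda^{-1}\Omega$ lies strictly inside $\Omega$ I would show $\underline u \geq -1+\kappa(\lambda-1)$ on the lateral face $\partial(\lambda^{-1}\Omega)\times[0,\lambda^{-2}T]$ by comparing $\underline u$ from below in its elliptic phase against an elliptic exterior-ball barrier of the sort built in the proof of Lemma~\ref{stability:initial}. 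With strict ordering on $\partial_P Q_\lambda$ in hand, Theorem~\ref{th:comparisonPrinciple} yields $\overline u_\lambda<\underline u$ on $Q_\lambda$; letting $\lambda\downarrow 1$ and using $\overline u=\overline u^*$ from Corollary of Theorem~\ref{stability:visc} gives $\overline u\leq\underline u$ on $Q$.

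In case (A) the relevant symmetry is time translation. The hypothesis $(A)$ asserts precisely that $u_0$ is a strict classical supersolution of \eqref{RP} (time-independent, with $-F(D^2u_0)>0$ in the positive phase and $|Du_0^+|<|Du_0^-|$ on $\Gamma_0$), so an application of Theorem~\ref{th:comparisonPrinciple} with $u_0$ itself as a barrier produces constants $c,t_0>0$ with
\[
\overline u(x,h)\leq u_0(x)-c h\quad\text{for }h\in(0,t_0),\ x\in\Omega.
\]
The time-translate $\overline u_h(x,t):=\overline u(x,t+h)$ is then a viscosity solution on $\Omega\times(0,T-h]$ with $\overline u_h(\cdot,0)<\underline u(\cdot,0)$ strictly on $\Omega$, but both functions still take the value $-1$ on $\partial\Omega$; to separate them I would subtract a small constant $\eta>0$ from $\overline u_h$ inside a neighborhood $U$ of $\partial\Omega$ entirely contained in the elliptic phase $\{\overline u_h<-\eta_0\}$ (glued smoothly to $\overline u_h$ in the interior by a cutoff), where the downward shift preserves the viscosity subsolution property because the phase boundary is untouched. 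For $\eta$ small compared to the gap $c h$ the resulting function is still strictly below $\underline u$ at $t=0$, Theorem~\ref{th:comparisonPrinciple} yields $\overline u_h-\eta\leq\underline u$ on $\Omega\times(0,T-h]$, and sending first $\eta\to0$ and then $h\to0$ completes the argument.

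The step I expect to be most delicate in both cases is the promotion of the equality $w_\alpha=\underline u=-1$ on the lateral portion of $\partial_P$ to a \emph{strict} inequality suitable for feeding into Theorem~\ref{th:comparisonPrinciple}; it is precisely the interplay between the strict classical supersolution structure of $u_0$ in (A), or between the radial monotonicity of $u_0$ and the homogeneity $F(\mu M)=\mu F(M)$ in (B), and the elliptic machinery (Hopf's lemma, Proposition~\ref{pr:Hopf}) that furnishes this strict separation.
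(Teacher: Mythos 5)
Your overall strategy — perturbing $\overline u$ by symmetries of the equation and feeding the result into Theorem~\ref{th:comparisonPrinciple} — is the right one, and your identification of parabolic rescaling (case (B)) and time translation (case (A)) as the relevant symmetries matches the paper. However, both cases as you have written them contain genuine gaps.

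\textbf{Case (A).} The step that fails is the cutoff subtraction near $\partial\Omega$. You propose to replace $\overline u_h$ by $\overline u_h - \eta\chi$, where $\chi$ is a cutoff equal to $1$ near $\partial\Omega$, and claim this preserves the subsolution property ``because the phase boundary is untouched.'' But the elliptic equation sees the full Hessian: for a fully nonlinear $F$,
\[
-F\bigl(D^2(\overline u_h-\eta\chi)\bigr)\ \leq\ -F(D^2\overline u_h)+\eta\,\mathcal M^+(D^2\chi),
\]
and $\mathcal M^+(D^2\chi)$ is not signed, so $\overline u_h-\eta\chi$ is in general \emph{not} an elliptic subsolution in the transition region where $D^2\chi\neq 0$. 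Subtracting a genuine constant would work, but then both boundary value \emph{and} interior value drop by $\eta$ and you would still be left with only $-1-\eta<-1$ on $\partial\Omega$ matched against $\underline u=-1$ on $\partial\Omega$, which is fine, but you would also need $\overline u_h-\eta$ to remain a subsolution, which it is not: shifting by a constant changes $b(\cdot)_t$ through the set where $\overline u_h$ crosses $\eta$. The paper sidesteps all of this by exploiting the positive one-homogeneity $F(\mu M)=\mu F(M)$ together with $b(u)=u_+$: multiplying a viscosity solution by a scalar $\mu>1$ produces another viscosity solution, and it turns the lateral boundary value from $-1$ into $-\mu<-1$, giving strict separation without any cutoff. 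This multiplicative perturbation is the essential mechanism that your proposal is missing.

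\textbf{Case (B).} You assert that the star-shape hypothesis gives $\overline u_\lambda(\cdot,0)<\underline u(\cdot,0)$ \emph{strictly} on all of $\lambda^{-1}\Omega$. But hypothesis (B) reads ``$u_0((1+\e)x) < u_0(x)$ \ldots except at $x= 0$,'' and indeed at $x=0$ one has $u_0(\lambda\cdot 0)=u_0(0)$ with equality. So $\overline u_\lambda(0,0)=\underline u(0,0)$, and the hypothesis of Theorem~\ref{th:comparisonPrinciple} (strict ordering on all of $\partial_P$) is violated at this one initial point. Your Hopf-type estimate $\underline u\geq -1+\kappa(\lambda-1)$ on $\partial(\lambda^{-1}\Omega)\times[0,T]$ is a reasonable way to handle the lateral boundary, but it does nothing for the point $(0,0)$. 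The paper again resolves this by inserting a multiplicative factor $1+c_\e$ close to $1$ in front of the rescaled solution, using the homogeneity of $F$; the factor is what separates the two functions at $x=0$ (where $u_0(0)>0$) while the rescaling takes care of $x\neq 0$. Without this extra degree of freedom your argument stalls at the origin.

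In short: the key idea you have not used is that the homogeneity $F(\mu M)=\mu F(M)$ makes the class of viscosity solutions closed under multiplication by positive scalars, and this — not cutoff subtraction or a purely Hopf-based lateral estimate — is what produces the required strict separation in both cases.
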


\begin{proof}
1. In the case of (A), let us define $u^\e_0(x):= (u_0-\e)_+$ in $\{u_0>\e\}$ and $u^\e_0$ be the solution of 
$$
-F(D^2 u)=0 \hbox{ in } \{u_0<\e\}
$$
with the boundary data $g$. Since $u_0$ is $C^1$ and $D(u_0^-)$ changes continuously with respect to the change of the domain (due to the ellipticity of $F$ and the regularity of the domain), if $\e$ is sufficiently small we have $|D(u^\e_0)^+| < |D(u_0)^-|$.

\vspace{10pt}

 Now let us define $V(x,t):= u^{t}_0(x)$. Then for small time, $V(x,t)$ is a viscosity supersolution of \eqref{RP}. It follows then that, for sufficiently small $\e>0$,
\begin{equation}\label{dec}
u(x,\e) < u(x,0) \hbox{ in } \Omega,
\end{equation}
and there exists a constant $c_\e \to 0^+$ as $\e\to 0$ such that 
$(1+c_\e)u(x,\e) < u(x,0)$ in $\cl{\Omega}.$
 Let $u$ and $v$ be two viscosity solutions of \eqref{RP} with initial data $u_0$. In case of (A) we perturb by $(1+c_0\e)u(x,t+\e)$ for given $\e>0$ and apply \eqref{dec} as well as Theorem~\ref{th:comparisonPrinciple} to show that $ u\leq v$. 
 
 \vspace{10pt}
 
 2. In case of (B), first note that from the star-shapedness assumption it follows that $u_0(0)>0$. 
 Hence for given $\e>0$ 
 $$
 (1+c_\e)u((1+\e)x, (1+\e)^2 t)\leq v\hbox{ in }(1+\e)^{-1}\Omega \times [0,\infty).
 $$
 Since $\e$ is arbitrary, we conclude that $u \leq v$.
\end{proof}

\section{Approximation by uniformly parabolic problems} 
\label{sec:approximation}
In this section we show that our problem \eqref{RP} can be obtained as a singular limit of uniformly parabolic problems.  
Consider $b_n \in C^2(\R)$ to be a smooth approximation of $b(s)=s_+$ with the following properties:
\begin{enumerate}
\item $0 < b_n' < 1$ on $\R$,
\item $b_n \to b$ locally uniformly on $\R$,
\item $b'_n \to 0$ locally uniformly on $(-\infty, 0]$,
\item $b'_n \to 1$ locally uniformly on $(0, \infty)$.
\end{enumerate}

For example, one can choose
$$
b_n(s) = \frac{1}{n^2} \log \frac{e^n + e^{n^2s}}{e^{n} + 1}.
$$

\begin{remark}
The interval in condition (c) includes $0$. This moves the irregularity of $b'_n$ into the positive phase and thus simplifies the proof of Proposition~\ref{limit:visc}, but it is not an essential assumption.
\end{remark}

For $n \geq 1$, let $u_n$ be the unique solution of the smooth parabolic problem
\begin{equation}\label{approx}
\begin{cases}
b_n(u_n)_t - F(D^2 u_n, D u_n, u_n) = 0, & \text{in } Q := \Omega \times (0,\infty),\\
u_n = g&\text{on $\partial_P Q$}.
\end{cases}
\end{equation}
Here we assume that $g$ is a bounded Lipschitz continuous function on $\partial_P Q$.

Due to the maximum principle, $u_n$'s are bounded uniformly in $n$ and therefore  problem \eqref{approx} is uniformly parabolic for each $n$.
Note that $u_n$ is $C^{1,\alpha}$ in $Q$ for each $n$ due to \cite{LSU} (for $F$ given as in \eqref{divergence}) and \cite{W} (for nonlinear $F$).
Also note that one cannot directly use the results from \cite{CIL} since the operator does not converge uniformly. Indeed, $b'$ is discontinuous in our setting. 

 Define the half-relaxed limits
\begin{align*}
\overline{\omega} := \limhalfsup_{n\to\infty} u_n, \quad \underline{\omega} := \limhalfinf_{n\to\infty} u_n.
\end{align*}

\begin{proposition}\label{limit:visc}
$\overline{\omega}$ and $\underline{\omega}$ are respectively a viscosity subsolution and a viscosity supersolution of \eqref{RP} in $Q$.
\end{proposition}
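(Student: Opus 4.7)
The plan is to establish the subsolution property of $\overline\omega$ by the classical half-relaxed limits technique; the supersolution property of $\underline\omega$ is symmetric. The nontrivial point is that $b_n'$ converges to two different values on the two sides of $\set{u=0}$, so the passage to the limit in the equation has to be split by phase.

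\textbf{Initial and boundary data.} First, $\overline\omega \in USC(\cl Q)$ and $\underline\omega \in LSC(\cl Q)$ follow directly from the definition of half-relaxed limits. The inequalities $\overline\omega \leq g$ on $\partial \Omega \times [0,T]$ and $\overline\omega(\cdot,0) \leq u_0$ are obtained by comparing each $u_n$ against the classical supersolution $V$ built in Lemma~\ref{stability:initial}: because \eqref{approx} is uniformly parabolic for each $n$, the standard parabolic comparison applies and passes to the limit.

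\textbf{Subsolution test, by contradiction.} Suppose $\overline\omega$ fails to be a viscosity subsolution: there is a parabolic neighborhood $E \subset Q$ and a strict classical supersolution $\vp$ of \eqref{RP} on $E$ with $\overline\omega < \vp$ on $\partial_P E$ but $\overline\omega \geq \vp$ at some interior point. By adding $\eta(\abs{x - x_*}^2 + (t - t_*)^2)$ for small $\eta > 0$ and shrinking $E$ (this preserves the strict supersolution condition in each phase and the strict gradient mismatch on $\set{\vp = 0}$), reduce to the case in which $\overline\omega - \vp$ attains a strict local maximum at some interior point $(x_0, t_0)$ with $\overline\omega(x_0, t_0) \geq \vp(x_0, t_0)$. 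By the standard half-relaxed limit lemma (see \cite{CIL}), there exist $(x_n, t_n) \to (x_0, t_0)$ at which $u_n - \vp$ has a local maximum and $u_n(x_n, t_n) \to \overline\omega(x_0, t_0)$. Since each $u_n$ is a classical solution of the uniformly parabolic equation \eqref{approx}, writing the equation at $(x_n, t_n)$ and using the Pucci bound with $D^2 u_n \leq D^2 \vp$ gives
\begin{align*}
b_n'(u_n(x_n, t_n))\, \vp_t(x_n, t_n) \leq F(D^2 \vp(x_n, t_n), D\vp(x_n, t_n), u_n(x_n, t_n)).
\end{align*}

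\textbf{Easy cases.} Let $a = \vp(x_0, t_0)$ and $b = \overline\omega(x_0, t_0) \geq a$. If $a > 0$, then $b > 0$, so $u_n(x_n,t_n) > 0$ for large $n$ and property (d) of $b_n$ gives $b_n'(u_n) \to 1$. Passing to the limit and using properness yields $\vp_t \leq F(D^2\vp, D\vp, \vp)$ at $(x_0, t_0)$, contradicting the strict supersolution inequality on $\set{\vp > 0}$. If $a < 0$ and $b < 0$, property (c) gives $b_n'(u_n) \to 0$; the limit reads $0 \leq F(D^2\vp, D\vp, b)$, which together with $-F(D^2\vp, D\vp, \vp) > 0$ and properness (using $b \geq a$) is a contradiction.

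\textbf{Main obstacle: the interface case $a \leq 0 \leq b$.} The hard part is when the test function is negative at the contact while $\overline\omega$ has already reached the nonnegative phase, or when $a = 0$. In that regime the limit of $b_n'(u_n(x_n,t_n))$ is not pinned down by the signs of $a$ and $b$, and a naive limit in the parabolic equation loses the time term. The plan here is twofold. First, observe that when $a<0<b$ the maximum $\overline\omega - \vp$ is strictly positive, so we may lower $\vp$: consider $\vp_c := \vp - c$ for $c \in (0, \overline\omega(x_0,t_0) - \vp(x_0,t_0))$. On $\set{\vp > 0}$ and on $\set{\vp < 0}$ the map $\vp \mapsto \vp - c$ preserves (by properness) the strict classical supersolution inequality; the only issue is the transitional strip $\set{0 < \vp < c}$, which is a thin tubular neighborhood of $\set{\vp = 0}$. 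In this strip we exploit the strict gradient condition $\abs{D\vp^+} < \abs{D\vp^-}$ on $\set{\vp = 0}$ together with localized parabolic/elliptic barriers of the same type as Lemmas~\ref{le:DsupersolutionInfty}--\ref{le:Dsubsolution0} to verify that $\vp_c$ is still a strict classical supersolution on a small neighborhood of the new contact point, which by construction lies in $\set{\vp_c < 0}$; this reduces the case to the easy case $a<0,\ b<0$ above. The remaining $a=0$ situation is handled by the same barrier perturbation combined with a shift in the $x$-normal direction $\nu$ dictated by $\set{\vp=0}$, again reducing to one of the two easy cases.

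\textbf{Supersolution property of $\underline\omega$.} The argument is symmetric, testing against strict classical subsolutions and using the other halves of properties (c)--(d) of $b_n$, with the Pucci inequality used on the opposite side. The same interface reduction applies, this time using the strict gradient mismatch $\abs{D\vp^+} > \abs{D\vp^-}$ from the strict classical subsolution condition.
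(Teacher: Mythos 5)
Your handling of the ``easy cases'' ($\vp$ strictly positive or strictly negative at the limiting touching point) is essentially the same as the paper's, modulo the cosmetic choice of using a local maximum versus a first crossing time. However, the reduction you propose for the interface case $a \leq 0 \leq b$ has two serious problems.

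First, the direction of the properness argument is backwards. Properness means $z \mapsto F(M,p,z)$ is \emph{nonincreasing}, so for $c > 0$ one has $F(D^2\vp, D\vp, \vp - c) \geq F(D^2\vp, D\vp, \vp)$. Consequently, on $\set{\vp_c > 0}$,
\begin{align*}
b(\vp_c)_t - F(D^2\vp_c, D\vp_c, \vp_c) = \vp_t - F(D^2\vp, D\vp, \vp - c) \leq \vp_t - F(D^2\vp, D\vp, \vp),
\end{align*}
and similarly on $\set{\vp_c < 0}$. Lowering a test function \emph{weakens} the strict supersolution inequalities rather than preserving them; properness would be your friend if you raised $\vp$, not if you lowered it. (The special case $\de_0 = 0$ escapes this, but your argument is not restricted to that case.) Moreover, inside the transition strip $\set{0 < \vp < c}$ you would have to turn the parabolic inequality $\vp_t - F > 0$ into the elliptic inequality $-F > 0$, which genuinely fails whenever $\vp_t > 0$ there — and the paper's own argument shows that $\phi_t^+ > 0$ is actually forced at the contact point, so you are in exactly the bad regime. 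Lemmas~\ref{le:DsupersolutionInfty} and \ref{le:Dsubsolution0} concern \emph{solutions} of the parabolic problem, not test functions; they do not deliver the strip estimate you need.

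Second, and more fundamentally, lowering $\vp$ cannot change $\overline\omega$. After replacing $\vp$ by $\vp_c$, the touching point is still $(x_0, t_0)$ (since $\overline\omega - \vp_c = \overline\omega - \vp + c$), and along the touching sequence $u_n(x_n, t_n) \to \overline\omega(x_0,t_0) = b \geq 0$. The quantity $b_n'(u_n(x_n,t_n))$ therefore remains pinned near the singular set $\set{u = 0}$ of $b'$, and the ``easy case $a < 0,\ b < 0$'' simply cannot be reached by modifying the test function alone.

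The paper resolves the interface case by a different mechanism. Along the touching sequence $u_n - \phi$ is at a first crossing, and adding the two inequalities
\begin{align*}
\phi_t - F(D^2\phi, D\phi, \phi) &\geq \delta, \\
-b_n'(\phi)\phi_t + F(D^2\phi, D\phi, \phi) &\geq 0,
\end{align*}
together with $0 < b_n' < 1$ yields $\phi_t^+(x_0,t_0) > 0$; i.e., the positive phase of $\phi$ is expanding at the contact point. One then replaces $\phi$ locally by a \emph{time-independent} radial strict supersolution $\psi$ of \eqref{RP} (Proposition~\ref{pr:radialSolution} with $\hat\omega = 0$), which dominates $\phi$ near the contact precisely because $\phi_t^+ > 0$ and because of the strict gradient mismatch. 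Since $\psi_t \equiv 0$, the equation for $u_n$ at the new touching point reduces to $-F(D^2\psi, D\psi, \psi) \leq 0$ regardless of the value of $b_n'$, contradicting the strict elliptic supersolution property of $\psi$. It is this ``time-independence trick'' that sidesteps the ambiguity in $\lim b_n'$, and it is the ingredient missing from your argument.
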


\begin{proof}
Let us denote $u = \overline{\omega}$. The goal is to show that $u$ is a viscosity subsolution of \eqref{RP}. 

1. Suppose not. By definition, there exists $\delta>0$ and a strict classical supersolution $\phi$ of \eqref{RP} in a parabolic neighborhood $E = U \cap \set{t \leq \tau} \subset Q$, satisfying 
\begin{enumerate}
\romanlist
\item $-F(D^2\phi, D\phi,\phi) \geq \delta$ if $ \phi < 0$,
\item $\phi_t-F(D^2\phi, D\phi,\phi) \geq \delta$ if  $\phi > 0$,
\item $|D\phi^+| +3 \delta  <|D\phi^-|$ on $\{\phi=0\}$,
\end{enumerate}
 such that $u < \phi$ on $\partial_P E$ while $\phi \leq u$ at some point of $P \in E$. By increasing $\tau$ if necessary, we may assume that $P \in \set{t < \tau}$.  And by perturbing $\phi$ as in the proof of Theorem~\ref{coincidence}, we may assume that in fact $\phi < u$ at $P \in E$. Therefore for large $n$ we have $u_n < \phi$ on $\partial_P E$ while $u_n > \phi$ at some point of $E$.

\vspace{10pt}

2. For large $n$, due to the continuity of $u_n$, we can find $(x_n,t_n) \in E$ with $u_n- \phi = 0$ at $(x_n, t_n)$ and $u_n -\phi < 0$ in $\set{t < t_n}$. Compactness of $\cl{E}$ allows us to select a subsequence, also denoted $n$, such that $(x_n, t_n) \to (x_0, t_0) \in \cl E$. Since $u < \phi$ on $\partial_P E$ we conclude that $(x_0, t_0) \in E$. Moreover, observe that $\phi(x_0,t_0) = 0$, since otherwise (i)--(ii) and the locally uniform convergence of $b'_n$ to $b'$ on $\R \setminus \set{0}$ yield a contradiction.

The regularity of $u_n$ and the condition (iii) on $\phi$ ensure that $\phi(x_n, t_n) \neq 0$.
Next suppose that along a subsequence we have $\phi(x_n, t_n) < 0$. Since also
$$
\bra{b_n(\phi)_t - F(D^2\phi,D\phi,\phi)}(x_n,t_n) \leq 0,
$$
the uniform convergence of  $b_n' $ to $b'$ in $(-\infty, 0]$ contradicts (i).

\vspace{10pt}

3. Therefore we can assume that $\phi(x_n, t_n) > 0$.  Let us first observe that $\phi_t^+(x_0, t_0) > 0$: this can be verified by adding the two inequalities
 \begin{align*}
\bra{\phi_t - F(D^2 \phi, D\phi,\phi)}(x_n,t_n) &\geq \delta\\
\bra{-b_n'(\phi)\phi_t + F(D^2\phi,D\phi,\phi)}(x_n,t_n) &\geq 0,
\end{align*}
and by using the fact that $b_n'\in(0,1)$. In particular, this implies that $\set{\phi > 0}$ is expanding at $(x_0, t_0)$. 

Let $\nu$ be the unit outer normal of $\partial \set{\phi > 0}_{t_0}$ at $x_0$. Due to the regularity of $\set{\phi = 0}$, it is possible to choose $\rho_0 > 0$ small enough such that $\cl B_{\rho_0}(\zeta_0) \cap \cl{\set{\phi > 0}_{t_0}} = \set{x_0}$, where $\zeta_0 = x_0 + \rho_0 \nu$.
For small enough $\rho_0$, Proposition~\ref{pr:radialSolution} provides a strict classical supersolution $\psi$ of \eqref{RP}, with parameters 
\begin{align*}
\hat a = \abs{D\phi^+(x_0, t_0)} + \de,\quad \hat b = -\abs{D\phi^-(x_0, t_0)} + \de \quad\text{and}\quad \hat \om = 0,
\end{align*}
in a neighborhood $K$ of $\partial B_{\rho_0} \times \set{0}$. Since $\hat \omega = 0$, $\psi$ is independent of time and thus it is a classical strict supersolution of the elliptic problem in each phase.
Let us define $\hat \psi_\eta(x,t) : = \psi(x - \zeta_0 + \eta \nu, t)$.
Choose $\de_1 > 0$ and $\eta > 0$ sufficiently small so that $\hat \psi_\eta(x,t) > \phi(x,t)$ on $\partial_P \Sigma_{\de_1}$ and $\Sigma_{\de_1} \subset K + (\zeta, t_0)$, where
\begin{align*}
\Sigma_{\de_1} = B_{\de_1}(x_0) \times (t_0-\de_1, t_0].
\end{align*}
This is possible due to the gradient ordering condition (iii) of $\phi$ and due to the fact that $\phi_t^+ > 0$ at $(x_0,t_0)$. 

Note that $\hat \psi_\eta < \phi$ at $(x_0,t_0)$. Therefore there is again a subsequence of $n$, and some other sequence $(x_n,t_n) \in \Sigma_{\de_1}$ where $u_n - \hat \psi_\eta$ has a maximum zero at $(x_n,t_n)$ in $\Sigma_{\de_1} \cap \set{t \leq t_n}$. As before, regularity of $u_n$ ensures that $\psi(x_n,t_n) \neq 0$ and so we have
\begin{align*}
\bra{b_n'(\psi) \psi_t - F(D^2 \psi, D\psi,\psi)} (x_n,t_n) \leq 0.
\end{align*}
This contradicts the fact that $\psi$ satisfies $-F(D^2\psi,D\psi,\psi)(x_n,t_n) > 0$.
\end{proof}

\begin{corollary}\label{approx11}(For nonlinear $F$)
Let $u_0\in\mathcal{P}$ and $u_0^\e$ be as given in Theorem~\ref{stability:visc}, and choose a sequence  $\e_k\to 0$. Then for given $T>0$ there exists $n_k\to \infty$ such that $\{u_{n_k}\}$ solving \eqref{approx} with initial data $u_0^{\e_k}$ converges to the maximal viscosity solution $\overline{u}$ of \eqref{RP} with initial data $u_0$. More precisely,  both 
$$U_1:=\limhalfsup_{k\to\infty} u_{n_k}\hbox{ and } U_2:=\limhalfinf_{k\to\infty} u_{n_k}$$ 
 satisfy
$$
(U_i)^* = \overline{u} \hbox{ and } (U_i)_* = \overline{u}_*.
$$
Parallel statements hold for the minimal viscosity solution.
\end{corollary}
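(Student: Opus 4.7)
The strategy is to combine Proposition~\ref{limit:visc}, which turns the singular-limit family $\{u_n\}$ into sub- and supersolutions of \eqref{RP}, with the stability Theorem~\ref{stability:visc}, which identifies $\overline u$ as the half-relaxed limit of any viscosity solutions of \eqref{RP} with the perturbed initial data $u_0^{\e_k}$, and to interleave them by a diagonal extraction in $n$.

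Fix $k$ and let $u_n^{(k)}$ solve \eqref{approx} with initial data $u_0^{\e_k}$. By Proposition~\ref{limit:visc} the half-relaxed limits
\[
\overline\omega^k:=\limhalfsup_{n\to\infty}u_n^{(k)},\qquad\underline\omega^k:=\limhalfinf_{n\to\infty}u_n^{(k)},
\]
are respectively a viscosity subsolution and a viscosity supersolution of \eqref{RP}. I would verify that their initial trace at $t=0$ is $u_0^{\e_k}$ via a uniform-in-$n$ barrier argument: the sub- and supersolutions $U^{\e_k}$ and $V^{\e_k}$ constructed as in Lemma~\ref{stability:initial} for the data $u_0^{\e_k}$ remain barriers for the approximating uniformly parabolic problem \eqref{approx} (they are classical in each phase, and the crossing-flux condition at $\{V^{\e_k}=0\}$ is not seen by $b_n$), so they sandwich every $u_n^{(k)}$ near $t=0$.

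Let $\overline u^{\e_k}$ and $\underline u^{\e_k}$ denote the maximal and minimal viscosity solutions of \eqref{RP} with initial data $u_0^{\e_k}$ provided by Theorem~\ref{existence}. Since $\overline\omega^k\leq V^{\e_k}$ after passing to the half-relaxed limit in $u_n^{(k)}\leq V^{\e_k}$, the Perron characterization of $\overline u^{\e_k}$ as the supremum of subsolutions bounded by $V^{\e_k}$ gives $\overline\omega^k\leq\overline u^{\e_k}$; symmetrically $\underline u^{\e_k}\leq\underline\omega^k$. Theorem~\ref{stability:visc}, applied to each of the sequences $\{\overline u^{\e_k}\}$ and $\{\underline u^{\e_k}\}$ (both of which satisfy the required monotone ordering with respect to $u_0^{\e_k}$), then shows that their half-relaxed limits in $k$ have USC envelope $\overline u$ and LSC envelope $\overline u_*$.

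Finally, a diagonal extraction picks $n_k\to\infty$ that simultaneously controls the upper and lower envelopes. Enumerating a countable basis of balls $\{B_j\}_j$ in $\cl Q$ (open balls of rational radii centered at rational points), we choose $n_k$ so that
\[
\bigl|\sup_{B_j}u_{n_k}^{(k)}-\sup_{B_j}\overline u^{\e_k}\bigr|<1/k\qquad\text{and}\qquad \bigl|\inf_{B_j}u_{n_k}^{(k)}-\inf_{B_j}\underline u^{\e_k}\bigr|<1/k
\]
for all $j\leq k$; this is possible from the $n$-limit characterization of $\overline\omega^k,\underline\omega^k$ combined with the sandwich of the previous paragraph. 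Then $U_1$ and $U_2$ are pinched between the $k$-half-relaxed limits of $\overline u^{\e_k}$ and $\underline u^{\e_k}$, whose envelopes are $\overline u$ and $\overline u_*$ by Theorem~\ref{stability:visc}. The minimal-solution statement follows from the parallel construction with an increasing sequence $u_0^{\e_k}\uparrow u_0$. I expect the diagonal step to be the main obstacle: half-relaxed limits in two parameters do not commute in general, and a single choice of $n_k$ must be strong enough to control both the USC envelope (tracked by $\limhalfsup$) and the LSC envelope (tracked by $\limhalfinf$) of the doubly-indexed sequence.
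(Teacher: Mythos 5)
There is a genuine gap in your diagonal extraction, and you correctly suspect it yourself. The conditions you impose, namely $\bigl|\sup_{B_j}u_{n_k}^{(k)}-\sup_{B_j}\overline u^{\e_k}\bigr|<1/k$ and $\bigl|\inf_{B_j}u_{n_k}^{(k)}-\inf_{B_j}\underline u^{\e_k}\bigr|<1/k$, are not achievable from what you have established. The only information coming out of Proposition~\ref{limit:visc} and the sandwich is $\underline u^{\e_k}\leq\underline\omega^k\leq\overline\omega^k\leq\overline u^{\e_k}$, where $\overline\omega^k,\underline\omega^k$ are the half-relaxed $n$-limits. Thus $\limsup_n\sup_{B_j}u_n^{(k)}\leq\sup_{\cl B_j}\overline\omega^k$, which may be strictly less than $\sup_{B_j}\overline u^{\e_k}$, so the two-sided control you ask for has no lower bound on $\sup_{B_j}u_n^{(k)}$ to back it up. In fact, were your diagonal step achievable, it would already imply $\limhalfsup_n u_n^{(k)}=\overline u^{\e_k}$ for each fixed $k$, which is essentially the assertion being proved; the argument is implicitly circular.

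The paper sidesteps this with a one-sided, non-uniform extraction. First one observes (Proposition~\ref{limit:visc} plus the lateral and initial barrier estimates) that for \emph{any} sequence $n_k\to\infty$ one has $\limhalfsup_k u_{n_k}\leq\overline u$: the upper bound needs no choice of $n_k$. For the lower bound one then picks $n_k$ so that $\underline u_{\e_k}-\e_k\leq u_{n_k}$ on $[0,T]$, which is achievable because $\limhalfinf_n u_n^{(k)}=\underline\omega^k\geq\underline u_{\e_k}$ is LSC on the compact cylinder; you never try to hit $\overline u^{\e_k}$ from below. Theorem~\ref{stability:visc} applied to the minimal solutions $\underline u_{\e_k}$ then identifies the $k$-half-relaxed limits of $\underline u_{\e_k}-\e_k$ as having envelopes $\overline u$ and $\overline u_*$, and pinching between this lower sequence and the universal upper bound $\overline u$ gives the conclusion. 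Separately, your parenthetical justification that $U^{\e_k},V^{\e_k}$ remain barriers for the regularized problem~\eqref{approx} is too terse: in the negative phase the regularized operator produces a nontrivial term $b_n'(\vp)\vp_t$ with $b_n'>0$, and one must use that $\vp_t\geq 0$ there (which holds because the negative-phase domain $\mathcal{O}^-(t)$ shrinks) to retain the supersolution inequality; ``the crossing condition is not seen by $b_n$'' addresses the zero set but not this sign issue.
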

\begin{proof}

The main challenge in this proof is to ensure that $\overline{\omega}$ and $\underline{\omega}$ assume the correct boundary and initial data. Let us denote $M:=\max\set{|u_0|,1}$ as the uniform bound on $|u_n|$ obtained by the maximum principle. 

\vspace{10pt}

1. We first construct barriers to control $u_n$ near the lateral boundary data.
Consider $\vp(\cdot, t)$, which solves $F(D^2\vp, D\vp, \vp)=0$ in $\Omega$ for each $t \geq 0$ with boundary data $g(\cdot, t)$. This gives a uniform bound from below on $u_n$ at the lateral boundary $\partial_L Q$. 
Next define $\vp(x,t) = \psi(x)$ where $\psi$ solves $F(D^2\psi, D\psi, \psi)=0$ in $\Sigma_{\delta}:= \{x:d(x,\partial\Omega)<\delta\}$ with boundary data $g$ on $\partial \Omega$ and $M$ on $\{x: d(x,\partial\Omega)=\delta\}.$ Here $\de > 0$ is  chosen small enough so that $\psi \geq g(\cdot, 0)$. Then $u_n \leq \vp$ in the set $\Sigma_{\delta}$ and we obtain the uniform bound from above on $u_n$ at the lateral boundary $\partial_L Q$.

\vspace{10pt}

2. To obtain estimates on $u_n$ near the initial data, one can consider barriers which are defined for short time $0\leq t\leq t_0$ as follows.
For given $\delta>0$ let $V(x,t;\delta)$ be the strict supersolution of \eqref{RP} for $0\leq t\leq t_0$, constructed as in the proof of Lemma~\ref{stability:initial}, with the initial data $u_0^\delta:=u_0+\delta$, boundary data $\delta$ on $\Gamma(t)$ and
\begin{align*}
\mathcal{O}^+(t) = \{x: d(x, \{u_0> 0\})< t^{1/4}\} = \{x: d(x, \{u_0^\de> \de\})< t^{1/4}\}.
\end{align*} 
Note that then we have
$$
\mathcal{O}^+(t)=\{V>\delta\} \hbox{ and } \mathcal{O}^-(t):=\Omega-\mathcal{O}^+(t)=\{V \leq \delta\}.
$$
We observe that $V_t\geq 0$ in $\mathcal{O}^-(t)$ since $V$ solves the elliptic equation with boundary data $\delta$ in $\mathcal{O}^-(t)$ and  the set shrinks in time. Moreover, as mentioned in the proof of Lemma 4.1, due to the uniform ellipticity of the operator in the set $\mathcal{O}^-(t)$, we have 
 \begin{equation}\label{grad_order}
 |DV^+| <|DV^-|\hbox{ on } \Gamma(t).
 \end{equation}
 Now $\vp(x,t):=V(x,t;\delta)$ satisfies
$$
\vp_t-F(D^2\vp,D\vp, \vp) > -F(D^2\vp,D\vp,\vp) > 0 \hbox{ in } \mathcal{O}^-(t)
$$
 and
 $$
 \vp_t-F(D^2\vp,D\vp,\vp) > 0 \hbox{ in } \mathcal{O}^+(t).
 $$
The above inequalities as well as \eqref{grad_order} yield that $\vp$ is a supersolution of \eqref{approx} for $0\leq t\leq t_0$ if $n$ is sufficiently large with respect to $\delta$. Since $\vp(x,0) =u_0+\delta >u_0$,  it follows from the comparison principle for \eqref{approx} that $u_n\leq \vp$  for $0\leq t\leq t_0$ and for sufficiently large $n$. Therefore we have $\overline{\omega}(x,0)\leq u_0(x)+\delta$, and we conclude that $\overline{\omega}(x,0) \leq u_0(x)$.  
 
 To show that $\underline{\omega}(x,0)\geq u_0(x)$ one can argue similarly as above to construct a subsolution of \eqref{approx}, with $U$ in the proof of Lemma~\ref{stability:initial}  and with the initial data $u_0-\delta$.

\vspace{10pt}

3. Now we construct $n_k$ as follows. Note that, for any sequence $n_k\to\infty$, Proposition~\ref{limit:visc} as well as the boundary data estimates given in steps 1.--2. yield that 
 $$
 \limhalfsup_{k\to\infty} u_{n_k} \leq \overline{u}^*.
 $$
 
 For given $\e_k$, choose $n_k$ such that $u_{n_k}$ satisfies 
 $$
\underline{u}_{\e_k} -\e_k \leq u_{n_k}  \hbox{ for } 0\leq t\leq T,
 $$
where $\underline{u}_{\e_k}$ is the minimal viscosity solutions of \eqref{RP} with initial data $u_0^{\e_k}$: such $n_k$ exists due to Proposition~\ref{limit:visc}.  Now we conclude by applying Theorem~\ref{stability:visc} to $\underline{u}_{\e_k}$.

\end{proof}
\begin{corollary}(For nonlinear $F$)
Under the setting of Theorem~\ref{uniqueness}, $\{u_n\}$ converges to $u$ in the sense of Corollary~\ref{approx11}.
\end{corollary}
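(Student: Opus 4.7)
My plan is to combine the subsolution/supersolution property of half-relaxed limits from Proposition~\ref{limit:visc}, the barrier estimates already developed in the proof of Corollary~\ref{approx11}, and the uniqueness supplied by Theorem~\ref{uniqueness}. Under the hypotheses of Theorem~\ref{uniqueness}, the maximal and minimal viscosity solutions collapse, $\overline{u}=\underline{u}=u$, which removes the subsequence extraction present in Corollary~\ref{approx11} and identifies the two one-sided limits with the single object $u$.

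Set $U_1:=\limhalfsup_{n\to\infty}u_n$ and $U_2:=\limhalfinf_{n\to\infty}u_n$, where $u_n$ solves \eqref{approx} with initial data $u_0$. By Proposition~\ref{limit:visc}, $U_1$ is a viscosity subsolution and $U_2$ a viscosity supersolution of \eqref{RP} on $Q$. To pin down the initial and lateral traces of $U_1$ and $U_2$, I would transplant steps~1 and~2 of the proof of Corollary~\ref{approx11} verbatim: the supersolution $V(\,\cdot\,,\,\cdot\,;\de)$ constructed in Lemma~\ref{stability:initial} with shifted initial data $u_0+\de$ dominates $u_n$ for $n$ large, so $U_1(\cdot,0)\leq u_0+\de$, and sending $\de\searrow0$ gives $U_1(\cdot,0)\leq u_0$. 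The analogous subsolution yields $U_2(\cdot,0)\geq u_0$, and the lateral boundary is handled symmetrically, producing $U_1\leq-1\leq U_2$ on $\partial\Omega\times(0,T]$.

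To close, I would perturb the initial datum upward: pick $u_0^\de\in\mathcal{P}$ satisfying the hypotheses of Theorem~\ref{stability:visc} with $u_0^\de>u_0$ and $u_0^\de\searrow u_0$, and let $u^\de$ denote any viscosity solution of \eqref{RP} with initial data $u_0^\de$. Strict separation on $\partial_P Q$ lets me invoke Theorem~\ref{th:comparisonPrinciple} to conclude $U_1<u^\de$ throughout $Q$. Sending $\de\to0$ and applying Theorem~\ref{stability:visc} gives $U_1\leq\overline{u}=u$ at the level of USC envelopes. A symmetric downward perturbation yields $U_2\geq\underline{u}=u$ at the level of LSC envelopes. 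Since $U_2\leq U_1$ trivially, all these inequalities are equalities, which gives $(U_i)^{*}=u$ and $(U_i)_{*}=u_{*}$ for $i=1,2$, i.e.\ precisely the convergence asserted in Corollary~\ref{approx11}.

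The main obstacle is this last step: Theorem~\ref{th:comparisonPrinciple} requires strict separation on $\partial_P Q$, so one cannot compare $U_1$ with $u$ directly, and the perturbation-then-stability argument via Theorem~\ref{stability:visc} is essential. Everything else is a repackaging of machinery already assembled in the paper, with Theorem~\ref{uniqueness} doing the crucial work of identifying the two half-relaxed limits.
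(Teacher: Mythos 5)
The paper supplies no proof for this corollary, so the intended argument has to be inferred. The most economical reading is that $\{u_n\}$ denotes the same diagonal sequence $\{u_{n_k}\}$ (with perturbed initial data $u_0^{\e_k}$) already constructed in Corollary~\ref{approx11}; under that reading the proof is a one-line observation: Theorem~\ref{uniqueness} forces $\overline{u}=\underline{u}=u$, and one substitutes this into the conclusion of Corollary~\ref{approx11}.

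You take a different and more ambitious route: you work with $u_n$ having the \emph{unperturbed} initial data $u_0$ and show that the half-relaxed limits of the full sequence coincide (up to envelopes) with $u$. This entails redoing the barrier estimates from steps 1--2 of the proof of Corollary~\ref{approx11} to pin down the initial and lateral traces of $U_1,U_2$, then comparing $U_1$ against the solutions $u^\de$ with strictly larger initial data via Theorem~\ref{th:comparisonPrinciple}, and finally passing $\de\to 0$ through Theorem~\ref{stability:visc}. All of these ingredients are deployed correctly, and the sandwich $U_2\leq U_1$ together with \eqref{equality} does close the argument. What this buys over the one-line reading is a genuinely stronger statement: convergence of the full, unperturbed approximating sequence, without the diagonal extraction $n_k(\e_k)$ — which, in fact, only becomes available precisely because uniqueness identifies $\overline{u}$ and $\underline{u}$.

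One caveat you should acknowledge: your invocation of Theorem~\ref{th:comparisonPrinciple} requires strict separation on all of $\partial_P Q$, but on the lateral boundary $\partial\Omega\times(0,T]$ both $U_1$ and $(u^\de)_*$ take the common value $g\equiv -1$, so the separation there is not literally strict. This is, however, a pre-existing lacuna in the paper itself (it is equally present in the derivation of \eqref{one} in the proof of Theorem~\ref{stability:visc}); the standard repair is to shrink to $\Omega_\rho\Subset\Omega$ and use a strong maximum principle or the lateral barriers of step~1 of Corollary~\ref{approx11}'s proof to produce a strict gap at $\partial\Omega_\rho$. Since the paper does not spell this out anywhere, your treatment is consistent with its conventions, but it is worth flagging explicitly.
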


\begin{corollary}
Suppose $u_0\in \mathcal{P}$ and $F$ is as given in Corollary~\ref{coro:unique}. Then $u_n$ with the initial data $u_0^n$  converges to the unique viscosity solution $u$ of \eqref{RP} with initial data $u_0$ in the sense of Corollary~\ref{approx11}.
\end{corollary}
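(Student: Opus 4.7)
The plan is to deduce the statement directly from the previous two corollaries. By Corollary~\ref{coro:unique}, in the linear setting $F(M,p,z) = \trace M$ there is a unique viscosity solution $u$ of \eqref{RP} with initial data $u_0 \in \mathcal{P}$. In particular, the maximal and minimal viscosity solutions constructed in Theorem~\ref{existence} must coincide, so $\overline{u} = \underline{u} = u$ and $\overline{u}_* = \underline{u}_* = u_*$.

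Next, I would invoke Corollary~\ref{approx11} twice. Applied to an initial datum sequence $u_0^{\e_k}$ decreasing to $u_0$ as in Theorem~\ref{stability:visc}, it produces a subsequence $u_{n_k}$ of solutions of \eqref{approx} whose half-relaxed limits $U_1 = \limhalfsup u_{n_k}$ and $U_2 = \limhalfinf u_{n_k}$ satisfy $(U_i)^* = \overline{u} = u$ and $(U_i)_* = \overline{u}_* = u_*$. Applied symmetrically to a sequence $u_0^{\e_k}$ increasing to $u_0$, the corresponding subsequence of solutions of \eqref{approx} has half-relaxed limits agreeing with $\underline{u}$ and $\underline{u}_*$, which again equal $u$ and $u_*$.

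For a general approximating sequence $u_0^n$ of initial data converging to $u_0$ in the sense compatible with $\mathcal{P}$, I would sandwich it between a decreasing and an increasing approximation $u_0^{\e_k, +} \geq u_0^n \geq u_0^{\e_k, -}$ (possible by the uniform $C^{1,1}$ regularity of $\Gamma(u_0^n)$ assumed throughout). The parabolic comparison principle for the uniformly parabolic problem \eqref{approx} yields $u_n^- \leq u_n \leq u_n^+$ pointwise, and passing to half-relaxed limits gives
\begin{align*}
u_* = \underline{u}_* \leq \limhalfinf u_n \leq \limhalfsup u_n \leq \overline{u}^* = u.
\end{align*}
Since $u \in USC$ and $u_* \in LSC$ by construction, this forces $(U_i)^* = u$ and $(U_i)_* = u_*$ for both $U_1 = \limhalfsup u_n$ and $U_2 = \limhalfinf u_n$.

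The main (small) obstacle is the sandwiching step: one needs to verify that $\mathcal{P}$-initial data $u_0^n$ approximating $u_0$ can indeed be squeezed between increasing and decreasing $\mathcal{P}$-approximations compatible with the hypotheses of Theorem~\ref{stability:visc} and Corollary~\ref{approx11}. This is a standard perturbation of the $C^{1,1}$ free boundary $\Gamma(u_0^n)$ by inward and outward normal displacements of size $o(1)$, together with solving the corresponding elliptic problem in the new negative phase; the uniform interior/exterior ball radius $R_0$ for $\Gamma(u_0)$ guarantees the perturbations lie in $\mathcal{P}$. All of the viscosity-theoretic work has already been done in Corollaries~\ref{coro:unique} and \ref{approx11}, so no new arguments involving the equation itself are needed.
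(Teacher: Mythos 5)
Your first two paragraphs carry the intended argument: Corollary~\ref{coro:unique} identifies the maximal and minimal solutions with the unique viscosity solution, and then Corollary~\ref{approx11} gives the conclusion directly. That is the whole proof, and it is the same route the paper has in mind.

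One imprecision worth flagging: the pointwise identity $\overline{u} = \underline{u} = u$ does not hold literally. By construction $\overline{u} = \overline{u}^*$ is upper semicontinuous while $\underline{u} = \underline{u}_*$ is lower semicontinuous, and as Example~\ref{ex:jump} illustrates the solution is genuinely discontinuous in time, so these two representatives must disagree on the jump set. What uniqueness in Corollary~\ref{coro:unique} actually delivers is coincidence a.e.\ together with coincidence of the semicontinuous envelopes, i.e.\ $\overline{u} = u^\diamond$ and $\underline{u} = u_\diamond$. That is exactly what is needed here, because the ``convergence in the sense of Corollary~\ref{approx11}'' is a statement about envelopes of the half-relaxed limits, not a pointwise one; so your conclusion survives, but you should not claim equality of the three functions.

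Your third paragraph is not required and, as written, does not close. The statement ``$u_n$ with initial data $u_0^n$ ... in the sense of Corollary~\ref{approx11}'' refers to the same diagonal subsequence $u_{n_k}$ with data $u_0^{\e_k}$ already built there, not to an arbitrary sequence of initial data, so no sandwiching is called for. Moreover, the inequalities you obtain from parabolic comparison run the wrong way for your purpose: $u_n \leq u_n^+$ gives $\limhalfsup u_n \leq \limhalfsup u_n^+$, which bounds the half-relaxed limit \emph{from above} by a quantity tied to the \emph{larger} data $u_0^{\e_k,+}$, not from below by $\overline{u}$; and the indices of the sandwiching sequences are not synchronized with the $n_k$ chosen in Corollary~\ref{approx11}, so you cannot simply transfer its conclusion. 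To actually handle a general sequence you would need to rerun the stability and barrier arguments of the proof of Corollary~\ref{approx11} (half-relaxed limits are sub/supersolutions, barriers pin the data, then invoke maximality/minimality and uniqueness). Since the corollary does not ask for this, dropping the third paragraph leaves a correct and complete proof.
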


\section*{Appendix}
\appendix
\section{Elliptic and parabolic theory}
\label{ap:elliptic-and-parabolic-theory}

In this section we collect a few classical results from the theory of fully nonlinear elliptic and parabolic operators used throughout this article.

\begin{proposition}[Elliptic comparison]
\label{pr:ellipticComparison}
Let $\Omega\subset \Rn$ be a bounded domain and let $F$ satisfy \eqref{eq:structural} and \eqref{eq:proper}. Suppose that $u \in USC(\cl{\Omega})$ is a viscosity subsolution and $v \in LSC(\cl{\Omega})$ is a viscosity supersolution of the elliptic problem $-F(D^2 u, Du, u) = 0$ in $\Omega$, and $u \leq v$ on $\partial \Omega$. Then $u \leq v$ in $\Omega$.
\end{proposition}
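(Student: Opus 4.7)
The plan is to argue by contradiction using the classical doubling-of-variables technique from the theory of viscosity solutions; the reference is \cite{CIL}.

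First I would suppose for contradiction that $M := \max_{\cl{\Omega}}(u-v) > 0$. By upper semicontinuity of $u - v$ on the compact set $\cl{\Omega}$ and the boundary hypothesis $u \leq v$ on $\partial \Omega$, this maximum is attained at some interior point $\hat x \in \Omega$. For each $\alpha > 0$ I would introduce the penalized functional
\[
\Phi_\alpha(x, y) := u(x) - v(y) - \tfrac{\alpha}{2}\abs{x-y}^2
\]
on $\cl{\Omega} \times \cl{\Omega}$, pick a maximizer $(x_\alpha, y_\alpha)$, and invoke the standard facts (see \cite{CIL}*{Proposition 3.7}): along a subsequence, $(x_\alpha, y_\alpha) \to (\hat x, \hat x) \in \Omega \times \Omega$, $\alpha\abs{x_\alpha - y_\alpha}^2 \to 0$, and $u(x_\alpha) - v(y_\alpha) \to M > 0$. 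The Crandall-Ishii lemma (\cite{CIL}*{Theorem 3.2}) then furnishes symmetric matrices $X, Y \in \mathcal{S}_n$ with $(p_\alpha, X) \in \overline{J}^{2,+} u(x_\alpha)$, $(p_\alpha, Y) \in \overline{J}^{2,-} v(y_\alpha)$, where $p_\alpha = \alpha(x_\alpha - y_\alpha)$, and satisfying the matrix inequality that implies $X \leq Y$ as symmetric matrices.

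Next I would combine the viscosity sub- and supersolution inequalities $F(X, p_\alpha, u(x_\alpha)) \geq 0 \geq F(Y, p_\alpha, v(y_\alpha))$ with the properness \eqref{eq:proper} (applicable since $u(x_\alpha) > v(y_\alpha)$ for large $\alpha$) and the structural upper bound \eqref{eq:structural}, to get
\[
0 \leq F(X, p_\alpha, u(x_\alpha)) - F(Y, p_\alpha, v(y_\alpha)) \leq F(X, p_\alpha, v(y_\alpha)) - F(Y, p_\alpha, v(y_\alpha)) \leq \mathcal{M}^+(X - Y) \leq 0,
\]
so every inequality in the chain is saturated.

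The main obstacle is that \eqref{eq:proper} only demands monotonicity in $z$, not strict monotonicity, so this chain is not yet an outright contradiction. I would close the argument by the standard regularization via sup- and inf-convolutions
\[
u^\e(x) = \sup_{y \in \cl{\Omega}}\set{u(y) - \tfrac{1}{2\e} \abs{x - y}^2}, \quad v_\e(x) = \inf_{y \in \cl{\Omega}}\set{v(y) + \tfrac{1}{2\e}\abs{x-y}^2},
\]
which remain viscosity sub- and supersolutions on a slightly smaller subdomain, are Lipschitz, and are semi-convex (resp. semi-concave), hence twice differentiable almost everywhere by Alexandrov's theorem. Jensen's maximum principle (\cite{CIL}*{Lemma A.3}) produces points near a maximum of $u^\e - v_\e$ at which both functions are classically twice differentiable and the viscosity inequalities hold pointwise; the strict ellipticity $\la > 0$ in \eqref{eq:structural} then supplies the room in the Pucci term $\mathcal{M}^+$ needed to extract a strict inequality and hence a contradiction. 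Passing to the limit $\e \to 0$ completes the proof.
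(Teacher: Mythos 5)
The paper's own proof is a one-line citation: apply the ABP estimate to $u - v$, which by \eqref{eq:structural} and \eqref{eq:proper} is a subsolution of the extremal equation $\mathcal{M}^-(D^2\om) + \de_1\abs{D\om} + \de_0\abs{\om} \geq 0$ (see \cite{ATh}*{Proposition 2.17}). Your doubling-of-variables route is genuinely different, and its first half is sound, but the final step has a gap. You correctly observe that the chain
$0 \leq F(X,p_\al,u(x_\al)) - F(Y,p_\al,v(y_\al)) \leq \mathcal{M}^+(X-Y) \leq 0$
is saturated because properness \eqref{eq:proper} is only non-strict. The proposed fix (sup/inf-convolutions followed by Jensen's lemma) does not supply the missing strictness: at the points produced by Jensen's lemma one only has $D^2(u^\e - v_\e) \leq 0$ (the non-strict second-order condition at a maximum), with $D(u^\e - v_\e)$ small, and feeding this into \eqref{eq:structural} and \eqref{eq:proper} gives $0 \leq \mathcal{M}^+\bigl(D^2(u^\e - v_\e)\bigr) + \de_1\abs{D(u^\e - v_\e)} \leq o(1)$, again a consistent chain with no contradiction. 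Uniform ellipticity $\la > 0$ alone does not help here: $\mathcal{M}^+(M) < 0$ requires $M$ to have a strictly negative eigenvalue, and Jensen's lemma only gives $M \leq 0$.

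The standard way to close a doubling argument when the operator is proper but not strictly proper is to first perturb $u$ into a \emph{strict} subsolution. Take $\vp(x) = e^{\al x_1} - \max_{\cl\Om} e^{\al x_1} \leq 0$ with $\al > \de_1/\la$ and set $u_\ga := u + \ga\vp$. Then $\mathcal{M}^-(\ga D^2\vp) - \de_1\abs{\ga D\vp} = \ga\al(\la\al - \de_1)e^{\al x_1} \geq c\ga > 0$ on $\cl\Om$ for some $c > 0$, and since $\vp \leq 0$ properness works in your favor (no $\de_0$ contribution when you shift the $z$-argument downward): combining \eqref{eq:structural} and \eqref{eq:proper}, $u_\ga$ is a viscosity subsolution of $F(D^2 u_\ga, D u_\ga, u_\ga) = c\ga$, and still $u_\ga \leq v$ on $\partial\Om$. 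Running the doubling argument with $u_\ga$ in place of $u$ now yields $c\ga \leq \mathcal{M}^+(X - Y) \leq 0$, a genuine contradiction; letting $\ga \to 0$ finishes the proof. With this correction your approach is a valid, more self-contained (though lengthier) alternative to the paper's ABP citation.
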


\begin{proof}
This is a consequence of ABP estimate applied to $u - v$, see \cite{ATh}*{Proposition 2.17}.
\end{proof}

\begin{proposition}[Parabolic comparison]
\label{pr:parabolicComparison}
Let $\Sigma \subset \Rn\times \R$ be a bounded parabolic domain and let $F$ satisfy \eqref{eq:structural} and \eqref{eq:proper}. Suppose that $u \in USC(\cl{\Sigma})$ is a viscosity subsolution and $v \in LSC(\cl{\Sigma})$ is a viscosity supersolution of the parabolic problem $u_t - F(D^2 u, Du, u) = 0$ on $\Sigma$ such that $u \leq v$ on $\partial_P \Sigma$. Then $u \leq v$ in $\Sigma$.
\end{proposition}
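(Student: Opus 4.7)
The plan is to apply the classical Crandall--Ishii--Lions doubling-of-variables argument adapted to the parabolic setting (\cite{CIL}*{Section 8}). Assume for contradiction that $M := \sup_{\cl\Sigma}(u-v) > 0$; the ordering $u \leq v$ on $\partial_P\Sigma$ together with semicontinuity forces this supremum to be attained at some point of $\Sigma$. To keep the eventual doubled maximum away from the top of $\Sigma$, I would fix $T^*$ strictly above $\sup\set{t:(x,t)\in \cl\Sigma}$ and replace $u$ by $\tilde u(x,t) := u(x,t) - \eta/(T^*-t)$ for small $\eta > 0$. By properness \eqref{eq:proper}, $\tilde u$ is a viscosity subsolution of the strict inequality
$$ \tilde u_t - F(D^2 \tilde u, D \tilde u, \tilde u) \leq -\frac{\eta}{(T^*-t)^2}, $$
and $\sup_{\cl\Sigma}(\tilde u - v) > 0$ still holds provided $\eta$ is small.

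Next I would double variables: for $\e > 0$ set
$$ \Phi_\e(x,y,t,s) := \tilde u(x,t) - v(y,s) - \frac{|x-y|^2 + (t-s)^2}{2\e}. $$
By semicontinuity, $\Phi_\e$ attains its maximum on $\cl\Sigma \times \cl\Sigma$ at some $(x_\e,y_\e,t_\e,s_\e)$, and the standard penalization lemma (\cite{CIL}*{Lemma~3.1}) gives $|x_\e - y_\e|^2/\e \to 0$, $(t_\e - s_\e)^2/\e \to 0$, and any limit point of $(x_\e, t_\e), (y_\e, s_\e)$ achieves $M$ for $\tilde u - v$; in particular, for $\e$ small, the maximum sits in $\Sigma \times \Sigma$ thanks to the strict separation on $\partial_P \Sigma$ and the penalty at the top time. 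Applying the parabolic theorem of sums (\cite{CIL}*{Theorem~8.3}) yields parabolic semijets
$$ (a_\e, p_\e, X_\e) \in \cl{\mathcal{P}}^{2,+} \tilde u(x_\e,t_\e), \qquad (b_\e, p_\e, Y_\e) \in \cl{\mathcal{P}}^{2,-} v(y_\e,s_\e), $$
with $a_\e = b_\e = (t_\e - s_\e)/\e$, $p_\e = (x_\e - y_\e)/\e$, and a matrix inequality implying $X_\e \leq Y_\e$.

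Inserting these jets into the strict sub- and supersolution inequalities and subtracting gives
$$ 0 = b_\e - a_\e \geq F(Y_\e, p_\e, v(y_\e, s_\e)) - F(X_\e, p_\e, \tilde u(x_\e,t_\e)) + \frac{\eta}{(T^*-t_\e)^2}. $$
Because $\tilde u(x_\e,t_\e) - v(y_\e,s_\e) \to M > 0$, properness lets me replace $v(y_\e,s_\e)$ by $\tilde u(x_\e,t_\e)$ in the first $F$-term at the cost of making it smaller, after which the structural bound \eqref{eq:structural} applied with coinciding $p$'s and $z$'s yields the lower bound $\mathcal{M}^-(Y_\e - X_\e)$. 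Since $Y_\e - X_\e \geq 0$, all eigenvalues of $Y_\e - X_\e$ are non-negative and $\mathcal{M}^-(Y_\e - X_\e) \geq 0$, so
$$ 0 \geq \mathcal{M}^-(Y_\e - X_\e) + \frac{\eta}{(T^*-t_\e)^2} \geq \frac{\eta}{(T^*-t_\e)^2} > 0, $$
a contradiction. Letting $\eta \to 0$ then gives $u \leq v$ on $\cl\Sigma$.

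The main obstacle is less conceptual than bookkeeping: one must confirm that the doubled maximum lies in the interior of the parabolic domain $\Sigma = U \cap \set{t \leq \tau}$ (the $\eta/(T^*-t)$ term plus strict separation on $\partial_P \Sigma$ handles this), and one must be careful to exploit properness \eqref{eq:proper} to remove the $\de_0|\tilde u - v|$ term from the structural inequality, since $|\tilde u(x_\e,t_\e) - v(y_\e,s_\e)| \to M$ does not vanish. This is precisely why the structural condition \eqref{eq:structural} is combined with \eqref{eq:proper}: it allows the $p$- and $z$-slot mismatches to be absorbed, and the Pucci term $\mathcal{M}^-(Y_\e - X_\e)$ becomes the only ingredient needed to close the argument via the matrix inequality.
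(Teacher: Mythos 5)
Your argument is exactly the classical Crandall--Ishii--Lions doubling argument that the paper invokes: the paper offers no proof of its own here, simply citing \cite{CIL}*{Theorem 8.2} (or \cite{WangI}*{Corollary 3.15}), and your write-up unpacks that reference faithfully, using the $\eta/(T^*-t)$ perturbation for strictness, Ishii's lemma for the matrix inequality $X_\e\le Y_\e$, and then \eqref{eq:proper} together with \eqref{eq:structural} to reduce to $\mathcal{M}^-(Y_\e-X_\e)\ge 0$. One small technical caveat: \cite{CIL}*{Theorem 8.3} is stated with a \emph{single} time variable and a penalty $\phi(t,x,y)$, not with a doubled time variable and a $(t-s)^2/2\e$ term; as written, your penalization does not fall under that theorem's hypotheses. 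The cleanest fix is to drop the time-doubling entirely (set $\Phi_\e(x,y,t)=\tilde u(x,t)-v(y,t)-|x-y|^2/(2\e)$), in which case Theorem~8.3 applies directly and still yields $a_\e=b_\e$ and $X_\e\le Y_\e$; the rest of your argument then goes through unchanged.
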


\begin{proof}
See for instance \cite{CIL}*{Theorem 8.2} or \cite{WangI}*{Corollary 3.15}.
\end{proof}

\begin{proposition}[Elliptic Hopf's lemma]
\label{pr:Hopf}
Let $u$ and $v$ be respectively a subsolution and a supersolution of $-F(D^2 u, Du, u) = 0$, $u \leq v$ on a domain $\Omega$ and $u \not\equiv v$. If $u = v$ at $x_0 \in \partial \Omega$ and $\Omega$ has a interior ball touching $x_0$, then
\begin{align}
\label{eq:hopfuv}
\liminf_{h\searrow 0+} \frac{u(x_0) - u(x_0 - h \xi)}{h} > \liminf_{h\searrow 0+} \frac{v(x_0) - v(x_0 - h \xi)}{h},
\end{align}
for any $\xi$, $\xi \cdot \nu > 0$, where $\nu$ is the unit outer normal to the interior ball at $x_0$, provided that the right-hand side is finite.
\end{proposition}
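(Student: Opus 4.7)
My plan is to imitate the classical Hopf lemma by constructing an exponential barrier inside the interior ball at $x_0$. After a translation I may assume the interior ball is $B := B_R(0)$, so $x_0 \in \partial B \cap \partial\Omega$ and $\nu = x_0/R$ is the unit outer normal to both $B$ and $\Omega$ at $x_0$. Set $z := v - u$; then $z$ is lower semicontinuous, nonnegative on $\Omega$, and $z(x_0) = 0$.

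The first step is to obtain $z > 0$ on $B$ via a strong maximum principle. The structural assumption \eqref{eq:structural} implies that $z$ inherits (in the viscosity sense) a Pucci-type supersolution inequality of the form $\mathcal{M}^-(D^2 z) - \de_1 |Dz| - \de_0 z \leq 0$, and the hypothesis $u \not\equiv v$ combined with the strong maximum principle for such operators (from the Caffarelli--Cabré theory) gives $z > 0$ on $B$; in particular, by lower semicontinuity, $m := \min_{\partial B_{R/2}(0)} z > 0$.

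Next I construct the barrier $\phi(x) := e^{-\alpha |x|^2} - e^{-\alpha R^2}$ on the annulus $A := B \setminus \cl{B_{R/2}(0)}$. A direct computation of the eigenvalues of $D^2\phi$ (one radial eigenvalue $(4\alpha^2|x|^2 - 2\alpha)e^{-\alpha|x|^2}$, which is positive on $A$ for large $\alpha$, plus $n-1$ tangential eigenvalues $-2\alpha e^{-\alpha|x|^2}$) shows that for $\alpha$ sufficiently large,
\begin{align*}
\mathcal{M}^-(D^2\phi) - \de_1 |D\phi| - \de_0 |\phi| > 0 \quad \text{on } A.
\end{align*}
Now for any $\e > 0$, $u + \e \phi$ is a viscosity subsolution of $-F = 0$ on $A$: if a smooth $\psi$ touches $u + \e\phi$ from above at $x_1 \in A$, then $\psi - \e\phi$ touches $u$ from above at $x_1$, so the subsolution property of $u$ together with \eqref{eq:structural} and the positive-homogeneity of $\mathcal{M}^-$ yield
\begin{align*}
F(D^2\psi, D\psi, u + \e\phi)(x_1) \geq \e\bra{\mathcal{M}^-(D^2\phi) - \de_1 |D\phi| - \de_0 |\phi|}(x_1) > 0.
\end{align*}
Pick $\e$ small enough that $\e\phi \leq m$ on $\partial B_{R/2}$; then $u + \e\phi \leq v$ on $\partial A$ (on $\partial B_R$ since $\phi = 0$ and $u \leq v$; on $\partial B_{R/2}$ since $u + \e\phi \leq u + m \leq u + z = v$). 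Proposition~\ref{pr:ellipticComparison} yields $\e \phi \leq z$ on $A$.

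Finally, expanding $|x_0 - h\xi|^2 = R^2 - 2hR(\nu \cdot \xi) + O(h^2)$ gives $\phi(x_0 - h\xi)/h \to 2\alpha R (\nu\cdot\xi) e^{-\alpha R^2}$ as $h \searrow 0+$, and since $\nu \cdot \xi > 0$ this limit is strictly positive. Therefore $\liminf_{h \to 0+} z(x_0 - h\xi)/h \geq c > 0$ for some constant $c$. Using $u(x_0) = v(x_0)$, one has $\frac{u(x_0) - u(x_0-h\xi)}{h} - \frac{v(x_0) - v(x_0-h\xi)}{h} = \frac{z(x_0-h\xi)}{h}$, and taking $\liminf$'s (valid because the right-hand side in \eqref{eq:hopfuv} is finite) gives \eqref{eq:hopfuv}. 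The main obstacle is the first step: one must carefully derive the Pucci inequality satisfied by $z = v - u$ in the viscosity sense (which is nontrivial since $u$ and $v$ are only semicontinuous and can be handled via sup/inf-convolution approximations or via doubling of variables) and cite an appropriate strong maximum principle for Pucci-type operators.
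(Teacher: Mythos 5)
Your proof is correct and it is a genuinely different route from the paper's. The paper proceeds by showing that $\omega = u - v$ is a viscosity subsolution of the Pucci-type extremal equation $-\mathcal{M}^-(D^2\omega) - \de_1|D\omega| - \de_0|\omega| = 0$ (via the structural condition \eqref{eq:structural} and the sum rule for viscosity sub/supersolutions, citing Armstrong's Proposition 2.14), and then simply invokes a ready-made Hopf lemma for such Pucci operators (Armstrong's Proposition 2.15) to get \eqref{eq:hopfomega}, followed by the same $\liminf$ superadditivity that you use at the end. You instead prove the Hopf-type lower bound from scratch with the classical exponential barrier $\phi = e^{-\alpha|x|^2} - e^{-\alpha R^2}$. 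The pleasant feature of your argument is that you sidestep any Hopf lemma \emph{for the difference}: by observing that $u + \e\phi$ is a subsolution of the original operator $-F = 0$ (via \eqref{eq:structural} and positive homogeneity of $\mathcal{M}^-$), you can compare it against $v$ using the paper's own Proposition~\ref{pr:ellipticComparison} to get $\e\phi \leq z$ on the annulus. This means the Pucci inequality for $z = v - u$ enters your argument only through the strong maximum principle (to get $z > 0$ on the half-ball $\partial B_{R/2}$), whereas the paper uses it twice, once for the sum rule and once inside the cited Hopf lemma. The price you pay is the explicit barrier computation; what you gain is an argument that only needs one black box (a strong maximum principle for Pucci-type supersolutions) rather than two. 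You are right to flag that deriving the Pucci inequality for the semicontinuous function $z = v - u$ is the nontrivial technical crux — it is precisely what the paper outsources to Armstrong's Proposition 2.14 — and a fully rigorous version of your argument would still need to cite or reprove that fact. One small sign remark: from the paper's derivation $z = v - u$ is a supersolution of $\mathcal{M}^+(D^2 z) - \de_1|Dz| - \de_0|z| = 0$, which of course implies your $\mathcal{M}^-$ version since $\mathcal{M}^- \leq \mathcal{M}^+$, so your statement is consistent but slightly weaker than the optimal one; this makes no difference to the strong maximum principle step.
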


\begin{proof}
1. For $\omega = u - v$, we first show that
\begin{align}
\label{eq:hopfomega}
\liminf_{h\searrow 0+} \frac{\omega(x_0) - \omega(x_0 - \xi h)}{h} > 0.
\end{align}
Indeed, \eqref{eq:structural} yields
\begin{align*}
-\mathcal{M}^-(M+N) - \de_1 \abs{p + q} - \de_0 \abs{z + w} &\leq F(-N, -q, -w) - F(M, p, z).
\end{align*}
Since $-v$ is a subsolution of $-G(D^2v, Dv, v) = 0$, where 
\begin{align*}
G(N, q, w) = -F(-N, -q, -w),
\end{align*}
we can use \cite{ATh}*{Proposition 2.14} to conclude that $\om = u + (- v)$ is a subsolution of  
\begin{align*}
-\mathcal{M}^-(D^2\om) - \de_1 \abs{D\om} - \de_0 \abs{\om} =0.
\end{align*}
Thus Hopf's lemma \cite{ATh}*{Proposition 2.15} applied to $\om$ at $x_0$ (recall that $\om(x_0) = 0$) yields \eqref{eq:hopfomega}.

2. Using the property of $\liminf$, we rewrite
\begin{align*}
\liminf_{h\searrow 0+} \frac{u(x_0) - u(x_0 - h \xi)}{h} &\geq \liminf_{h\searrow 0+} \frac{v(x_0) - v(x_0 - h \xi)}{h} \\&+ \liminf_{h\searrow 0+} \frac{\om(x_0) - \om(x_0 - h \xi)}{h},
\end{align*}
which together with \eqref{eq:hopfomega} implies \eqref{eq:hopfuv}.
\end{proof}

\begin{proposition}[Parabolic Harnack's inequality]
\label{pr:Harnack}
Suppose that $F$ satisfies the structural condition \eqref{eq:structural}, is proper \eqref{eq:proper}, and $F(0,0,0) = 0$. Let $t_1$, $t_2$ satisfy $0 < t_1 < t_2$. There exists a constant $c$, depending only on $\la$, $\Lambda$, $\de_1$, $\de_0$, $t_1, t_2$ and $n$, such that 
\begin{align*}
\sup_{D_\e (0, \e^2 t_1)} u \leq c \inf_{D_\e(0,\e^2 t_2)} u
\end{align*}
for any $\e \in (0, 1)$ and any nonnegative continuous solution $u$ of the parabolic problem on $Q_\e = B_\e \times (0, \e^2 t_2]$.
\end{proposition}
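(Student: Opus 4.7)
The plan is to reduce to the unit scale by parabolic rescaling and then invoke the standard parabolic Harnack inequality for nonnegative viscosity solutions of uniformly parabolic fully nonlinear equations with linear lower-order terms.

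First, I would introduce the rescaled function $v(x,t) := u(\e x, \e^2 t)$ defined on the unit parabolic cylinder $Q_1 = B_1^n \times (0, t_2]$. A direct chain-rule calculation shows that $v$ solves $v_t - \tilde F(D^2 v, Dv, v) = 0$, where
\begin{align*}
\tilde F(M,p,z) := \e^2 F(\e^{-2} M,\ \e^{-1} p,\ z).
\end{align*}
Using \eqref{eq:structural} together with the scaling identity $\e^2 \mathcal{M}^\pm(\e^{-2} M) = \mathcal{M}^\pm(M)$, one checks that $\tilde F$ satisfies the structural condition with the same ellipticity constants $\lambda,\Lambda$ and with lower-order constants $\tilde \delta_1 = \e \de_1 \leq \de_1$ and $\tilde \de_0 = \e^2 \de_0 \leq \de_0$. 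The properness \eqref{eq:proper} and normalization \eqref{eq:Fzero} are preserved by the rescaling. Crucially, the new constants are bounded uniformly in $\e \in (0,1)$, so the structural constants under which $v$ solves its equation do not depend on $\e$ in a way that could blow up.

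Next, I would invoke the classical parabolic Harnack inequality for nonnegative viscosity solutions of uniformly parabolic fully nonlinear equations with linear growth in $Du$ and $u$, as established by Wang \cite{WangI}. Applied to the nonnegative solution $v$ on $Q_1$, this yields a constant $c$ depending only on $\lambda, \Lambda, \de_1, \de_0, t_1, t_2$ and $n$ such that
\begin{align*}
\sup_{B_1^n \times \set{t_1}} v \leq c \inf_{B_1^n \times \set{t_2}} v.
\end{align*}
Undoing the rescaling via $v(x,t) = u(\e x, \e^2 t)$ translates the left-hand side to $\sup_{D_\e(0,\e^2 t_1)} u$ and the right-hand side to $\inf_{D_\e(0,\e^2 t_2)} u$, which is the stated inequality.

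The only substantive input is the unit-scale Harnack inequality itself, whose proof (via the Alexandrov--Bakelman--Pucci/Krylov--Tso estimate and a Calder\'on--Zygmund type covering) is standard for operators satisfying \eqref{eq:structural}--\eqref{eq:proper} and is the main technical content; the rescaling argument is a routine verification. The one point that deserves care is the monotone dependence of $\tilde \de_0, \tilde \de_1$ on $\e$: because they scale by positive powers of $\e \leq 1$ they shrink rather than grow, so the Harnack constant obtained at the unit scale serves uniformly for all $\e \in (0,1)$.
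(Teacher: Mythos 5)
Your proposal is correct and follows essentially the same route as the paper: parabolically rescale $u$ to the unit cylinder, observe that the rescaled problem satisfies the structural condition with lower-order constants $\e\de_1 \leq \de_1$ and $\e^2\de_0 \leq \de_0$ (hence uniformly bounded in $\e\in(0,1)$), and then invoke Wang's Harnack inequality. The paper phrases the intermediate step slightly differently — instead of writing the rescaled operator $\tilde F$ explicitly it verifies directly from the viscosity definition that $u^\e$ lies in the Pucci classes $\underline{\mathcal{S}}(\de_1,0,0)$ and $\overline{\mathcal{S}}(\de_1,\de_0,0)$, which is the form in which Wang's theorem is stated — but the mathematical content is identical.
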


\begin{proof}
The main goal is to show that $c$ does not depend on $\e$. Let $\e \in (0,1)$ and let $u \geq 0$ be a continuous solution of the parabolic problem on $Q_\e$.
Let us define the rescaled function $u^\e(x,t) = u(\e x, \e^2 t)$ defined on $Q_1$.
We will show that we can apply Harnack's inequality to $u^\e$.

Let $\vp$ be a smooth function and suppose that $u^\e - \vp$ has a strict maximum zero at $(x_0,t_0) \in Q_1$.
Then $u - \vp^{1/\e}$ has a strict maximum at $(\e x_0, \e^2 t_0)$, and by \eqref{eq:structural} and \eqref{eq:proper} at $(x_0,t_0)$,
\begin{align*}
\e^{-2} \pth{\vp_t - \mathcal{M}^+ (D^2 \vp) - \de_1 \abs{D\vp}} 
&\leq \e^{-2} \vp_t - \e^{-2} \mathcal{M}^+ (D^2 \vp) - \e^{-1} \de_1 \abs{D\vp}
\\&= \vp^{1/\e}_t - \mathcal{M}^+(D^2 \vp^{1/\e}) - \de_1 \abs{D\vp^{1/\e}} 
\\&\leq  \vp^{1/\e}_t - F(D^2 \vp^{1/\e}, D\vp^{1/\e}, \vp^{1/\e}) \leq 0.
\end{align*}
Therefore $u \in \underline{\mathcal{S}} (\de_1, 0, 0)$ as defined in \cite{WangI}.

Similarly, if $u - \vp$ has a strict minimum zero at $(x_0,t_0)$, then 
\begin{align*}
\e^{-2}&\pth{\vp_t - \mathcal{M}^-(D^2\vp) + \de_1 \abs{D\vp} + \de_0 \abs{\vp}} 
\\&\geq \e^{-2} \vp_t - \e^{-2}\mathcal{M}^-(D^2\vp) + \e^{-1}\de_1 \abs{D\vp} + \de_0 \abs{\vp}
\\&\geq \vp^{1/\e}_t - F(D^2 \vp^{1/\e}, D\vp^{1/\e}, \vp^{1/\e}) 
\geq 0.
\end{align*}
i.e $u \in \overline{\mathcal{S}} (\de_1, \de_0, 0)$.

Therefore we can apply the Harnack's inequality \cite{WangI}*{Theorem 4.18} to $u^\e$, since its proof relies on the weak Harnack's inequality \cite{WangI}*{Corollary 4.14} and the local maximum estimate \cite{WangI}*{Theorem 4.16}, and the function $u^\e$ satisfies the hypotheses of both.
\end{proof}

\section{Construction of barriers}
\label{ap:construction-of-barriers}

In this section we construct necessary radially symmetric barriers for \eqref{RP} used throughout the paper, first for fully nonlinear operators and then for the divergence-type operators.

\subsection{Fully nonlinear operator}
\label{ap:construction-of-barriers-fully-nonlinear}

Given a radially symmetric function $\vp$ of the form
\begin{align*}
\vp(x) = \psi(\abs{x}),
\end{align*}
its Hessian can be expressed as
\begin{align*}
D^2\vp(x) = \frac{1}{\abs{x}}\pth{I - \frac{x \otimes x}{\abs{x}^2}}\psi'(\abs{x}) + \frac{x \otimes x}{\abs{x}^2} \psi''(\abs{x}),
\end{align*}
where $(x \otimes x)_{ij} = x_ix_j$.

Note that 
\begin{align*}
(D^2 \vp(x)) (x) &= \psi''(\abs{x})x,\\
(D^2 \vp(x)) (y) &= \frac{1}{\abs{x}} \psi'(\abs{x}) y & &\text{for all } y, y \perp x
\end{align*}
Therefore the eigenvalues of $D^2 \vp(x)$ are
\begin{align}
\label{eq:radialEigenvalues}
e_1 = \ldots = e_{n-1} = \frac{1}{\abs{x}}\psi'(\abs{x}), \quad e_n = \psi''(\abs{x}),
\end{align}
and the eigenvector for $e_n$ is $\frac{x}{\abs{x}}$.
\begin{example}
Indeed, we can express the laplacian of a radially symmetric function as
\begin{align*}
\lap \vp(x) = \trace D^2\vp(x) = \frac{n -1}{\abs{x}} \psi'(\abs{x}) + \psi''(\abs{x}).
\end{align*}
\end{example}

\begin{example}
If $\psi(\rho)$ is increasing and concave then the Pucci extremal operators can be expressed as
\begin{align*}
\mathcal{M}^-(D^2\vp) &= \lambda (n-1) \frac{\vp'(\abs{x})}{\abs{x}} + \Lambda \vp''(\abs{x})\\
\mathcal{M}^+(D^2\vp) &= \Lambda (n-1) \frac{\vp'(\abs{x})}{\abs{x}} + \lambda \vp''(\abs{x}).
\end{align*}
Of course, if $\psi(\rho)$ is decreasing and convex, we simply swap $\mathcal{M}^+$ and $\mathcal{M}^-$.
\end{example}

Now we state two propositions that guarantee the existence of barriers used in the proof of the comparison theorem in section~\ref{sec:comparison}.

\begin{proposition}[Radial solutions]
\label{pr:radialSolution}
Let $F$ in \eqref{RP} satisfy \eqref{eq:structural} with given constants $\la$, $\Lambda$, $\de_1$, $\de_0$, and \eqref{eq:Fzero}. There is a constant $\rho_c = \rho_c(\la, \Lambda, \de_1, n)$ such that for any $\rho_0 \in (0, \rho_c]$, $\hat a > 0 > \hat b$, $\hat a + \hat b > 0$, and $\hat \om \geq 0$, there exists $\e > 0$ and a radially symmetric function $\hat \vp(x,t)$, decreasing in $\abs{x}$, and the following holds:
\begin{enumerate}
\romanlist
\item $\mu \hat\vp$ (resp. $-\mu \hat\vp$) is a classical subsolution (resp. supersolution) of \eqref{RP} for any $\mu > 0$ on 
\begin{align*}
K = \set{x : \rho_0 - \e < \abs{x} < \rho_0 + \e} \times (-\e, \e),
\end{align*}
\item $\abs{D\hat\vp^+} = \hat a$ and $\abs{D \hat\vp^-} = -\hat b$ at $\abs{x} = \rho_0$, $t = 0$,
\item the zero set moves with velocity $\hat\om$, i.e.
\begin{align*}
\set{x : \hat\vp(x,t) = 0} &= \set{x : \abs{x} = \rho_0 + \hat \om t} & & \text{for } t \in (-\e, \e).
\end{align*}
\end{enumerate}
\end{proposition}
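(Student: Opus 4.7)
The plan is to seek $\hat\vp$ as a function of the signed distance to the moving sphere,
\begin{align*}
\hat\vp(x,t) = \psi\pth{\abs{x} - \rho_0 - \hat\om t},
\end{align*}
which places the zero set exactly on $\set{\abs{x} = \rho_0 + \hat\om t}$ and gives (iii) for free. I would take $\psi$ to be piecewise quadratic,
\begin{align*}
\psi(s) = \begin{cases} -\hat a\, s + A_+ s^2, & s \leq 0,\\ \hat b\, s + A_- s^2, & s \geq 0, \end{cases}
\end{align*}
with $A_+, A_- > 0$ to be fixed later. By construction $\psi(0)=0$, $\psi$ is continuous and strictly decreasing through $s=0$, positive on a left neighborhood of $0$ and negative on a right neighborhood, and its one-sided derivatives satisfy $\abs{\psi'(0^-)}=\hat a$, $\abs{\psi'(0^+)}=-\hat b$, i.e. (ii).

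Next I would verify (i). Positive homogeneity $\mathcal{M}^\pm(\mu M) = \mu\mathcal{M}^\pm(M)$ of the Pucci operators, together with the structural bound \eqref{eq:structural} and \eqref{eq:Fzero}, reduces the requirement that $\mu\hat\vp$ be a strict classical subsolution and $-\mu\hat\vp$ a strict classical supersolution for every $\mu>0$ to four $\mu$-free inequalities on $\hat\vp$: two involving $\mathcal{M}^-$ (parabolic on $\set{\hat\vp>0}$ and elliptic on $\set{\hat\vp<0}$), and two involving $\mathcal{M}^+$. Inserting the radial Hessian formula \eqref{eq:radialEigenvalues} (eigenvalues $\psi'(s)/r$ with multiplicity $n-1$ and the radial eigenvalue $\psi''(s)=2A_\pm$), each of these becomes, near $s=0$, a strict linear inequality in $A_\pm$ with positive coefficient $2\la$ or $2\Lambda$, since $\psi''>0$ makes the radial contribution favorable. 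Hence all four can be enforced at once by choosing $A_\pm$ above an explicit threshold depending on $\hat a, \hat b, \hat\om, \rho_0$ and the structural constants. The flux-matching condition (iv) in Definition~\ref{de:classicalSubsolution}, spelled out for $\mu\hat\vp$ and for $-\mu\hat\vp$, reduces in both cases to the strict inequality $\hat a + \hat b > 0$, which is the remaining hypothesis.

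With $A_\pm$ fixed, continuity of $\psi'$, $\psi''$ and $r\mapsto 1/r$ lets me pick a single $\e > 0$ for which the four strict inequalities persist on the tubular neighborhood $K$, $\psi$ keeps the correct sign on $(-\e,0)$ and $(0,\e)$, and $K$ lies in a thin annulus around $\abs{x}=\rho_0$. The main obstacle is extracting the universal $\rho_c = \rho_c(\la, \Lambda, \de_1, n)$ claimed in the statement: the dangerous term is the tangential Pucci contribution $\Lambda(n-1)\psi'(s)/r$, which forces the threshold on $A_\pm$ to be of order $(\hat a + \abs{\hat b})/\rho_0$ and so blows up as $\rho_0 \to 0$. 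I would deal with it by rescaling the profile, so that after dividing the constraint on $A_\pm$ by $\max(\hat a, -\hat b)$ it becomes a purely geometric inequality on $\rho_0$ depending only on $\la, \Lambda, \de_1, n$; the remaining $\de_0\abs{\hat\vp}$ and $\hat\om$ terms vanish at $s=0$ and can be absorbed by shrinking $\e$, which is exactly why $\rho_c$ and the final statement do not have to reference $\de_0$ or $\hat\om$.
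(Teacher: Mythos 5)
Your construction is correct, and it is a genuinely different (and in some respects cleaner) route than the paper's. The paper builds a profile of the form $\varphi = ct + \alpha(\rho^{-\gamma} - \rho_0^{-\gamma}) + \beta(\rho^2 - \rho_0^2)$, tunes $\alpha,\beta,c,\gamma$ to hit the prescribed one-sided gradient and normal velocity, and then must replace the term $ct$ by a nonlinear $\tau(t)$ to force the zero set onto the exact sphere $\abs{x}=\rho_0+\hat\om t$ (and it implicitly glues two such profiles to get the two one-sided slopes $\hat a$, $-\hat b$). Your ansatz $\hat\vp=\psi(\abs{x}-\rho_0-\hat\om t)$ with a piecewise-quadratic $\psi$ hardwires the moving sphere and the two one-sided gradients from the start, so (ii) and (iii) are immediate and only the PDE inequalities and the flux condition remain. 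Both approaches then hinge on the same mechanism: $\psi''>0$ plus the radial eigenvalue formula, positive homogeneity of the Pucci operators, and $F(0,0,0)=0$.

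Two small remarks. First, since $\psi''=2A_\pm>0$ is always the \emph{positive} eigenvalue of $D^2\hat\vp$, it is multiplied by $\lambda$ in $\mathcal{M}^-(D^2\hat\vp)$; and after using $\mathcal{M}^+(-M)=-\mathcal{M}^-(M)$ for the $-\mu\hat\vp$ side, the relevant coefficient of $A_\pm$ is again $2\lambda$. So it is always $2\lambda$, never $2\Lambda$ -- a harmless slip but worth correcting. Second, the final paragraph chases a phantom obstacle. In your construction $A_\pm$ is a free parameter of the barrier, allowed to depend on $\rho_0$, $\hat a$, $\hat b$, $\hat\om$; the fact that the required lower bound on $A_\pm$ grows like $\Lambda(n-1)(\hat a+\abs{\hat b})/(2\lambda\rho_0)$ as $\rho_0\to 0$ causes no difficulty, because nothing in the proposition constrains $A_\pm$. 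Consequently your barrier exists for \emph{every} $\rho_0>0$, the restriction $\rho_0\leq\rho_c$ is vacuous in your argument (simply set $\rho_c=1$), and the ``rescaling'' you propose is unnecessary -- dividing the threshold by $\max(\hat a,-\hat b)$ still leaves $\rho_0$ in it and still imposes nothing, since $A_\pm$ may be as large as you wish. The $\rho_c$ restriction is an artifact of the paper's particular power-law profile, not an intrinsic feature of the statement, so there is nothing to ``extract.'' Also, $\hat\vp_t=-\hat\om\psi'$ does not vanish at $s=0$; it equals $\hat\om\hat a$ (resp.\ $-\hat\om\hat b$) there, but this is already absorbed into the threshold for $A_\pm$, so again no issue.
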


\begin{proof}
To construct an increasing subsolution of the parabolic problem, we consider  the radially symmetric positive function
\begin{align*}
\vp(x,t) = ct + \psi(\rho) := c t + \al \pth{\rho^{-\ga} - \rho_0^{-\ga}} + \be \pth{\rho^2 - \rho_0^2}
\end{align*}
for some $\al, \be, c, \ga$ positive. $\al$ and $\be$ are chosen so that $\vp$ is decreasing in $\rho$. A straightforward calculation yields
\begin{align*}
\psi'' &= \al\ga(\ga+1) \rho^{-\ga - 2} + 2 \be > 0, & \psi' &= \rho\pth{-\al \ga \rho^{-\ga - 2} + 2 \be} < 0
\end{align*}

Then using \eqref{eq:radialEigenvalues} and \eqref{eq:Pucci}, we have at $\rho = \rho_0$ and $t = 0$, 
\begin{align}
\label{eq:constrSubs}
\begin{aligned}
\vp_t - F(D^2 \vp, D\vp, \vp) &\leq c - \mathcal{M}^-(D^2\vp) + \de_1\abs{D\vp}  \quad\pth{+ \de_0\abs{\vp} - F(0,0,0)} \\
 &= c - \al(\la(\ga + 1) - (n-1) \Lambda - \de_1 \rho_0) \ga \rho_0^{-\ga-2} \\
&\quad- 2 \be(\la + (n-1) \Lambda - \de_1 \rho_0)\\
& =: c - \al \tau_1 - \be \tau_2.
\end{aligned}
\end{align}
The term $\tau_2$ is positive for any $\rho_0$ that satisfy
\begin{align}
\label{eq:rho0restr}
0 < \rho_0 \leq \frac{\la + (n-1) \Lambda}{2\de_1} =: \rho_c.
\end{align}
 Furthermore, for $\rho_0$ in this range, one can choose $\ga$ large enough depending only on $\la, \Lambda, \de_1$ and $n$ so that $\tau_1$ is also positive. If $c < \be \tau_2$ as well, we observe that due to the continuity, $\vp$ with such parameters is a strict subsolution of the parabolic problem (and also the elliptic problem) in a neighborhood of $\set{(x,0): \abs{x} = \rho_0}$.

\vspace{10pt}

Given $a < 0$, $\om > 0$ and $\rho_0$ satisfying \eqref{eq:rho0restr}, we can choose parameters $\al, \be$ and $c$ in such a way that $\abs{D \vp} = \abs{a}$, $D\vp \cdot \nu = a$ and $V_\nu = \om$ at $t = 0$ and $\abs{x} = \rho_0$. Indeed, since $\vp$ is smooth, decreasing in $\rho$ and increasing in $t$, we can express the normal velocity $V_\nu$ on the zero level set $\set{\abs{x} = \rho_0}$ at $t = 0$ as
\begin{align*}
V_\nu = \frac{\vp_t}{\abs{D\vp}} = \frac{c}{2 \be \rho_0 - \al \ga \rho_0^{-\ga - 1}}.
\end{align*}
The conditions $V_\nu = \om$ and $\abs{D\vp} = \abs{a}$ yield $c = \om \abs{a}$. Then we find $\be$ large enough so that $c < \be \tau_2$ and $a < 2 \be \rho_0$ and finally we solve for $\al>0$ from $\abs{a} =\abs{D \vp} = 2 \be \rho_0 - \al \ga \rho_0^{-\ga - 1}$. Therefore \eqref{eq:constrSubs} is strictly negative on $\set{(x,0): \abs{x} = \rho_0}$, and continuity yields that $\vp$ is in fact a strict subsolution of the parabolic problem on a neighborhood of this set. In fact, due to smoothness of $\psi$, it is possible to replace the term $c t$ by the unique smooth increasing function of $t$, $\tau(t)$, with $\tau'(0) = c$, which guarantees that $\set{x: \vp(x, t) = 0} = \set{x : \abs{x} = \rho + \om t}$ for $t$ in a neighborhood of $t = 0$, while at the same time $\vp$ is still a strict supersolution. Finally, since the right-hand side in \eqref{eq:constrSubs} applied to $\mu \vp$ is 1-homogeneous in $\mu > 0$, $\mu \vp$ is also a subsolution on the same neighborhood.

\vspace{10pt}

The above construction also provides a subsolution of the elliptic problem for any $c \in \R$ and  $\be \geq 0$. Moreover, function $\tilde \vp = - \vp$ is a decreasing supersolution of the parabolic problem or a supersolution of the elliptic problem since $\mathcal{M}^-(M) = - \mathcal{M}^+(-M)$.
\end{proof}

The free boundary of a solution of \eqref{RP} can propagate arbitrary fast in some situations. In particular, if a supersolution is positive on a boundery of an open set $G$, then it will immediately become positive in $G$, as observed in the proof of Lemma~\ref{le:trunk}. We show this using a barrier constructed in the following lemma in the form of the fundamental solution for the heat equation in one dimension..

\begin{lemma}\label{le:arbitrarySpeedBarrier}
Suppose that $F$ satisfies \eqref{eq:structural} with constants $\la, \Lambda, \de_1, \de_0$. Let $G$ be a bounded open set and let $c, \de > 0$ be given positive constants. Then there exists a classical strict subsolution $\vp$ of \eqref{RP} on $\cl \Sigma$, $\Sigma := G \times (0, \de]$, such that $\vp < c$ on $\cl \Sigma$, $\vp(\cdot, 0) < 0$ on $\cl G$ and $\vp(\cdot, \de) > 0$ on $\cl G$.
\end{lemma}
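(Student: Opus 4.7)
My plan is to build $\vp$ from a shifted one-dimensional heat kernel, as hinted in the remark preceding the lemma. Since $G$ is bounded, after translation assume $G \subset \set{\abs{x_1} < R}$. Fix $K \in (0, \la)$, let $\Phi(y,s) := s^{-1/2} \exp\!\bigl(-y^2/(4Ks)\bigr)$ so that $\Phi_s = K\Phi_{yy}$, and take as ansatz
\[
\psi(x,t) := \al\, \Phi(x_1 + L, t + \tau) - \be,
\]
for parameters $\al,\be,\tau,L > 0$ to be fixed in that order. First I choose $L \gg R$ so that the argument $y := x_1 + L \in [L-R,L+R]$ satisfies $y^2 \gg 2K(\de + \tau) \geq 2Ks$ on $\cl\Sigma$, giving $\Phi_{yy} > 0$ and hence $\mathcal{M}^-(D^2\psi) = \la\al\Phi_{yy}$. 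The structural bound \eqref{eq:structural} then yields, in the positive phase,
\[
\psi_t - F(D^2\psi, D\psi, \psi) \leq \al(K-\la)\Phi_{yy} + \de_1\al\abs{\Phi_y} + \de_0\abs{\psi},
\]
with a parallel bound for $-F(D^2\psi,D\psi,\psi)$ in the negative phase. Since $\Phi_{yy}/\Phi = (y^2 - 2Ks)/(4K^2 s^2)$ grows quadratically in $y$ while $\abs{\Phi_y}/\Phi = y/(2Ks)$ grows only linearly, taking $L$ sufficiently large makes the negative leading term $\al(K-\la)\Phi_{yy}$ dominate every lower-order contribution, yielding the strict inequality (iii) of Definition~\ref{de:classicalSubsolution} in both phases.

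Next I fix the remaining parameters to enforce the sign change $\psi(\cdot, 0) < 0 < \psi(\cdot, \de)$ on $\cl G$. It suffices to place $\be/\al$ in the open interval $\bigl(\max_{\cl G}\Phi(\cdot + L, \tau),\ \min_{\cl G}\Phi(\cdot + L, \de + \tau)\bigr)$, and its nonemptiness reduces after elementary manipulation to
\[
\frac{(L-R)^2}{4K\tau} - \frac{(L+R)^2}{4K(\de+\tau)} > \tfrac{1}{2}\log\frac{\de+\tau}{\tau},
\]
which holds for all sufficiently small $\tau$ since the left side blows up like $L^2/(4K\tau)$ while the right side grows only logarithmically. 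This gives a valid choice of $\al,\be,\tau$ with $\psi$ having the correct boundary values and, by the previous paragraph, the strict phase-wise subsolution inequalities.

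The function $\psi$ so obtained is smooth on $\cl\Sigma$, so $\abs{D\psi^+} = \abs{D\psi^-}$ on $\set{\psi = 0}$ and condition (iv) of Definition~\ref{de:classicalSubsolution} would fail. I repair this by an asymmetric rescaling: for some $A > 1$, set
\[
\vp := A\psi_+ - \psi_- = \begin{cases} A\psi & \psi \geq 0,\\ \psi & \psi \leq 0.\end{cases}
\]
Then $\vp$ is smooth on $\cl\set{\vp > 0}$ and on $\cl\set{\vp < 0}$ separately; positive scaling in \eqref{eq:structural} preserves the strict phase-wise inequality (iii); and at the zero set $\abs{D\vp^+} = A\abs{D\psi} > \abs{D\psi} = \abs{D\vp^-}$, so (iv) holds strictly. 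A final multiplication $\vp \mapsto \mu\vp$ with $\mu \in (0,c/\sup_{\cl\Sigma}\vp)$ enforces $\vp < c$ without spoiling anything else. The main technical obstacle is the strict inequality in the negative phase: there $\de_0\abs{\psi}$ is bounded below by a multiple of $\be$ even in regions where $\al\Phi_{yy}$ may degenerate as $\Phi \to 0$. The crucial identity $\al\Phi_{yy} = (\psi + \be)\,(y^2 - 2Ks)/(4K^2 s^2)$, combined with the fact that $(y^2 - 2Ks)/(4K^2 s^2) \geq L^2/(8K^2 (\de+\tau)^2)$ on $\cl\Sigma$ for $L \gg R$, is what makes the balance possible, so that the coupling of $L$, $\tau$, $\al$, $\be$ (taking $L$ large enough relative to $\la,\de_0,\de_1$ and $\tau$ calibrated to $L$ via the inequality above) can be arranged to close all the estimates simultaneously.
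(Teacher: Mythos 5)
Your construction (shifted one-dimensional heat kernel, asymmetric rescaling $A\psi_+ - \psi_-$ to make (iv) strict, and a final small multiple to control the magnitude) is essentially the same as the paper's, with a cosmetic difference: the paper places $G$ in $\set{d < x_1 < 2d}$ and sends the diffusivity $k \to 0$, while you fix $K < \la$ and translate the kernel by $L \to \infty$. Both parameterizations achieve the same effect of making the $\la\Phi_{yy}$ term dominate the first-order terms. The asymmetric scaling is also the same (the paper uses $\vp = \al\phi_+ - \tfrac{\al}{2}\phi_-$).

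However, your closing paragraph identifies the right obstacle but gives an incorrect reason for why it is overcome. In the negative phase the bound $\de_0\abs{\psi}$ is of size $\approx \de_0\be$, and you claim this is controlled because $\Phi_{yy}/\Phi = (y^2-2Ks)/(4K^2s^2)$ grows like $L^2/\tau^2$. That is not enough: at the worst point $(y,s) = (L+R,\tau)$, where $\Phi$ attains its minimum, the sign constraint $\be > \al\Phi(L-R,\tau)$ forces
\[
\frac{\be}{\al\Phi(L+R,\tau)} > \exp\!\pth{\frac{LR}{K\tau}},
\]
which grows exponentially in $L/\tau$ — the same quantity that must be large to make $\Phi_{yy}/\Phi$ dominate $\abs{\Phi_y}/\Phi$. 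The polynomial $\la L^2/(4K^2\tau^2)$ can never beat $\de_0\exp(LR/(K\tau))$ in that regime, so the estimate, as you describe it, does not close. The correct (and simple) fix is to use the properness condition \eqref{eq:proper} together with \eqref{eq:Fzero}: in the negative phase $\psi \leq 0$ gives $F(0,0,\psi) \geq F(0,0,0) = 0$, and applying \eqref{eq:structural} against $(0,0,\psi)$ rather than $(0,0,0)$ yields
\[
F(D^2\psi, D\psi, \psi) \geq \mathcal{M}^-(D^2\psi) - \de_1\abs{D\psi},
\]
with no $\de_0$ term at all. The negative-phase inequality then reduces to $\la q_1 > \de_1 q_2$, which is exactly what your choice of $L$ (or the paper's choice of $k$) ensures. (The paper's own displayed estimate, which silently replaces $\de_0\abs{\psi-\e}$ by $\de_0\psi$, has the same lacuna; properness is the missing ingredient in both arguments.)
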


\begin{proof}
We can assume that $G \neq \emptyset$. Let us define $d := \diam G$. By translating $G$ if necessary, we can also assume that $G \subset \set{x : d < x_1 < 2 d}$.  We shall consider $\phi$ of the form
\begin{align*}
\phi(x,t) &= \psi(x_1, t) - \e,
&
 \psi(x_1,t) &:= t^{-1/2} \exp \bra{-\frac{x_1^2}{4kt}}
\end{align*}
for suitable constants $k, \e > 0$ chosen below. 

First note that $\psi(\cdot, t)$ is convex for $x_1^2 > 2kt$. Then \eqref{eq:structural} and \eqref{eq:Pucci} yield
\begin{align*}
\phi_t - F(D^2 \phi, D\phi, \phi) &\leq \phi_t - \mathcal{M}^-(D^2\phi) + \de_1 \abs{D\phi} + \de_0 \abs{\phi}\\ \\
&= \al\pth{\psi_t - \la \psi_{x_1x_1} + \de_1 \abs{\psi_{x_1}} + \de_0 \abs{ \psi -\e}}\\ \\
&\leq \al \bra{\frac{(x_1^2 - 2kt)(k - \la)}{4k^2t^2} +\frac{\de_1\abs{x_1}}{2kt} + \de_0}\psi.
\end{align*}
By choosing $k$, $0<k \ll \min(\frac{d^2}{4\de}, \la)$, we can guarantee that the bracketed quantity is negative for all $t \in (0,2\de)$, $x_1 \in [d, 2d]$.

Observe that $\psi(\cdot, t)$ is decreasing when $x_1 > 0$. Moreover, for the fixed $k$, there are $\eta \in (0,\de)$ small and $\e > 0$ such that
\begin{align*}
\eta^{-1/2} \exp \bra{- \frac{d^2}{4k\eta}} < \e < (\de + \eta)^{-1/2} \exp \bra{- \frac{d^2}{k(\de + \eta)}}
\end{align*}
since the left-hand side goes to 0 and the right-hand side is bounded from below as $\eta \to 0$. With this choice of $k, \eta $ and $\e$ we see that $\phi(\cdot, \eta) <0$ in $\cl{G} \subset \set{x: d \leq x_1 \leq 2d}$ and $\phi(\cdot, \de + \eta) > 0$ on $\cl{G}$.

Now it is easy to verify that the function
\begin{align*}
\vp(x,t) = \al \phi(x, t + \eta)_+ - \frac{\al}{2} \phi(x,t+\eta)_-,
\end{align*}
where $\al > 0$ is sufficiently small so that $\vp < c$ on $\cl{\Sigma}$, has the properties asserted in the statement of the lemma.
\end{proof}

\subsection{Divergence-form operator}
\label{ap:construction-of-barriers-divergence-form}

Next we consider the operator
\begin{align}
\label{divFormProblem}
b(u)_t - \nabla \cdot \pth{\Psi(b(u)) \nabla u} = 0
\end{align}
where $b$ and $\Psi$ satisfy the assumptions from the introduction. Since this operator is not positively homogeneous of degree one in $u$ as the Pucci operators, it is necessary to construct barriers analogous to Proposition~\ref{pr:radialSolution} in two steps. In particular, the barrier of Proposition~\ref{pr:barrierDivLarge} is used in the proof of Lemma~\ref{le:ZWzeroAtContact}. We do not present a modified version of Lemma~\ref{le:arbitrarySpeedBarrier} since the barrier for problem \eqref{divFormProblem} can be constructed in a similar way.

\begin{proposition}
\label{pr:barrierDivLarge}
For given $\om \geq 0$, $\rho_0 > 0$ and $M > 0$ there exists $\eta_0 > 0$ such that for all $\eta \in (0, \eta_0)$ there is a strict classical supersolution $\vp$ of the divergence form problem \eqref{divFormProblem} on 
\begin{align*}
\Sigma := \set{(x,t) : \rho_0+\om t \leq \abs{x} \leq \rho_0 + \om t +\eta,\ t\in(-\tau, \infty)}
\end{align*}
with $\tau = \frac{\rho_0}{2\om}$ if $\om > 0$ and $\tau = \infty$ if $\om = 0$. Furthermore $\vp(x,t) = 0$ for $\abs{x} = \rho_0 + \om t$ and $\vp(x,t) > 2M$ for $\abs{x} = \rho_0 + \om t + \eta$.
\end{proposition}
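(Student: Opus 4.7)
The plan is to seek the barrier as a radial traveling wave
\[
\varphi(x,t) = \psi(r - \omega t - \rho_0), \qquad r = |x|,
\]
with $\psi : [0,\eta] \to [0,\infty)$ smooth and increasing, satisfying $\psi(0) = 0$ and $\psi(\eta) > 2M$. This forces $\varphi = 0$ on the inner boundary $\{r = \rho_0 + \omega t\}$ and $\varphi > 2M$ on the outer boundary, and makes the supersolution property a one-dimensional ODE problem. The key tool is the Kirchhoff transformation $\Phi(v) := \int_0^v \Psi(s)\,ds$ (so $\Phi' = \Psi > 0$ and $\Phi \in C^2$): working with the profile $\xi(s) := \Phi(\psi(s))$ turns the divergence term into a Laplacian, because $\nabla\!\cdot\!(\Psi(\varphi)\nabla\varphi) = \Delta \Phi(\varphi)$ on $\{\varphi > 0\}$.

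Substituting the ansatz, a direct radial computation reduces the strict supersolution inequality $\varphi_t - \nabla\!\cdot\!(\Psi(\varphi)\nabla\varphi) > 0$ (in the positive phase) to
\[
\xi''(s) + \Bigl[\,\tfrac{n-1}{r} + \tfrac{\omega}{\Psi(\Phi^{-1}(\xi(s)))}\,\Bigr]\xi'(s) < 0, \qquad \xi' > 0.
\]
I would then take the explicit profile
\[
\xi(s) = C\,\frac{1 - e^{-Ks}}{1 - e^{-K\eta}}, \qquad C := \Phi(2M) + 1,
\]
which is independent of $\eta$ in its target value and satisfies $\xi''/\xi' \equiv -K$. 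The ODE inequality therefore collapses to the algebraic condition
\[
K > \tfrac{n-1}{r} + \tfrac{\omega}{\Psi(\psi)}.
\]
Since $r \geq \rho_0 + \omega t \geq \rho_0/2$ on $\Sigma$ (using $t > -\tau = -\rho_0/(2\omega)$ when $\omega > 0$, and $r \geq \rho_0$ when $\omega = 0$), and since $\psi$ ranges in the fixed compact interval $[0, \Phi^{-1}(C)]$ on which $\Psi \geq \Psi_{\min} > 0$ by continuity and positivity, it suffices to fix
\[
K := \tfrac{2(n-1)}{\rho_0} + \tfrac{\omega}{\Psi_{\min}} + 1,
\]
which does not depend on $\eta$.

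With this $K$ chosen, set $\eta_0 := 1$ (say) and, for any $\eta \in (0,\eta_0)$, define $\varphi$ by the ansatz. Regularity of $\varphi$ in $\{\varphi > 0\}$ follows from $\Psi \in C^1$ via $\psi = \Phi^{-1}\circ \xi$, and the boundary values $\varphi|_{r = \rho_0 + \omega t} = 0$ and $\varphi|_{r = \rho_0 + \omega t + \eta} = \Phi^{-1}(C) > 2M$ are built in. The main obstacle is precisely the uniformity in $\eta$: a naive ansatz (linear, or power-type $\psi$) leaves residual $\eta$-dependent terms in the leading coefficient, forcing the thresholds on $K$ to blow up with $\eta \to 0$. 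The Kirchhoff transform removes the quadratic $\Psi'(\psi)(\psi')^2$ term and, together with the exponential choice of $\xi$ for which $\xi''/\xi'$ is a free constant, decouples $K$ from $\eta$ and yields the desired uniform construction.
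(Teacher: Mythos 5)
Your Kirchhoff-transform approach is a genuinely different and arguably cleaner route than the paper's. The paper works directly with the original profile $\psi$ and chooses a logarithm, $\psi(s)=k^{-1}\log(aks+1)$, tuning $a$ and $k$ to both satisfy the supersolution inequality and force $\psi(\eta)\in(2M,3M)$. You instead transform to $\xi=\Phi(\psi)$, in which the radial divergence term becomes a Laplacian, and then pick an exponential profile whose logarithmic derivative $\xi''/\xi'=-K$ is a free constant; the supersolution inequality collapses to a uniform algebraic bound on $K$, and the outer boundary value $\xi(\eta)=C$ is literally constant, rather than tuned. Both approaches achieve the key point -- uniformity of the outer value as $\eta\to 0$ -- but yours makes the decoupling of $K$ from $\eta$ completely transparent, while the paper's choice keeps everything explicit in the original $\psi$ variable.

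There is one genuine gap, though. Proposition~\ref{pr:barrierDivLarge} is stated for the general $b$ from the introduction (Lipschitz, increasing, $b'\geq c>0$ in the positive phase), not for $b(u)=u_+$; that is why the paper's constants $k_1$ and $k_2$ carry $b'(s)$ and $\Psi(b(s))$ factors. Your computation silently assumes $b(s)=s$ in the positive phase at two places: (i) you set $\Phi(v)=\int_0^v\Psi(s)\,ds$, so $\Delta\Phi(\varphi)=\nabla\cdot(\Psi(\varphi)\nabla\varphi)$ rather than the required $\nabla\cdot(\Psi(b(\varphi))\nabla\varphi)$, and (ii) you wrote $\varphi_t$ rather than $b(\varphi)_t=b'(\varphi)\varphi_t$, so the time term is off by a factor $b'(\varphi)$. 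The fix is mechanical but must be made: take $\Phi(v):=\int_0^v\Psi(b(s))\,ds$ so that $\Phi'=\Psi\circ b$ and $\Delta\Phi(\varphi)=\nabla\cdot(\Psi(b(\varphi))\nabla\varphi)$; then the ODE inequality becomes
\begin{align*}
\xi''(s)+\Bigl[\tfrac{n-1}{r}+\tfrac{\omega\,b'(\psi(s))}{\Psi(b(\psi(s)))}\Bigr]\xi'(s)<0,
\end{align*}
and you must bound $b'$ from above (by the Lipschitz constant of $b$) on the compact range $[0,\Phi^{-1}(C)]$ when choosing $K$. With that correction, the rest of your argument goes through unchanged.
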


\begin{proof}
Let us set
\begin{align}
\label{dfbk1k2}
k_1 &:= \max_{s\in[0,3M]}\frac{\om b'(s)}{\Psi(b(s))} + \frac{2(n-1)}{\rho_0} > 0, & k_2 &:= \max_{s \in [0,3M]} \frac{\abs{\Psi'(b(s))}b'(s)}{\Psi(b(s))}.
\end{align}
Set $\eta_0 := k_1^{-1}$. Now we can fix any $\eta \in (0, \eta_0)$, and for such $\eta$ choose $k > k_2$ large enough so that
\begin{equation}
\label{dfbk}
\frac{k - k_2}{k k_1} - \frac{1}{k} >\eta \quad\hbox{and}\quad k^{-1}\log \frac{k - k_2}{k_1} < 2M.
\end{equation}
Finally, we choose $a > 1$ large enough such that 
\begin{align*}
2M < k^{-1}\log (a k \eta + 1) < 3M.
\end{align*}
Note that this is always possible since $ \frac{k - k_2}{k k_1} - \frac{1}{k} \to \eta_0 > \eta$ and $k^{-1}\log \frac{k - k_2}{k_1} \to 0$ as $k \to \infty$, and $k \eta + 1 < \frac{k - k_2}{k_1}$.

We will show that the function $\vp$ of the form
\begin{equation}
\label{dfbpsi}
\vp(x,t) = \psi(\abs{x} - \om t - \rho_0), \qquad \psi(s) = \frac{\log (aks + 1)}{k},
\end{equation}
is the sought supersolution of \eqref{divFormProblem}.

Indeed, recall that $\abs{x} > \rho_0/2$ and $\vp(x,t) \in [0,3M)$ in $\Sigma$. With the help of $k_1$, $k_2$ defined in \eqref{dfbk1k2}, we can estimate
\begin{align*}
\frac{1}{\Psi(b(\vp))} &\pth{b(\vp)_t - \nabla \cdot \pth{\Psi(b(\vp)) \nabla \vp}}\\ \\
&=-\frac{\om b'(\vp)}{\Psi(b(\vp))} \psi' - \frac{\Psi'(b(\vp))b'(\vp)}{\Psi(b(\vp))} \psi'^2 - \frac{n-1}{\abs{x}} \psi' - \psi''\\ \\
&\geq -\psi'(k_1 + k_2 \psi') - \psi''.
\end{align*}
A straightforward differentiation of $\psi$ defined in \eqref{dfbpsi} then yields 
\begin{align*}
-\psi'(s)&(k_1 + k_2 \psi'(s)) - \psi''(s) \\ 
&= -\frac{a}{aks + 1} \pth{k_1 + k_2 \frac{a}{aks+1}} + \frac{a^2k}{(aks+1)^2}\\ 
&=\frac{a}{aks+1} \pth{\frac{a (k - k_2)}{aks+1} - k_1}>0,
\end{align*}
for all $s = \abs{x} - \om t - \rho_0 \in (0, \eta)$ due to the choice of $k$ in \eqref{dfbk} and $a > 1$.
\end{proof}

The barrier constructed in Proposition~\ref{pr:barrierDivLarge} is suitable for the construction of a barrier for the Richards problem similar to Proposition~\ref{pr:radialSolution}.

\begin{proposition}
For any $\rho_0 > 0$, $\hat a > 0 > \hat b$, $\hat a + \hat b > 0$, and $\hat \om \geq 0$ there exists $\e > 0$ and a radially symmetric function $\hat \vp(x,t)$, decreasing in $\abs{x}$, and the following holds:
\begin{enumerate}
\romanlist
\item $\hat \vp$ is a strict classical subsolution of \eqref{RP} in the divergence form \eqref{divFormProblem} on 
\begin{align*}
K = \set{x : \rho_0 - \e < \abs{x} < \rho_0 + \e} \times (-\e, \e),
\end{align*}
\item $\abs{D\hat\vp} = \hat a$ and $\abs{D\hat \vp} = - \hat b$ on $\abs{x} = \rho_0 + \om t$,
\item the zero set of $\hat\vp$ moves with velocity $\hat \om$, i.e.
\begin{align*}
\set{x: \vp(x,t) = 0} = \set{x : \abs{x} = \rho_0 + \hat \om t}.
\end{align*}
\end{enumerate}
\end{proposition}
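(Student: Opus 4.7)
The plan is to mirror Proposition~\ref{pr:radialSolution}, but adapted to the quasilinear structure of \eqref{divFormProblem}. Look for $\hat\vp$ in traveling-wave form
\begin{align*}
\hat\vp(x,t) = \Phi(|x| - \rho_0 - \hat\om t),
\end{align*}
where $\Phi$ is continuous at $0$ with $\Phi(0)=0$, and built piecewise from a convex decreasing branch $\Phi^+$ on $(-\e,0]$ (positive phase) and a convex branch $\Phi^-$ on $[0,\e)$ (negative phase). This form immediately gives (iii), and (ii) reduces to the one-sided derivative conditions $(\Phi^+)'(0^-)=-\hat a$ and $(\Phi^-)'(0^+)=\hat b$. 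The strict flux ordering $|D\hat\vp^+|=\hat a > -\hat b=|D\hat\vp^-|$ required by Definition~\ref{de:classicalSubsolution}(iv) then follows from the hypothesis $\hat a+\hat b>0$.

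For the positive branch I would take the convex quadratic
\begin{align*}
\Phi^+(s) = -\hat a\, s + C s^2, \qquad s \in (-\e,0],
\end{align*}
which is decreasing with $(\Phi^+)'(0)=-\hat a$ and $(\Phi^+)''=2C$. Using the radial identities $D\hat\vp = \Phi'(s)\,x/|x|$ and $\Delta \hat\vp = \Phi''(s) + \frac{n-1}{|x|}\Phi'(s)$ and dividing by $\Psi(b(\hat\vp))>0$, the strict subsolution inequality for the parabolic problem becomes
\begin{align*}
-\frac{\hat\om\, b'(\hat\vp)}{\Psi(b(\hat\vp))}\,\Phi' \;-\; \Phi'' \;-\; \frac{n-1}{|x|}\Phi' \;-\; \frac{\Psi'(b(\hat\vp))\,b'(\hat\vp)}{\Psi(b(\hat\vp))}(\Phi')^2 \;<\; 0.
\end{align*}
On the compact range $\hat\vp \in [0,\Phi^+(-\e)]$ and $|x|\in[\rho_0-\e,\rho_0+\e]$, the coefficients $b'/\Psi(b)$ and $|\Psi'(b)|b'/\Psi(b)$ are bounded by constants depending only on $b$, $\Psi$, and $\max\hat\vp$, exactly as in \eqref{dfbk1k2}. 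The only $C$-dependent contribution comes from $-\Phi''=-2C$, so fixing $C$ large (depending on $\hat a,\hat\om,\rho_0$ and the structural bounds on $b,\Psi$) and then shrinking $\e$ so that $|\Phi'|$ remains close to $\hat a$ makes the left-hand side uniformly negative.

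For the negative branch one has $b(\hat\vp)=0$, so the equation collapses to $-\Psi(0)\Delta\hat\vp < 0$. Taking
\begin{align*}
\Phi^-(s) = \hat b\, s + D s^2, \qquad s\in[0,\e),
\end{align*}
the strict elliptic subsolution condition reads $2D + \frac{n-1}{|x|}(\hat b + 2Ds) > 0$, which is easily satisfied by any $D > \tfrac{(n-1)|\hat b|}{2(\rho_0-\e)}$ once $\e$ is small.

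The main obstacle is the positive-phase estimate, specifically the quasilinear cross-term $\frac{\Psi'(b)b'}{\Psi(b)}(\Phi')^2$ near the free boundary, which is absent in the fully nonlinear case of Proposition~\ref{pr:radialSolution}. The saving observation is that this term, together with all other "bad" contributions, is bounded independently of the convexity parameter $C$ provided $\e$ is chosen so that $C\e$ stays bounded; the linearly-in-$C$ good term $-2C\Psi(b(\hat\vp))$ then dominates. This forces a two-step parameter choice—first $C$ large to beat the fixed bounded terms, then $\e$ small so that the $C$-dependent growth of $|\Phi'|$ does not spoil the estimate—which takes the role played in Proposition~\ref{pr:radialSolution} by the $1$-homogeneity of the Pucci operators.
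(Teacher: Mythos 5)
Your proposal is correct, but it takes a different route from the paper. The paper reuses the logarithmic traveling-wave profile $\psi(s)=k^{-1}\log(aks+1)$ already built in Proposition~\ref{pr:barrierDivLarge} (set $\hat\vp=-\psi(s)$ for $s\le 0$), and pairs it with a harmonic-plus-quadratic profile $\hat b\,|x|^{2-n}/((2-n)(\rho_0+\hat\om t)^{1-n})+s^2$ in the negative phase, then invokes the differential inequality already verified there. You instead use bare quadratic profiles $-\hat a s+Cs^2$ and $\hat b s+Ds^2$ on the two branches and verify the inequalities from scratch, closing the argument with the two-step choice (first $C$, $D$ large to dominate the quasilinear lower-order terms, then $\e$ small enough that $C\e$, $D\e$ stay bounded so $|\Phi'|$ stays within a fixed multiple of $\hat a$, $|\hat b|$). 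Both are valid traveling-wave barriers; yours is more self-contained and elementary, while the paper's choice leverages an auxiliary barrier that it needs anyway (e.g.\ in Lemma~\ref{le:ZWzeroAtContact}). One small point worth making explicit in your write-up: on the negative branch you also need $\e\le |\hat b|/(2D)$ so that $\Phi^-$ stays negative and monotone decreasing on $[0,\e)$; this is an additional constraint on $\e$ of the same flavor as the one on the positive branch, and it is compatible with the order in which you fix $D$ and then $\e$.
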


\begin{proof}
We will use the function constructed in the proof of Proposition~\ref{pr:barrierDivLarge}. Let $\om = \hat \om$ and $M = 1$. Choose any $k > k_2$, where $k_2$ was defined in \eqref{dfbk1k2}, and set $a = \hat a$. Let $\psi(s)$ be as defined in \eqref{dfbpsi} and for some $\de > 0$ and $t \in (-\de, \de)$ let us set
\begin{align*}
\hat \vp(x,t) =
\begin{cases}
 -\psi(s) & \text{for } s \in (- \de, 0],\\  \\
 \frac{\hat b \abs{x}^{2-n}}{(2-n) (\rho_0 + \om t)^{1-n}} + s^2 & \text{for } s > 0,
 \end{cases}
\end{align*}
where $s = \abs{x} - \rho_0 - \om t$.
Then due to the computation in that proof, we can find $\de > 0$ such that 
\begin{itemize}
\item $\psi(s)$ is well-defined and $-\psi(s) \leq 1 = M$ for $s \in (-\de, 0)$,
\item $\hat \vp$ is a strict classical subsolution of \eqref{divFormProblem} in 
\begin{align*}
\set{(x,t) : \abs{x} - \rho_0 - \om t \in (-\de, 0),\ t \in (-\de, \de)},
\end{align*}
\item $\abs{D \hat \vp^+} = \hat a$, $\abs{D \hat \vp^-} = -\hat b$ on $\abs{x} = \rho_0 + \om t$.
\end{itemize}
We finish the proof by choosing $\e \in (0,\de)$ small enough so that 
\begin{align*}
K \subset \set{(x,t) : \abs{x} - \rho_0 - \om t \in (-\de , \infty),\ t\in (-\de,\de)}.
\end{align*}
\end{proof}

\medskip
\noindent\textbf{Acknowledgements.}  I. K. would like to thank Piotr Rybka for interesting discussions which prompted this research. 
N. P. would like to thank Yoshikazu Giga and the Department of Mathematics of the University of Tokyo for their hospitality. Much of the work was
done when he was a project researcher
of the University of Tokyo during 2011--2012,
supported by a grant (Kiban S, No.21224001)
from the Japan Society for the Promotion
of Science.

\begin{bibdiv}
\begin{biblist}

\bib{AL}{article}{
   author={Alt, Hans Wilhelm},
   author={Luckhaus, Stephan},
   title={Quasilinear elliptic-parabolic differential equations},
   journal={Math. Z.},
   volume={183},
   date={1983},
   number={3},
   pages={311--341},
   issn={0025-5874},
   review={\MR{706391 (85c:35059)}},
   doi={10.1007/BF01176474},
}

\bib{ATh}{article}{
   author={Armstrong, Scott},
   title={Principal half-eigenvalues of fully nonlinear homogeneous elliptic operators},
   pages={thesis}
}

\bib{BH}{article}{
   author={Bertsch, M.},
   author={Hulshof, J.},
   title={Regularity results for an elliptic-parabolic free boundary
   problem},
   journal={Trans. Amer. Math. Soc.},
   volume={297},
   date={1986},
   number={1},
   pages={337--350},
   issn={0002-9947},
   review={\MR{849483 (88c:35157)}},
   doi={10.2307/2000472},
}

\bib{BV}{article}{
   author={Br{\"a}ndle, Cristina},
   author={V{\'a}zquez, Juan Luis},
   title={Viscosity solutions for quasilinear degenerate parabolic equations
   of porous medium type},
   journal={Indiana Univ. Math. J.},
   volume={54},
   date={2005},
   number={3},
   pages={817--860},
   issn={0022-2518},
   review={\MR{2151235 (2006j:35130)}},
   doi={10.1512/iumj.2005.54.2565},
}

\bib{BW}{article}{
   author={Benilan, Philippe},
   author={Wittbold, Petra},
   title={On mild and weak solutions of elliptic-parabolic problems},
   journal={Adv. Differential Equations},
   volume={1},
   date={1996},
   number={6},
   pages={1053--1073},
   issn={1079-9389},
   review={\MR{1409899 (97e:35119)}},
}
  
\bib{CC}{book}{
   author={Caffarelli, Luis},
   author={Cabr{\'e}, Xavier},
   title={Fully nonlinear elliptic equations},
   series={American Mathematical Society Colloquium Publications},
   volume={43},
   publisher={American Mathematical Society},
   place={Providence, RI},
   date={1995},
   pages={vi+104},
   isbn={0-8218-0437-5},
   review={\MR{1351007 (96h:35046)}},
}

\bib{CS}{book}{
   author={Caffarelli, Luis},
   author={Salsa, Sandro},
   title={A geometric approach to free boundary problems},
   series={Graduate Studies in Mathematics},
   volume={68},
   publisher={American Mathematical Society},
   place={Providence, RI},
   date={2005},
   pages={x+270},
   isbn={0-8218-3784-2},
   review={\MR{2145284 (2006k:35310)}},
}
	
\bib{CV}{article}{
   author={Caffarelli, Luis},
   author={Vazquez, Juan Luis},
   title={Viscosity solutions for the porous medium equation},
   conference={
      title={Differential equations: La Pietra 1996 (Florence)},
   },
   book={
      series={Proc. Sympos. Pure Math.},
      volume={65},
      publisher={Amer. Math. Soc.},
      place={Providence, RI},
   },
   date={1999},
   pages={13--26},
   review={\MR{1662747 (99m:35029)}},
}

\bib{Carrillo99}{article}{
   author={Carrillo, Jos{\'e}},
   title={Entropy solutions for nonlinear degenerate problems},
   journal={Arch. Ration. Mech. Anal.},
   volume={147},
   date={1999},
   number={4},
   pages={269--361},
   issn={0003-9527},
   review={\MR{1709116 (2000m:35132)}},
   doi={10.1007/s002050050152},
}

\bib{CIL}{article}{
   author={Crandall, Michael G.},
   author={Ishii, Hitoshi},
   author={Lions, Pierre-Louis},
   title={User's guide to viscosity solutions of second order partial
   differential equations},
   journal={Bull. Amer. Math. Soc. (N.S.)},
   volume={27},
   date={1992},
   number={1},
   pages={1--67},
   issn={0273-0979},
   review={\MR{1118699 (92j:35050)}},
   doi={10.1090/S0273-0979-1992-00266-5},
}

\bib{CrandallLiggett}{article}{
   author={Crandall, M. G.},
   author={Liggett, T. M.},
   title={Generation of semi-groups of nonlinear transformations on general
   Banach spaces},
   journal={Amer. J. Math.},
   volume={93},
   date={1971},
   pages={265--298},
   issn={0002-9327},
   review={\MR{0287357 (44 \#4563)}},
}
\bib{DDS}{article}{
   author={Douglas, Jim, Jr.},
   author={Dupont, Todd},
   author={Serrin, James},
   title={Uniqueness and comparison theorems for nonlinear elliptic
   equations in divergence form},
   journal={Arch. Rational Mech. Anal.},
   volume={42},
   date={1971},
   pages={157--168},
   issn={0003-9527},
   review={\MR{0393829 (52 \#14637)}},
}

\bib{DiBG}{article}{
   author={DiBenedetto, Emmanuele},
   author={Gariepy, Ronald},
   title={Local behavior of solutions of an elliptic-parabolic equation},
   journal={Arch. Rational Mech. Anal.},
   volume={97},
   date={1987},
   number={1},
   pages={1--17},
   issn={0003-9527},
   review={\MR{856306 (87j:35259)}},
   doi={10.1007/BF00279843},
}

\bib{DS}{book}{
   author={Domencio, P.A.},
   author={Schwartz, F.W.},
   title={Physical and Chemical Hydrogeology},
   publisher={John Wiley and Sons, New-York},
   date={1998},
}

\bib{Evans}{book}{
   author={Evans, Lawrence C.},
   title={Partial differential equations},
   series={Graduate Studies in Mathematics},
   volume={19},
   edition={2},
   publisher={American Mathematical Society},
   place={Providence, RI},
   date={2010},
   pages={xxii+749},
   isbn={978-0-8218-4974-3},
   review={\MR{2597943 (2011c:35002)}},
}

\bib{FS}{book}{
   author={Fleming, Wendell H.},
   author={Soner, H. Mete},
   title={Controlled Markov processes and viscosity solutions},
   series={Applications of Mathematics (New York)},
   volume={25},
   publisher={Springer-Verlag},
   place={New York},
   date={1993},
   pages={xvi+428},
   isbn={0-387-97927-1},
   review={\MR{1199811 (94e:93004)}},
}

\bib{GGZ}{book}{
   author={Gajewski, Herbert},
   author={Gr{\"o}ger, Konrad},
   author={Zacharias, Klaus},
   title={Nichtlineare Operatorgleichungen und
   Operatordifferentialgleichungen},
   language={German},
   note={Mathematische Lehrb\"ucher und Monographien, II. Abteilung,
   Mathematische Monographien, Band 38},
   publisher={Akademie-Verlag},
   place={Berlin},
   date={1974},
   pages={ix+281 pp. (loose errata)},
   review={\MR{0636412 (58 \#30524a)}},
}

\bib{GT}{book}{
   author={Gilbarg, David},
   author={Trudinger, Neil S.},
   title={Elliptic partial differential equations of second order},
   note={Grundlehren der Mathematischen Wissenschaften, Vol. 224},
   publisher={Springer-Verlag},
   place={Berlin},
   date={1977},
   pages={x+401},
   isbn={3-540-08007-4},
   review={\MR{0473443 (57 \#13109)}},
}

\bib{K03}{article}{
   author={Kim, Inwon C.},
   title={Uniqueness and existence results on the Hele-Shaw and the Stefan
   problems},
   journal={Arch. Ration. Mech. Anal.},
   volume={168},
   date={2003},
   number={4},
   pages={299--328},
   issn={0003-9527},
   review={\MR{1994745 (2004k:35422)}},
   doi={10.1007/s00205-003-0251-z},
}

\bib{K02}{article}{
   author={Kim, Inwon C.},
   title={A free boundary problem arising in flame propagation},
   journal={J. Differential Equations},
   volume={191},
   date={2003},
   number={2},
   pages={470--489},
   issn={0022-0396},
   review={\MR{1978386 (2004e:35240)}},
   doi={10.1016/S0022-0396(02)00195-X},
}

\bib{K}{article}{
   author={Kim, Inwon C.},
   title={A free boundary problem with curvature},
   journal={Comm. Partial Differential Equations},
   volume={30},
   date={2005},
   number={1-3},
   pages={121--138},
   issn={0360-5302},
   review={\MR{2131048 (2006c:35305)}},
   doi={10.1081/PDE-200044474},
}

\bib{KP}{article}{
   author={Kim, Inwon C.},
   author={Po{\v{z}}{\'a}r, Norbert},
   title={Viscosity solutions for the two-phase Stefan problem},
   journal={Comm. Partial Differential Equations},
   volume={36},
   date={2011},
   number={1},
   pages={42--66},
   issn={0360-5302},
   review={\MR{2763347}},
   doi={10.1080/03605302.2010.526980},
}

\bib{KRT}{article}{
   author={Karlsen, K. H.},
   author={Risebro, N. H.},
   author={Towers, J. D.},
   title={$L^1$ stability for entropy solutions of nonlinear degenerate
   parabolic convection-diffusion equations with discontinuous coefficients},
   journal={Skr. K. Nor. Vidensk. Selsk.},
   date={2003},
   number={3},
   pages={1--49},
   issn={0368-6310},
   review={\MR{2024741 (2004j:35149)}},
}

\bib{LSU}{book}{
   author={Lady{\v{z}}enskaja, O. A.},
   author={Solonnikov, V. A.},
   author={Ural{\cprime}ceva, N. N.},
   title={Linear and quasilinear equations of parabolic type},
   language={Russian},
   series={Translated from the Russian by S. Smith. Translations of
   Mathematical Monographs, Vol. 23},
   publisher={American Mathematical Society},
   place={Providence, R.I.},
   date={1967},
   pages={xi+648},
   review={\MR{0241822 (39 \#3159b)}},
}
\bib{MR}{article}{
   author={Merz, W.},
   author={Rybka, P.},
   title={Strong solutions to the Richards equation in the unsaturated zone},
   journal={J. Math. Anal. Appl.},
   volume={371},
   date={2010},
   number={2},
   pages={741--749},
   issn={0022-247X},
   review={\MR{2670151}},
   doi={10.1016/j.jmaa.2010.05.066},
}

\bib{MV}{article}{
   author={Mannucci, Paola},
   author={Vazquez, Juan Luis},
   title={Viscosity solutions for elliptic-parabolic problems},
   journal={NoDEA Nonlinear Differential Equations Appl.},
   volume={14},
   date={2007},
   number={1-2},
   pages={75--90},
   issn={1021-9722},
   review={\MR{2346454 (2008k:35224)}},
   doi={10.1007/s00030-007-4044-1},
}
		
\bib{QS}{article}{
   author={Quaas, Alexander},
   author={Sirakov, Boyan},
   title={Principal eigenvalues and the Dirichlet problem for fully
   nonlinear elliptic operators},
   journal={Adv. Math.},
   volume={218},
   date={2008},
   number={1},
   pages={105--135},
   issn={0001-8708},
   review={\MR{2409410 (2009b:35132)}},
   doi={10.1016/j.aim.2007.12.002},
}

\bib{R}{article}{
author={Richards, L.A.},
title={Capillary conduction of liquids through porous mediums},
journal={Physics},
volume={1},
date={1931},
number={5},
pages={318--333}
}

\bib{V}{book}{
   author={V{\'a}zquez, Juan Luis},
   title={The porous medium equation},
   series={Oxford Mathematical Monographs},
   note={Mathematical theory},
   publisher={The Clarendon Press Oxford University Press},
   place={Oxford},
   date={2007},
   pages={xxii+624},
   isbn={978-0-19-856903-9},
   isbn={0-19-856903-3},
   review={\MR{2286292 (2008e:35003)}},
}

\bib{VP}{article}{
   author={van Duyn, C. J.},
   author={Peletier, L. A.},
   title={Nonstationary filtration in partially saturated porous media},
   journal={Arch. Rational Mech. Anal.},
   volume={78},
   date={1982},
   number={2},
   pages={173--198},
   issn={0003-9527},
   review={\MR{648943 (83g:76090)}},
   doi={10.1007/BF00250838},
}
		
\bib{WangI}{article}{
   author={Wang, Lihe},
   title={On the regularity theory of fully nonlinear parabolic equations.
   I},
   journal={Comm. Pure Appl. Math.},
   volume={45},
   date={1992},
   number={1},
   pages={27--76},
   issn={0010-3640},
   review={\MR{1135923 (92m:35126)}},
   doi={10.1002/cpa.3160450103},
}

\bib{W}{article}{
   author={Wang, Lihe},
   title={On the regularity theory of fully nonlinear parabolic equations.
   II},
   journal={Comm. Pure Appl. Math.},
   volume={45},
   date={1992},
   number={2},
   pages={141--178},
   issn={0010-3640},
   review={\MR{1139064 (92m:35127)}},
   doi={10.1002/cpa.3160450202},
}

\end{biblist}
\end{bibdiv}

 \end{document}